\documentclass[a4paper,reqno]{amsart}
\usepackage[english]{babel}
\usepackage{amsmath,amsthm,amssymb,amsfonts}

\usepackage{enumitem}

\usepackage{hyperref}
\usepackage{mathtools}
\usepackage[utf8]{inputenc}
\hypersetup{
 colorlinks,
 linkcolor={blue!90!black},
 citecolor={red!80!black},
 urlcolor={blue!50!black}
}
\usepackage{tikz}
\usepackage{tikz-cd}
\usepackage{graphicx}
\usepackage[all,cmtip]{xy}
\usepackage{mleftright}
\usepackage{booktabs}
\usepackage{cite}
\usepackage{amsfonts}

%-------------------------------------------------------------------------------
%\setlist[enumerate]{label=\normalfont(\roman*)}
%-------------------------------------------------------------------------------
%-------------------------------------------------------------------------------
%\theoremstyle{theorem}
\newtheorem{theorem}{\sc \textbf{Theorem}}[section]  
\newtheorem{proposition}[theorem]{\sc \textbf{Proposition}}   
\newtheorem{corollary}[theorem]{\sc \textbf{Corollary}}        
\newtheorem{lemma}[theorem]{\sc \textbf{Lemma}}

\renewcommand{\approx}{ \asymp}

\theoremstyle{remark}
\newtheorem{definition}[theorem]{\sc \textbf{Definition}}

\newtheorem{remark}[theorem]{\sc \textbf{Remark}}

\newcommand{\Ls}{\mathcal{L}}

% blackboard bold math

% boldface math

\def\bX{{\mathbf X}}

% caligraphic math

% math script

\numberwithin{equation}{section}

\newcommand{\N}{\mathbb{N}}
\newcommand{\Z}{\mathbb{Z}}
\newcommand{\R}{\mathbb{R}}
\newcommand{\C}{\mathbb{C}}

%{\, \text{\rm Re\,}}
%{\, \text{\rm Im\,}}

\def\e{\mathrm{e}}

\def\supp{\operatorname{supp}}

\newcommand{\dd}{d}
\newcommand{\dm}{\mathbf{d}}
\newcommand{\nm}{\mathbf{n}}

\newcommand{\dell}{\dot{\ell}}

\newcommand{\dB}{\dot{B}}
\newcommand{\dF}{\dot{F}}
\newcommand{\dX}{\dot{X}}
\newcommand{\dL}{\dot{L}}

\newcommand{\x}{\underline{x}}
\newcommand{\y}{\underline{y}}
\newcommand{\X}{\mathbf{X}}

\newcommand{\Ss}{\mathcal{S}}
\newcommand{\Ps}{\mathcal{P}}

\makeatletter
\@namedef{subjclassname@1991}{Mathematics Subject Classification}
\usepackage{enumerate}
%-------------------------------------------------------------------------------
\graphicspath{{images/}}
%-------------------------------------------------------------------------------
\begin{document}
%-------------------------------------------------------------------------------
\author[Tommaso Bruno]{Tommaso Bruno}
\address{Department of Mathematics: Analysis, Logic and Discrete Mathematics, Ghent University,
Krijgslaan 281, 9000 Ghent, Belgium}
\email{tommaso.bruno@ugent.be}
%-------------------------------------------------------------------------------
\keywords{Dirichlet space; Besov spaces; Triebel Lizorkin spaces; homogeneous spaces; algebra properties; Nilpotent Lie groups; Grushin operators.}
\subjclass{Primary 46E35, 58J35, 43A85; Secondary 46E36, 46F10, 22E25}
\thanks{The author acknowledges
  support by the Research Foundation – Flanders (FWO) through the
  postdoctoral grant 12ZW120N}
%-------------------------------------------------------------------------------
\title[Homogeneous algebras via heat kernel estimates]{Homogeneous algebras via heat kernel estimates}
%-------------------------------------------------------------------------------
\begin{abstract}
We study homogeneous Besov and Triebel--Lizorkin spaces defined on doubling metric measure spaces in terms of a self-adjoint operator whose heat kernel satisfies Gaussian estimates together with its derivatives. When the measure space is a smooth manifold and such operator is a sum of squares of smooth vector fields, we prove that their intersection with $L^\infty$ is an algebra for pointwise multiplication. Our results apply  to nilpotent Lie groups and Grushin settings.
\end{abstract}
\maketitle

\section{Introduction}
The continuity of pointwise multiplication within spaces of functions, i.e.\ their algebra property, has been proven to be a fundamental tool in the study of nonlinear PDEs. Homogeneous Besov and Triebel–Lizorkin spaces are  among the spaces of most interest; but though their properties have been widely investigated on $\R^d$ and their study has been extended to a variety of settings~\cite{GKKP1, GKKP2, WHHY, BBD, LYY}, their algebra properties have not yet been fully understood in non-Euclidean contexts. Some results have been obtained for Sobolev and Besov spaces on doubling Riemannian manifolds and Lie groups of polynomial growth~\cite{BBR, CRTN, GS}, and for Sobolev spaces on doubling metric measure spaces endowed with a \textit{carré du champ}~\cite{BF,BCF}. The main purpose of this paper is to study Besov and Triebel--Lizorkin spaces on doubling metric measure spaces of infinite measure, endowed with a self-adjoint operator whose heat kernel satisfies Gaussian estimates together with its derivatives. When the measure space is a smooth manifold and such operator is a sum of squares of smooth vector fields, we prove that their intersection with $L^\infty$ is an algebra. Noncompact nilpotent Lie groups and the Grushin setting belong to this class.   

Rather than functions, the elements of homogeneous spaces are distributions modulo polynomials. These do not require a significantly different approach if the norms are defined by means of Littlewood--Paley decompositions, which make use of multipliers supported away from the origin. Properties such as frame and atomic decompositions, among others, have been obtained in this way in a remarkably general setting~\cite{GKKP1, GKKP2, BBD, BD, LYY}. Nevertheless,~\cite{BF,BCF} and recent results for inhomogeneous spaces on Lie groups~\cite{Fe, BPTV, BPV1} reveal that a powerful tool to obtain algebra properties are paraproducts expressed in terms of noncompactly supported multipliers, in particular heat semigroups of some underlying self-adjoint operator. Under this point of view, the possibility of expressing the norm of these spaces in terms of such semigroups is of fundamental importance. However, the image of a distribution modulo polynomials through a multiplier containing the origin in its support is not well defined as a distribution, as it might depend on the representative of the class. Though a characterization of homogeneous Triebel--Lizorkin and Besov norms in terms of noncompactly supported multipliers has been known on $\R^d$ for long time~\cite{Triebel}, to the best of our knowledge this issue has remained quite unexplored in other contexts. In the Euclidean setting itself the picture has not yet been completely clarified; see the recent paper~\cite{MPS} for the case of  Sobolev spaces. We shall elaborate on this later on.

A possible approach to circumvent this problem is to define homogeneous spaces as the closure of suitable test functions with respect to homogeneous norms. This is the case,  e.g., of~\cite{BCF, BF, CRTN}. Nevertheless, as shown in~\cite{MPS}, the drawback of such a definition is that it might give rise to spaces of distributions modulo polynomials whose degree depends on the order of regularity of the space. For this reason we maintain the classical approach, and following a theory which has been recently developed in~\cite{KP} we work with distributions and their equivalence classes modulo polynomials.

We shall first consider doubling metric measure spaces of infinite measure, endowed with a self-adjoint operator $\Ls$ whose heat kernel satisfies Gaussian estimates together with its time derivatives. A quite general framework where these estimates are available is that of strictly local regular Dirichlet spaces of Harnack type. We shall define homogeneous Besov and Triebel--Lizorkin spaces, $\dB^{p,q}_\alpha$ and $\dF^{p,q}_\alpha$ respectively, by means of the heat semigroup of $\Ls$, and prove equivalent characterizations of their norms and embeddings. In order to obtain algebra properties for all regularities, it seems appropriate to restrict to differentiable structures: our framework will then be a smooth manifold, and $\Ls$ will be a sum of squares of smooth vector fields whose heat kernel satisfies Gaussian estimates together with its space derivatives. In this setting, we prove that $L^\infty \cap \dB^{p,q}_\alpha$ and $L^\infty \cap \dF^{p,q}_\alpha$, suitably interpreted, are algebras for pointwise multiplication whenever $\alpha> 0$, $q\in [1,\infty]$, and $p\in [1,\infty]$ or $p\in (1,\infty)$ respectively. We shall also  show interpolation properties, representation formulae, and discuss the equivalence of our definition with those which make use of Littlewood--Paley decompositions. The two main examples that we shall consider here are nilpotent Lie groups and Grushin settings. In the case of the Grushin operator, we shall prove estimates for the derivatives of its heat kernel which appear to be new and of independent interest. Our results insert in between the aforementioned~\cite{BCF, BF},~\cite{KP, GKKP1}, and recent results of Peloso, Vallarino and the present author~\cite{BPV1, BPV2,BPV3} where a theory of inhomogeneous function spaces on Lie groups was developed. We shall exploit and combine many strengths of the three theories by remaining essentially self-contained.

Let us finally mention that the range of integrability indices $p,q$ for $\dB^{p,q}_\alpha$ and $\dF^{p,q}_\alpha$ will always lie in $ [1,\infty]$. The study of the case $p,q\in (0,1)$, which seems to require substantially different techniques, is left to future work. Results in the spirit of this paper, for a limited range of indices and regularities but under weaker geometric assumptions, are also being object of investigation~\cite{BPRV}.

\smallskip

The structure of the paper is as follows. In the remainder of the introduction, we describe  the setting in detail and introduce some convenient notation. In Section~\ref{sec:distrL} we introduce distributions, polynomials and prove some auxiliary results about the heat semigroup of $\Ls$. In Section~\ref{BFspaces} we define Besov and Triebel--Lizorkin spaces and obtain equivalent characterizations of their norms. We prove embeddings in Section~\ref{sec:embed} and algebra properties in Section~\ref{sec:AP}. Section~\ref{sec:SS} is devoted to Calder\'on-type representation formulae and interpolation properties.  In Section~\ref{sec:inho} we discuss a parallel theory of inhomogeneous spaces, and the final Section~\ref{sec:example} shows how nilpotent Lie groups and Grushin settings are particular instances of the framework of the paper.

\subsection{Setting of the paper }
We denote by $(M, d,\mu )$ a second countable, locally compact, connected metric measure space of infinite volume,  whose measure $\mu$ is Radon, positive, noncollapsing and doubling:  in particular, there exist constants $c_0,\dm>0$ such that
\begin{equation*}
\inf_{x\in M} \mu(B(x,1))\geq  c_0^{-1}, \qquad \mu(B(x,R r))\leq c_0 R^\dm \mu(B(x,r))
\end{equation*}
for all $x\in M$, $r>0$ and $R\geq 1$. Here $B(x,r)$ stands for the ball centred at $x$ with radius $r$ with respect to the distance $d$. The constant $\dm$ might be called the dimension of $M$ though, as we shall see later on, when $M$ is a manifold $\dm$ might not be its topological dimension. In very limited circumstances, we shall also assume a stronger noncollapsing condition on $\mu$, but this will be discussed in due course.

Let $L^2(\mu)$ be the space of real-valued $L^2$ functions on $M$. We assume the existence of a non-negative self-adjoint operator $\Ls$ on $L^2(\mu)$, mapping real-valued to real-valued functions, which generates a positivity preserving, contractive and symmetric semigroup $\e^{-t\Ls} = T_t$ on $L^2(\mu)$ such that $T_t 1=1$ for all $t>0$. Its integral (heat) kernel $H_t(\cdot ,\cdot )$ is assumed to be H\"older continuous, satisfy Gaussian two-sided bounds, and with time derivatives satisfying Gaussian upper bounds. More precisely, we assume that there exist $C,b_0,b_1>0$ such that, for $x,y\in M$ and $t>0$,
\begin{equation}\label{Htestimate}
C^{-1} \frac{\e^{-b_0 \, d^2(x,y)/t}}{(\mu(B(x,\sqrt t)) \mu(B(y,\sqrt t)))^{1/2}} \leq H_t(x,y) \leq C \frac{\e^{-b_1 \, d^2(x,y)/t}}{(\mu(B(x,\sqrt t)) \mu(B(y,\sqrt t)))^{1/2}},
\end{equation}
that there exist $a, b_2>0$ such that, for $x,y,y'\in M$ and $t>0$ with $d(y,y')\leq \sqrt{t}$,
\[
| H_t(x,y) - H_t(x,y') |
\leq C \,   \left( \frac{d(y,y')}{\sqrt{t}}\right)^a \frac{\e^{-{b_2 \, d^2(x,y)}/t}}{(\mu(B(x,\sqrt t))\mu(B(y,\sqrt t)))^{1/2}},
\]
and that for every $k\in \{0,1,\dots\}$ there exist positive constants $C = C_{k}$ and $\delta=\delta_{k}$ such that, for $x,y\in M$ and $t>0$,
\begin{equation}\label{dtHtestimate}
|\partial_t^{k} H_t(x,y)| \leq C t^{-k} \frac{\e^{-\delta \, d^2(x,y)/t}}{(\mu(B(x,\sqrt t))\mu(B(y,\sqrt t)))^{1/2}}. 
\end{equation}
As already mentioned, the above estimates are realized e.g.\ by the heat kernel on strictly local regular Dirichlet spaces of Harnack type with a complete intrinsic metric~\cite[Section 2.3.2]{GSC}. In many circumstances, the H\"older continuity assumption is redundant, as~\cite[Theorem 5.11 and Corollary 7.6]{GT} show. Observe moreover that our framework satisfies the assumptions of~\cite{KP, GKKP1}, and that by~\cite[Theorem 1.4.1]{Davies} the semigroup $T_t$ can be extended to a contraction semigroup on $L^p$, $1\leq p\leq \infty$, strongly continuous when $1\leq p<\infty$.

\begin{remark}
Due to the Gaussian term, and since there is $C>0$ such that
\[
\mu(B(y,\sqrt{t})) \leq C (1+d(x,y)/\sqrt{t})^\dm \mu(B(x,\sqrt{t}))
\]
for all $x,y\in M$ and $t>0$ because of the doubling condition, up to changing the constants involved one might replace $(\mu(B(x,\sqrt t)) \mu(B(y,\sqrt t)))^{1/2}$ by $\mu(B(x,\sqrt{t}))$ in~\eqref{Htestimate} and~\eqref{dtHtestimate}. 
\end{remark}

\begin{remark}\label{noncollaps}
Since $M$ is connected, the reverse doubling property of $\mu$ holds~\cite[Proposition 2.2]{CKP}. This implies the  existence of some $\dm^*$, $0<\dm^*\leq \dm$, and $c>0$ such that $\mu(B(x,Rr))\geq c R^{\dm^*} \mu(B(x,r))$ for all $x\in M$, $r>0$ and $R\geq 1$; together with the noncollapsing condition of $\mu$, this implies  $\mu(B(x,r))\geq c\, c_0^{-1} r^{\dm^*} $ for all $r\geq 1$. The noncollapsing and the doubling conditions, instead, lead to $\mu(B(x,r))\geq c_0^{-2} r^{\dm} $ for all $r\leq  1$.
\end{remark}

\subsection{Notation} \label{Sub:not}
For $p\in [1,\infty]$, we shall denote by $L^p(\mu)$, or simply by $L^p$, the usual Lebesgue spaces endowed with their usual norm $\|\cdot \|_p$. For $p\in [1,\infty]$, $q\in [1,\infty)$ and a measurable function $F\colon  (0,\infty) \times M \to \R$, we write
\[
\|F\|_{L^p(L^q_+)} =\bigg\| \left( \int_0^\infty \! |F(t, \cdot )|^q \, \frac{\dd t}{t} \right)^{\! 1/q}\bigg\|_p, \qquad \|F\|_{L^q_+(L^p)} = \bigg( \int_0^\infty \!\left\| F(t, \cdot )\right\|_p^q  \, \frac{\dd t}{t} \bigg)^{\! 1/q},
\]
and define the spaces $L^p(L^q_+)$ and $L^q_+(L^p)$ accordingly.
When $q=\infty$, we shall mean $L^\infty_+ = L^\infty((0,\infty))$. We shall also use their discrete counterparts
\[
\|F\|_{L^p(\dell^q)} =\bigg\| \bigg( \sum_{j\in \Z}  |F(2^j, \cdot )|^q \bigg)^{1/q}\bigg\|_p, \qquad \|F\|_{\dell^q(L^p)} = \bigg( \sum_{j\in\Z} \| F(2^j, \cdot )\|_p^q \bigg)^{1/q},
\]
and analogously $L^p(\dell^\infty)$ and $\dell^{\infty}(L^p)$. We shall often display explicitly the dependence on the $t$ variable in the norms: we shall equivalently write $\|F\|_{L^p(L^q_+)}$ and $\|F(t,\cdot)\|_{L^p(L^q_+)}$, and similarly in the other cases. To denote either $L^p(L^q_+)$ or $L^q_+(L^p)$ when no distinction is needed, we shall write $X^{p,q}_+$. We shall always consider real-valued functions.

For two positive quantities $A$ and $B$, we write $A\lesssim B$ to indicate that there exists a constant $c>0$ such that $A\leq c \,B$. If $A\lesssim B$ and $B\lesssim A$, we write $A\approx B$.  We denote by $C<\infty$, or $c > 0$, a constant that may vary from place to place but is independent of significant quantities. If $\beta\geq 0$, we denote its integer part by $[\beta]$.

\section{Distributions, polynomials and the heat semigroup}\label{sec:distrL}

We recall here the theory of distributions for $\Ls$ settled in~\cite{KP}, to which we refer the reader for all the details, and prove some further results. We fix once and for all a reference point $x_0\in M$, we write $d(x)= d(x,x_0)$ for all $x\in M$, and define $\Ss(\Ls)$, or simply $\Ss$, as
\begin{equation}\label{defS}
\Ss= \left\{ \phi\in L^2\colon \Ls^n \phi \in L^2, \; p_{n}(\phi)<\infty \; \; \forall n\in \N\right\}
\end{equation}
where
\[
p_{n}(\phi) = \big\| ( 1+ d(\cdot ))^n \max_{0\leq k \leq n}|\Ls^k\phi| \big\|_{\infty} , \quad n\geq 0.
\]
It is to observe that $\Ss$ is a Fréchet space. We denote by $\Ss'$ its topological dual with the weak topology. We shall refer to the elements of $\Ss'$ as distributions, and write the action of $f\in \Ss'$ on $\phi \in \Ss$ as $\langle f, \phi\rangle$. Any operator $T$ which can be defined on and preserves $\Ss$ may also be defined by duality on $\Ss'$, as usual, as $\langle Tf, \phi\rangle=  \langle f,T\phi\rangle$. We recall that if $\varphi \in \Ss(\R)$ is real valued and even, then $\varphi (\sqrt{\Ls})$ is continuous on $\Ss$ and $\Ss'$ by~\cite[Theorem 3.4]{KP}, see also~\cite[Theorem 2.2]{GKKP1}; and that by~\cite[Proposition 3.2]{GKKP1}, if $f\in \Ss'$, then $\varphi (\sqrt{\Ls}) f$ is a continuous and slowly growing function, where this means that there exists $h\in \N$ such that
\[
|\varphi (\sqrt{L})f(x)| \lesssim (1 + d(x))^h, \qquad x\in M.
\]
\begin{remark}\label{remark:continuity}
We observe for future convenience that in particular $\Ls^h T_t f = T_t \Ls^h f$ is continuous and slowly growing for all $t>0$, $h\in \N$ and $f\in \Ss'$. Observe also that $\Ls T_t f= -\partial_t T_t f$. 
\end{remark}

We define a (generalized) polynomial of degree $k\in \N$ as a distribution $\rho\in \Ss'$ such that $\Ls^k \rho=0$, and we write $\rho \in \Ps_k$. The space of all polynomials will be denoted by $\Ps$. We define an equivalence relation on $\Ss'$ by saying that $f\sim g$ whenever $f-g\in \Ps$, and denote by $\Ss'/\Ps$ the set of all equivalence classes in $\Ss'$. The equivalence class in $\Ss'/\Ps$ of $f\in \Ss'$ will be denoted by $[f]$. In particular, the action of $T_t$ on $\Ss'$ will be very important in the following.

\begin{lemma}\label{lem:poly}
Suppose $f\in \Ss'$. Then $T_t f \to f$ in $\Ss'$ when $t\to 0^+$.
\end{lemma}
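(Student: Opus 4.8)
The plan is to move everything to the dual side. Since $\Ss'$ carries the weak topology, the assertion $T_tf\to f$ in $\Ss'$ means precisely $\langle T_tf,\phi\rangle\to\langle f,\phi\rangle$ for every $\phi\in\Ss$; by symmetry of $T_t$ (and the definition of its action on $\Ss'$, consistent with the $L^2$ one) this equals $\langle f,T_t\phi\rangle$, and since $f$ is a continuous linear functional on $\Ss$ and $T_t=\e^{-t\Ls}$ maps $\Ss$ into itself, the statement follows once we show $T_t\phi\to\phi$ in $\Ss$ as $t\to0^+$, i.e.\ $p_n(T_t\phi-\phi)\to0$ for every $n$. Now $\Ls$ maps $\Ss$ into itself (directly from the definition) and commutes with $T_t$, so $\Ls^k(T_t\phi-\phi)=T_t\psi_k-\psi_k$ with $\psi_k:=\Ls^k\phi\in\Ss$; since $p_n$ only involves the finite maximum over $k\le n$, we get $p_n(T_t\phi-\phi)\le\sum_{k=0}^n\|(1+d(\cdot))^n(T_t\psi_k-\psi_k)\|_\infty$, and the lemma reduces to the claim that for every $\psi\in\Ss$ and every $n\in\N$,
\[
\|(1+d(\cdot))^n(T_t\psi-\psi)\|_\infty\longrightarrow0\qquad(t\to0^+).
\]

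To prove the claim I would avoid a soft approximate-identity/dominated-convergence argument — it is obstructed by the facts that $T_t$ is not strongly continuous on $L^\infty$ and that elements of $\Ss$ need not be uniformly continuous on $M$ — and instead extract a quantitative rate from the representation
\[
(T_t\psi-\psi)(x)=-\int_0^t(T_s\Ls\psi)(x)\,\dd s\qquad\text{for a.e.\ }x\in M,\ t\in(0,1].
\]
This comes from $\partial_s T_s\psi=-\Ls T_s\psi=-T_s\Ls\psi$ (Remark~\ref{remark:continuity}) integrated over $[\eta,t]$ and letting $\eta\to0^+$ in $L^2$ — legitimate since $\psi,\Ls\psi\in L^2$ and the semigroup is strongly continuous on $L^2$ — together with a Fubini argument (using $\Ls\psi\in L^\infty$ and the Gaussian bound) to pass from the $L^2$-valued integral to the pointwise one.

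The heart of the matter is then a bound on $\|(1+d(\cdot))^nT_s\Ls\psi\|_\infty$ uniform in $s\in(0,1]$. Writing $T_s\Ls\psi(x)=\int_M H_s(x,y)\,\Ls\psi(y)\,\dd\mu(y)$, one brings the weight inside via $(1+d(x))^n\le(1+d(x,y))^n(1+d(y))^n$ and bounds $(1+d(y))^n|\Ls\psi(y)|\le\|(1+d(\cdot))^n\Ls\psi\|_\infty<\infty$ (finite since $\psi\in\Ss$). What remains is
\[
\sup_{x\in M,\,s\in(0,1]}\ \int_M H_s(x,y)\,(1+d(x,y))^n\,\dd\mu(y)<\infty,
\]
which I would derive from the Gaussian upper bound in~\eqref{Htestimate}, after replacing the geometric mean in its denominator by $\mu(B(x,\sqrt s))$ (legitimate by doubling), via the annular decomposition of $M$ into $\{2^j\sqrt s\le d(x,y)<2^{j+1}\sqrt s\}$, $j\ge0$ (plus the ball $d(x,y)<\sqrt s$): doubling gives $\mu(B(x,2^{j+1}\sqrt s))\lesssim 2^{j\dm}\mu(B(x,\sqrt s))$, and for $s\le1$ the polynomial factor $(1+2^{j+1}\sqrt s)^n\lesssim 2^{jn}$ is absorbed by $\e^{-b_1 4^j}$ upon summing in $j$. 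Combining,
\[
\|(1+d(\cdot))^n(T_t\psi-\psi)\|_\infty\le t\,\sup_{s\in(0,1]}\|(1+d(\cdot))^nT_s\Ls\psi\|_\infty\lesssim t\longrightarrow0,
\]
which proves the claim, and with it the lemma.

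I expect the only genuine obstacle to be conceptual: recognising that the argument should be quantified through the identity $T_t\psi-\psi=-\int_0^tT_s\Ls\psi\,\dd s$ rather than attempted by a direct convergence-of-the-kernel argument. The remaining ingredients — the $L^2$-calculus justification of that identity and the uniform annular estimate — are routine given the Gaussian bounds and the doubling condition.
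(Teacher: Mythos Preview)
Your proof is correct. The paper does not actually prove this lemma --- it cites Proposition~3.8 of~\cite{GKKP1} and omits the details --- so there is no in-paper argument to compare against directly. Your route (dualise to $T_t\phi\to\phi$ in $\Ss$; write $T_t\psi-\psi=-\int_0^tT_s\Ls\psi\,\dd s$; bound the integrand uniformly via the Gaussian upper estimate and an annular decomposition) is standard and self-contained. Note that the uniform weighted bound you establish is precisely~\eqref{norm1} in the proof of Lemma~\ref{lemmaseminorm}; equivalently, that lemma with $h=0$ gives $p_n(T_s\Ls\psi)\lesssim(1+\sqrt s)^n p_{n+1}(\psi)$, which is bounded for $s\le1$ and is exactly what you need. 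Your argument is thus consistent with --- and in fact anticipates --- the paper's own estimates.
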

The proof of the above lemma is implicitly contained in the proof of~\cite[Proposition 3.8]{GKKP1}, so we omit the details. In particular, we observe that if $f\in \Ss'$ and $T_t f =0$ for all $t>0$, then $f=0$.
\begin{lemma}\label{lemma:TLnorm}
Let $k\geq 1$ be an integer. If $\rho\in \Ps_{k}$, then
\[
T_t \rho = \sum_{j=0}^{k-1}\frac{ (-1)^j}{j!} (\Ls^j \rho)\,  t^j \quad \mbox{ in } \:  \Ss'.
\]
Moreover, $\Ls^j\rho$ is a continuous slowly growing function for all $j$, and the equality above holds as continuous functions.
\end{lemma}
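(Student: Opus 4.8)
The plan is to regard $t\mapsto T_t\rho$ as an $\Ss'$-valued function on $(0,\infty)$, to show that it solves the $k$-th order linear equation $\partial_t^{\,k}(T_t\rho)=0$, and then to pin down its behaviour as $t\to 0^+$ by means of Lemma~\ref{lem:poly}; evaluating the resulting polynomial against test functions will give the identity in $\Ss'$, and a Vandermonde argument will upgrade it to an identity of continuous functions.

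First I would record that, iterating the identity $\Ls T_t h=-\partial_t T_t h$ of Remark~\ref{remark:continuity} (legitimate at each step since $\Ls$ maps $\Ss'$ into $\Ss'$ and commutes there with $T_t$), one has
\[
\partial_t^{\,j}(T_t\rho)=(-1)^j\,\Ls^j T_t\rho=(-1)^j\,T_t\Ls^j\rho\qquad\text{in }\Ss',\quad j\geq 0,\ t>0 .
\]
Since $\rho\in\Ps_k$ means $\Ls^k\rho=0$, the case $j=k$ yields $\partial_t^{\,k}(T_t\rho)=0$ for all $t>0$. Moreover, applying Lemma~\ref{lem:poly} to the distribution $\Ls^j\rho\in\Ss'$ gives $\partial_t^{\,j}(T_t\rho)=(-1)^j T_t\Ls^j\rho\to(-1)^j\Ls^j\rho$ in $\Ss'$ as $t\to 0^+$, for every $j$.

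Next, I fix $\phi\in\Ss$ and put
\[
g(t)=\langle T_t\rho,\phi\rangle-\sum_{j=0}^{k-1}\frac{(-1)^j}{j!}\langle\Ls^j\rho,\phi\rangle\,t^j ,\qquad t>0 .
\]
Reading the previous display against $\phi$ — here one uses that $\frac{d}{dt}\langle T_t h,\phi\rangle=-\langle\Ls T_t h,\phi\rangle$ for $h\in\Ss'$, which is the meaning of $\Ls T_t h=-\partial_t T_t h$, iterated — shows that $g\in C^\infty((0,\infty))$ with $g^{(k)}\equiv 0$, so $g$ is a polynomial in $t$ of degree at most $k-1$; in particular the one-sided limits $g^{(j)}(0^+)$, $0\leq j\leq k-1$, exist. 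Differentiating the explicit polynomial summand $j$ times, as $t\to 0^+$ only the $i=j$ term survives and contributes $(-1)^j\langle\Ls^j\rho,\phi\rangle$, which by the previous paragraph cancels $\lim_{t\to 0^+}\langle\partial_t^{\,j}(T_t\rho),\phi\rangle=(-1)^j\langle\Ls^j\rho,\phi\rangle$; hence $g^{(j)}(0^+)=0$ for all $j=0,\dots,k-1$. A polynomial of degree $\leq k-1$ all of whose derivatives of order $\leq k-1$ vanish at a point is identically zero, so $g\equiv 0$; since $\phi\in\Ss$ was arbitrary, this is exactly the claimed identity in $\Ss'$.

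Finally, for the ``moreover'' part: by Remark~\ref{remark:continuity} each $T_t\rho$ ($t>0$) is a continuous, slowly growing function. Choosing $k$ distinct $t_1,\dots,t_k>0$ and inverting the Vandermonde system obtained by evaluating the identity just proven at $t_1,\dots,t_k$ expresses each $\Ls^j\rho$, $0\leq j\leq k-1$, as a finite linear combination of $T_{t_1}\rho,\dots,T_{t_k}\rho$, so each $\Ls^j\rho$ is continuous and slowly growing; since two continuous slowly growing functions coinciding in $\Ss'$ coincide pointwise (e.g.\ by applying $T_s$ and letting $s\to 0^+$), the identity then holds as an equality of continuous functions. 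I expect the only genuinely delicate point to be the passage to the limit $t\to 0^+$: justifying that $t$-derivatives may be taken inside the pairing $\langle\,\cdot\,,\phi\rangle$ on $(0,\infty)$, which is $\Ls T_t h=-\partial_t T_t h$ read against a test function and iterated, and that the boundary values of $g$ are then computed correctly from the $\Ss'$-limits $\partial_t^{\,j}(T_t\rho)\to(-1)^j\Ls^j\rho$; everything else is the cited continuity and convergence statements together with elementary linear algebra.
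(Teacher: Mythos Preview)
Your argument is correct and follows essentially the same route as the paper: both prove the polynomial identity by showing that the scalar function $t\mapsto\langle T_t\rho,\phi\rangle$ satisfies a $k$-th order ODE with initial data supplied by Lemma~\ref{lem:poly} (the paper phrases this as an induction on $k$, integrating one derivative at a time, whereas you kill the $k$-th derivative in one stroke and match all boundary values at $0^+$). For the ``moreover'' part you use a Vandermonde inversion at $k$ distinct times to express each $\Ls^j\rho$ as a combination of the continuous slowly growing functions $T_{t_i}\rho$, while the paper instead evaluates the formula at $t=1$ and bootstraps downward from $\Ls^{k-1}\rho=T_1\Ls^{k-1}\rho$; both are valid, and your Vandermonde trick is arguably cleaner.
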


\begin{proof}
We prove the statement by induction on $k$.  Suppose $k=1$, hence $\Ls \rho =0$. For $\phi \in \Ss$,  consider the function $u(t) = \langle T_t \rho, \phi\rangle$. Then $u'(t) = -\langle T_t\Ls \rho,\phi \rangle =0$. Thus $u(t)$ is constant, whence
\[
u(t) = \langle T_t \rho, \phi\rangle = \lim_{t\to 0^+} \,  \langle T_t \rho, \phi\rangle = \langle\rho, \phi\rangle,
\]
the last equality by Lemma~\ref{lem:poly}. Since $\phi$ is arbitrary, we obtain $ T_t \rho =  \rho$. 

Assume now that $k\geq 2$, that the statement holds for $k-1$ and suppose $\rho \in \Ps_k$. Since $\Ls \rho\in \Ps_{k-1}$, for $\phi \in \Ss$ by the inductive assumption
\[
-\frac{d}{dt} \langle T_t \rho, \phi \rangle=  \langle T_t \Ls \rho, \phi \rangle =  \sum_{j=0}^{k-2}\frac{ (-1)^j}{j!} \langle \Ls^{j+1}\rho, \phi\rangle t^j.
\]
Therefore, $ \langle T_t \rho,\phi\rangle = c +\sum_{j=1}^{k-1}\frac{ (-1)^j}{j!} \langle \Ls^j \rho, \phi\rangle t^j $ for some $c\in \R$. By taking the limit for $t\to 0$, one gets $c = \langle \rho, \phi \rangle$ and the first part of the statement follows by the arbitrariness of $\phi$. 

Let us now consider $\rho \in \Ps_k$ and conclude the proof. Since $\Ls^{k-1} \rho =T_1 \Ls^{k-1} \rho$ in $\Ss'$ and $T_1 \Ls^{k-1} \rho$ is continuous and slowly growing by Remark~\ref{remark:continuity}, so is $\Ls^{k-1}\rho$. Now observe that $\Ls^{k-2}\rho = T_1 \Ls^{k-2}\rho + \Ls^{k-1}\rho$, hence $\Ls^{k-2}\rho$ is also continuous and slowly growing. By iteration, the conclusion follows.
\end{proof}

\begin{lemma}\label{lemma:pol}
For all $\rho \in \Ps\setminus \{0\}$ there exist $k\geq 1$, $t_0>0$, a relatively compact open set $U\subset M$ and $c>0$ such that $|\mathbf{1}_{\overline{U}} T_t \rho| \geq c t^{k-1}$ for all $t\geq t_0$. In particular, if $t^{\beta} T_t \rho \in X^{p,q}_+$ for some $\beta>0$ and $p,q\in [1, \infty]$, then $\rho=0$.
\end{lemma}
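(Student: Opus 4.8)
The plan is to use Lemma~\ref{lemma:TLnorm} to write $T_t\rho$ explicitly as a polynomial in $t$ with continuous coefficients, to isolate its top-degree coefficient, and to localize where that coefficient does not vanish. First I would fix $\rho\in\Ps\setminus\{0\}$ and let $k\ge1$ be the least integer with $\Ls^k\rho=0$; such a $k$ exists because $\rho\in\Ps$, and $k\ge1$ because $\Ps_0=\{0\}$. By Lemma~\ref{lemma:TLnorm}, each $\Ls^j\rho$ is a continuous slowly growing function and
\[
T_t\rho=\sum_{j=0}^{k-1}\frac{(-1)^j}{j!}\,(\Ls^j\rho)\,t^j
\]
as continuous functions. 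The coefficient $\Ls^{k-1}\rho$ cannot be identically $0$: otherwise it would be the zero distribution and $\rho$ would lie in $\Ps_{k-1}$, contradicting the minimality of $k$. Hence there is $x_1\in M$ with $(\Ls^{k-1}\rho)(x_1)\ne0$, and by continuity of $\Ls^{k-1}\rho$ and local compactness of $M$ I can pick a relatively compact open set $U\ni x_1$ and $c_1>0$ with $|\Ls^{k-1}\rho|\ge 2c_1$ on $\overline U$; since $\overline U$ is compact, the remaining coefficients satisfy $|\Ls^j\rho|\le C_U$ on $\overline U$ for $0\le j\le k-2$.

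Next I would read off the lower bound from the explicit formula: for $x\in\overline U$ and $t\ge1$,
\[
|T_t\rho(x)|\ \ge\ \frac{2c_1}{(k-1)!}\,t^{k-1}-C_U\sum_{j=0}^{k-2}\frac{t^j}{j!}\ \ge\ \frac{2c_1}{(k-1)!}\,t^{k-1}-C_U'\,t^{k-2},
\]
so that choosing $t_0\ge1$ large enough forces $|T_t\rho(x)|\ge c\,t^{k-1}$ on $\overline U$ for all $t\ge t_0$, with $c:=c_1/(k-1)!$; when $k=1$ the sum is empty and $|T_t\rho|=|\rho|\ge c$ on $\overline U$ for every $t$. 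This is the first assertion, with $U$, $k$, $t_0$, $c$ as produced above.

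For the ``in particular'' part I would argue by contradiction: assume $\rho\ne0$ and $t^\beta T_t\rho\in X^{p,q}_+$ for some $\beta>0$ and $p,q\in[1,\infty]$, and keep $U,k,t_0,c$ from the first part. On $\overline U$ and for $t\ge t_0$ one has $t^\beta|T_t\rho(x)|\ge c\,t^{\beta+k-1}$, with exponent $\beta+k-1>0$. Since $\mu$ is Radon and, by Remark~\ref{noncollaps}, every nonempty open set has positive measure, $0<\mu(\overline U)<\infty$. If $q<\infty$, then $\int_{t_0}^\infty t^{q(\beta+k-1)}\,\frac{\dd t}{t}=\infty$, while if $q=\infty$, then $\sup_{t\ge t_0}t^{\beta+k-1}=\infty$; in every case, and whether $X^{p,q}_+$ denotes $L^p(L^q_+)$ or $L^q_+(L^p)$ and for any $p\in[1,\infty]$, restricting the relevant norm to $\overline U$ and $t\ge t_0$ shows $\|t^\beta T_t\rho\|_{X^{p,q}_+}=\infty$, a contradiction. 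Hence $\rho=0$.

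The main thing to be careful about is the step asserting that $\Ls^{k-1}\rho$ is not the zero function. I would avoid proving that $\Ss$ separates continuous functions and instead use only the trivial implication that the zero function is the zero distribution: if $\Ls^{k-1}\rho$ vanished pointwise it would vanish in $\Ss'$, against the minimality of $k$. Beyond that, the argument is just the explicit formula of Lemma~\ref{lemma:TLnorm} together with an elementary divergent-integral computation, so I do not expect a genuine obstacle.
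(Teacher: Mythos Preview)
Your proof is correct and follows essentially the same approach as the paper: both choose $k$ so that $\Ls^{k-1}\rho$ is the highest nonvanishing coefficient, use Lemma~\ref{lemma:TLnorm} to express $T_t\rho$ as a polynomial in $t$ with continuous coefficients, localize to a relatively compact open set where the top coefficient is bounded away from zero, and then observe that the resulting lower bound $c\,t^{k-1}$ forces $\|t^\beta T_t\rho\|_{X^{p,q}_+}=\infty$. Your write-up is somewhat more explicit (handling $k=1$ separately, justifying $0<\mu(\overline U)<\infty$), but the argument is the same.
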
 

\begin{proof}
Let $k\geq 1$ be the maximal integer for which there is $x\in M$ such that $\Ls^{k-1}\rho(x) \neq 0$. Since $\Ls^{k-1}\rho$  is continuous, there exists a relatively compact open $U$ and $\epsilon>0$ such that $|\Ls^{k-1}\rho(x)| \geq \epsilon$ for all $x\in \overline{U}$. Since $\Ls^j\rho$ is continuous by Lemma~\ref{lemma:TLnorm}, it is bounded on $\overline{U}$ for all $j=0,\dots, k-2$. The first statement then follows by Lemma~\ref{lemma:TLnorm}. To conclude, notice that if $\rho \neq 0$ and $k$ is as above, then $\|t^{\beta} T_t \rho\|_{X^{p,q}_+}  \gtrsim \| \mathbf{1}_{[t_0,\infty)}(t) \mathbf{1}_{\overline{U}} t^{\beta+k-1} \|_{X^{p,q}_+} =\infty$.
\end{proof}
\subsection{Heat kernel estimates and beyond}\label{sec:consheat}
In this section we show some direct consequences of the heat kernel estimates which will be of use all over the paper. We write $b=b_0/b_1$, where $b_1$ and $b_0$ are those of~\eqref{Htestimate}.

\begin{lemma}\label{pointwiseestheat1}
The following hold.
\begin{itemize}
\item[\emph{(1)}] For every $0<c_1<c_2$, there exists $C>0$ such that $|T_t f | \leq C  \, T_{bc_2 s}|f|$  for all $s >0$, $t\in [c_1 s,c_2 s]$ and measurable functions $f\in \Ss'$.
\item[\emph{(2)}] For all $h\in \N$ there are $a_h,C>0$ such that  $|\Ls^h T_{t} f |\leq C t^{-h} T_{a_h t}|f|$ for all $t>0$ and measurable functions $f\in\Ss'$.
\item[\emph{(3)}] If $h\in\N$ and $p\in [1,\infty]$, then $ \|\Ls^h T_t g\|_p \lesssim  t^{-h} \|g\|_p$ for all $t>0$ and $g\in L^p$.
\end{itemize}
\end{lemma}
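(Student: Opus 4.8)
The plan is to reduce each of (1)--(3) to a pointwise comparison between integral kernels and then integrate against $|f|$. For a measurable $f\in\Ss'$ the function $y\mapsto(1+d(y))^{-n}|f(y)|$ is $\mu$-integrable for every $n\in\N$, so the Gaussian factors in~\eqref{Htestimate} and~\eqref{dtHtestimate} make all the integrals below absolutely convergent, and $T_tf(x)=\int_M H_t(x,y)f(y)\,\dd\mu(y)$. Moreover, by Remark~\ref{remark:continuity} one has $\Ls^hT_tf=(-1)^h\partial_t^hT_tf$, and differentiating under the integral sign (legitimate by~\eqref{dtHtestimate} and dominated convergence) shows that $\Ls^hT_t$ has integral kernel $(-1)^h\partial_t^hH_t(x,y)$. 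I will also use freely that, by the doubling condition, $\mu(B(x,\lambda r))\approx\mu(B(x,r))$ for $\lambda$ ranging in any fixed compact subset of $(0,\infty)$, with implicit constants depending only on $c_0,\dm$ and on the bounds for $\lambda$.

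\emph{Proof of (1).} Fix $0<c_1<c_2$ and take $s>0$, $t\in[c_1s,c_2s]$. Since $b=b_0/b_1$ and $t\leq c_2s$,
\[
\frac{b_1\,d^2(x,y)}{t}\;\geq\;\frac{b_1\,d^2(x,y)}{c_2s}\;=\;\frac{b_0\,d^2(x,y)}{bc_2s}.
\]
Hence the upper bound in~\eqref{Htestimate} for $H_t$, the lower bound in~\eqref{Htestimate} for $H_{bc_2s}$, and the comparability $\mu(B(\cdot,\sqrt t))\approx\mu(B(\cdot,\sqrt{bc_2s}))$ (valid because $\sqrt t/\sqrt s\in[\sqrt{c_1},\sqrt{c_2}]$) together yield $H_t(x,y)\leq C\,H_{bc_2s}(x,y)$ for all $x,y\in M$, with $C$ depending only on $c_1,c_2,b$ and the constants in~\eqref{Htestimate}. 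Multiplying by $|f(y)|$ and integrating gives $|T_tf|\leq\int_M H_t(\cdot,y)|f(y)|\,\dd\mu(y)\leq C\,T_{bc_2s}|f|$.

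\emph{Proof of (2) and (3).} For $h\in\N$ let $\delta_h$ be as in~\eqref{dtHtestimate} and set $a_h=\max\{1,\,b_0/\delta_h\}$, so that $\delta_h\geq b_0/a_h$ and, since $a_h\geq1$, $\mu(B(\cdot,\sqrt{a_ht}))\leq C\,\mu(B(\cdot,\sqrt t))$ by doubling. Then~\eqref{dtHtestimate} gives
\[
|\partial_t^hH_t(x,y)|\;\leq\;C_h\,t^{-h}\,\frac{\e^{-\delta_h d^2(x,y)/t}}{(\mu(B(x,\sqrt t))\mu(B(y,\sqrt t)))^{1/2}}\;\leq\;C\,t^{-h}\,\frac{\e^{-b_0 d^2(x,y)/(a_ht)}}{(\mu(B(x,\sqrt{a_ht}))\mu(B(y,\sqrt{a_ht})))^{1/2}},
\]
and the lower bound in~\eqref{Htestimate} at time $a_ht$ turns the right-hand side into $C'\,t^{-h}H_{a_ht}(x,y)$. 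Since $(-1)^h\partial_t^hH_t$ is the kernel of $\Ls^hT_t$, multiplying by $|f(y)|$ and integrating gives $|\Ls^hT_tf|\leq C'\,t^{-h}T_{a_ht}|f|$, which is (2). Statement (3) then follows by taking $L^p$-norms: since $L^p\subset\Ss'$ and each $T_s$ is a contraction on $L^p$ (recalled in the introduction via~\cite[Theorem 1.4.1]{Davies}), $\|\Ls^hT_tg\|_p\leq C'\,t^{-h}\,\|T_{a_ht}|g|\|_p\leq C'\,t^{-h}\,\||g|\|_p=C'\,t^{-h}\,\|g\|_p$.

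I do not expect a serious obstacle here: the content is entirely contained in the Gaussian bounds~\eqref{Htestimate} and~\eqref{dtHtestimate}. The only points requiring any care are the integral-kernel representations of $T_t$ and of $\Ls^hT_t$ on $\Ss'$ and the differentiation under the integral sign, which are routine once one notes that an element of $\Ss'$ represented by a measurable function is integrable against every negative power of $1+d(\cdot)$, a bound the Gaussian factors dominate.
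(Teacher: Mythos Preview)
Your proof is correct and follows essentially the same route as the paper: reduce each statement to a pointwise kernel comparison using the two-sided Gaussian bounds~\eqref{Htestimate}, the derivative bounds~\eqref{dtHtestimate}, and the doubling property, then integrate against $|f|$ and (for (3)) invoke the $L^p$-contractivity of $T_t$. One small slip: the claim that $(1+d(\cdot))^{-n}|f|$ is $\mu$-integrable \emph{for every} $n\in\N$ is false in general (take $f\equiv 1$); what you need, and what actually holds, is integrability for some sufficiently large $n$, which suffices since the Gaussian factor dominates any polynomial weight.
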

\begin{proof}
Statement (1) is a consequence of the positivity of $H_t$, which implies $|T_t f | \leq  T_t |f|$, and its two-sided estimates~\eqref{Htestimate} together with the doubling property of $\mu$. Indeed, these imply
\[
H_t(x,y)  \lesssim  H_{b c_2 s}(x,y)\qquad \forall\, x,y\in M,\; \forall\, t\in [c_1 s,c_2 s].
\]
Statement (2) follows similarly by using~\eqref{dtHtestimate},~\eqref{Htestimate} and the fact that $\mu$ is doubling. Finally, (3) is a consequence of (2) and the $L^p$-boundedness of $T_t$.
\end{proof}

\begin{lemma}\label{lemmaseminorm}
Let $n,h\geq 0$ be integers. Then $p_n(\Ls^h T_t \phi ) \lesssim  t^{-h}(1+\sqrt{t})^n p_{n}(\phi)$ for all $\phi \in\Ss$ and $t>0$.
\end{lemma}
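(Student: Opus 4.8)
The plan is to reduce the statement, via Lemma~\ref{pointwiseestheat1}(2) and the positivity of the heat semigroup, to a single weighted bound for $T_s$ acting on the negative power weight $(1+d(\cdot))^{-n}$, and then to prove that bound by a standard near/far splitting against the Gaussian kernel.

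\emph{Step 1: reduction.} Since $\Ls$ and $T_t$ commute on $\Ss$, for $0\le k\le n$ we have $\Ls^k(\Ls^h T_t\phi)=\Ls^h T_t(\Ls^k\phi)$, and $\Ls^k\phi\in\Ss$ is a measurable element of $\Ss'$. By Lemma~\ref{pointwiseestheat1}(2) there are $a_h,C>0$ with $|\Ls^h T_t(\Ls^k\phi)|\le C t^{-h}T_{a_h t}|\Ls^k\phi|$ for every such $k$. As $T_{a_h t}$ has a positive kernel it is monotone, so
\[
\max_{0\le k\le n}\big|\Ls^{k+h}T_t\phi\big|\le C\, t^{-h}\,T_{a_h t}\Big(\max_{0\le k\le n}|\Ls^k\phi|\Big).
\]
By definition of $p_n$ one has $\max_{0\le k\le n}|\Ls^k\phi|(y)\le p_n(\phi)\,(1+d(y))^{-n}$ for all $y$, hence
\[
(1+d(x))^n\max_{0\le k\le n}\big|\Ls^{k+h}T_t\phi\big|(x)\le C\, t^{-h}\,p_n(\phi)\,(1+d(x))^n\,T_{a_h t}\big[(1+d(\cdot))^{-n}\big](x).
\]
Taking the supremum over $x\in M$, the lemma follows once we establish that for all $s>0$,
\[
\sup_{x\in M}(1+d(x))^n\,T_s\big[(1+d(\cdot))^{-n}\big](x)\lesssim (1+\sqrt s)^n,
\]
since the right-hand side at $s=a_h t$ is then $\lesssim (1+\sqrt t)^n$.

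\emph{Step 2: the weighted bound.} Using \eqref{Htestimate} and the doubling property (which also allows replacing $(\mu(B(x,\sqrt s))\mu(B(y,\sqrt s)))^{1/2}$ by $\mu(B(x,\sqrt s))$), we have $H_s(x,y)\lesssim \mu(B(x,\sqrt s))^{-1}\e^{-b_1 d^2(x,y)/s}$. A dyadic-annulus decomposition $M=B(x,\sqrt s)\cup\bigcup_{j\ge1}\big(B(x,2^j\sqrt s)\setminus B(x,2^{j-1}\sqrt s)\big)$ together with doubling gives
\[
\int_M \mu(B(x,\sqrt s))^{-1}\,\e^{-c\, d^2(x,y)/s}\,\dd\mu(y)\lesssim 1
\]
uniformly in $x\in M$, $s>0$, for any fixed $c>0$. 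Now split the integral defining $T_s[(1+d(\cdot))^{-n}](x)$ over $\{d(x,y)\le (1+d(x))/2\}$ and its complement. On the first set $1+d(y)\ge (1+d(x))/2$, so $(1+d(y))^{-n}\le 2^n(1+d(x))^{-n}$ and the corresponding part is $\lesssim (1+d(x))^{-n}$. On the second set, $(1+d(y))^{-n}\le1$ and $d^2(x,y)\ge \tfrac12 d^2(x,y)+\tfrac18(1+d(x))^2$, so $\e^{-b_1 d^2(x,y)/s}\le \e^{-b_1 d^2(x,y)/(2s)}\,\e^{-b_1(1+d(x))^2/(8s)}$; the elementary inequality $\e^{-u}\le C_n u^{-n/2}$ bounds the last factor by $\lesssim s^{n/2}(1+d(x))^{-n}\le (1+\sqrt s)^n(1+d(x))^{-n}$, while the remaining Gaussian integrates to $\lesssim 1$ by the display above. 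Adding the two contributions gives $T_s[(1+d(\cdot))^{-n}](x)\lesssim (1+\sqrt s)^n(1+d(x))^{-n}$, as required.

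The only genuinely non-formal point is the weighted estimate of Step 2: one must check that the Gaussian decay beats the loss coming from comparing the weight at $y$ with the weight at $x$ in the far region, and this is precisely where the factor $(1+\sqrt s)^n$, and nothing worse, is produced through $\e^{-u}\lesssim u^{-n/2}$. Everything else is bookkeeping with the doubling condition, the commutation of $\Ls$ with $T_t$, and the positivity of the heat semigroup.
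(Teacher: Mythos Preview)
Your proof is correct and follows the same overall scheme as the paper: reduce via Lemma~\ref{pointwiseestheat1}(2) to a weighted bound for the heat kernel, then control that bound using the Gaussian estimates and doubling.

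The only difference is in the packaging of Step~2. You do a near/far split around $x$ and sacrifice part of the Gaussian in the far region to produce the factor $(1+\sqrt s)^n$. The paper instead uses the submultiplicativity $(1+d(x))^n\le (1+d(x,y))^n(1+d(y))^n$ to transfer the weight onto the kernel, reducing everything to the moment estimate $\|(1+d(x,\cdot))^n H_t(x,\cdot)\|_1\lesssim (1+\sqrt t)^n$, which it then proves by a dyadic-annulus decomposition. The two arguments are equivalent in spirit; the paper's route has the mild advantage that the same computation simultaneously yields the $L^r$ moment bound~\eqref{normHt}, which is reused later (e.g.\ in Lemma~\ref{lemma:Sderivate} and in Section~\ref{sec:inho}).
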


\begin{proof}
Recall that
\[
p_n(\Ls^h T_t\phi )   =\sup_{x\in M} \max_{0 \leq k \leq n}  (1+ d(x))^n  | \Ls^h T_{t}  \Ls^k \phi |(x).
\]
Since $(1+d(x))^n \leq (1+d(x,y))^n (1+d(y))^n$, and by Lemma~\ref{pointwiseestheat1}~(1) and~(2), for $k=0,\dots, n$
\begin{align*}
(1+d(x))^n & |\Ls^h T_{t} \Ls^k\phi(x)| \\
& \leq t^{-h }[ \sup_{y\in M} (1+d(y))^n |\Ls^k \phi(y)| ] \int_M (1+d(x,y))^n H_{ct} (x,y)\, \dd \mu(y)\\
& \lesssim t^{-h } p_{n}(\phi) \|  (1+d(x,\cdot ))^n H_{ct}(x,\cdot)\|_1.
\end{align*}
We shall then estimate $\|  (1+d(x,\cdot ))^n H_t(x,\cdot)\|_1$, or rather $\|  (1+d(x,\cdot ))^n H_t(x,\cdot)\|_r$, for a general $1\leq r<\infty$ as this will be of use later on. By~\eqref{Htestimate}, one has
\begin{align*}
&\mu(B(x,\sqrt{t}))^{r}  \|  (1+d(x,\cdot ))^n H_t(x,\cdot)\|_r^r\\
&\lesssim \int_{M}  (1+d(x,y))^{rn}\e^{-cr \frac{d(x,y)^2}{t}}\, \dd \mu(y) \\
&  \lesssim    \int_{B(x,\sqrt{t})} (1+\sqrt{t})^{nr} \, \dd \mu(y) + \sum_{k\geq 0}  (1+ 2^{k}\sqrt{t})^{rn} \!\int_{B(x, 2^{k+1}\sqrt{t} )\setminus B(x, 2^{k}\sqrt{t} )} \!\e^{-cr2^{2k}}\,\dd \mu(y)\\
& \lesssim  \mu(B(x,\sqrt{t})) (1+\sqrt{t})^{nr} + \sum_{k\geq 0}    (1+ 2^{k}\sqrt{t})^{rn}\mu(B(x, 2^{k+1}\sqrt{t}) ) \e^{-cr2^{2k}}.
\end{align*}
Hence
\begin{align*}
\|  (1+d(x,\cdot ))^n H_t(x,\cdot)\|_r^r
%& \lesssim  |B(x,\sqrt{t})|^{-r}  \sum_{k=0}^\infty (1+ 2^{k}\sqrt{t})^{rn} |B(x, 2^{k}\sqrt{t}) | \e^{-cr2^{2k}}  \\
&\lesssim  \sum_{k\geq 0}  \e^{-cr2^{2k}} (1+ 2^{k}\sqrt{t})^{rn}\frac{\mu(B(x,2^{k+1}\sqrt{t}))}{\mu(B(x,\sqrt{t}))^{r}}\\
& \lesssim \mu(B(x,\sqrt{t}))^{1-r} \sum_{k\geq 0} \e^{-cr2^{2k}} (1+ 2^{k}\sqrt{t})^{rn} 2^{k\dm},
\end{align*}
the second inequality by the doubling property of $\mu$. Observe finally that if $t\leq 1$ then
\[
1\lesssim \mu(B(x,1))= \mu(B(x, t^{-1/2}t^{1/2}))\lesssim t^{-\dm/2}\mu(B(x,\sqrt{t})),
\] 
while if $t\geq 1$
\[
\mu(B(x,\sqrt{t})) \geq \mu(B(x,1)) \gtrsim 1.
\]
Thus, for $1\leq r<\infty$,
\begin{equation}\label{normHt}
\|  (1+d(x,\cdot ))^n H_t(x,\cdot)\|_r \lesssim t^{-\dm/(2r')} \mathbf{1}_{(0,1)}(t) + t^{n/2}\mathbf{1}_{[1,\infty)}(t).
\end{equation}
In particular, 
\begin{equation}\label{norm1}
\|  (1+d(x,\cdot ))^n H_t(x,\cdot)\|_1 \lesssim (1+\sqrt{t})^n,
\end{equation}
which concludes the proof.
\end{proof}

\begin{proposition}\label{contCRTN}
Suppose $p\in(1,\infty)$, $q\in [1, \infty]$ and $0<c_1<c_2$. If $(t_j)_{j\in\Z}$ is a sequence of measurable functions such that $c_1 2^j \leq t_j(x)\leq c_2 2^{j}$ for every $j\in \Z$ and $x\in M$, then
\[
\| T_{t_j} f_j\|_{L^p(\dell^q)} \lesssim \| f_j\|_{L^p(\dell^q)}
\]
for all measurable functions $(f_j)$ in $\mathcal S'$.
\end{proposition}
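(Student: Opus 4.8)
The plan is to dominate $|T_{t_j}f_j|$ pointwise by the Hardy--Littlewood maximal function $\cM$ on $(M,d,\mu)$, and then to invoke the $L^p$-boundedness of $\cM$ together with, for $1<q<\infty$, the Fefferman--Stein vector-valued maximal inequality; the endpoint $q=1$ will need a separate duality argument. Throughout we may assume the right-hand side finite, so that each $f_j\in L^p$ and $T_{t_j}f_j(x)=\int_M H_{t_j(x)}(x,y)\,f_j(y)\,d\mu(y)$ is well defined.

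First I would prove the pointwise bound: there is $C>0$, depending only on $c_1,c_2$ and the structural constants, with $|T_{t_j}f_j(x)|\le C\,\cM f_j(x)$ for all $x\in M$, $j\in\Z$, uniformly over admissible $(t_j)$. Indeed $|T_{t_j}f_j(x)|\le\int_M H_{t_j(x)}(x,y)\,|f_j(y)|\,d\mu(y)$; since $\sqrt{t_j(x)}\approx 2^{j/2}$ uniformly in $x$, the Gaussian upper bound in~\eqref{Htestimate} and the doubling property give $H_{t_j(x)}(x,y)\lesssim\mu(B(x,2^{j/2}))^{-1}\,\e^{-c\,d(x,y)^2/2^j}$, and a dyadic annular decomposition of $M$ centred at $x$, carried out as in the proof of Lemma~\ref{lemmaseminorm}, bounds the integral by $\big(\sum_{k\ge0}\e^{-c4^k}2^{k\dm}\big)\,\cM f_j(x)\lesssim\cM f_j(x)$.

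For $q=\infty$ this gives $\sup_j|T_{t_j}f_j|\le C\,\cM\big(\sup_k|f_k|\big)$ pointwise, and the statement follows from the boundedness of $\cM$ on $L^p$, $p\in(1,\infty)$, which holds on any doubling metric measure space. For $1<q<\infty$ the pointwise bound reduces the statement to $\big\|\big(\sum_j(\cM f_j)^q\big)^{1/q}\big\|_p\lesssim\big\|\big(\sum_j|f_j|^q\big)^{1/q}\big\|_p$, which is the Fefferman--Stein inequality, valid on spaces of homogeneous type for $1<p,q<\infty$. For $q=1$, where Fefferman--Stein fails and $\sum_j\cM f_j$ is too lossy, I would argue by duality. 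With $G:=\sum_j T_{t_j}|f_j|\ge\sum_j|T_{t_j}f_j|$ and $g\ge0$, $\|g\|_{p'}\le1$, Tonelli's theorem gives
\[
\int_M G\,g\,d\mu=\sum_j\int_M|f_j(y)|\Big(\int_M H_{t_j(x)}(x,y)\,g(x)\,d\mu(x)\Big)d\mu(y);
\]
using the symmetry $H_{t_j(x)}(x,y)=H_{t_j(x)}(y,x)$ and the estimate of the previous step with the roles of $x$ and $y$ interchanged, the inner integral is $\lesssim\cM g(y)$, uniformly in $j$, so that $\int_M G\,g\,d\mu\lesssim\big\|\sum_j|f_j|\big\|_p\,\|\cM g\|_{p'}\lesssim\big\|\sum_j|f_j|\big\|_p$, and taking the supremum over $g$ yields $\|T_{t_j}f_j\|_{L^p(\dell^1)}\le\|G\|_p\lesssim\|f_j\|_{L^p(\dell^1)}$.

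The genuinely non-routine point is the endpoint $q=1$, which does not follow from Fefferman--Stein: the duality argument works only because $T_t$ is self-adjoint, and the mild complication is that $t_j$ depends on $x$, so the adjoint of $(f_j)\mapsto(T_{t_j}f_j)$ is not literally an operator of the same type; one must use the kernel symmetry and absorb the polynomial factor $(1+d(x,y)/2^{j/2})^{\dm/2}$ that arises when comparing $\mu(B(x,2^{j/2}))$ with $\mu(B(y,2^{j/2}))$ into the Gaussian before passing to $\cM g$. The remaining ingredients---the annular decomposition and the $L^p$-boundedness of $\cM$---are standard.
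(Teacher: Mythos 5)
Your proof is correct, but it follows a genuinely different route from the paper's. You dominate $|T_{t_j}f_j|$ pointwise by the Hardy--Littlewood maximal function $\cM f_j$ (using the Gaussian upper bound in~\eqref{Htestimate} plus an annular decomposition), and then conclude via the boundedness of $\cM$ on $L^p$ for $q=\infty$, the Fefferman--Stein vector-valued maximal inequality on spaces of homogeneous type for $1<q<\infty$, and a duality argument based on the symmetry of the kernel for $q=1$. The paper instead never passes through $\cM$: it dominates everything by the heat maximal operator $T^*f=\sup_{t>0}|T_tf|$, whose $L^p$-boundedness comes from Stein's maximal theorem for symmetric diffusion semigroups, then invokes \cite[Corollary 1.23, p.~482]{GCRDF} to get the $\ell^q$-valued bound for $1<p\leq q$, and for $p>q$ (which includes $q=1$) runs a duality argument after first applying Jensen's inequality $|T_{t_j(x)}f|^q\leq T_{t_j(x)}|f|^q$ (valid since $T_t1=1$) and the self-adjointness of $T_t$. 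The trade-off: your argument leans on the Gaussian upper bounds (which are assumed anyway) and on the homogeneous-type Fefferman--Stein theorem, making the $1<q<\infty$ case a one-line citation; the paper's argument is semigroup-theoretic and would survive in settings where only the diffusion-semigroup structure, not pointwise Gaussian domination by $\cM$, is available, and it dispatches all of $q\geq p$ including $q=\infty$ with a single cited result. You correctly identified the two delicate points --- that Fefferman--Stein fails at $q=1$, and that the $x$-dependence of $t_j$ means the adjoint is not literally an operator of the same type, which you resolve by working with the symmetric kernel and a uniform-in-$x$ Gaussian bound $H_{t_j(x)}(x,y)\lesssim \mu(B(y,2^{j/2}))^{-1}\e^{-c\,d(x,y)^2/2^j}$ before decomposing in annuli around $y$.
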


\begin{proof}
 Consider the operator
\[
T f(x) = \sup\nolimits_{j\in \Z} |T_{t_j(x)}f(x)|, \qquad x\in M,
\]
which is bounded on $L^p$, since $T f(x) \leq \sup_{t>0} |T_tf(x)| \eqqcolon T^* f(x)$ and $T^*$ is bounded on $L^p$ by the maximal theorem of~\cite[p.\ 73]{Stein}. Moreover, $|T f|\leq T  |f|$. By applying~\cite[Corollary 1.23, p.\ 482]{GCRDF} to $T$ and observing that $|T_{t_j} f_j |\leq T |f_j|$, the conclusion follows when $1<p\leq q$.

Assume now that $p>q$. By Lemma~\ref{pointwiseestheat1}~(1), for every function $g\geq 0$
\[
T_{t_j(x)} g(x) \lesssim T_{b c_2 2^{j} } g(x).
\]
Moreover, since $T_t1=1$ and by Jensen's inequality, $|T_{t_j(x)}f(x)|^q \leq T_{t_j(x)}|f(x)|^q$. By the symmetry of $T_t$ on $L^2$, for all measurable functions $w$,
\begin{align*}
\Big| \int_M |T_{t_j(x)}f(x)|^q w(x) \, \dd \mu(x)\Big|
&\leq \int_M T_{t_j(x)}|f(x)|^q |w|(x) \, \dd \mu(x) \\
&\lesssim \int_M T_{b c_2 2^{j}}|f(x)|^q |w|(x) \, \dd \mu(x) \\
& = \int_M |f(x)|^q T_{b c_2 2^{j}} |w|(x) \, \dd \mu(x)\\
& \leq \int_M |f(x)|^q T^* |w|(x) \, \dd \mu(x).
\end{align*}
Thus 
\[
\Big| \int_M \Big( \sum_{j\in \Z} |T_{t_j(x)}f_j(x)|^q \Big) w(x) \, \dd \mu(x)\Big|  \lesssim \int_M  \Big( \sum_{j\in \Z} |f_j(x)|^q\Big) T^* |w|(x) \, \dd \mu(x),
\]
so that if $r$ is such that $\frac{1}{r}= 1- \frac{q}{p}$, 
\begin{align*}
\| T_{t_j} f_j\|_{L^p(\dell^q)}  & = \Big\|  \sum_{j\in \Z} |T_{t_j}f_j |^q \Big\|^{1/q}_{p/q} \\
&= \sup_{\|w\|_r=1} \Big|\int_M \Big( \sum_{j\in \Z} |T_{t_j(x)}f_j(x)|^q \Big) w(x) \, \dd \mu(x)  \Big|^{1/q}\\
& \lesssim \Big\|  \Big( \sum_{j\in \Z} |f_j|^q\Big)\Big\|_{p/q}^{1/q}  \sup_{\|w\|_r=1} \|T^*|w|\|_r \lesssim \| f_j\|_{L^p(\dell^q)} ,
\end{align*}
where we used that $T^*$ is bounded on $L^r$, as $1<r<\infty$.
\end{proof}

We now prove a proposition which may be thought of as an integral analogue of Proposition~\ref{contCRTN}. We state it in a quite general, though involved form, in order to apply it in several different circumstances. For notational convenience, we write $T_0 f =f $.

\begin{proposition}\label{CRTNintegrale}
Suppose $p\in (1,\infty)$, $q\in [1,\infty]$, $c>0$ and let $F\colon [0,\infty) \times M \to \R$ be a measurable function. Assume that there exist two functions $F_1, F_2 \colon [0,\infty) \times M \to \R$, continuous in $[0,\infty)$ for each $x\in M$ and $\delta, \epsilon \geq 0$ such that 
\begin{itemize}
\item[\emph{(1)}] $T_{ct} |F(t,x)| \lesssim T_{\delta s} |F_1( s,x)|$ for all $s>0$, $ t\in [s, 2s] $ and $x\in M$, and
\item[\emph{(2)}]  $|F_1( t,x)| \lesssim T_{\epsilon s}|F_2(s,x)|$ for all $s>0$, $ t\in [s/2, s] $ and $x\in M$.
\end{itemize}
Then $\|T_{ct} |F(t,\cdot)| \|_{L^p(L^q_+)} \lesssim \|F_2(t,\cdot)\|_{L^p(L^q_+)}$.
\end{proposition}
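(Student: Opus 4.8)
The plan is to collapse hypotheses~(1) and~(2) into a single pointwise inequality relating $F$ and $F_2$ \emph{at the same scale}, and then to run the argument of Proposition~\ref{contCRTN} with dyadic sums over $\Z$ replaced by integrals in $\tfrac{\dd t}{t}$. The crucial point is that both hypotheses may be used with the coupling $s=t$: since $t\in[t,2t]$, hypothesis~(1) yields $T_{ct}|F(t,x)|\lesssim T_{\delta t}|F_1(t,x)|$, and since $t\in[t/2,t]$, hypothesis~(2) yields the pointwise bound $|F_1(t,\cdot)|\lesssim T_{\epsilon t}|F_2(t,\cdot)|$. Applying the positivity-preserving operator $T_{\delta t}$ to the latter inequality and using $T_{\delta t}T_{\epsilon t}=T_{(\delta+\epsilon)t}$ on non-negative functions (with the convention $T_0f=f$), I obtain, with $\nu:=\delta+\epsilon\geq 0$ and implied constants independent of $t$ and $x$,
\[
T_{ct}|F(t,x)|\lesssim T_{\nu t}|F_2(t,\cdot)|(x)\leq T^\ast|F_2(t,\cdot)|(x),\qquad t>0,\ x\in M,
\]
where $T^\ast g=\sup_{s>0}|T_sg|$ is the heat maximal operator, bounded on $L^r$ for $1<r\leq\infty$ by the maximal theorem of \cite[p.~73]{Stein} (the endpoint $r=\infty$ because $T_s1=1$). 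If $\nu=0$ this already says $T_{ct}|F(t,x)|\lesssim|F_2(t,x)|$ and the conclusion is immediate by monotonicity of the $L^p(L^q_+)$ norm; I therefore assume $\nu>0$ below.

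When $1<p\leq q\leq\infty$ I would argue as in the proof of Proposition~\ref{contCRTN}: since $T_{ct}|F(t,\cdot)|\lesssim T^\ast|F_2(t,\cdot)|$ and $|T^\ast h|\leq T^\ast|h|$, an application of \cite[Corollary~1.23, p.~482]{GCRDF} to $T^\ast$, with $\bigl((0,\infty),\tfrac{\dd t}{t}\bigr)$ in place of the counting measure on $\Z$, gives
\[
\|T_{ct}|F(t,\cdot)|\|_{L^p(L^q_+)}\lesssim\|T^\ast|F_2(t,\cdot)|\|_{L^p(L^q_+)}\lesssim\|F_2(t,\cdot)\|_{L^p(L^q_+)}.
\]
(For $q=\infty$ one may instead observe $\sup_{t>0}T_{\nu t}|F_2(t,\cdot)|(x)\leq T^\ast\bigl(\sup_{s>0}|F_2(s,\cdot)|\bigr)(x)$ and use the $L^p$-boundedness of $T^\ast$ directly.)

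When $1\leq q\leq p<\infty$ I would use duality instead. Let $r\in(1,\infty]$ be defined by $\tfrac1r=1-\tfrac qp$, i.e.\ $r=(p/q)'$; it is enough to bound $\int_M\bigl(\int_0^\infty(T_{ct}|F(t,x)|)^q\,\tfrac{\dd t}{t}\bigr)w(x)\,\dd\mu(x)$ uniformly over $0\leq w$ with $\|w\|_r=1$. Since $q\geq1$ and $T_{\nu t}1=1$, Jensen's inequality gives $(T_{\nu t}|F_2(t,\cdot)|)^q\leq T_{\nu t}(|F_2(t,\cdot)|^q)$; combining this with the chaining bound, the symmetry of $T_{\nu t}$ and Tonelli's theorem, the displayed integral is
\[
\lesssim\int_M\Bigl(\int_0^\infty|F_2(t,x)|^q\,\tfrac{\dd t}{t}\Bigr)T^\ast w(x)\,\dd\mu(x)\leq\|F_2(t,\cdot)\|_{L^p(L^q_+)}^q\,\|T^\ast w\|_r\lesssim\|F_2(t,\cdot)\|_{L^p(L^q_+)}^q,
\]
using H\"older's inequality and the boundedness of $T^\ast$ on $L^r$ ($1<r\leq\infty$). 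Taking the supremum over $w$ settles this range; since the two ranges overlap at $p=q$, together they cover all $p\in(1,\infty)$, $q\in[1,\infty]$.

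The genuinely delicate point is the chaining in the first paragraph. The naive alternative — dyadically discretising $(0,\infty)$, estimating $T_{ct}|F(t,\cdot)|$ on each block $[2^j,2^{j+1}]$ in terms of $F_2(2^j,\cdot)$ via Proposition~\ref{contCRTN}, and then re-expanding — fails, because $\sum_{j\in\Z}|F_2(2^j,x)|^q$ is not controlled by $\int_0^\infty|F_2(t,x)|^q\,\tfrac{\dd t}{t}$ and the continuity of $F_2$ in $t$ does not help. Using the coupling $s=t$ and passing directly to $T^\ast$ avoids touching the target norm until the last step; the continuity of $F_1$ and $F_2$ in $t$ is then needed only to ensure joint measurability in $(t,x)$, so that Tonelli's theorem may be applied.
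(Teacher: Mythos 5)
Your proof is correct, but it takes a genuinely different route from the paper's. Your key move is to use both hypotheses on the diagonal $s=t$ (legitimate, since $t\in[t,2t]$ and $t\in[t/2,t]$), collapsing them into the single pointwise bound $T_{ct}|F(t,\cdot)|\lesssim T_{(\delta+\epsilon)t}|F_2(t,\cdot)|$, and then to prove a continuous-parameter Fefferman--Stein-type estimate (positivity plus the vector-valued extension of \cite[Corollary 1.23]{GCRDF} for $p\leq q$, duality plus Jensen and the symmetry of $T_t$ for $q\leq p$). The paper instead keeps the two scales separate: it applies hypothesis~(1) with $s=2^j$ to discretise the left-hand norm, removes the semigroup via Proposition~\ref{contCRTN}, and then --- this is where the continuity of $F_2$ in $t$ is used essentially --- invokes the mean value theorem to choose $x$-dependent times $s_j(x)\in[2^j,2^{j+1}]$ realising the dyadic averages of $|F_2|^q$, so that hypothesis~(2) with $s=s_j(x)$ together with the variable-time feature of Proposition~\ref{contCRTN} reconstructs the continuous norm of $F_2$. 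You correctly identify why the naive discretisation at the endpoints $2^j$ fails; the paper's cure is the mean value theorem, yours is the same-scale chaining. Your argument is shorter, uses the continuity of $F_1,F_2$ only for measurability, and makes clear that the off-diagonal flexibility in the hypotheses is not needed for the proof itself; its cost is that you must extend \cite[Corollary 1.23]{GCRDF} (and in effect Proposition~\ref{contCRTN}) from $\dell^q$ to $L^q((0,\infty),\dd t/t)$, which is standard but should be stated, and that your proof does not produce as a byproduct the discrete--continuous comparison inequalities that the paper later extracts from this proof in Theorem~\ref{teo-equiv2}.
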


\begin{proof}
First, suppose $q<\infty$. By (1) with $s=2^j$, $j\in \Z$,
\begin{align*}
  \int_0^\infty \left( T_{ct} |F(t,x)|\right)^q \, \frac{\dd t}{t} &  \lesssim  \sum_{j\in \Z}\int_{2^{j}}^{2^{j+1}} \left(   T_{\delta 2^j} |F_1( 2^{j},x)|\right)^q  \, \frac{\dd t}{t} \lesssim  \sum_{j\in\Z} \left(   T_{\delta 2^j} |F_1( 2^{j},x)|\right)^q.
\end{align*}
Thus, by Proposition~\ref{contCRTN}
\[
\|T_{ct} |F(t,\cdot)| \|_{L^p(L^q_+)}  \lesssim  \|T_{\delta 2^j} |F_1(2^j,\cdot)| \|_{L^p(\dell^q)} \lesssim  \|F_1( 2^j,\cdot) \|_{L^p(\dell^q)}.
\]
We now make use of  (2) to reconstruct a continuous $L^q_+$ norm. Observe indeed that, for $j\in \Z$, by the mean value theorem if $x\in M$ there exists $s_j(x)\in [2^{j}, 2^{j+1}]$ such that 
\[
\int_{2^{j}}^{2^{j+1}}   |   F_2(  t,x)| ^q \, \frac{\dd t}{t} \approx |   F_2( s_j(x),x)|^q.
\]
Then, by (2) with $s=s_j(x)$ and $t=2^j$, and by  Proposition~\ref{contCRTN},
\begin{align*}
 \|F_1( 2^j,\cdot) \|_{L^p(\dell^q)}
&\lesssim \| T_{\epsilon s_j }F_2( s_j(\cdot),\cdot)\|_{L^p(\dell^q)}  \\
& \lesssim \| F_2(s_j(\cdot),\cdot)\|_{L^p(\dell^q)} \\
& \approx   \Big\|  \Big(\sum_{j\in\Z}\int_{2^{j}}^{2^{j+1}}   |  F_2(t,\cdot)|^q \, \frac{\dd t}{t}  \Big)^{1/q}\Big\|_p = \|F_2( t,\cdot)\|_{L^p(L^q_+)},
\end{align*}
and the conclusion follows. If $q=\infty$ the statement is simpler. Arguing as above indeed, 
\begin{align*}
\sup_{t>0}   T_{ct} |F(t,x)| 
& = \sup_{j\in \Z} \sup_{t\in [2^{j}, 2^{j+1}]}   T_{ct} |F(t,x)| \lesssim \sup_{j\in\Z}     T_{ \delta 2^j }|F_1( 2^j, x)|,
\end{align*}
so that by Proposition~\ref{contCRTN}
\begin{align*}
\|T_{ct} |F(t,\cdot)| \|_{L^p(L^q_+)}
&\lesssim \| \sup_{j\in\Z}     T_{\delta 2^j }|F_1(  2^j, \cdot)| \|_p \\
&\lesssim  \| \sup_{j\in \Z}    |F_1( 2^j, \cdot)| \|_p \\
& \lesssim  \| \sup_{j\in \Z}    T_{\epsilon 2^j} |F_2( 2^j, \cdot)| \|_p  \lesssim \| \sup_{t>0   } |F_2( t,\cdot)|\|_p.\qedhere
\end{align*}
\end{proof}
The following corollary contains the instances of Proposition~\ref{CRTNintegrale} which we shall use most.
\begin{corollary}\label{bilinearCRTNint}
Suppose $p\in(1,\infty)$, $q\in [1, \infty]$, $\beta\in \R$ and $c>0$. Then for all $f\in \Ss'$ and $\gamma \geq 0$ (with $f$ measurable function if $\gamma =0$),
\[
\|t^\beta T_{c t} |T_{\gamma t} f |\|_{L^p(L^q_+)} \lesssim \|t^\beta T_{\gamma t} f \|_{L^p(L^q_+)}.
\]
Moreover, there exists $\gamma>0$ such that  for $f,g$ in $\Ss'$
\begin{equation}\label{Ttprod}
\|t^\beta T_{c t}(|T_t f|\cdot |T_t g |)\|_{L^p(L^q_+)} \lesssim \|t^\beta T_{\gamma t} (|T_t f|)\cdot T_{\gamma t} (|T_t g|)\|_{L^p(L^q_+)} .
\end{equation}
\end{corollary}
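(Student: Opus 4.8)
The plan is to read both inequalities off Proposition~\ref{CRTNintegrale}. In each case I would produce a measurable $F\colon[0,\infty)\times M\to\R$ for which $T_{ct}|F(t,\cdot)|$ is exactly the function whose $L^p(L^q_+)$ norm stands on the left-hand side, together with an auxiliary $F_2$ whose $L^p(L^q_+)$ norm is (dominated by) the right-hand side; I would then exhibit the intermediate $F_1$ and verify hypotheses (1)--(2) of that proposition pointwise, using only the semigroup law $T_\sigma T_{\sigma'}=T_{\sigma+\sigma'}$, the contractivity $|T_\sigma h|\le T_\sigma|h|$ (whence $T_\sigma h\le T_\sigma h'$ for $0\le h\le h'$), and the comparison $T_\sigma h\lesssim T_{\sigma'}h$ for $h\ge0$, $\sigma\asymp\sigma'$, from Lemma~\ref{pointwiseestheat1}~(1). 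The weight is inert, as $t^\beta\asymp s^\beta$ whenever $s\asymp t$.

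For the first inequality, fix $\gamma\ge0$ and take $F(t,x)=t^\beta T_{\gamma t}f(x)$ with the convention $T_0=\mathrm{id}$, so that $T_{ct}|F(t,\cdot)|=t^\beta T_{ct}|T_{\gamma t}f|$. For hypothesis (1) and $t\in[s,2s]$ I would write $T_{\gamma t}f=T_{\gamma(t-s)}T_{\gamma s}f$ (dropping $T_{\gamma(t-s)}$ when $\gamma=0$), getting $|T_{\gamma t}f|\le T_{\gamma(t-s)}|T_{\gamma s}f|$ and hence $T_{ct}|T_{\gamma t}f|\le T_{ct+\gamma(t-s)}|T_{\gamma s}f|$; since $ct+\gamma(t-s)\asymp s$, Lemma~\ref{pointwiseestheat1}~(1) bounds this by $T_{c_1 s}|T_{\gamma s}f|$ for a suitable $c_1$, so (1) holds with $F_1(s,x)=s^\beta T_{c_1 s}|T_{\gamma s}f|(x)$ and $\delta=0$. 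It then remains to verify hypothesis (2) with $F_2=F$, for which $\|F_2(t,\cdot)\|_{L^p(L^q_+)}$ is exactly the target $\|t^\beta T_{\gamma t}f\|_{L^p(L^q_+)}$. For $\gamma=0$ this amounts to $t^\beta T_{c_1 t}|f|(x)\lesssim s^\beta T_{\epsilon s}|f|(x)$ for $t\in[s/2,s]$, which is immediate from Lemma~\ref{pointwiseestheat1}~(1) once $\epsilon$ is large. For $\gamma>0$ one must compare $T_{c_1 t}|T_{\gamma t}f|$ with $T_{\epsilon s}|T_{\gamma s}f|$ for $s\asymp t$: here the point is that $T_{\gamma t}f$ is a heat image, so both quantities are heat regularizations of one slowly varying profile, and the comparison can be carried out via Lemma~\ref{pointwiseestheat1} and the semigroup law — passing, if it helps, through the dyadic decomposition, the discrete estimate of Proposition~\ref{contCRTN}, and back to the continuous parameter by the mean value theorem as in the proof of Proposition~\ref{CRTNintegrale}.

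For the second inequality, I would take $F(t,x)=t^\beta|T_tf(x)|\,|T_tg(x)|$ and $F_2(t,x)=t^\beta T_{\gamma t}(|T_tf|)(x)\,T_{\gamma t}(|T_tg|)(x)$ with $\gamma$ to be chosen large, attempting $F_1=F_2$. For hypothesis (1) and $t\in[s,2s]$, I would use $|T_tf|\le T_{t-s}|T_sf|$ and $|T_tg|\le T_{t-s}|T_sg|$ to get
\[
T_{ct}\big(|T_tf|\,|T_tg|\big)\le T_{ct}\big(T_{t-s}|T_sf|\cdot T_{t-s}|T_sg|\big),
\]
and then pass the outer operator $T_{ct}$ through the product — possible, up to constants, because the two factors on the right are heat images, hence slowly varying at scale $\sqrt s$ with Gaussian tails — to reach a bound of the form $s^\beta T_{\gamma s}(|T_sf|)(x)\,T_{\gamma s}(|T_sg|)(x)$; the extra smoothing at scale $\sqrt{\gamma s}$ is what must dominate $\sqrt{ct}$ here, and this is what fixes the admissible size of $\gamma$. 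Hypothesis (2) is the analogous estimate with $t\in[s/2,s]$.

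The step I expect to be the genuine obstacle is hypothesis (2) of Proposition~\ref{CRTNintegrale} in both cases — the reconstruction of the continuous $L^q_+$ norm — because it requires the relevant families to be comparable across neighbouring time scales, which is \emph{false} pointwise for the raw heat images $T_{\gamma t}f$: cancellation can make $|T_{\gamma s}f(x)|$ much smaller than $|T_{\gamma t}f(x)|$ even when $s\asymp t$. The resolution I have in mind is to keep the absolute values inside throughout, so that the quantities actually compared are heat regularizations — $T_{c_1 s}|T_{\gamma s}f|$ in the first estimate, $T_{\gamma s}(|T_sf|)$ and its $g$-counterpart in the second — for which cross-scale comparison is legitimate, by Lemma~\ref{pointwiseestheat1}, the semigroup law and the slow variation of heat images; where a cleaner route is wanted, one discretizes, applies Proposition~\ref{contCRTN}, and returns to the continuous scale via the mean value theorem as in the proof of Proposition~\ref{CRTNintegrale}. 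In the bilinear case the same difficulty resurfaces as the need to distribute the outer heat operator over a product of two heat images, which is precisely why $\gamma$ must be taken sufficiently large.
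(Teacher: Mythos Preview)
Your overall plan—reducing both estimates to Proposition~\ref{CRTNintegrale}—is exactly what the paper does, and your verification of hypothesis~(1) for the first inequality is fine. The gap is in hypothesis~(2), and while you correctly flag it as the obstacle, your proposed resolution does not close it.

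For the first inequality you take $F_2=F$, so hypothesis~(2) reads $t^\beta\,T_{c_1 t}|T_{\gamma t}f|\lesssim s^\beta\,T_{\epsilon s}|T_{\gamma s}f|$ for $t\in[s/2,s]$. Since $\gamma t\le\gamma s$, the semigroup law only gives $|T_{\gamma s}f|\le T_{\gamma(s-t)}|T_{\gamma t}f|$, which is the wrong direction; keeping ``absolute values inside'' does not help, because the quantity you must bound from below is $|T_{\gamma s}f|$ itself, and that can vanish by cancellation while $|T_{\gamma t}f|$ does not. The paper's fix is to \emph{halve the inner time} at each step rather than aiming for $F_2=F$: it takes $F_1(t,\cdot)=t^\beta\,T_{\gamma t/2}f$ and $F_2(t,\cdot)=t^\beta\,T_{\gamma t/8}f$. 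For $t\in[s,2s]$ one peels off $T_{\gamma(t-s/2)}$ to reach $T_{\gamma s/2}f$, yielding~(1) with $\delta=c_1>0$; for $t\in[s/2,s]$ one has $\gamma t/2\ge\gamma s/4>\gamma s/8$, so $|T_{\gamma t/2}f|\le T_{\gamma t/2-\gamma s/8}|T_{\gamma s/8}f|\lesssim T_{c_2\gamma s}|T_{\gamma s/8}f|$, a \emph{forward} application of the semigroup law. The target norm is recovered at the end by the harmless rescaling $t\mapsto 8t$.

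For~\eqref{Ttprod}, your step ``pass the outer operator $T_{ct}$ through the product'' is neither valid nor needed. Hypothesis~(1) of Proposition~\ref{CRTNintegrale} already supplies a $T_{\delta s}$ on the right, so you simply leave it outside: the paper takes $F_1(s,\cdot)=s^\beta\,T_{c_4 s}|T_{s/2}f|\cdot T_{c_4 s}|T_{s/2}g|$ and uses $T_{ct}(\,\cdot\,)\lesssim T_{c_3 s}(\,\cdot\,)$ on the nonnegative argument to get~(1) with $\delta=c_3$. Hypothesis~(2) is then handled factorwise by the same time-halving device, landing on $F_2(s,\cdot)=s^\beta\,T_{c_5 s}|T_{c_6 s}f|\cdot T_{c_5 s}|T_{c_6 s}g|$ for a suitably small $c_6$; a final change of variables $c_6 t\mapsto t$ produces the right-hand side of~\eqref{Ttprod} with $\gamma=c_5/c_6$.
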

\begin{proof}
Pick $s>0$. By Lemma~\ref{pointwiseestheat1},  if $t\in [s,2s]$ then
\[
T_{c t} | T_{\gamma t} f | \leq    T_{c t} T_{\gamma (t - \frac{s}{2} ) }| T_{\gamma\frac{s}{2}  } f| \lesssim T_{c_1 s} | T_{\gamma \frac{s}{2} } f|,
\]
and analogously if $t\in [s/2,s]$ then
\[
|  T_{\gamma \frac{t}{2} } f| \lesssim T_{c_2\gamma s} |  T_{\gamma \frac{s}{8} } f|,
\]
for some constants $c_1,c_2>0$. The conclusion then follows by Proposition~\ref{CRTNintegrale} with $F(t,\cdot) = t^\beta  T_{\gamma t} f$, $F_1(t,\cdot) = t^\beta T_{\gamma t/2}f$ and $F_2(t,\cdot) = t^\beta T_{\gamma t/8}f$.  To prove~\eqref{Ttprod}, for $t\in [ s,2s]$ one shows as above that
\begin{align*}
 T_{ct}(|T_t f|\cdot |T_t g|) &\lesssim  T_{c_3s} (T_{t-\frac{s}{2}}|T_{\frac{s}{2}} f|\cdot T_{t-\frac{s}{2}}|T_{\frac{s}{2}}  g|) \lesssim T_{c_3 s} (T_{c_4s}|T_{\frac{s}{2}} f|\cdot T_{c_4s}|T_{\frac{s}{2}}g|)
\end{align*}
for some positive constants $c_3,c_4$, and similarly that if $t\in [s/2,s]$
 \[
 T_{c_4 t}|T_{ t/2 } f|\cdot T_{c_4 t}|T_{ t/2 }g| \lesssim T_{c_5 s}|T_{c_6 s} f|\cdot T_{c_5 s}|T_{c_6 s }g|,
 \]
for some $c_5,c_6>0$. Hence the conclusion follows by Proposition~\ref{CRTNintegrale} with $F(t,\cdot ) = t^\beta |T_t f|\cdot |T_t g|$, $F_1(t,\cdot ) = t^\beta  T_{c_4t}|T_{t/2} f|\cdot T_{c_4t}|T_{t/2}g|$ and $F_2(t,\cdot ) = t^\beta  T_{c_5t}|T_{c_6 t} f|\cdot T_{c_5 t}|T_{c_6 t}g|$. 
\end{proof}

To conclude this section, we note the following  consequences of Schur's Lemma: if $0<\gamma<\eta$ and $q\in [1,\infty)$, then  for every measurable function $v \colon (0,\infty) \to (0,\infty)$ one has
\begin{equation}\label{Schurcont}
\int_{0}^\infty \left( u^\gamma \int_0^\infty  \frac{1}{(t+u)^{\eta}} v(t)\, \frac{\dd t}{t} \right)^q \, \frac{\dd u}{u} \lesssim  \int_{0}^\infty \left( t^{\gamma - \eta} v(t)\right)^q \, \frac{\dd t}{t},
\end{equation}
and given a sequence $(v_n)_{n\in \Z} \subset [0,\infty)$,
\begin{equation}\label{Schurdiscr}
\sum_{j\in\Z} \Big( 2^{j\gamma} \sum_{n\in \Z} \frac{2^{n\eta}}{(2^j+2^n)^{2\eta}} v_n \Big)^q  \lesssim \sum_{j\in\Z} \Big( 2^{j\gamma} \sum_{n\in \Z} 2^{-\max \{n,j\}\eta} v_n \Big)^q \lesssim \sum_{n\in\Z} (2^{(\gamma-\eta)n} v_n)^q,
\end{equation}
whenever the right hand sides are finite. If $q=\infty$ the same results hold with the obvious modifications. We refer the reader to~\cite[Lemma 4.3]{BPV1} for details.

\section{$\dB$- and $\dF$- spaces}\label{BFspaces}
We begin with the definition of Besov ($\dB$-) and Triebel--Lizorkin ($\dF$-) norms and spaces.
\begin{definition}\label{defBTL}
Suppose $\alpha \geq 0$, $m=[\alpha/2]+1$ and $1\leq  p,q\leq \infty$. For $[f] \in \Ss'/\Ps$ and $q<\infty$, define
\begin{align*}
\|[f]\|_{\dB_\alpha^{p,q} } & =  \inf_{\rho \in \Ps} \bigg( \int_0^\infty (t^{m-\frac{\alpha}{2}} \|T_t \Ls^m (f + \rho)\|_p)^q \,\frac{\dd t}{t}\bigg)^{1/q},\\
\|[f]\|_{\dF_\alpha^{p,q} } & =  \inf_{\rho\in \Ps} \bigg\| \left( \int_0^\infty (t^{m-\frac{\alpha}{2}}  |T_t \Ls^m (f + \rho)|)^q \,\frac{\dd t}{t}\right)^{1/q}\bigg\|_p,
\end{align*}
while if $q=\infty$,
\begin{align*}
\|[f]\|_{\dB_\alpha^{p,\infty} } & =\inf_{\rho \in \Ps}   \Big( \sup_{t>0} \, t^{m-\frac{\alpha}{2}} \|T_t \Ls^m (f + \rho)\|_p\Big),\\
\|[f]\|_{\dF_\alpha^{p,\infty} } & = \inf_{\rho\in \Ps} \Big\|\sup_{t>0} \, t^{m-\frac{\alpha}{2}}  |T_t \Ls^m (f + \rho)|\Big\|_p.
\end{align*}
The corresponding Besov and Triebel--Lizorkin spaces are respectively
\[\dB_{\alpha}^{p,q}\coloneqq \{ [f]\in \Ss'/\Ps \colon \|[f]\|_{\dB_\alpha^{p,q} } <\infty\}, \qquad \dF_\alpha^{p,q} \coloneqq \{ [f]\in \Ss'/\Ps \colon \|[f]\|_{\dF_\alpha^{p,q} } <\infty\}.\]
\end{definition}

The definitions are clearly well posed. It is less obvious, but true, that $\|\cdot \|_{\dB_\alpha^{p,q} }$ and $\| \cdot \|_{\dF_\alpha^{p,q} }$ are indeed norms on $\Ss'/\Ps$. This is what we are going to show now. We leave further remarks, and comparisons with analogous spaces defined via Littlewood--Paley decompositions, to Subsection~\ref{sub:comparison} below.

Under the notation introduced in Subsection~\ref{Sub:not}, when $m= [\alpha/2]+1$
\begin{align*}
 \| [f]\|_{\dB^{p,q}_\alpha} &= \inf_{\rho \in \Ps} \| t^{m-\alpha/2} T_t \Ls^m (f+\rho)\|_{L^q_+(L^p)},\\
 \| [f]\|_{\dF^{p,q}_\alpha} &= \inf_{\rho \in \Ps} \| t^{m-\alpha/2} T_t\Ls^m (f+\rho)\|_{L^p(L^q_+)}.
\end{align*}
We shall also use $\dX^{p,q}_\alpha$ to denote either $\dF^{p,q}_\alpha$ or $\dB^{p,q}_\alpha$ when no distinction is needed.

\begin{proposition}\label{prop:minattained} 
Suppose $[f]\in \Ss'/\Ps$ is such that, for some $\alpha\geq 0$, $p,q \in [1,\infty]$, and an integer $m> \alpha/2$,
\[
\inf_{\rho \in \Ps} \| t^{m-\alpha/2} T_t\Ls^m  (f+\rho)\|_{X^{p,q}_+}<\infty.
\]
Then there exists  $\wp_f =\wp(f,\alpha,m, p,q)\in \Ps$ such that
\[
\inf_{\rho \in \Ps} \| t^{m-\alpha/2}  T_t \Ls^m(f+\rho)\|_{X^{p,q}_+}= \| t^{m-\alpha/2} T_t \Ls^m (f+\wp_f)\|_{X^{p,q}_+}.
\]
If $[f]=[g]$ and $\wp_f$ and $\wp_g $ are as above, then $\Ls^m  (f+\wp_f) =  \Ls^m (g+\wp_g)$. 

Moreover, provided $p\in (1,\infty)$ when $X^{p,q}_+ = L^p(L^q_+)$, for all integers $n>m$
\[
\| t^{n-\alpha/2} T_t \Ls^n (f+\wp)\|_{X^{p,q}_+}= \inf_{\rho \in \Ps} \| t^{n-\alpha/2}  T_t \Ls^n(f+\rho)\|_{X^{p,q}_+}<\infty.
\]
\end{proposition}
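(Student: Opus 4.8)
The plan is to break the statement into three parts and handle them in sequence: (i) existence of a minimizer $\wp_f$; (ii) well-definedness of $\Ls^m(f+\wp_f)$ on the equivalence class $[f]$; (iii) independence of the infimum of the auxiliary integer $m$, i.e. the value coincides for every $n>m$.

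For (i), the idea is a compactness argument. Fix a minimizing sequence $\rho_j \in \Ps$, so that $\|t^{m-\alpha/2} T_t\Ls^m(f+\rho_j)\|_{X^{p,q}_+}$ converges to the infimum $I<\infty$. Writing $\sigma_{jk} = \rho_j - \rho_k \in \Ps$, one has $t^{m-\alpha/2} T_t\Ls^m \sigma_{jk} = t^{m-\alpha/2}T_t\Ls^m(f+\rho_j) - t^{m-\alpha/2}T_t\Ls^m(f+\rho_k)$, whose $X^{p,q}_+$ norm is bounded (at least along a tail). But $\sigma_{jk}$ is a polynomial, so by Lemma~\ref{lemma:TLnorm} the function $T_t\Ls^m\sigma_{jk}$ is itself a polynomial-in-$t$ expression with coefficients $\Ls^{m+i}\sigma_{jk}$; by Lemma~\ref{lemma:pol} (applied to the polynomial $\Ls^m\sigma_{jk} \in \Ps_{k-m}$), the only way $t^{m-\alpha/2}T_t\Ls^m\sigma_{jk}$ can have finite $X^{p,q}_+$ norm is if $\Ls^m\sigma_{jk}=0$. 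Hence $\Ls^m\rho_j$ is \emph{independent of $j$}, call it $P$; then $\Ls^m(f+\rho_j) = \Ls^m f + P$ is literally constant in $j$, and \emph{every} $\rho\in\Ps$ with $\Ls^m\rho = P$ attains the infimum. Concretely, since $I<\infty$ we may simply pick any one such $\rho$ (e.g. any term of the original sequence) and call it $\wp_f$; the argument also shows $\Ls^m(f+\wp_f)$ is determined by $[f]$ up to the choice of $P$, which is what part (ii) addresses.

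For (ii): if $[f]=[g]$ then $f-g\in\Ps$, so $\Ls^m(f+\wp_f) - \Ls^m(g+\wp_g) = \Ls^m(f - g + \wp_f - \wp_g) = \Ls^m \pi$ for the polynomial $\pi := f-g+\wp_f-\wp_g$. Again $t^{m-\alpha/2}T_t\Ls^m\pi$ is the difference of two functions of finite $X^{p,q}_+$ norm (both equal to the common infimum value, which is finite), hence has finite $X^{p,q}_+$ norm; by Lemma~\ref{lemma:pol} this forces $\Ls^m\pi=0$, i.e. $\Ls^m(f+\wp_f)=\Ls^m(g+\wp_g)$.

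For (iii), the independence of $m$: fix $n>m$ and abbreviate $u_t := \Ls^m(f+\wp_f)$, which is a continuous slowly growing function (Remark~\ref{remark:continuity}, noting $\Ls^m(f+\wp_f)=T_1\Ls^m(f+\wp_f)$ up to a polynomial correction handled as above, or directly since $\Ls^m(f+\wp_f)$ differs from $\Ls^m f$ by a polynomial). One inequality is easy: $\Ls^n(f+\wp_f) = \Ls^{n-m}u$, so $t^{n-\alpha/2}T_t\Ls^n(f+\wp_f) = t^{n-m}\cdot \Ls^{n-m}(t^{m-\alpha/2}T_t\Ls^m(f+\wp_f))$, and by Lemma~\ref{pointwiseestheat1}(2) (pointwise, in the $\dF$ case) or Lemma~\ref{pointwiseestheat1}(3) (in the $\dB$ case), together with Corollary~\ref{bilinearCRTNint} to absorb the heat factor back, $\|t^{n-\alpha/2}T_t\Ls^n(f+\wp_f)\|_{X^{p,q}_+}\lesssim \|t^{m-\alpha/2}T_t\Ls^m(f+\wp_f)\|_{X^{p,q}_+}<\infty$; in particular $\wp_f$ is admissible for the $n$-infimum, so $\inf_\rho\|t^{n-\alpha/2}T_t\Ls^n(f+\rho)\|_{X^{p,q}_+}\le \|t^{n-\alpha/2}T_t\Ls^n(f+\wp_f)\|_{X^{p,q}_+}$. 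For the reverse, let $\rho$ be any competitor for the $n$-infimum with finite norm; repeating the argument of part (i) with $n$ in place of $m$ shows $\Ls^n(f+\rho)$ is determined (it equals $\Ls^n(f+\wp_f) = \Ls^{n-m}u$ by the Lemma~\ref{lemma:pol} rigidity), so there is really only one value $\|t^{n-\alpha/2}T_t\Ls^n(f+\wp_f)\|_{X^{p,q}_+}$ to speak of, and it equals the infimum.

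The main obstacle I anticipate is the reverse inequality in (iii) — more precisely, showing that the finite-norm competitors for the $n$-level are genuinely in bijection with those for the $m$-level, i.e. that from a polynomial $\rho$ making $\|t^{n-\alpha/2}T_t\Ls^n(f+\rho)\|_{X^{p,q}_+}$ finite one can actually recover (after subtracting a suitable polynomial) the canonical $\wp_f$. This is where the rigidity of Lemma~\ref{lemma:pol} — that no nonzero polynomial survives with finite $X^{p,q}_+$ norm after the weight — must be applied carefully: one needs that $\Ls^m(f+\rho) - u$ is a polynomial (true, since $\Ls^m(f+\rho)$ and $u=\Ls^m(f+\wp_f)$ differ by $\Ls^m$ of a polynomial, namely $\rho-\wp_f$) and then that $\Ls^{n-m}$ applied to it, times the weight $t^{n-\alpha/2}T_t(\cdot)$, being of finite norm, forces that polynomial's $\Ls^{n-m}$-image to vanish — hence $\Ls^n(f+\rho)=\Ls^n(f+\wp_f)$. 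The restriction $p\in(1,\infty)$ in the $\dF$ case enters precisely when invoking Corollary~\ref{bilinearCRTNint}/Proposition~\ref{contCRTN} for the easy direction, which require $p>1$; I would flag this explicitly and note it is not needed for $\dB$.
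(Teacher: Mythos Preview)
Your proposal is correct and follows essentially the same approach as the paper. The core mechanism is identical: any two polynomials $\rho,\varrho$ with finite $X^{p,q}_+$-norm satisfy $\Ls^m(\rho-\varrho)=0$ by Lemma~\ref{lemma:pol}, so all finite-norm competitors yield the same value of $\Ls^m(f+\rho)$ and hence the same norm; this simultaneously gives existence of $\wp_f$, well-definedness on $[f]$, and (after the easy direction via Lemma~\ref{pointwiseestheat1} and Corollary~\ref{bilinearCRTNint}) the fact that $\wp_f$ attains the $n$-infimum too.

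Two minor remarks. First, the minimizing-sequence framing in (i) is a detour: the paper simply observes that any two finite-norm competitors agree at the level of $\Ls^m$, so the set of values is a singleton and any competitor is a minimizer --- no limiting argument is needed. Second, your aside that ``$\Ls^m(f+\wp_f)$ is a continuous slowly growing function'' is not correct in general (only $T_t\Ls^m(f+\wp_f)$ for $t>0$ is, by Remark~\ref{remark:continuity}); fortunately you never actually use this claim. Your identification of where the restriction $p\in(1,\infty)$ enters (Corollary~\ref{bilinearCRTNint} in the $\dF$ case) is exactly right.
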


\begin{proof}
Pick $n\geq m$, and suppose additionally that $p\in (1,\infty)$ if $n>m$ and $X^{p,q}_+ = L^p(L^q_+)$. By Lemma~\ref{pointwiseestheat1} and Corollary~\ref{bilinearCRTNint}, for all $\rho \in \Ps$,
\begin{equation}
\begin{split}\label{Mm}
\| t^{n-\alpha/2} \Ls^{n} T_t (f+\rho)\|_{X^{p,q}_+ } 
& \lesssim \| t^{m-\alpha/2} T_{ct} |\Ls^m T_{t/2} (f+\rho)|\|_{X^{p,q}_+ }\\
& \lesssim \| t^{m-\alpha/2} \Ls^m T_{t/2} (f+\rho) \|_{X^{p,q}_+ }\\
& \lesssim \| t^{m-\alpha/2} \Ls^m T_{t} (f+\rho) \|_{X^{p,q}_+},
\end{split}
\end{equation}
whence $\inf_{\rho \in \Ps} \| t^{n-\alpha/2} T_t\Ls^n  (f+\rho)\|_{X^{p,q}_+}$ is also finite.

Denote $\beta=n- \alpha/2$, and let $\rho,\varrho \in \Ps$ be such that $t^\beta T_t \Ls^{n} (f+\rho), t^\beta T_t \Ls^{n} (f+\varrho) \in X^{p,q}_+$. Then $t^\beta T_t \Ls^n (\rho-\varrho) \in X^{p,q}_+$. Since $ \Ls^n (\rho-\varrho)  \in \Ps$, by Lemma~\ref{lemma:pol} one has $\Ls^{n} (\rho-\varrho) =0$, whence the existence of $\wp_{f,n}=\wp(f,\alpha,n, p,q) \in \Ps$ such that
\[
\inf_{\rho \in \Ps} \| t^{n-\alpha/2}  T_t \Ls^n(f+\rho)\|_{X^{p,q}_+}= \| t^{n-\alpha/2} T_t \Ls^n (f+\wp_{f,n})\|_{X^{p,q}_+},
\]
and such that, if $[f]=[g]$, then $\Ls^n  (f+\wp_{f,n}) =  \Ls^n (g+\wp_{g,n})$.

We finally show that one can choose $\wp_{f,n}= \wp_{f,m}$ for all $n> m$. Indeed, by~\eqref{Mm} $t^{n-\alpha/2} T_t \Ls^{n} (f+\wp_{f,m}) \in X^{p,q}_+$; since also $t^{n-\alpha/2} T_t \Ls^{n} (f+\wp_{f,n}) \in X^{p,q}_+$, one gets as above that $\Ls^{n} \wp_{f,m}  = \Ls^{n} \wp_{f,n} $, whence the conclusion.
\end{proof}

The notation of the above proposition will be maintained throughout the paper. We shall sometimes write $\wp$ or $\wp_f$, without stressing the dependence on other parameters.
\begin{proposition}
For $1\leq p,q\leq \infty$ and $\alpha \geq 0$, $\| \cdot \|_{\dB_\alpha^{p,q} }$ and $\| \cdot \|_{\dF_\alpha^{p,q} }$ are norms.
\end{proposition}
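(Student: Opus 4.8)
The plan is to verify the three norm axioms — nonnegativity with $\|[f]\|=0 \iff [f]=0$, homogeneity, and the triangle inequality — for both $\dB^{p,q}_\alpha$ and $\dF^{p,q}_\alpha$ simultaneously, writing $\dX^{p,q}_\alpha$ and $X^{p,q}_+$ throughout. Homogeneity is immediate: for $\lambda\in\R$, the map $\rho\mapsto\lambda\rho$ is a bijection of $\Ps$, and $T_t\Ls^m(\lambda f+\lambda\rho)=\lambda\, T_t\Ls^m(f+\rho)$, so pulling $|\lambda|$ out of the $X^{p,q}_+$-norm and taking the infimum gives $\|[\lambda f]\|_{\dX^{p,q}_\alpha}=|\lambda|\,\|[f]\|_{\dX^{p,q}_\alpha}$; the case $\lambda=0$ is trivial since $0\in\Ps$.

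For the triangle inequality I would argue as follows. Fix $[f],[g]$ and $\eps>0$; choose $\rho,\sigma\in\Ps$ nearly realizing the two infima. Since $m=[\alpha/2]+1>\alpha/2$, Proposition~\ref{prop:minattained} (or, more simply, Minkowski's inequality in $X^{p,q}_+$, which holds as $p,q\in[1,\infty]$) applied to the sum $(f+\rho)+(g+\sigma)$, whose $\Ls^m T_t$-transform is the sum of the two individual transforms, yields
\[
\| t^{m-\alpha/2} T_t\Ls^m\big((f+g)+(\rho+\sigma)\big)\|_{X^{p,q}_+}
\le \| t^{m-\alpha/2} T_t\Ls^m(f+\rho)\|_{X^{p,q}_+}+\| t^{m-\alpha/2} T_t\Ls^m(g+\sigma)\|_{X^{p,q}_+}.
\]
Since $\rho+\sigma\in\Ps$, the left side bounds $\|[f+g]\|_{\dX^{p,q}_\alpha}=\|[f]+[g]\|_{\dX^{p,q}_\alpha}$ from above, and letting $\eps\to0$ gives subadditivity.

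The only substantive point — and the main obstacle — is the implication $\|[f]\|_{\dX^{p,q}_\alpha}=0\implies[f]=0$ in $\Ss'/\Ps$; the reverse is clear since $-f\in\Ps$ when $[f]=0$ gives the zero integrand. Suppose $\|[f]\|_{\dX^{p,q}_\alpha}=0$. By Proposition~\ref{prop:minattained} the infimum is attained at some $\wp=\wp_f\in\Ps$, so $t^{m-\alpha/2}T_t\Ls^m(f+\wp)=0$ as an element of $X^{p,q}_+$, hence $T_t\Ls^m(f+\wp)=0$ for a.e.\ $t>0$ (for every $t>0$, in fact, by the continuity in $t$ of $T_t\Ls^m(f+\wp)=\Ls^m T_t(f+\wp)$ recorded in Remark~\ref{remark:continuity}). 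Applying Lemma~\ref{lem:poly} — namely $T_s h\to h$ in $\Ss'$ as $s\to0^+$ — to $h=\Ls^m(f+\wp)\in\Ss'$ and using $T_s\big(\Ls^m(f+\wp)\big)=\Ls^m\big(T_s(f+\wp)\big)=0$, we conclude $\Ls^m(f+\wp)=0$, i.e.\ $f+\wp\in\Ps_m\subset\Ps$. Therefore $f\in\Ps$ and $[f]=0$, as required. One should note that the appeal to Proposition~\ref{prop:minattained} is legitimate here: in the case $X^{p,q}_+=L^p(L^q_+)$ with $p=1$ the attainment part of that proposition still applies since we only use it with $n=m$ (the restriction $p\in(1,\infty)$ there concerns only the passage to $n>m$).
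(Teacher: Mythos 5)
Your proposal is correct and, on the substantive point (definiteness), follows exactly the paper's argument: invoke Proposition~\ref{prop:minattained} to realize the infimum at some $\wp\in\Ps$, conclude $T_t\Ls^m(f+\wp)=0$ for all $t>0$, and apply Lemma~\ref{lem:poly} to get $\Ls^m(f+\wp)=0$, hence $f\in\Ps$. The paper leaves homogeneity and the triangle inequality implicit, which you correctly supply via the bijection $\rho\mapsto\lambda\rho$ of $\Ps$ and Minkowski's inequality in $X^{p,q}_+$; your observation that the attainment part of Proposition~\ref{prop:minattained} needs no restriction on $p$ at the level $n=m$ is also accurate.
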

\begin{proof}
If $\|[f]\|_{\dX_\alpha^{p,q} }=0$ and $m=[\alpha/2]+1$, then by Proposition~\ref{prop:minattained} there exists $\wp \in \Ps$ such that $T_t \Ls^m (f + \wp)=0$ for all $t>0$. Lemma~\ref{lem:poly} implies then $\Ls^m (f + \wp)=0$. Thus $f\in \Ps$ and $[f]=0$ in $\Ss'/\Ps$.
\end{proof}

\subsection{Representation formulae}
In this subsection we obtain continuous Calder\'on-type reproducing formulae, which will play the role of the classical Littlewood--Paley representation formula (see also Section~\ref{sec:SS}, and in particular Subsection~\ref{sub:comparison} below) involving heat semigroups. In all its forms,~\eqref{LPDlocal}, \eqref{HomLnf} and~\eqref{HomLnfBF}, it has a fundamental importance in our theory.  

\begin{proposition}\label{thm:decomp-SP}
Let $m\geq 1$ be an integer and suppose $f\in \Ss'$. Then for all $\tau>0$
\begin{equation}\label{LPDlocal}
f= \frac{1}{(m-1)!} \int_0^{\tau} (t\Ls)^m T_t f \, \frac{\dd t}{t} +  \sum_{k=0}^{m-1} \frac{1}{k!} (\tau \Ls)^k T_\tau f \qquad  \mbox{ in }\; \Ss'.
\end{equation}
\end{proposition}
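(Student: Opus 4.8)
The plan is to test the asserted identity against an arbitrary $\phi\in\Ss$ and to recognise it as Taylor's formula with integral remainder for the scalar function $u(t)=\langle T_tf,\phi\rangle$, $t>0$. Once the one-variable identity is in hand on an interval $[\epsilon,\tau]$, letting $\epsilon\to0^+$ and using $T_tf\to f$ in $\Ss'$ (Lemma~\ref{lem:poly}) will produce the term $\langle f,\phi\rangle$, the polynomial part of the expansion will reproduce $\big\langle\sum_{k=0}^{m-1}\frac{1}{k!}(\tau\Ls)^kT_\tau f,\phi\big\rangle$, and the remainder will give the integral term.

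First I would fix $\phi\in\Ss$ and prove that $u\in C^\infty((0,\infty))$ with
\[
u^{(k)}(t)=(-1)^k\langle f,T_t\Ls^k\phi\rangle=(-1)^k\langle\Ls^kT_tf,\phi\rangle,\qquad k\ge0,\ t>0.
\]
This follows by induction on $k$ from Remark~\ref{remark:continuity}, which gives $\frac{\dd}{\dd t}\langle T_tf,\psi\rangle=-\langle\Ls T_tf,\psi\rangle=-\langle f,T_t\Ls\psi\rangle$ for every $\psi\in\Ss$, together with the fact that $T_t\Ls^k\phi\in\Ss$ for all $k$ and $t>0$ by Lemma~\ref{lemmaseminorm}. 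Moreover, since $f\in\Ss'$ there are $N\in\N$ and $C>0$ with $|\langle f,\psi\rangle|\le C\,p_N(\psi)$ for all $\psi\in\Ss$, so Lemma~\ref{lemmaseminorm} with $h=0$ and the trivial bound $p_N(\Ls^m\phi)\le p_{N+m}(\phi)$ yield
\[
|u^{(m)}(t)|\lesssim(1+\sqrt t)^N\,p_{N+m}(\phi),\qquad t>0.
\]
Since $m\ge1$, this makes $t\mapsto t^{m-1}|u^{(m)}(t)|$ integrable on $(0,\tau]$; in particular $\phi\mapsto\frac{1}{(m-1)!}\int_0^\tau t^{m-1}\langle\Ls^mT_tf,\phi\rangle\,\dd t$ is a bounded linear functional on $\Ss$, hence a well-defined element of $\Ss'$, namely the one denoted $\frac{1}{(m-1)!}\int_0^\tau(t\Ls)^mT_tf\,\frac{\dd t}{t}$ in the statement.

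Then, for $0<\epsilon<\tau$, since $u\in C^m([\epsilon,\tau])$, Taylor's formula with integral remainder reads
\[
u(\epsilon)=\sum_{k=0}^{m-1}\frac{(\epsilon-\tau)^k}{k!}\,u^{(k)}(\tau)+\frac{1}{(m-1)!}\int_\tau^\epsilon(\epsilon-t)^{m-1}u^{(m)}(t)\,\dd t.
\]
Now I would let $\epsilon\to0^+$. The left-hand side converges to $\langle f,\phi\rangle$ by Lemma~\ref{lem:poly}; each summand converges to $\frac{(-\tau)^k}{k!}u^{(k)}(\tau)=\big\langle\frac{1}{k!}(\tau\Ls)^kT_\tau f,\phi\big\rangle$; and, rewriting the remainder as $-\frac{1}{(m-1)!}\int_0^\tau\mathbf{1}_{[\epsilon,\tau]}(t)(\epsilon-t)^{m-1}u^{(m)}(t)\,\dd t$ and dominating its integrand by $\frac{1}{(m-1)!}t^{m-1}|u^{(m)}(t)|\in L^1((0,\tau))$ from the previous step, dominated convergence together with $u^{(m)}(t)=(-1)^m\langle\Ls^mT_tf,\phi\rangle$ shows the remainder tends to $\frac{1}{(m-1)!}\int_0^\tau\langle(t\Ls)^mT_tf,\phi\rangle\,\frac{\dd t}{t}$ (the factors $(-1)^{m-1}$ from $(-t)^{m-1}$, $(-1)^m$ from $u^{(m)}$, and the outer minus sign combine to $+1$). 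Collecting these limits gives the claimed identity evaluated at $\phi$, and since $\phi\in\Ss$ is arbitrary, it holds in $\Ss'$.

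I expect the only genuinely delicate point to be the first step: establishing that $u$ is smooth in $t$ and that differentiation may be carried inside the pairing and interchanged with the powers of $\Ls$, which rests precisely on Remark~\ref{remark:continuity} and the seminorm estimate of Lemma~\ref{lemmaseminorm}; the growth bound on $u^{(m)}$ then makes the $\Ss'$-valued integral meaningful and supplies the domination. Everything after that is the classical one-variable Taylor expansion plus a routine dominated-convergence argument. As an alternative to invoking Taylor's formula, one could integrate by parts $m$ times in $\frac{(-1)^m}{(m-1)!}\int_\epsilon^\tau t^{m-1}\partial_t^m\langle T_tf,\phi\rangle\,\dd t$, using Lemma~\ref{lem:poly} applied to $\Ls^jf\in\Ss'$ to see that every boundary term at $t=\epsilon$ vanishes as $\epsilon\to0^+$ except the one producing $\langle f,\phi\rangle$; this amounts to essentially the same computation.
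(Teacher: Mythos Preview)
Your proof is correct. The paper takes a somewhat different organizational route: it first records the scalar identity
\[
1 = \frac{1}{(m-1)!}\int_0^{\tau}(tu)^m\e^{-tu}\,\frac{\dd t}{t} + \sum_{k=0}^{m-1}\frac{1}{k!}(\tau u)^k\e^{-\tau u},\qquad \tau,u>0,
\]
applies functional calculus to obtain the identity for $\phi\in\Ss$ with the integral converging in $L^2$, and then upgrades the convergence to the topology of $\Ss$ using the seminorm estimate of Lemma~\ref{lemmaseminorm}; the $\Ss'$ statement then follows by duality. You instead work directly with the scalar function $u(t)=\langle T_tf,\phi\rangle$ and recognize the identity as Taylor's formula with integral remainder, letting $\epsilon\to0^+$ via Lemma~\ref{lem:poly} and dominated convergence. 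Both arguments ultimately rest on the same ingredient, Lemma~\ref{lemmaseminorm}, and the scalar identity above is itself nothing but Taylor's formula for $s\mapsto\e^{-su}$. Your route is slightly more elementary in that it bypasses functional calculus; the paper's route has the side benefit of actually establishing that the integral $\int_0^\tau (t\Ls)^m T_t\phi\,\frac{\dd t}{t}$ converges in $\Ss$ (not merely weakly), a fact used later, e.g.\ in the proof of Proposition~\ref{HomLPgenbis}.
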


\begin{proof}
It is enough to prove the decomposition in  $\Ss$ for a function $\phi\in\Ss$, for the case when $f\in \Ss'$ follows by duality. We start from the identity, for $\tau, u>0$,
\begin{equation*}
1 = \frac{1}{(m-1)!} \int_0^{\tau} (t u)^{m} \e^{-tu} \, \frac{\dd t}{t} + \sum_{k=0}^{m-1} \frac{1}{k!} (\tau u)^k \e^{-\tau u},
\end{equation*}
from which by functional calculus  one gets, for $\phi \in \mathcal{S}$, 
\[
\phi = \frac{1}{(m-1)!} \int_0^\tau (t\Ls)^m T_t\phi \, \frac{\dd t}{t} + \sum_{k=0}^{m-1} \frac{1}{k!} (\tau \Ls)^k T_\tau \phi,
\]
where the integral converges in $L^2$. The statement follows if we prove that for all $n$
\[
 \lim_{\epsilon \to 0} p_n\bigg( \int_0^\epsilon (t\Ls)^{m} T_t \phi \, \frac{\dd t}{t} \bigg)  = 0.
\]
This, however, is an immediate consequence of Lemma~\ref{lemmaseminorm}, since 
\[
p_n\bigg( \int_0^\epsilon (t\Ls)^{m} T_t \phi \, \frac{\dd t}{t} \bigg) \leq \int_0^\epsilon  t^{m-1} p_n(T_t \Ls^{m}\phi) \, \dd t \lesssim p_{n+m}(\phi) \int_0^\epsilon  t^{m-1} (1+\sqrt{t})^n \, \dd t,
\]
where we used that $p_n(\Ls^m \phi) \leq p_{n+m}(\phi)$. This completes the proof.
\end{proof}
Proposition~\ref{thm:decomp-SP} does not address the problem of representing a distribution $f \in \Ss'$ without inhomogeneous remainder terms. As the Euclidean setting shows, however (see e.g.~\cite[Lemma 2.6]{Bownik}), this is a rather delicate matter and some additional property of $\Ss$ seems to be required. We postpone a thorough discussion to Section~\ref{sec:SS} below. For our current purposes, it is enough to show representation formulae for elements $\Ls^n f$, with $f\in\Ss'$ or in some $\dB$- or $\dF$- space.

As the reader will soon realise, the proofs for $\dB$-norms are considerably easier than those for $\dF$-norms, and follow the same steps. For this reason, from this point on, these will be detailed only for Triebel--Lizorkin spaces, and the adaptation to the Besov case will be left to the reader. 

\begin{proposition}\label{HomLPgenbis}
If $f\in \Ss'$, then there exists $h>0$ such that, for all integers $m\geq 1$ and $n>h/2$,
 \begin{equation}\label{HomLnf}
\Ls^n f = \frac{1}{(m-1)!} \int_0^{\infty} (t\Ls)^m T_t \Ls^n f \, \frac{\dd t}{t} \qquad \mbox{ in }\; \Ss'.
\end{equation}
If $f\in \Ss'$ is such that $[f]\in \dB^{p,q}_\alpha$ for some $\alpha \geq 0$ and $p,q\in [1,\infty]$, or $[f]\in \dF^{p,q}_\alpha$ for some $\alpha \geq 0$, $p\in (1,\infty)$ and $q\in [1,\infty]$, then for all integers $m\geq 1$ and $n >\alpha/2$
 \begin{equation}\label{HomLnfBF}
\Ls^n (f+\wp_f) = \frac{1}{(m-1)!} \int_0^{\infty} (t\Ls)^m T_t \Ls^n (f+\wp_f) \, \frac{\dd t}{t} \qquad \mbox{ in }\; \Ss',
\end{equation}
where $\wp_{f} = \wp(f,\alpha, [\alpha/2]+1,p,q)$ is that of Proposition~\ref{prop:minattained}.
\end{proposition}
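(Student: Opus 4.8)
The plan is to derive \eqref{HomLnf} and \eqref{HomLnfBF} from the local Calder\'on formula of Proposition~\ref{thm:decomp-SP} by letting the truncation parameter $\tau\to\infty$ and showing that the inhomogeneous remainder terms $\sum_{k=0}^{m-1}\frac{1}{k!}(\tau\Ls)^k T_\tau \Ls^n f$ (resp.\ with $f+\wp_f$) vanish in $\Ss'$. Applying $\Ls^n$ to \eqref{LPDlocal} and using that $\Ls^n$ commutes with $T_t$ and with the integral (both on $\Ss$ and by duality on $\Ss'$, cf.\ Remark~\ref{remark:continuity}), one has for every $\tau>0$
\[
\Ls^n f = \frac{1}{(m-1)!}\int_0^\tau (t\Ls)^m T_t \Ls^n f\,\frac{\dd t}{t} + \sum_{k=0}^{m-1}\frac{1}{k!}(\tau\Ls)^k T_\tau \Ls^n f
\]
in $\Ss'$, so it suffices to control the two pieces as $\tau\to\infty$: show the integral converges in $\Ss'$ to the right-hand side of \eqref{HomLnf}, and show each remainder term $(\tau\Ls)^k T_\tau \Ls^n f\to 0$ in $\Ss'$.

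For the remainder terms in the $\Ss'$-case \eqref{HomLnf}, I would test against $\phi\in\Ss$ and move the operators onto $\phi$: $\langle (\tau\Ls)^k T_\tau \Ls^n f,\phi\rangle = \langle f, \Ls^n (\tau\Ls)^k T_\tau \phi\rangle$. By Lemma~\ref{lemmaseminorm}, $p_N(\Ls^{n+k}\tau^k T_\tau\phi)\lesssim \tau^{k}\tau^{-(n+k)}(1+\sqrt\tau)^N p_N(\phi) = \tau^{-n}(1+\sqrt\tau)^N p_N(\phi)$, which for fixed $N$ tends to $0$ as $\tau\to\infty$ precisely when $n> N/2$. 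Since by \cite[Proposition 3.2]{GKKP1} (recalled in the excerpt) $f\in\Ss'$ is continuous against the seminorm $p_h$ for some $h\in\N$, choosing that $h$ and requiring $n>h/2$ makes the pairing go to $0$; this is exactly the hypothesis $n>h/2$ in the statement. The same Lemma~\ref{lemmaseminorm} estimate, integrated, gives that the tail $\int_\tau^\infty t^{m-1} p_N(T_t\Ls^{m+n}\phi)\,\dd t$ of the integral is finite and $\to 0$, so the integral converges in $\Ss'$; hence \eqref{HomLnf}.

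For \eqref{HomLnfBF} one cannot use the crude seminorm bound, since $f+\wp_f$ need only lie in a $\dB$- or $\dF$-space. Instead I would exploit that, by Proposition~\ref{prop:minattained}, $t^{n-\alpha/2}T_t\Ls^n(f+\wp_f)\in X^{p,q}_+$ for every integer $n>\alpha/2$ (with $p\in(1,\infty)$ in the $\dF$ case). Writing $g=f+\wp_f$, test the remainder term $(\tau\Ls)^k T_\tau \Ls^n g$ against $\phi\in\Ss$: split $T_\tau = T_{\tau/2}T_{\tau/2}$, put one half-semigroup on $\phi$ to gain the rapid spatial decay of $T_{\tau/2}\Ls^k\phi$ (controlled via the heat kernel estimate \eqref{normHt} applied to $\|(1+d(x,\cdot))^N H_{\tau/2}(x,\cdot)\|_{p'}\lesssim \tau^{-\dm/(2p)}$ for large $\tau$), and bound the $L^p$-norm of the other half, $\tau^{k}\|\Ls^{n+k}T_{\tau/2}g\|_p\lesssim \tau^{k}\tau^{-(n+k-m')}\|\Ls^{m'}T_{c\tau}g\|_p$ for any $m'$ with $m'>\alpha/2$, where the last factor is $\lesssim \tau^{-(m'-\alpha/2)}\|t^{m'-\alpha/2}T_t\Ls^{m'}g\|_{L^q_+(L^p)}$ evaluated near $t\asymp\tau$; picking $n$ (hence $n+k$) large enough relative to $\alpha$ forces a negative power of $\tau$, so the term vanishes. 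Crucially this requires only $n>\alpha/2$ in the final statement because $k$ ranges over $0,\dots,m-1$ and raising $n$ by $1$ already gives decay; the lower powers $(\tau\Ls)^k$ with small $k$ are the binding ones and are handled by absorbing extra powers of $\Ls$ using \eqref{Mm}. Convergence of the integral in $\Ss'$ to $\Ls^n g$ follows from the same estimates applied to the tail. \emph{The main obstacle} is the $\dF$-case remainder estimate: one must convert the mixed-norm control $t^{n-\alpha/2}T_t\Ls^n g\in L^p(L^q_+)$ into a pointwise-in-$\tau$ decay of $\|(\tau\Ls)^k T_\tau\Ls^n g\|$ paired with a test function, and the natural route — Hölder in space against $T_{\tau/2}\phi$ plus Corollary~\ref{bilinearCRTNint}/Lemma~\ref{pointwiseestheat1} to shuffle semigroups — has to be arranged so that the power of $\tau$ is genuinely negative for \emph{all} $k\le m-1$ simultaneously, which is what pins down the hypothesis $n>\alpha/2$ (together with $p\in(1,\infty)$, needed for the maximal-function arguments underlying Corollary~\ref{bilinearCRTNint}).
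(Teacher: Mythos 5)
Your argument for~\eqref{HomLnf} is correct and is essentially the paper's: test the remainder terms against $\phi\in\Ss$, apply Lemma~\ref{lemmaseminorm} to get $|\langle(\tau\Ls)^k\Ls^nT_\tau f,\phi\rangle|\lesssim\tau^{-n}(1+\sqrt\tau)^h p_h(\phi)\to0$ for $n>h/2$, and invoke Proposition~\ref{thm:decomp-SP}.

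For~\eqref{HomLnfBF}, however, there is a genuine gap, and it is exactly the one you flag yourself as ``the main obstacle''. Your plan requires a \emph{pointwise-in-$\tau$} decay of $\|(\tau\Ls)^k\Ls^nT_\tau(f+\wp_f)\|_p$, extracted from the hypothesis that $t^{m'-\alpha/2}\Ls^{m'}T_t(f+\wp_f)$ lies in $X^{p,q}_+$. But an $L^q(\dd t/t)$-type control (and, worse, an $L^p(L^q_+)$ control in the Triebel--Lizorkin case, where the time integration sits \emph{inside} the spatial norm) does not yield a bound at a single time $\tau$; ``evaluated near $t\asymp\tau$'' cannot be made rigorous for finite $q$ without first converting to an $L^q_+(L^p)$ statement, which is not available here without circularity. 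The paper avoids this entirely by a different second step: it first shows (via H\"older in $t$, as in~\eqref{conv1tau}, using only $n>\alpha/2$ and Corollary~\ref{bilinearCRTNint}) that $\int_1^\tau$ converges in $L^p$, so that by Proposition~\ref{thm:decomp-SP} the remainder sum converges in $\Ss'$ to \emph{some} limit $f_0$; it then observes that $f_0\in\Ps$ (by applying~\eqref{HomLnf} to $f+\wp_f$, i.e.\ by hitting $f_0$ with a high power of $\Ls$); and finally it kills $f_0$ by showing $t^{n-\alpha/2}T_tf_0\in L^p(L^q_+)$ --- which follows from~\eqref{beforesplitting} once the key estimate~\eqref{mm1} is established via Schur's lemma --- and invoking Lemma~\ref{lemma:pol}, which says a nonzero polynomial can never have finite $X^{p,q}_+$ norm. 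In short: you do not need the remainder to tend to zero directly; you need its limit to be a polynomial with finite homogeneous norm. Without this (or some substitute for the pointwise decay), your proof of~\eqref{HomLnfBF} does not close.
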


\begin{proof}
We first prove~\eqref{HomLnf}. Let $m\geq 1$ be an integer, and $h\in\N$ be such that 
\[
|\langle f,\phi\rangle|\leq C p_h(\phi) \qquad \forall\, \phi\in \Ss.
\]
Then observe that, for $\tau>0$, $k=0,\dots, m-1$ and $n>h/2$, by Lemma~\ref{lemmaseminorm}
\[
|\langle (\tau \Ls)^k \Ls^n T_\tau f, \phi\rangle|  \lesssim p_h( (\tau\Ls)^k \Ls^n T_\tau \phi)\lesssim \tau^{-n} (1+\tau)^{h/2} p_{n}(\phi),
\]
which implies  $(\tau \Ls)^k \Ls^n T_\tau f  \to 0 $ in $\Ss'$ when $\tau \to \infty$. The conclusion follows by Proposition~\ref{thm:decomp-SP}.

The proof of~\eqref{HomLnfBF} requires more work, and we split it in two steps. We suppose $[f]\in \dF^{p,q}_\alpha$ for some $\alpha \geq 0$, $p\in (1,\infty)$ and $q\in [1,\infty]$. We write $\nu_0= [\alpha/2] +1$, and let $m\geq 1$ and $n>\alpha/2$ be integers.

As a first step, we prove that the integral in~\eqref{HomLnfBF} converges in $\Ss'$. Since 
\[
\int_0^1 (t\Ls)^{m} T_t \Ls^n (f+\wp_f) \, \frac{\dd t}{t}  = \Ls^n\int_0^1 (t\Ls)^{m} T_t (f+\wp_f) \, \frac{\dd t}{t} \in \Ss'
\]
by Proposition~\ref{thm:decomp-SP}, it will be enough to show that the integral from $1$ to $\tau$ converges in $\Ss'$ when $\tau \to \infty$; we shall actually prove that it converges in $L^p$.

 Pick $\wp_f =\wp(f,\alpha, \nu_0, p,q)\in \Ps$. By H\"older's inequality, for $q\in (1,\infty)$ and $1\leq \sigma \leq \tau$
\begin{equation}\label{conv1tau}
\begin{split}
\bigg|\int_\sigma^\tau (t \Ls)^m \Ls^n T_t (f+ \wp_{f}) \, \frac{\dd t}{t} \bigg|
& \leq  \int_\sigma^\tau t^{m} |\Ls^{n+m-\nu_0} T_{t/2}  \Ls^{\nu_0} T_{t/2} (f+ \wp_{f}) | \, \frac{\dd t}{t}  \\
& \lesssim \int_\sigma^\tau t^{\nu_0-\alpha/2} t^{\alpha/2 - n} T_{ct}|\Ls^{\nu_0} T_{t/2} (f+ \wp_{f}) |\, \frac{\dd t}{t}  \\
& \leq   \bigg( \int_\sigma^\tau (t^{\nu_0-\alpha/2} T_{ct}|\Ls^{\nu_0} T_{t/2} (f+ \wp_{f}) | )^q\, \frac{\dd t}{t} \bigg)^{1/q}  \\
& \hspace{3cm} \times  \bigg(\int_\sigma^\tau (t^{\alpha/2 - n})^{q'}\, \frac{\dd t}{t}\bigg)^{1/q'},
\end{split}
\end{equation}
whence by Corollary~\ref{bilinearCRTNint}
\[
\bigg\|  \int_\sigma^\tau (t\Ls)^m T_t \Ls^n (f+ \wp_{f}) \, \frac{\dd t}{t}  \bigg\|_p \lesssim \bigg(\int_\sigma^\tau (t^{\alpha/2 - \nu_0})^{q'}\, \frac{\dd t}{t}\bigg)^{1/q'}  \| [f]\|_{\dF^{p,q}_\alpha},
\]
which completes our first step, since the cases $q=1, \infty$ are analogous.  

By the first step and Proposition~\ref{thm:decomp-SP}, the limit
\[
f_0 = \lim_{\tau \to \infty} \sum_{k=0}^{m-1} \frac{1}{k!} (\tau \Ls)^k \Ls^n T_\tau (f+\wp_f)
\]
exists in $\Ss'$. The second step in the proof is then showing that $f_0=0$. 

By~\eqref{HomLnf} applied to  $f+\wp_f$, one deduces that $f_0 \in \Ps$. In other words,
\[
\Ls^n (f+\wp_f) + f_0 = \frac{1}{(m-1)!} \int_0^{\infty} (s\Ls)^m T_s \Ls^n (f+\wp_f) \, \frac{\dd s}{s} 
\]
in $\Ss'$, with $f_0\in \Ps$. Therefore,
\begin{equation}\label{beforesplitting}
t^{n-\alpha/2} T_t  \Ls^n (f+\wp_f)  + t^{n-\alpha/2} T_t  f_0=\frac{t^{n-\alpha/2}}{(m-1)!} \int_0^{\infty} (s\Ls)^m \Ls^{n} T_{s+t} (f+\wp_f) \, \frac{\dd s}{s}.
\end{equation}
We shall show that $ t^{n-\alpha/2} T_t  f_0 \in L^p(L^q_+)$; this will complete the proof by Lemma~\ref{lemma:pol}.

By Lemma~\ref{pointwiseestheat1} and Corollary~\ref{bilinearCRTNint},
\begin{align}\label{equivform2}
\| t^{n-\alpha/2} \Ls^{n} T_t (f+\wp_f)\|_{L^p(L^q_+)}  & \lesssim \| t^{[\alpha/2]+1-\alpha/2} T_{ct} |\Ls^{[\alpha/2]+1} T_{t/2} (f+\wp_f)|\|_{L^p(L^q_+)}\nonumber \\
&\lesssim \| t^{[\alpha/2]+1-\alpha/2} \Ls^{[\alpha/2]+1} T_{t/2} (f+\wp_f) \|_{L^p(L^q_+)}\nonumber  \\
&\approx \| [f]\|_{\dF^{p,q}_\alpha},
\end{align}
whence the first term in the left hand side of~\eqref{beforesplitting} belongs to $L^p(L^q_+)$. We shall show that
\begin{equation}\label{mm1}
\bigg\| t^{n-\frac{\alpha}{2}} \int_0^{\infty} (s\Ls)^{m} \Ls^{n} T_{s+t} (f+\wp) \, \frac{\dd s}{s}\bigg\|_{L^p(L^q_+)} \lesssim \| t^{n+1-\frac{\alpha}{2}}  \Ls^{n+1} T_{t}  (f+\wp)\|_{L^p(L^q_+)},
\end{equation}
where the right hand side is finite by~\eqref{equivform2} with $n+1$ in place of $n$.  These two facts imply by~\eqref{beforesplitting}  that $ t^{n-\alpha/2} T_t  f_0 \in L^p(L^q_+)$, whence $f_0 =0$ by Lemma~\ref{lemma:pol}.

We split the integral in~\eqref{mm1}. By Lemma~\ref{pointwiseestheat1}
\begin{align*}
\left| \int_0^t s^{m-1}\Ls^{m+n} T_{s+t}  (f+\wp) \, \dd s\right|
&\leq \int_0^t T_{c's +t/2} | \Ls^{n+1} T_{t/2}  (f+\wp)| \, \dd s \\
& \lesssim t  T_{c t} | \Ls^{n+1} T_{t/2}  (f+\wp)|   ,
\end{align*}
the last inequality since $s\in [0,t]$. Hence by Corollary~\ref{bilinearCRTNint}
\begin{align}\label{split1}
\bigg\| t^{n-\frac{\alpha}{2}} \int_0^{t} s^m \Ls^{m+n} T_{s+t} (f+\wp) \, \frac{\dd s}{s}\bigg\|_{L^p(L^q_+)} 
&\lesssim \big\| t^{n+1-\frac{\alpha}{2}}  T_{c t} | \Ls^{n+1} T_{t/2}  (f+\wp)| \big\|_{L^p(L^q_+)} \nonumber \\
& \lesssim \big\| t^{n+1-\frac{\alpha}{2}}  \Ls^{n+1} T_{t/2}  (f+\wp) \big\|_{L^p(L^q_+)}.
\end{align}
If instead $s\geq t$, then
\[
|s^{m-1}\Ls^{m+n} T_{s+t}  (f+\wp)| \lesssim T_{cs} |\Ls^{n+1} T_{\frac{s}{2}+t}  (f+\wp)| \leq T_{t+cs} |\Ls^{n+1} T_{s/2}  (f+\wp)|,
\]
so that
\[
\left| \int_t^{\infty}s^{m-1} \Ls^{m+n} T_{s+t} (f+\wp) \, \dd s\right|  \lesssim  T_{ t} \int_t^\infty T_{cs}| \Ls^{n+1} T_{s/2}  (f+\wp)|\, \dd s.
\]
By Corollary~\ref{bilinearCRTNint} and Proposition~\ref{CRTNintegrale} (with $F(t, \cdot) =t^{n-\frac{\alpha}{2}} \int_t^\infty T_{cs}| \Ls^{n+1} T_{s/2}  (f+\wp)|\, \dd s$)  then
\begin{multline*}
\bigg\| t^{n-\frac{\alpha}{2}} \int_t^{\infty} s^{m} \Ls^{m+n} T_{s+t} (f+\wp) \, \frac{\dd s}{s}\bigg\|_{L^p(L^q_+)} \\
\lesssim \bigg\| t^{n-\frac{\alpha}{2}}  \int_{t}^\infty T_{cs} | \Ls^{n+1} T_{s/2}  (f+\wp)|\, \dd s\bigg\|_{L^p(L^q_+)} .
\end{multline*}
To conclude, observe that if $q<\infty$ then
\[
\int_0^\infty \Big( t^{n-\frac{\alpha}{2}} \int_{t}^\infty T_{cs} |\Ls^{n+1} T_{s/2}(f+\wp)|\, \dd s \Big)^q \, \frac{\dd t}{t}  = \int_0^\infty \Big( \int_0^\infty  K(s,t)g(s)\, \frac{\dd s}{s} \Big)^q \, \frac{\dd t}{t}
\]
where $g(s) = s^{n+1-\frac{\alpha}{2}}T_{cs} |\Ls^{n+1} T_{s/2} (f+\wp) |$ and $K(s,t) =(t/s)^{n-\frac{\alpha}{2}}   \mathbf{1}_{\{s\geq t\}}$. Since
\[
\sup_{t>0} \int_0 ^\infty K(s,t)\, \frac{\dd s }{s} \lesssim 1, \qquad   \qquad \sup_{s>0}\int_0 ^\infty K(s,t)\, \frac{\dd t}{t} \lesssim 1,
\]
by Schur's Lemma and Corollary~\ref{bilinearCRTNint} we get
\begin{align*}
\bigg\| t^{n-\frac{\alpha}{2}}  \int_{t}^\infty T_{cs} | \Ls^{n+1} T_{s/2} (f+\wp)|\, \dd s\bigg\|_{L^p(L^q_+)}\!\!\!\!
&\lesssim \| t^{n+1-\frac{\alpha}{2}} T_{ct}  | \Ls^{n+1} T_{t/2}  (f+\wp)|\|_{L^p(L^q_+)} \nonumber \\
& \lesssim  \| t^{n+1-\frac{\alpha}{2}}  \Ls^{n+1} T_{t/2}  (f+\wp)\|_{L^p(L^q_+)}.
\end{align*}
This together with~\eqref{split1} shows~\eqref{mm1}. The case $q=\infty$ is analogous, and the proof is complete.
\end{proof}

\subsection{Equivalent norm characterizations}\label{sub:normchar} The results proven so far allow us to show some equivalent characterizations of $\dB$- and $\dF$-norms as follows.

\begin{theorem} \label{teo-equiv1}
Suppose $\alpha\geq 0$, $q\in [1,\infty]$ and let $m>\alpha/2$ be an integer.
\begin{itemize}
\item[\emph{(i)}] If $p \in [1,\infty]$, then $\|[f]\|_{\dB_\alpha^{p,q}}  \approx \inf_{\rho \in \Ps}\| t^{m-\alpha/2} \Ls^m T_t (f+\rho)\|_{L^q_+(L^p)}$.
\item[\emph{(ii)}] If $p\in (1,\infty)$, then  $\|[f]\|_{\dF_\alpha^{p,q}} \approx \inf_{\rho\in \Ps} \| t^{m-\alpha/2} \Ls^m T_t (f+\rho)\|_{L^p(L^q_+)}$.
\end{itemize}
\end{theorem}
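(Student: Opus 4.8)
The plan is to prove that the norm $\|[f]\|_{\dX_\alpha^{p,q}}$, defined with the minimal admissible exponent $m_0 = [\alpha/2]+1$, is comparable to the same quantity computed with any larger integer $m > \alpha/2$. One inequality is essentially free: by Proposition~\ref{prop:minattained}, if $\inf_\rho \|t^{m_0-\alpha/2} T_t \Ls^{m_0}(f+\rho)\|_{X^{p,q}_+} < \infty$ then for any $n > m_0$ one has (using~\eqref{Mm}, the chain of estimates proved there via Lemma~\ref{pointwiseestheat1} and Corollary~\ref{bilinearCRTNint})
\[
\inf_{\rho\in\Ps} \|t^{n-\alpha/2} T_t \Ls^n(f+\rho)\|_{X^{p,q}_+} \lesssim \inf_{\rho\in\Ps}\|t^{m_0-\alpha/2} T_t \Ls^{m_0}(f+\rho)\|_{X^{p,q}_+},
\]
and symmetrically one gets finiteness in the other direction from the representation formula. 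So the content is the reverse bound: controlling the low-order quantity by the high-order one.

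The main step is therefore: given an integer $n > \alpha/2$, show $\inf_\rho \|t^{m_0-\alpha/2} T_t \Ls^{m_0}(f+\rho)\|_{X^{p,q}_+} \lesssim \inf_\rho \|t^{n-\alpha/2} T_t \Ls^n(f+\rho)\|_{X^{p,q}_+}$. Fix $\wp = \wp_f = \wp(f,\alpha,n,p,q)$ realizing the infimum on the right, which exists by Proposition~\ref{prop:minattained}; the hypothesis guarantees $[f]\in\dX_\alpha^{p,q}$ so all of Proposition~\ref{HomLPgenbis} applies. Using the representation formula~\eqref{HomLnfBF} with $n$ replaced by $m_0$ (legitimate since $m_0 > \alpha/2$) and with the auxiliary integer parameter chosen as $n-m_0 \geq 1$, one writes
\[
\Ls^{m_0}(f+\wp) = \frac{1}{(n-m_0-1)!}\int_0^\infty (s\Ls)^{n-m_0} T_s \Ls^{m_0}(f+\wp)\, \frac{\dd s}{s} = \frac{1}{(n-m_0-1)!}\int_0^\infty s^{n-m_0-1} T_s \Ls^n(f+\wp)\, \dd s
\]
(when $n = m_0+1$ the factorial is $0! = 1$ and the formula reads $\Ls^{m_0}(f+\wp) = \int_0^\infty T_s \Ls^n(f+\wp)\,\dd s$). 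Apply $T_t$, multiply by $t^{m_0-\alpha/2}$, split the $s$-integral at $s=t$, and estimate. On $s \leq t$, use $T_{s+t}\Ls^n \lesssim T_{ct}|\Ls^n T_{t/2}(f+\wp)|$ via Lemma~\ref{pointwiseestheat1}, so that the inner integral is $\lesssim t^{n-m_0}T_{ct}|\Ls^n T_{t/2}(f+\wp)|$; then invoke Corollary~\ref{bilinearCRTNint} to pass to $\|t^{n-\alpha/2}\Ls^n T_{t/2}(f+\wp)\|_{X^{p,q}_+}$, which is $\approx \|t^{n-\alpha/2}\Ls^n T_t(f+\wp)\|_{X^{p,q}_+}$ by another application of Lemma~\ref{pointwiseestheat1} and Corollary~\ref{bilinearCRTNint}. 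On $s \geq t$, bound $s^{n-m_0-1}T_{s+t}\Ls^n(f+\wp)$ by $s^{-1}\cdot s^{n-m_0}\cdot T_{t+cs}|\Ls^n T_{s/2}(f+\wp)|$, reducing to a Schur-type kernel estimate with $K(s,t) = (t/s)^{m_0-\alpha/2}\mathbf{1}_{\{s\geq t\}}$ and $g(s) = s^{n-\alpha/2}T_{cs}|\Ls^n T_{s/2}(f+\wp)|$; here $m_0 - \alpha/2 > 0$ since $m_0 = [\alpha/2]+1 > \alpha/2$, so both Schur row/column sums are finite, and~\eqref{Schurcont} (together with Corollary~\ref{bilinearCRTNint}) finishes the job. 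This is exactly the argument template already executed in the proof of Proposition~\ref{HomLPgenbis} for~\eqref{mm1}, so it can be cited rather than repeated in full. For $q=\infty$ one replaces Hölder/Schur integral estimates by suprema in the obvious way, and the Besov case $p\in[1,\infty]$ is handled identically with $L^q_+(L^p)$ in place of $L^p(L^q_+)$, Minkowski's integral inequality replacing the need for $p>1$ (so the restriction $p\in(1,\infty)$ is only needed in the $\dF$ case, where Corollary~\ref{bilinearCRTNint} and Proposition~\ref{contCRTN} require it).

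Finally, I note that the infimum on the left can be restricted to polynomials $\rho$ with $t^{m_0-\alpha/2}T_t\Ls^{m_0}(f+\rho)\in X^{p,q}_+$, and for any such $\rho$ Lemma~\ref{lemma:pol} forces $\Ls^{m_0}(\rho - \wp) = \Ls^{m_0}(\rho-\wp_{f,m_0}) = 0$ once we also know (from Proposition~\ref{prop:minattained}) that $\wp$ can be taken independent of the order, i.e.\ $\Ls^n\wp_{f,m_0} = \Ls^n\wp$; so the minimizing polynomial is the same $\wp$ in both the $m_0$- and $n$-expressions, and the two infima are literally realized at the same element, giving the claimed equivalence $\|[f]\|_{\dX_\alpha^{p,q}} \approx \inf_\rho \|t^{m-\alpha/2}\Ls^m T_t(f+\rho)\|$. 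The hardest point is really bookkeeping: making sure the minimizing polynomial is consistent across the different orders $m_0$, $n$, $n+1$ invoked in the estimates, which is precisely what the last paragraph of Proposition~\ref{prop:minattained} provides, and ensuring the representation formula~\eqref{HomLnfBF} is applied with indices in its stated range. No genuinely new estimate is needed beyond what Section~\ref{sec:consheat} and Proposition~\ref{HomLPgenbis} already supply.
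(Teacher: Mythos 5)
Your argument is correct and is essentially the paper's own proof: the easy direction is the smoothing chain of Lemma~\ref{pointwiseestheat1} and Corollary~\ref{bilinearCRTNint} (the paper's~\eqref{equivform2}), the reverse direction is the representation formula~\eqref{HomLnfBF} combined with the Schur-type estimate~\eqref{mm1}, and the consistency of the minimizing polynomial across orders is exactly the last part of Proposition~\ref{prop:minattained}. The only cosmetic difference is that you pass from order $m_0=[\alpha/2]+1$ to order $n$ in a single application of~\eqref{HomLnfBF} (with auxiliary parameter $n-m_0$), whereas the paper raises the order by one at a time and iterates.
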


\begin{proof} Suppose $[f] \in \dF^{p,q}_\alpha$, let $\wp_{f} = \wp(f,\alpha, [\alpha/2]+1,p,q)$ be that of Proposition~\ref{prop:minattained} and pick an integer $m>\alpha/2$. By~\eqref{equivform2}, 
\[
\inf_{\rho \in \Ps}\| t^{m-\alpha/2} \Ls^m T_t (f+\rho)\|_{L^q_+(L^p)} = \| t^{m-\alpha/2} \Ls^{m} T_t (f+\wp_f)\|_{L^p(L^q_+)} \lesssim \| [f]\|_{\dF^{p,q}_\alpha},
\]
while by combining~\eqref{HomLnfBF} with~\eqref{mm1}, we conclude
\begin{align*}
\| t^{m-\alpha/2} \Ls^m T_t (f + \wp_f)\|_{L^p(L^q_+)} & \lesssim  \| t^{m+1-\alpha/2} \Ls^{m+1} T_t (f + \wp_f)\|_{L^p(L^q_+)}\\
&=  \inf_{\rho\in \Ps}\| t^{m+1-\alpha/2} \Ls^{m+1} T_t (f+\rho)\|_{L^p(L^q_+)},
\end{align*}
which completes the proof.
\end{proof}

\begin{theorem} \label{teo-equiv2}
Suppose $\alpha\geq   0$, $q\in [1,\infty]$, and let $m>\alpha/2$ be an integer.
\begin{itemize}
\item[\emph{(i)}]  If $p\in [1,\infty]$, then
\[
  \|[f]\|_{{\dB}_\alpha^{p,q}} \approx \inf_{\rho\in \Ps} \|  2^{j(m-\frac{\alpha}{2})} \Ls^m T_{2^j} (f+\rho)\|_{\dell^q(L^p)} =\|  2^{j(m-\frac{\alpha}{2})} \Ls^m T_{2^j} (f+\wp_f)\|_{\dell^q(L^p)}.
  \]
\item[\emph{(ii)}]  If $p\in (1, \infty)$, then 
\[
\|[f]\|_{\dF_\alpha^{p,q}} \approx  \inf_{\rho\in \Ps}  \|  2^{j(m-\frac{\alpha}{2})} \Ls^m T_{2^j} (f+\rho)\|_{L^p(\dell^q)} = \|  2^{j(m-\frac{\alpha}{2})} \Ls^m T_{2^j} (f+\wp_f)\|_{L^p(\dell^q)} . 
\]
\end{itemize}
Here $\wp_f = \wp(f,\alpha,[\alpha/2]+1, p,q)$ is that of Proposition~\ref{prop:minattained}.
\end{theorem}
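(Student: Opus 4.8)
The strategy is to pass from the continuous characterizations of Theorems~\ref{teo-equiv1} and the Definition~\ref{defBTL} to their discretized versions, and the only real content is the two-sided comparison between the continuous $X^{p,q}_+$ norms and their discrete $L^p(\dell^q)$ / $\dell^q(L^p)$ counterparts. As announced in the paper, I will write everything for the $\dF$-case (ii), the $\dB$-case being identical with $L^p(\dell^q)$ replaced by $\dell^q(L^p)$ and Proposition~\ref{contCRTN} not even needed. Throughout I set $\beta=m-\alpha/2>0$ and work with a fixed representative $f+\wp_f$, where $\wp_f=\wp(f,\alpha,[\alpha/2]+1,p,q)$; recall from Proposition~\ref{prop:minattained} that the infimum over $\rho\in\Ps$ in Theorem~\ref{teo-equiv1}(ii) is attained precisely at $\wp_f$ (for every integer $m>\alpha/2$, thanks to the last clause of that Proposition, which requires $p\in(1,\infty)$), so the equality $\inf_{\rho\in\Ps}(\cdots)=(\cdots)_{\wp_f}$ in the statement is immediate once we know the two quantities are finite and that the argument below applies verbatim to $f+\rho$ for any $\rho$; I only need to prove $\|t^\beta\Ls^m T_t(f+\wp_f)\|_{L^p(L^q_+)}\asymp \|2^{j\beta}\Ls^m T_{2^j}(f+\wp_f)\|_{L^p(\dell^q)}$.

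For the inequality $\lesssim$ (continuous dominates discrete): fix $j\in\Z$ and $t\in[2^j,2^{j+1}]$. Writing $\Ls^m T_{2^j}g = \Ls^m T_{2^j-t/2}T_{t/2}g$ and applying Lemma~\ref{pointwiseestheat1}(1)--(2) (with $g=f+\wp_f$, noting $\Ls^m T_{t/2}g$ is a continuous slowly growing function by Remark~\ref{remark:continuity}) gives $|\Ls^m T_{2^j}g|\lesssim T_{c2^j}|\Ls^m T_{t/2}g|\lesssim T_{c't}|\Ls^m T_{t/2}g|$ pointwise, with constants uniform in $j$ and $t$. Since $2^{j\beta}\asymp t^\beta$ on this interval, averaging over $t\in[2^j,2^{j+1}]$ against $\dd t/t$ and summing in $j$ yields
\[
\sum_{j\in\Z}\big(2^{j\beta}|\Ls^m T_{2^j}g(x)|\big)^q\lesssim \int_0^\infty\big(t^\beta T_{c't}|\Ls^m T_{t/2}g|(x)\big)^q\,\frac{\dd t}{t},
\]
(with the obvious $\sup$ modification when $q=\infty$); taking $L^p$ norms and using Corollary~\ref{bilinearCRTNint} (with $\gamma$ chosen so $\Ls^m T_{t/2}g = \Ls^m T_{\gamma t}g$, i.e.\ $\gamma=1/2$, after possibly relabelling) to remove the extra $T_{c't}$ gives $\lesssim\|t^\beta\Ls^m T_{t/2}g\|_{L^p(L^q_+)}\asymp\|t^\beta\Ls^m T_t g\|_{L^p(L^q_+)}$, the last step again by Corollary~\ref{bilinearCRTNint}. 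For the reverse inequality $\gtrsim$ (discrete dominates continuous): fix $j\in\Z$ and $t\in[2^j,2^{j+1}]$; by Lemma~\ref{pointwiseestheat1} applied the other way, $|\Ls^m T_t g|\lesssim T_{c2^j}|\Ls^m T_{2^{j-1}}g|$ pointwise, uniformly. Hence $\int_{2^j}^{2^{j+1}}(t^\beta|\Ls^m T_t g|)^q\,\dd t/t\lesssim (2^{j\beta}T_{c2^j}|\Ls^m T_{2^{j-1}}g|)^q$ pointwise, so summing in $j$, taking $L^p$ norms and invoking Proposition~\ref{contCRTN} (with $t_j\equiv c2^j$, which is where $p\in(1,\infty)$ is used) followed by a harmless re-indexing gives $\|t^\beta\Ls^m T_t g\|_{L^p(L^q_+)}\lesssim\|2^{j\beta}\Ls^m T_{2^j}g\|_{L^p(\dell^q)}$.

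Finally, chaining these two estimates with Theorem~\ref{teo-equiv1}(ii), which already identifies $\|[f]\|_{\dF^{p,q}_\alpha}\asymp\|t^\beta\Ls^m T_t(f+\wp_f)\|_{L^p(L^q_+)}$ for every integer $m>\alpha/2$, yields $\|[f]\|_{\dF^{p,q}_\alpha}\asymp\|2^{j\beta}\Ls^m T_{2^j}(f+\wp_f)\|_{L^p(\dell^q)}$; the same computation run with $f+\rho$ in place of $f+\wp_f$ (everything used — Lemma~\ref{pointwiseestheat1}, Corollary~\ref{bilinearCRTNint}, Proposition~\ref{contCRTN} — holds for an arbitrary distribution in $\Ss'$) shows the discrete quantity for a general $\rho$ dominates $\|[f]\|_{\dF^{p,q}_\alpha}$ up to a constant, while it equals the $\wp_f$-value when $\rho=\wp_f$, giving the claimed $\inf_\rho(\cdots)=(\cdots)_{\wp_f}$. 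I expect the main (modest) obstacle to be bookkeeping the semigroup-time parameters so that Lemma~\ref{pointwiseestheat1}(1)--(2) and Corollary~\ref{bilinearCRTNint} apply with matching constants — in particular making sure the intermediate times stay comparable to $t$ (resp.\ $2^j$) uniformly in $j$ — together with carefully stating the $q=\infty$ variants; the underlying maximal-function input (Proposition~\ref{contCRTN}) is the only place analytical depth enters, and it is already available.
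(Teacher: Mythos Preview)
Your proposal is correct and follows essentially the same approach as the paper: both reduce to the equivalence $\|t^\beta\Ls^m T_t g\|_{L^p(L^q_+)}\asymp\|2^{j\beta}\Ls^m T_{2^j}g\|_{L^p(\dell^q)}$ via the pointwise heat-kernel comparisons of Lemma~\ref{pointwiseestheat1} together with Proposition~\ref{contCRTN}, and then invoke Theorem~\ref{teo-equiv1} and Proposition~\ref{prop:minattained} (the paper simply packages both directions by pointing to the proof of Proposition~\ref{CRTNintegrale}). One small bookkeeping caveat, of exactly the kind you anticipated: in your first direction the split $T_{2^j}=T_{2^j-t/2}T_{t/2}$ gives $2^j-t/2\to 0$ as $t\to 2^{j+1}$, so Lemma~\ref{pointwiseestheat1}(1) does not apply with a uniform constant there --- replacing $t/2$ by $t/4$ (so that $2^j-t/4\in[2^{j-1},3\cdot 2^{j-2}]$ for all $t\in[2^j,2^{j+1}]$) fixes this cleanly.
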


\begin{proof}
In order to get (ii), it is enough to prove that for all $g\in \Ss'$ and all integers $m>\alpha/2$ one has
\[
\|  t^{m-\alpha/2} \Ls^m T_{t} g\|_{L^p(L^q_+)}  \approx \|  2^{j(m-\alpha/2)} \Ls^m T_{2^j}g\|_{L^p(\dell^q)},
\]
whenever these quantities are finite. The conclusion then follows by Theorem~\ref{teo-equiv1} and Proposition~\ref{prop:minattained}. To prove this, just observe that if $s>0$ and $t\in [s, 2s]$ then
\[
t^{m-\alpha/2 }|T_t \Ls^m g| \lesssim s^{m-\alpha/2} T_{t-\frac{s}{2}}|T_{\frac{s}{2}} \Ls^m g| \lesssim s^{m-\alpha/2} T_{\delta s}|T_{\frac{s}{2}} \Ls^m g| 
\]
for some $\delta>0$, while if $t\in [s/2, s]$ 
\[
t^{m-\alpha/2 }|T_{t/2} \Ls^m g| \lesssim s^{m-\alpha/2} T_{\frac{t}{2}-\frac{s}{8}}|T_{\frac{s}{8}} \Ls^m g| \lesssim  s^{m-\alpha/2} T_{\epsilon s}|T_{\frac{s}{8}} \Ls^m g|,
\]
for some $\epsilon >0$. We applied Lemma~\ref{pointwiseestheat1}. In the proof of Proposition~\ref{CRTNintegrale}, the reader will then precisely find the inequalities
\[
\|  t^{m-\alpha/2} \Ls^m T_{t} g\|_{L^p(L^q_+)}  \lesssim  \|  2^{j(m-\alpha/2)} \Ls^m T_{2^j}g\|_{L^p(\dell^q)}\lesssim \|  t^{m-\alpha/2} \Ls^m T_{t} g\|_{L^p(L^q_+)},
\]
which complete the proof.
\end{proof}

\section{Embeddings}\label{sec:embed}
To begin with, we observe that $\dF^{p,p}_\alpha = \dB^{p,p}_\alpha$ by definition. Moreover, if $\dL^{p}_\alpha$ is the homogeneous Sobolev space defined by the norm
\[
\| [f]\|_{\dL^{p}_\alpha}  = \inf_{\rho \in\Ps} \| \Ls^{\alpha/2}(f+\rho)\|_{p}, \qquad \alpha \geq 0, \; 1\leq p\leq \infty,
\]
then $\dF^{p,2}_\alpha = \dL^{p}_\alpha$ for $\alpha > 0$ and $1<p<\infty$ by Littlewood--Paley--Stein theory, see~\cite{Meda, Stein}. Despite this, embeddings of the form $\dF^{p,q}_{\alpha+\epsilon} \subseteq \dF^{p,q}_{\alpha} $ and $\dB^{p,q}_{\alpha+\epsilon} \subseteq \dB^{p,q}_{\alpha} $ in general do not hold.

For two results below, we shall need that $\mu$ satisfies a stronger noncollapsing condition, more precisely that there exists a constant $\nm>0$ such that 
\begin{equation}\label{strongnoncollaps}
\inf_{x\in M} \mu(B(x,r))\gtrsim r^{\nm}, \qquad r>0.
\end{equation}
This condition is slightly stronger than the reverse doubling property of $\mu$; recall Remark~\ref{noncollaps}.

Observe that~\eqref{strongnoncollaps} and the upper estimate~\eqref{Htestimate} imply that the semigroup $T_t$ is $L^1$-$L^\infty$ bounded, i.e.\ ultracontractive, with norm controlled by $t^{-\nm/2}$. For the purposes of the present subsection, one might indeed replace~\eqref{strongnoncollaps} by this ultracontractivity property of $T_t$.

We begin by observing that, for $g\in L^p$,
\begin{equation}\label{eqcontract}
\| T_t g\|_{{p_1}} \lesssim t^{\frac{\nm}{2}( \frac{1}{p_1}-\frac{1}{p})} \|g\|_p , \qquad 1\leq p\leq p_1\leq \infty, \quad t>0.
\end{equation}
This follows by the $L^1$-$L^\infty$ ultracontractivity of $T_t$ and~\cite[Proposition II.2.2]{VSCC}. 
\begin{theorem}\label{teo_embeddings}
Suppose $\alpha\geq 0$ and $q,r\in [1,\infty]$. The following embeddings hold.
\begin{itemize}
\item[\emph{(1)}] If $r\geq q$ and $p\in [1,\infty]$ or $p\in (1,\infty)$, then  $\dB_\alpha^{p,q} \subseteq \dB_{\alpha}^{p,r}$ and $\dF_\alpha^{p,q} \subseteq \dF_{\alpha}^{p,r}$ respectively.
\item[\emph{(2)}] If $p\in (1,\infty)$, then $\dB^{p,\min(p,q)}_\alpha \subseteq \dF^{p,q}_\alpha\subseteq \dB_\alpha^{p,\max(p,q)}$.
\end{itemize}
Assume further that~\eqref{strongnoncollaps} holds and that $\alpha_0,\alpha_1\geq 0$.  
\begin{itemize}
\item[\emph{(3)}] If $1\leq p_0< p_1 \leq \infty$, $\alpha_0 \geq \alpha_1$ and $\frac{\nm}{p_1}- \alpha_1 = \frac{\nm}{p_0}-\alpha_0$, then  $\dB_{\alpha_0}^{p_0,q} \subseteq \dB_{\alpha_1}^{p_1,q}$.
\item[\emph{(4)}] If $1< p_0< p_1 < \infty$, $\alpha_0 \geq \alpha_1 $ and $\frac{\nm}{p_1}- \alpha_1 = \frac{\nm}{p_0}-\alpha_0$, then  $\dF_{\alpha_0}^{p_0,q} \subseteq \dF_{\alpha_1}^{p_1,r}$.
\end{itemize}
\end{theorem}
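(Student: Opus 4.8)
The four embeddings split naturally into two groups. Items (1) and (2) are "soft" inequalities that follow from the equivalent discrete characterizations in Theorem~\ref{teo-equiv2} together with standard $\ell^q$ versus $\ell^r$ and $L^p(\ell^q)$ versus $\ell^q(L^p)$ inequalities; items (3) and (4) are Sobolev-type embeddings that genuinely use the ultracontractivity encoded in~\eqref{eqcontract}. For (1) we fix $m=[\alpha/2]+1$ and the optimal polynomial $\wp_f$ from Proposition~\ref{prop:minattained}: since $\|\cdot\|_{\ell^r}\leq\|\cdot\|_{\ell^q}$ for $r\geq q$, applying this pointwise inside the $L^p$ norm (for $\dF$, with $p\in(1,\infty)$ so that Theorem~\ref{teo-equiv2}(ii) applies) or directly (for $\dB$) gives $\|[f]\|_{\dX_\alpha^{p,r}}\lesssim\|[f]\|_{\dX_\alpha^{p,q}}$. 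For (2), with $m$ and $\wp_f$ fixed as before, the chain $\dB^{p,\min(p,q)}_\alpha\subseteq\dF^{p,q}_\alpha\subseteq\dB^{p,\max(p,q)}_\alpha$ is exactly Minkowski's integral inequality comparing $L^p(\ell^q)$ and $\ell^q(L^p)$: when $q\leq p$ one has $\|\cdot\|_{L^p(\ell^q)}\leq\|\cdot\|_{\ell^q(L^p)}$ and when $q\geq p$ the reverse, and combining the two cases (taking $\min$ or $\max$ of $p,q$ in the outer index) yields both inclusions; here again one works with the single representative $f+\wp_f$ so that the two norms are literally taken of the same function.

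**Items (3) and (4).** The strategy is the classical Sobolev embedding trick adapted to the heat-semigroup definition. Write $\theta=\frac{\nm}{2}(\frac1{p_0}-\frac1{p_1})\geq0$; the hypothesis $\frac{\nm}{p_1}-\alpha_1=\frac{\nm}{p_0}-\alpha_0$ says exactly $\alpha_0-\alpha_1=2\theta$, i.e. $m-\frac{\alpha_1}{2}=(m-\frac{\alpha_0}{2})-\theta$ for any fixed integer $m>\alpha_0/2$. Fix $[f]\in\dB^{p_0,q}_{\alpha_0}$ and let $\wp_f=\wp(f,\alpha_0,[\alpha_0/2]+1,p_0,q)$. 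The key pointwise-in-$t$ step is: apply~\eqref{eqcontract} with $p=p_0$, $p_1=p_1$ to $g=T_{t/2}\Ls^m(f+\wp_f)$ and use the semigroup law $T_t=T_{t/2}T_{t/2}$ together with Lemma~\ref{pointwiseestheat1}(3) (or directly the $L^{p_0}$–$L^{p_1}$ smoothing), to get
\[
\|\Ls^m T_t(f+\wp_f)\|_{p_1}
\;\lesssim\; t^{-\theta}\,\|\Ls^m T_{t/2}(f+\wp_f)\|_{p_0}.
\]
Multiplying by $t^{m-\alpha_1/2}=t^{m-\alpha_0/2}\cdot t^\theta$ kills the $t^{-\theta}$ exactly, so
\[
t^{m-\alpha_1/2}\|\Ls^m T_t(f+\wp_f)\|_{p_1}
\;\lesssim\; (t/2)^{m-\alpha_0/2}\,2^{m-\alpha_0/2}\|\Ls^m T_{t/2}(f+\wp_f)\|_{p_0},
\]
and taking the $L^q_+$ norm in $t$ (invariant under $t\mapsto t/2$) and passing to the infimum over $\rho$ (or rather using that $\wp_f$ need not be optimal for the target space, which only helps) gives $\|[f]\|_{\dB^{p_1,q}_{\alpha_1}}\lesssim\|[f]\|_{\dB^{p_0,q}_{\alpha_0}}$ via Theorem~\ref{teo-equiv1}(i). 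This proves (3). For (4), one first uses (3) with $q=r$ replaced appropriately and then the scale of inclusions in (2): since $p_0<p_1$, choose an intermediate step through the $\dB$ scale, namely $\dF^{p_0,q}_{\alpha_0}\subseteq\dB^{p_0,\max(p_0,q)}_{\alpha_0}\subseteq\dB^{p_1,\max(p_0,q)}_{\alpha_1}\subseteq\dF^{p_1,r}_{\alpha_1}$, where the middle inclusion is (3) and the last is (2) provided $r\geq\max(p_0,q)\geq p_1$ — which is not automatic, so one instead argues more carefully: the cleanest route is to run the same ultracontractivity estimate directly on the $\dF$-characterization of Theorem~\ref{teo-equiv1}(ii) / Theorem~\ref{teo-equiv2}(ii), estimating the $L^{p_1}(\ell^q)$ norm of $2^{j(m-\alpha_1/2)}\Ls^m T_{2^j}(f+\wp_f)$ by the $L^{p_0}(\ell^q)$ norm of $2^{j(m-\alpha_0/2)}\Ls^m T_{2^{j-1}}(f+\wp_f)$ using an $L^{p_0}$–$L^{p_1}$ bound that is compatible with the $\ell^q$ summation — this is where the discrete maximal/vector-valued estimates of Proposition~\ref{contCRTN} enter to absorb the shift $2^{j-1}\to 2^j$.

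**Main obstacle.** The delicate point is item (4): unlike the $\dB$-case, in the $\dF$-case the norm involves the $\ell^q$ (or $L^q_+$) sum inside the $L^p$ norm, so one cannot simply apply a scalar $L^{p_0}$–$L^{p_1}$ smoothing estimate at each fixed $t$ and then integrate — one needs a vector-valued version. The resolution is to combine the pointwise inequality $|\Ls^m T_{2^j}(f+\wp_f)|\lesssim 2^{j\cdot 0}\,T_{c 2^j}|\Ls^m T_{2^{j-1}}(f+\wp_f)|$ from Lemma~\ref{pointwiseestheat1} with a Gagliardo–Nirenberg/Sobolev-type inequality for the maximal function, or more simply to factor $T_{2^j}=T_{2^{j-1}}T_{2^{j-1}}$ and use that $T_{2^{j-1}}$ maps $L^{p_0}\to L^{p_1}$ with norm $\lesssim 2^{j\theta}$ while the vector-valued boundedness on the $\ell^q$-sum is handled by Proposition~\ref{contCRTN} applied to the family $t_j\asymp 2^j$. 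One must check that the loss $2^{j\theta}$ is exactly compensated by the gain $2^{-j\theta}$ coming from $2^{j(m-\alpha_1/2)}=2^{j(m-\alpha_0/2)}2^{-j\theta}$, which it is by the hypothesis on the indices; and one needs $p_0,p_1\in(1,\infty)$ precisely so that Proposition~\ref{contCRTN} is available. The conclusion is then read off from Theorem~\ref{teo-equiv2}(ii), using $\wp_f$ as a common representative and noting that it is an admissible (not necessarily optimal) competitor for the target norm.
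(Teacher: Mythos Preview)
Your treatment of (1), (2) and (3) is correct and matches the paper's proof essentially line by line: (1) is the $\ell^q\hookrightarrow\ell^r$ inclusion applied inside the discrete characterization of Theorem~\ref{teo-equiv2}; (2) is the Minkowski-type inequality between $L^p(\ell^q)$ and $\ell^q(L^p)$ (the paper writes it as the triangle inequality in $L^{p/q}$, which is the same thing), combined with (1); and (3) is exactly the scalar ultracontractivity~\eqref{eqcontract} applied at each fixed $t$, followed by a change of variables and Theorem~\ref{teo-equiv1}(i).

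There is, however, a genuine gap in your argument for (4). You propose to factor $T_{2^j}=T_{2^{j-1}}T_{2^{j-1}}$, use the scalar $L^{p_0}\to L^{p_1}$ bound of~\eqref{eqcontract} on one factor to produce the gain $2^{-j\theta}$, and invoke Proposition~\ref{contCRTN} for the vector-valued part. But Proposition~\ref{contCRTN} only gives $\|T_{t_j}f_j\|_{L^p(\ell^q)}\lesssim\|f_j\|_{L^p(\ell^q)}$ with the \emph{same} $p$ on both sides; it contains no $L^{p_0}\to L^{p_1}$ improvement whatsoever. The ultracontractive gain~\eqref{eqcontract} is an $L^p$-norm inequality, not a pointwise one, so it cannot be brought inside the $\ell^q$-sum before taking the outer $L^{p_1}$-norm. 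In short, there is no ``$L^{p_0}$--$L^{p_1}$ bound that is compatible with the $\ell^q$ summation'' available from the tools you cite, and the step ``estimating the $L^{p_1}(\ell^q)$ norm \dots\ by the $L^{p_0}(\ell^q)$ norm'' is exactly the missing content. (Your first attempt via (2)--(3)--(2) also fails, as you noticed: it would require $\min(p_1,r)\geq\max(p_0,q)$, which is false in general.)

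The paper's proof of (4) is genuinely different and uses a distribution-function argument of Jawerth--Franke type. One first reduces via (1) to the endpoint case $\dF^{p_0,\infty}_{\alpha_0}\subseteq\dF^{p_1,1}_{\alpha_1}$ (this is what buys the independence of $q$ and $r$ in the statement). Then, writing $\Lambda_{1,j}=2^{j(m_0-\alpha_1/2)}\Ls^{m_0}T_{2^j}$ and normalizing $\|[f]\|_{\dF^{p_0,\infty}_{\alpha_0}}=1$, one combines the scalar $L^{p_0}\to L^\infty$ ultracontractivity to get a uniform bound $\|\Lambda_{1,j}(f+\wp_0)\|_\infty\lesssim 2^{-j\nm/(2p_1)}$ (controlling the large-$j$ tail) with the pointwise bound $|\Lambda_{1,j}|\leq 2^{j(\alpha_0-\alpha_1)/2}\sup_k|\Lambda_{0,k}|$ (controlling the small-$j$ tail). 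A level-set decomposition of the $L^{p_1}$ norm, choosing the splitting index $k=k(t)$ so that the large-$j$ tail is below the level $t$, then converts the $L^{p_1}$ layer-cake integral into the $L^{p_0}$ one for $\sup_k|\Lambda_{0,k}(f+\wp_0)|$, via the change of variables $s\approx t^{p_1/p_0}$. This is the standard mechanism by which the Jawerth--Franke embedding is proved, and it is not reducible to a vector-valued contractivity estimate of the kind you invoke.
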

\begin{proof}
Statement (1) follows from Theorem~\ref{teo-equiv2} and the embeddings of the $\dell^q$ spaces.

To prove (2), pick $g\in \Ss'$ and an integer $m>\alpha/2$. Suppose first $p\geq q$. By the triangle inequality  in $L^{p/q}$,
\begin{align*}
\| 2^{j(m-\alpha/2)} \Ls^m T_{2^j} g\|_{L^p(\dell^q)} 
&= \Big\| \sum_{j\in\Z}(2^{j(m-\alpha/2)} |\Ls^m T_{2^j} g| )^q \Big\|_{{p/q}}^{1/q}\\
& \leq    \Big( \sum_{j\in\Z} \|2^{qj(m-\alpha/2)}|\Ls^m T_{2^j} g|^q \|_{{p/q}}\Big)^{1/q}\\
& = \| 2^{j(m-\alpha/2)} \Ls^m T_{2^j}  g\|_{\dell^q(L^p)}.
\end{align*}
Hence by Theorem~\ref{teo-equiv2}
\begin{align*}
\| [f]\|_{\dF^{p,q}_\alpha}  
&\approx  \inf_{\rho \in \Ps} \| 2^{j(m-\alpha/2)} \Ls^m T_{2^j} (f+\rho)\|_{L^p(\dell^q)}\\
&  \lesssim   \inf_{\rho \in \Ps} \| 2^{j(m-\alpha/2)} \Ls^m T_{2^j}  (f+\rho)\|_{\dell^q(L^p)} \approx \| [f]\|_{\dB^{p,q}_\alpha} .
\end{align*}
This proves that $\dB^{p,q}_\alpha \subseteq\dF^{p,q}_\alpha$, and the embedding $\dF^{p,q}_\alpha \subseteq \dF^{p,p}_\alpha= \dB^{p,p}_\alpha $ follows by (1). Similarly, if $p< q$ then
\begin{align*}
\| 2^{j(m-\alpha/2)} \Ls^m T_{2^j}  g\|_{\dell^q(L^p)}
 & = \left\| \int_M 2^{pj(m-\alpha/2)} |\Ls^m T_{2^j}  g|^p \, \dd \mu \right\|_{\dell^{q/p}}^{1/p} \\
& \leq  \left(\int_M \| 2^{pj(m-\alpha/2)} |\Ls^m T_{2^j}  g|^p\|_{\dell^{q/p}} \, \dd \mu \right)^{1/p}\\
& =  \| 2^{j(m-\alpha/2)} \Ls^m T_{2^j} g\|_{L^p(\dell^q)},
\end{align*}
hence $\|[ f]\|_{\dB^{p,q}_\alpha}  \lesssim \| [f] \|_{\dF^{p,q}_\alpha}$, which together with $\dB^{p,p}_\alpha = \dF^{p,p}_\alpha\subseteq \dF^{p,q}_\alpha $ coming from (1) concludes the proof of (2).

Let us now consider (3). Let $m>\alpha_0/2$ be an integer. Since $\frac{\nm}{p_1} - \frac{\nm}{p_0}= \alpha_1 -\alpha_0$, by~\eqref{eqcontract} for $g\in \Ss'$
\[
\| t^{m-\alpha_1/2} \Ls^m T_t g \|_{p_1}  \lesssim \| t^{m-\alpha_{0}/2} \Ls^m T_{t/2} g\|_{p_0}
\]
hence $\| [f]\|_{\dB^{p_1,q}_{\alpha_1}}  \lesssim \| [f]\|_{\dB^{p_0,q}_{\alpha_0}}$ by a change of variables and Theorem~\ref{teo-equiv1}.

We finally prove (4). Observe that by~(1) it is enough to prove that $\dF^{p_0,\infty}_{\alpha_0}\subseteq \dF^{p_1,1}_{\alpha_1}$.  Suppose $m_0 =[\alpha_0/2]+1>\alpha_1/2$ and to simplify the notation, define
\[
\Lambda_{0,j} = 2^{j (m_0-\frac{\alpha_0}{2})}\Ls^{m_0} T_{2^{j}}, \qquad \Lambda_{1,j} =2^{j (m_0-\frac{\alpha_1}{2})} \Ls^{m_0}T_{2^{j}}, \qquad j\in \Z.
\]
Thus, for $[f] \in \dF^{p_0,\infty}_{\alpha_0}$ and $\wp(f, \alpha_0,[\alpha_0/2]+1, p_0,\infty) = \wp_{0} \in \Ps$ as in Proposition~\ref{prop:minattained}, by Theorem~\ref{teo-equiv2}
\[
\| [f]\|_{\dF^{p_0,\infty}_{\alpha_0}} \approx \| \Lambda_{0,j} (f+\wp_0)\|_{L^{p_0}(\dell^\infty)}, \qquad \| [f]\|_{\dF^{p_1,1}_{\alpha_1}} \approx  \inf_{\rho \in \Ps}\| \Lambda_{1,j} (f+\rho)\|_{L^{p_1}(\dell^1)}.
\]
Without loss of generality, we may assume that $\| [f]\|_{\dF^{p_0,\infty}_{\alpha_0}}=1$. By~\eqref{eqcontract},
\begin{align*}
\| \Lambda_{1,j} (f+\wp_0)\|_{\infty} &
= 2^{j \frac{\alpha_0-\alpha_1}{2}}  \| 2^{j (m_0-\frac{\alpha_0}{2})} T_{2^{{j-1}}} T_{2^{{j-1}}}\Ls^{m_0}(f+\wp_0)\|_{\infty}\\
& \lesssim 2^{j \frac{\alpha_0-\alpha_1}{2} - j\frac{\nm}{2p_0}}\| \Lambda_{0,j-1} (f+\wp_0)\|_{{p_0}} \\
& = 2^{- j\frac{\nm}{2p_1}}\| \Lambda_{0, j-1} (f+\wp_0)\|_{{p_0}} \lesssim 2^{- j\frac{\nm}{2p_1}},
\end{align*}
so that, for every $k\in \Z$,
\begin{equation}\label{eq0K}
\sum\nolimits_{j\geq -k} |\Lambda_{1,j} (f+\wp_0)| \lesssim \sum\nolimits_{j\geq -k} 2^{- j\frac{\nm}{2p_1}} \leq C 2^{k\frac{\nm}{2p_1}},
\end{equation}
for some $C>0$. On the other hand, one has
\begin{equation}\label{eqK+1infty}
\sum_{j\leq -k-1}  |\Lambda_{1,j} (f+\wp_0)| =  \sum_{j\leq -k-1} 2^{j \frac{\alpha_0 -\alpha_1}{2}} |\Lambda_{0,j} (f+\wp_0)| \lesssim 2^{-k\frac{\alpha_0 -\alpha_1}{2}} \sup_{j\in \Z} |\Lambda_{0,j}(f+\wp_0)|.
\end{equation}
Observe now that
\begin{align*}
\| \Lambda_{1,j}(f+\wp_0)\|_{L^{p_1}(\dell^1)}^{p_1} 
& \lesssim\int_0^\infty t^{p_1-1} \mu \Big( \Big\{ \sum\nolimits_{j\in\Z}  |\Lambda_{1,j} (f+\wp_0)|>t \Big\} \Big) \, \dd t.
\end{align*}
By~\eqref{eq0K} and~\eqref{eqK+1infty}, if  $k=k(t)$ is the largest integer such that $ C 2^{k\frac{\nm}{2p_1}}<\frac{t}{2}$,
then
\begin{align*}
\Big\{  \sum\nolimits_{j\in\Z}  |\Lambda_{1,j}  (f+\wp_0)|>t \Big\} 
&\subset \Big\{\sum\nolimits_{j\leq -k-1} |\Lambda_{1,j}  (f+\wp_0)|>\frac{t}{2}\Big\}\\
& \subset \Big\{\sup\nolimits_{j\in \N} |\Lambda_{0,j} (f+\wp_0)|>C' t 2^{k\frac{\alpha_0 -\alpha_1}{2}}\Big\},
\end{align*}
where we observe that $t 2^{k\frac{\alpha_0 -\alpha_1}{2}} \approx t^{\frac{p_1}{p_0}}$. Then,
\begin{align*}
\int_0^\infty  t^{p_1-1} & \mu \Big( \Big\{ \sum_{j\in\Z}  |\Lambda_{1,j}  (f+\wp_0)|>t \Big\} \Big) \, \dd t\\
& \lesssim \int_0^\infty  t^{p_1-1} \mu \Big( \Big\{ \sup_{j\in \N} |\Lambda_{0,j}  (f+\wp_0)|> t^{\frac{p_1}{p_0}}  \Big\} \Big) \, \dd t\\
& \lesssim \int_0^\infty  s^{p_0-1}  \mu \Big( \Big\{  \sup_{j\in \N} |\Lambda_{0,j}  (f+\wp_0)|> s\  \Big\} \Big) \, \dd s \\
& \lesssim \| \Lambda_{0,j}  (f+\wp_0) \|_{L^{p_0}(\dell^\infty)}^{p_0}  \lesssim 1,
\end{align*}
hence $\| [f]\|_{\dF^{p_1,1}_{\alpha_1}}  \lesssim \| [f]\|_{\dF^{p_0,\infty}_{\alpha_0}}  $ and the proof of (4) is complete.
\end{proof}

\section{Besov and Triebel--Lizorkin algebras}\label{sec:AP}
Within the setting above, in this section we further assume that $M$ is a smooth manifold endowed with a family $\mathfrak X = \{X_1, \dots, X_\kappa \}$  of smooth vector fields, and that $\Ls$ is their sum of squares
\[
\Ls = -\sum_{j=1}^\kappa  X_j^2.
\]
We assume that its heat kernel $H_t(x,y)$ is smooth jointly in $t>0$ and $x,y\in M$, and that all its derivatives with respect to ${\mathfrak X}$ satisfy Gaussian estimates: if  $\mathcal{I}= \{1,\dots, \kappa\}$, then for every $h,k\in \N$ there exist positive constants $C = C_{h,k}$ and $c=c_{h,k}$ such that
\begin{equation}\label{XHtestimate}
|(X_{J})_{y} (X_{I})_{x}  H_t(x,y)| \leq C t^{-\frac{h+k}{2}} \frac{\e^{-c \, d^2(x,y)/t}}{(\mu(B(x,\sqrt t))\mu(B(y,\sqrt t)))^{1/2}}, \qquad  I \in \mathcal{I}^h,\, J\in \mathcal{I}^k.
\end{equation}
If $h\in\N$ and $I\in \mathcal{I}^h$, then $X_I$ stands for $X_{I_1}\cdots X_{I_h}$; we shall also write $|I|=h$. If $h=0$, $X_I$ will be understood as the identity operator. Note that the estimates~\eqref{XHtestimate} are stronger than~\eqref{dtHtestimate}, and imply the following lemma.

\begin{lemma}\label{pointwiseestheat2}
For all $h\in \N$ there is $a_h>0$ such that for all  measurable functions $g\in\Ss'$
\begin{equation*}
|X_{J}T_{t} g |\lesssim t^{-h/2} T_{a_h t}|g|\qquad   t>0, \, J\in \mathcal{I}^h.
\end{equation*}
\end{lemma}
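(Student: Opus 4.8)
The plan is to mirror the structure of Lemma~\ref{pointwiseestheat1}, replacing the time derivative estimate~\eqref{dtHtestimate} by the spatial derivative estimate~\eqref{XHtestimate}. First I would note that $X_J T_t g$ is represented, for $g$ a reasonable measurable function in $\Ss'$, by the kernel $(X_J)_x H_t(x,y)$ acting against $g(y)$; this differentiation under the integral sign is legitimate because the kernel and all its $\mathfrak X$-derivatives decay Gaussianly in $y$ while $g$ grows at most polynomially (being in $\Ss'$, recall the slowly growing property from~\cite[Proposition 3.2]{GKKP1} applied to $T_tg$), so the integral and its $X_J$-derivative converge absolutely and locally uniformly.

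Next, fixing $h=|J|$ and applying~\eqref{XHtestimate} with $k=0$, I would bound
\[
|X_J T_t g(x)| \leq \int_M |(X_J)_x H_t(x,y)|\, |g(y)|\, \dd\mu(y) \lesssim t^{-h/2} \int_M \frac{\e^{-c\, d^2(x,y)/t}}{(\mu(B(x,\sqrt t))\mu(B(y,\sqrt t)))^{1/2}}\, |g(y)|\, \dd\mu(y).
\]
The remaining task is to recognize the right-hand side, up to constants, as $t^{-h/2} T_{a_h t}|g|(x)$ for a suitable $a_h>0$. This is exactly the comparison already used in the proof of Lemma~\ref{pointwiseestheat1}: by the lower Gaussian bound in~\eqref{Htestimate}, for $a_h$ large enough (so that $c \geq b_1/a_h$, equivalently $a_h \geq b_1/c$) one has
\[
\frac{\e^{-c\, d^2(x,y)/t}}{(\mu(B(x,\sqrt t))\mu(B(y,\sqrt t)))^{1/2}} \lesssim \frac{\e^{-b_1\, d^2(x,y)/(a_h t)}}{(\mu(B(x,\sqrt{a_h t}))\mu(B(y,\sqrt{a_h t})))^{1/2}} \lesssim H_{a_h t}(x,y),
\]
where the first inequality absorbs the mismatch between the volume factors at scale $\sqrt t$ and scale $\sqrt{a_h t}$ using the doubling property (the volume ratio is polynomial in $a_h$, hence a constant, and is dominated by the gain in the Gaussian exponent since we only need the inequality up to a multiplicative constant). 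Integrating against $|g|$ then gives $|X_J T_t g(x)| \lesssim t^{-h/2} T_{a_h t}|g|(x)$, as desired.

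The only genuinely delicate point is the differentiation-under-the-integral justification for general measurable $g\in\Ss'$ rather than, say, $g\in L^p$ or $g$ bounded; but this is handled precisely as in the cited distributional framework of~\cite{KP, GKKP1}, since the estimates~\eqref{XHtestimate} provide Gaussian control on all the kernels $(X_J)_x H_t(x,\cdot)$ uniformly for $x$ in compact sets and $t$ in compact subsets of $(0,\infty)$, which together with the polynomial growth of $g$ yields a dominating integrable function and local uniform convergence of difference quotients. Everything else is the routine Gaussian-kernel bookkeeping already carried out for Lemma~\ref{pointwiseestheat1}, so I would simply remark that the proof follows that of Lemma~\ref{pointwiseestheat1}~(2) verbatim with~\eqref{XHtestimate} in place of~\eqref{dtHtestimate}.
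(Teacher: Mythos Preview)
Your proposal is correct and follows exactly the paper's approach: the paper's proof is a one-liner invoking~\eqref{XHtestimate}, the two-sided heat kernel bounds~\eqref{Htestimate}, and the doubling property, which is precisely what you spell out. One trivial slip: in your displayed comparison you should use the lower-bound constant $b_0$ rather than $b_1$ (you correctly say ``lower Gaussian bound'' but then write $b_1$); with that fixed, the argument is exactly as intended.
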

\begin{proof}
This is just an application of~\eqref{XHtestimate}, the heat kernel estimates~\eqref{Htestimate} and the doubling property of $\mu$.
\end{proof}

Let us define now the space 
\[
\tilde{\Ss} = \{ \phi\in L^2 \colon X_J \phi \in L^2, \; \tilde p_{n}(\phi) <\infty \; \forall n\in \N, \, \forall J\in\mathcal{I}^n \},
\]
endowed with the family of seminorms
\[
\tilde p_{n}(\phi) = \| (1+ d(\cdot ))^n \max_{0\leq |J| \leq n}|X_J\phi|\|_\infty, \quad n\in \N.
\]
In the next result we show that this space coincides with $\Ss$.

\begin{lemma} \label{lemma:Sderivate}
The spaces $\tilde \Ss$ and $\Ss$ coincide as Fréchet spaces. In particular, $X_J$ is continuous on $\Ss$ and $\Ss'$ for all $k\in\N$ and  $J\in \mathcal{I}^k$.
\end{lemma}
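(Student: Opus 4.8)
The inclusion $\tilde\Ss\subseteq\Ss$ is elementary. Since $\Ls=-\sum_{j=1}^\kappa X_j^2$, expanding the power gives $\Ls^k=(-1)^k\sum_{j_1,\dots,j_k}X_{j_1}^2\cdots X_{j_k}^2$, a finite sum of operators of the form $X_I$ with $|I|=2k$ and constant coefficients; hence $\max_{0\le k\le n}|\Ls^k\phi|\leq\kappa^n\max_{0\le|J|\le 2n}|X_J\phi|$, so that $p_n(\phi)\le\kappa^n\tilde p_{2n}(\phi)$, while each $\Ls^k\phi$ is a finite combination of functions $X_I\phi$ which lie in $L^2$ when $\phi\in\tilde\Ss$. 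Thus $\tilde\Ss\hookrightarrow\Ss$ continuously, and the content of the lemma is the reverse inclusion.

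To prove $\Ss\subseteq\tilde\Ss$ with continuous inclusion I would use the reproducing formula of Proposition~\ref{thm:decomp-SP}. Note first that $\mathcal D(M)\subseteq\Ss$ continuously (because $\Ls$ is a differential operator, hence local, and preserves $C_c^\infty$), so $\Ss'\hookrightarrow\mathcal D'(M)$. Fix $\phi\in\Ss$ and $J\in\mathcal I^h$, and pick an integer $m\ge1$ with $m>h/2$. Proposition~\ref{thm:decomp-SP} with $\tau=1$ gives, in $\Ss$ and hence in $\mathcal D'(M)$,
\[
\phi=\frac{1}{(m-1)!}\int_0^1 s^{m-1}\,\Ls^m T_s\phi\,\dd s+\sum_{k=0}^{m-1}\frac{1}{k!}\,\Ls^k T_1\phi .
\]
Each summand $\Ls^k T_1\phi$ and each $\Ls^m T_s\phi$ ($s>0$) is smooth by the assumed smoothness of $H_t$, so $X_J$ may be applied to them classically, and since $X_J$ is continuous on $\mathcal D'(M)$ it commutes with the sum and with the integral; therefore
\[
X_J\phi=\frac{1}{(m-1)!}\int_0^1 s^{m-1}\,X_J\Ls^m T_s\phi\,\dd s+\sum_{k=0}^{m-1}\frac{1}{k!}\,X_J\Ls^k T_1\phi \quad\text{in }\ \mathcal D'(M).
\]
I would then estimate each term pointwise, with weights. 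Writing $\Ls^m T_s\phi=T_s\Ls^m\phi$ and using Lemma~\ref{pointwiseestheat2}, the inequality $(1+d(x))^n\le(1+d(x,y))^n(1+d(y))^n$ and the bound~\eqref{norm1}, one gets for $s\in(0,1]$
\[
(1+d(x))^n\,|X_J\Ls^m T_s\phi(x)|\lesssim s^{-h/2}(1+d(x))^n\,T_{a_h s}|\Ls^m\phi|(x)\lesssim s^{-h/2}\,p_n(\Ls^m\phi)\lesssim s^{-h/2}\,p_{n+m}(\phi),
\]
so, since $m-1-h/2>-1$, the integral converges absolutely and contributes $\lesssim p_{n+m}(\phi)$; similarly, writing $\Ls^k T_1\phi=T_{1/2}(T_{1/2}\Ls^k\phi)$ and applying Lemma~\ref{pointwiseestheat2} and~\eqref{norm1} once more gives $(1+d(x))^n|X_J\Ls^k T_1\phi(x)|\lesssim p_n(\Ls^k\phi)\lesssim p_{n+m}(\phi)$. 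Taking $m=n$ (the case $n=0$ being trivial), this shows that $X_J\phi$ is a continuous function with $\tilde p_n(\phi)\lesssim p_{2n}(\phi)$ for every $n$, the implicit constants depending only on $n$. The same computation with $L^2$-norms in place of the weighted sup-norms—using that $T_s$ is an $L^2$-contraction and that $\|X_J T_s g\|_2\lesssim s^{-h/2}\|g\|_2$ by Lemma~\ref{pointwiseestheat2}—shows $X_J\phi\in L^2$, again because $m>h/2$ makes $\int_0^1 s^{m-1-h/2}\,\dd s$ finite. Hence $\phi\in\tilde\Ss$, the inclusion $\Ss\hookrightarrow\tilde\Ss$ is continuous, and $\Ss=\tilde\Ss$.

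It remains to justify the final sentence. Continuity of $X_J$ on $\Ss=\tilde\Ss$ is immediate, since composing the vector fields concatenates the multi-indices and therefore $\tilde p_n(X_J\phi)\le\tilde p_{n+|J|}(\phi)$. For the action on $\Ss'$, observe that the self-adjointness of $\Ls=-\sum_j X_j^2$ on $L^2(\mu)$ forces each $X_j$ to be skew-adjoint with respect to $\mu$ (equivalently, divergence-free); hence the transpose of $X_J$ is, up to a sign, again an operator $X_{J'}$ of the same type, which maps $\Ss$ to $\Ss$ continuously by what precedes, so that $X_J$ extends by duality to a weak-$*$ continuous operator on $\Ss'$.

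The main obstacle is the reverse inclusion $\Ss\subseteq\tilde\Ss$: since we do not assume any Hörmander-type condition on $\mathfrak X$, no classical subelliptic a priori estimate is available to pass from control of the iterates $\Ls^k\phi$ to control of the $X_J\phi$, and one must instead recover $X_J\phi$ entirely from the heat semigroup and the derivative bounds~\eqref{XHtestimate}. The only genuinely delicate point in doing so is the behaviour of $s^{m-1}X_J\Ls^m T_s\phi$ as $s\to0^+$, which is controlled precisely by the choice $m>|J|/2$; everything else is a routine combination of Lemma~\ref{pointwiseestheat2}, the weighted heat-kernel bound~\eqref{norm1}, and Proposition~\ref{thm:decomp-SP}.
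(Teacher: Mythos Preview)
Your argument for the identity $\Ss=\tilde\Ss$ is correct and close in spirit to the paper's: both recover $X_J\phi$ from an integral representation of $\phi$ built out of the heat semigroup, and both hinge on the same small-time integrability condition $m>|J|/2$ (respectively $\beta>|J|/2$) together with Lemma~\ref{pointwiseestheat2} and the weighted $L^1$-bound~\eqref{norm1}. The paper writes $\phi=(I+\Ls)^{-\beta}(I+\Ls)^\beta\phi$ with $\beta=[n/2]+1$ and uses the subordination formula $(I+\Ls)^{-\beta}=\Gamma(\beta)^{-1}\int_0^\infty t^{\beta-1}\e^{-t}T_t\,\dd t$, whereas you invoke Proposition~\ref{thm:decomp-SP} with $\tau=1$; the resulting pointwise estimates are essentially identical, and your route has the minor advantage of reusing a formula already established earlier in the paper rather than introducing the resolvent.

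There is, however, a genuine error in your justification of the continuity of $X_J$ on $\Ss'$. The assertion that self-adjointness of $\Ls=-\sum_j X_j^2$ forces each $X_j$ to be skew-adjoint (divergence-free) is false. On $\R$ with Lebesgue measure, take $X_1=\cos\theta(x)\,\partial_x$ and $X_2=\sin\theta(x)\,\partial_x$ for any smooth non-constant $\theta$; a direct computation gives $X_1^2+X_2^2=\partial_x^2$, so $\Ls=-\partial_x^2$ is self-adjoint, yet $\operatorname{div}X_1=-\theta'\sin\theta$ and $\operatorname{div}X_2=\theta'\cos\theta$ do not vanish. What one actually needs for the duality argument is that the formal transpose $X_J^t$, which is a differential operator of order $|J|$ whose coefficients involve the divergences $\operatorname{div}_\mu X_j$ and their $X$-derivatives, map $\Ss$ continuously into itself; this requires those coefficients to be suitably tame, not that they vanish. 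The paper does not spell this out either, and in the two classes of examples it treats (nilpotent Lie groups, Grushin) the vector fields are in fact divergence-free, so the issue does not arise there --- but your stated reason is incorrect as written.
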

\begin{proof}

The continuous inclusion $\tilde \Ss \subseteq \Ss$ is immediate from $p_n(\phi) \leq \tilde p_{2n}(\phi)$, so we have only to consider the opposite inclusion. Suppose $\beta>0$. We claim that if $\beta>|J|/2$, then for all $n\in \N$ and $\psi\in \Ss$
\begin{equation}\label{claimS}
\| (1+d(\cdot))^n X_J (I+\Ls)^{-\beta} \psi\|_\infty  \lesssim \| (1+d(\cdot))^n \psi \|_\infty.
\end{equation}
Assuming the claim, if $\phi\in \Ss$ then
\begin{align*}
\tilde p_n(\phi)
& \leq \max_{0\leq |J|\leq n} \|(1+d(\cdot))^{n} X_J \phi\|_{\infty}\\
& \lesssim \|(1+d(\cdot))^{n} (I + \Ls)^{[n/2]+1} \phi\|_{\infty } \lesssim p_{n}(\phi),
\end{align*}
and this proves that $\Ss \subseteq \tilde \Ss$ (observe that since $(1+d(\cdot))^{-\dm -1} \in L^1(\mu)$, one has $X_J \phi \in L^2$ for all $J$). Therefore, it remains to prove the claim~\eqref{claimS}. 

Pick $\psi \in \Ss$. Since
\[
(I+\Ls)^{-\beta} \psi =\frac{1}{\Gamma(\beta)} \int_0^\infty t^\beta \e^{-t} T_t \psi\, \frac{\dd t}{t},
\]
by~\eqref{XHtestimate}
\begin{align*}
(1+d(x))^n &|X_J (I+\Ls)^{-\beta} \psi(x)|\\
& \leq (1+d(x))^n \int_0^\infty t^{\beta-1} \e^{-t}\int_{M} |(X_J)_x H_t(x,y)| |\psi(y)|\, \dd \mu(y) \, \dd t \\
%& \lesssim (1+d(x))^n \int_0^\infty t^{\beta -\frac{|J|}{2}-1} \e^{-t}  \int_{M} H_t(x,y) |\psi(y)|\, \dd \mu(y)\, \dd t\\
& \lesssim \int_0^\infty t^{\beta -\frac{|J|}{2}-1} \e^{-t}  \int_{M} (1+d(x,y))^n H_t(x,y) (1+d(y))^n |\psi(y)|\, \dd \mu(y)\, \dd t,
\end{align*}
where we used that $(1+d(x))^n \leq (1+d(x,y))^n (1+d(y))^n$. Thus
\begin{align*}
(1+d(x))^n& |X_J (I+\Ls)^{-\beta} \psi(x)| \nonumber \\
%& \lesssim \| (1+d(\cdot))^n \psi\|_\infty \int_0^\infty t^{\beta -\frac{|J|}{2}-1} \e^{-t}  \int_{M} (1+d(x,y))^n H_t(x,y) \, \dd \mu(y)\, \dd t \nonumber \\
& \lesssim \| (1+d(\cdot))^n \psi\|_\infty \int_0^\infty t^{\beta -\frac{|J|}{2}-1} \e^{-t}  \|  (1+d(x,\cdot ))^n H_t(x,\cdot)\|_1\, \dd t\\
&  \lesssim \| (1+d(\cdot))^n \psi\|_\infty \int_0^\infty t^{\beta -\frac{|J|}{2}-1} \e^{-t} (1+\sqrt{t})^{n}\, \dd t,
\end{align*}
the last inequality by~\eqref{norm1}. The proof is complete.
\end{proof}

By Lemma~\ref{lemma:Sderivate}, from now on we shall make no distinction between $\Ss$ and $\tilde{\Ss}$.

\begin{definition}
Suppose $1\leq p,q,r\leq \infty$ and $\alpha \geq 0$. We define
\begin{align*}
 L^r\cap \dB^{p,q}_\alpha  &= \{ f\in  L^r  \colon [f] \in \dB^{p,q}_\alpha \}, \qquad L^r \cap \dF^{p,q}_\alpha = \{f\in L^r\colon [f] \in \dF^{p,q}_\alpha \},
\end{align*}
endowed with the norms $\| f\|_ r + \|[f] \|_{\dB^{p,q}_\alpha}$ and $\| f\|_ r + \|[f] \|_{\dF^{p,q}_\alpha}$ respectively.
\end{definition}
The reader will forgive us for the slight abuse of notation, as $\dX^{p,q}_\alpha $ is a space of equivalent classes of distributions while $L^r$ is a space of functions. Such a choice is justified by the following lemma.

\begin{lemma}\label{lemmaintersec}
Suppose $p,q\in [1, \infty]$, $\alpha> 0$ and let $m>\alpha/2$ be an integer.
\begin{itemize}
\item[\emph{(i)}]  If $r\in [1,\infty]$ and $f\in L^r\cap \dB^{p,q}_\alpha$, then $\|[f]\|_{\dB_\alpha^{p,q} } \approx \|t^{m-\frac{\alpha}{2}} \Ls^m T_t f \|_{L^q_+(L^p)}$.
\item[\emph{(ii)}] If $r\in (1,\infty]$ and $f\in L^r \cap \dF^{p,q}_\alpha$, then $\|[f]\|_{\dF_\alpha^{p,q} } \approx \|t^{m-\frac{\alpha}{2}}  \Ls^m T_t f \|_{L^p(L^q_+)}$.
\end{itemize}
(In other words, under the assumptions above one can take $\wp_f=0$).
\end{lemma}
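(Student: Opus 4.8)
The statement to prove is that when $f$ is a genuine function lying in $L^r$ (with $r\in[1,\infty]$ in the Besov case, $r\in(1,\infty]$ in the Triebel--Lizorkin case) and $[f]\in\dX^{p,q}_\alpha$ with $\alpha>0$, then the minimizing polynomial $\wp_f$ of Proposition~\ref{prop:minattained} can be taken to be $0$; equivalently, the infimum defining $\|[f]\|_{\dX^{p,q}_\alpha}$ is already attained at $\rho=0$. By Proposition~\ref{prop:minattained} we know $\Ls^m(f+\wp_f)=\Ls^m(f)+\Ls^m\wp_f$ and that $\wp_f\in\Ps$, so $\Ls^m\wp_f\in\Ps$ as well; the whole point is to show $\Ls^m\wp_f=0$, for then $T_t\Ls^m(f+\wp_f)=T_t\Ls^m f$ by Lemma~\ref{lemma:TLnorm} and the two quantities coincide. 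By Lemma~\ref{lemma:pol}, $\Ls^m\wp_f=0$ will follow as soon as we exhibit some $\beta>0$ with $t^\beta T_t(\Ls^m\wp_f)\in X^{p,q}_+$.

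\textbf{Key steps.} First I would show that $t^{m-\alpha/2}T_t\Ls^m f\in X^{p,q}_+$ directly, without reference to $\wp_f$. The mechanism is Lemma~\ref{pointwiseestheat1}(3): for $g\in L^r$ one has $\|\Ls^m T_t g\|_r\lesssim t^{-m}\|g\|_r$, which for large $t$ gives decay $t^{m-\alpha/2}\|\Ls^m T_tf\|_r\lesssim t^{-\alpha/2}\|f\|_r\to0$; since $\alpha>0$ this is integrable in $dt/t$ near $\infty$ (and bounded near $\infty$ when $q=\infty$). To handle the full range of $p$ and the region $t\le1$, I would combine this $L^r$ bound with the already-established finiteness of $\|t^{m-\alpha/2}T_t\Ls^m(f+\wp_f)\|_{X^{p,q}_+}$: split the $t$-integral at $t=1$. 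On $(0,1)$ both $T_t\Ls^m f$ and $T_t\Ls^m(f+\wp_f)$ differ by $t^{m-\alpha/2}T_t\Ls^m\wp_f$, and by Lemma~\ref{lemma:TLnorm} $T_t\Ls^m\wp_f=\sum_{j}\frac{(-1)^j}{j!}(\Ls^{m+j}\wp_f)t^j$ is a polynomial in $t$ with continuous slowly growing coefficients, so its $X^{p,q}_+$-type quasinorm over $(0,1)$ is governed by the coefficients and is finite iff $\Ls^m\wp_f=0$. Wait — more cleanly: on $(0,1)$ the measure $t^{(m-\alpha/2)q}\,dt/t$ has finite mass (since $m-\alpha/2>0$), so $\|\mathbf 1_{(0,1)}t^{m-\alpha/2}T_t\Ls^m\wp_f\|_{X^{p,q}_+}<\infty$ automatically. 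Hence $\|\mathbf 1_{(0,1)}t^{m-\alpha/2}T_t\Ls^m f\|_{X^{p,q}_+}\le\|\mathbf 1_{(0,1)}t^{m-\alpha/2}T_t\Ls^m(f+\wp_f)\|_{X^{p,q}_+}+\|\mathbf 1_{(0,1)}t^{m-\alpha/2}T_t\Ls^m\wp_f\|_{X^{p,q}_+}<\infty$. Combining with the $L^r$-decay estimate on $(1,\infty)$ embedded into $X^{p,q}_+$ (using that on a set of finite measure an $L^r$ bound controls weaker norms near $t=\infty$ only after care — here the cleanest is simply $\|F(t,\cdot)\|_p$ for $p\le r$ or interpolation, but since we only need finiteness and $t^{-\alpha/2}$ is integrable, a crude bound suffices), we conclude $t^{m-\alpha/2}T_t\Ls^m f\in X^{p,q}_+$.

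\textbf{Conclusion from finiteness.} Once $t^{m-\alpha/2}T_t\Ls^m f\in X^{p,q}_+$ is established, subtract: $t^{m-\alpha/2}T_t\Ls^m\wp_f=t^{m-\alpha/2}T_t\Ls^m(f+\wp_f)-t^{m-\alpha/2}T_t\Ls^m f\in X^{p,q}_+$, and since $\Ls^m\wp_f\in\Ps$, Lemma~\ref{lemma:pol} (applied with $\beta=m-\alpha/2>0$) forces $\Ls^m\wp_f=0$. Therefore $T_t\Ls^m(f+\wp_f)=T_t\Ls^m f$ for all $t>0$, which gives $\|[f]\|_{\dX^{p,q}_\alpha}=\|t^{m-\alpha/2}\Ls^m T_t f\|_{X^{p,q}_+}$, i.e.\ one may take $\wp_f=0$. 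The statement then extends from this single admissible exponent $m=[\alpha/2]+1$ to all integers $m>\alpha/2$ via the last part of Proposition~\ref{prop:minattained}.

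\textbf{Main obstacle.} The delicate point is the Triebel--Lizorkin case with $r=p$ excluded being allowed ($r\in(1,\infty]$): one must make sure the $L^r$-boundedness argument on $(1,\infty)$ produces finiteness of the \emph{mixed} norm $L^p(L^q_+)$ and not merely of $L^q_+(L^p)$. This is why the hypothesis is $r\in(1,\infty]$ rather than $r\in[1,\infty]$: for $p\in(1,\infty)$ one invokes the vector-valued maximal/semigroup bounds already packaged in Corollary~\ref{bilinearCRTNint} and Proposition~\ref{contCRTN}, writing $\Ls^m T_t f=\Ls^m T_{t/2}T_{t/2}f$ and controlling $T_{t/2}$ of an $L^r$ function. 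Getting the bookkeeping of the two regions $t\lessgtr1$ to interface cleanly — and confirming that the crude estimates genuinely land in the correct mixed-norm space — is the only real work; everything else is a direct application of the machinery already built.
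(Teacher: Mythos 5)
Your overall strategy is the right one (reduce to showing $\Ls^m\wp_f=0$ via Lemma~\ref{lemma:pol}), but two of your key finiteness claims are false as stated, and both failures stem from the same source: $\mu(M)=\infty$. First, on $(0,1)$ you assert that $\|\mathbf 1_{(0,1)}t^{m-\alpha/2}T_t\Ls^m\wp_f\|_{X^{p,q}_+}<\infty$ ``automatically'' because the measure $t^{(m-\alpha/2)q}\,dt/t$ has finite mass near $0$. That controls only the $t$-integration; by Lemma~\ref{lemma:TLnorm} the function $T_t\Ls^m\wp_f=\sum_j\frac{(-1)^j}{j!}(\Ls^{m+j}\wp_f)t^j$ has coefficients that are merely continuous and slowly growing, and these need not belong to $L^p(\mu)$ for any finite $p$ (think of a nonzero constant polynomial). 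So the $x$-norm may already be infinite for each fixed $t$, and your bound on $(0,1)$ collapses. Second, on $(1,\infty)$ your estimate $t^{m-\alpha/2}\|\Ls^m T_tf\|_r\lesssim t^{-\alpha/2}\|f\|_r$ yields finiteness in $L^q_+(L^r)$, not in $X^{p,q}_+$ with exponent $p$; since $M$ has infinite measure there is no embedding $L^r\subseteq L^p$ in either direction, and neither interpolation nor a ``crude bound'' converts an $L^r$ bound into an $L^p$ one. You correctly identify this as the delicate point but do not resolve it, and as written the global claim $t^{m-\alpha/2}T_t\Ls^m f\in X^{p,q}_+$ cannot be established this way (indeed, proving it directly is essentially equivalent to the lemma itself).

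The missing idea — and the way the paper proceeds — is to localize before comparing norms. Lemma~\ref{lemma:pol} supplies a \emph{relatively compact} open set $U$, a time $t_0$ and an integer $k\geq1$ with $\mathbf 1_{\overline U}|T_t\Ls^m\rho|\gtrsim t^{k-1}$ for $t\geq t_0$. One then restricts everything to $\overline U\times[t_0,\infty)$: there, $t^{m-\alpha/2}|\Ls^m T_tf|\lesssim t^{-\alpha/2}T_{ct}|f|$ lies in $X^{r,q}_+$ (by $L^r$-boundedness of $T_t$, or Proposition~\ref{CRTNintegrale} when $X^{r,q}_+=L^r(L^q_+)$ with $1<r<\infty$), and because $\mu(\overline U)<\infty$ this restriction also lies in $X^{\min(p,r),q}_+$. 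Subtracting from the restriction of $t^{m-\alpha/2}\Ls^m T_t(f+\rho)$, which is in $X^{p,q}_+$ hence in $X^{\min(p,r),q}_+$ on $\overline U$, one finds $\mathbf 1_{\overline U}\mathbf 1_{[t_0,\infty)}t^{m-\alpha/2}T_t\Ls^m\rho\in X^{\min(p,r),q}_+$, which contradicts the lower bound $t^{m-\alpha/2+k-1}$ unless $\Ls^m\rho=0$. Your proof needs this localization step (or an equivalent device) to close the two gaps; without it the argument does not go through.
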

\begin{proof}
It is enough to show that, if $f\in L^r$ and $\| t^{m-\frac{\alpha}{2}}  \Ls^m T_t(f+\rho)\|_{X^{p,q}_+}<\infty$ for some $\rho \in \Ps$, then $\|t^{m-\frac{\alpha}{2}}  \Ls^m T_t (f+\rho)\|_{X^{p,q}_+} = \|t^{m-\frac{\alpha}{2}}  \Ls^m T_t f\|_{X^{p,q}_+}$. We shall prove that under the above conditions one has indeed $\Ls^m \rho =0$.

Assume that $\Ls^m \rho \neq 0$ and let $U$, $k$ and $t_0$ be as in Lemma~\ref{lemma:pol} for $\Ls^m \rho \in \Ps$. In other words, $\mathbf{1}_{U} |T_t\Ls^m \rho|\gtrsim \mathbf{1}_{U} t^{k-1}$ for $t\geq t_0$.

 Since $ t^{m-\frac{\alpha}{2}}  \Ls^m T_t (f+\rho) \in X^{p,q}_+$, in particular $\mathbf{1}_{U}\mathbf{1}_{[t_0,\infty)}(t) t^{m-\frac{\alpha}{2}}  \Ls^m T_t (f+\rho) \in X^{p,q}_+$. Moreover, by Lemma~\ref{pointwiseestheat1}
\[
\mathbf{1}_{[t_0,\infty)}(t) t^{m-\frac{\alpha}{2}}  |\Ls^m T_t f|\lesssim \mathbf{1}_{[t_0,\infty)}(t) t^{-\frac{\alpha}{2}} T_{ct} |f|.
\]
Since $f\in L^r$, this latter function belongs to $X^{r,q}_+$: by the $L^r$ boundedness of $T_t$ if $X^{r,q}_+ = L^q_+(L^r)$ or if $r=\infty$ and $X^{\infty,q}_+ = L^\infty(L^q_+)$, and by Proposition~\ref{CRTNintegrale} if $X^{r,q}_+ = L^r(L^q_+)$ and $1<r<\infty$. Then
\[
\mathbf{1}_{U}\mathbf{1}_{[t_0,\infty)}(t) t^{m-\frac{\alpha}{2}}  \Ls^m T_t f \in X^{r,q}_+, \qquad \mathbf{1}_{U}\mathbf{1}_{[t_0,\infty)}(t) t^{m-\frac{\alpha}{2}}  \Ls^m T_t (f+\rho) \in X^{p,q}_+,
\]
from which we conclude that $\mathbf{1}_{U}\mathbf{1}_{[t_0,\infty)}(t) t^{m-\frac{\alpha}{2}}  \Ls^m T_t \rho \in X^{\min(p,r),q}_+$. This contradicts the estimate $\mathbf{1}_{U}\mathbf{1}_{[t_0,\infty)}(t) |T_t\Ls^m \rho|\gtrsim \mathbf{1}_{U}\mathbf{1}_{[t_0,\infty)}(t)  t^{k-1}$.
\end{proof}

We prove now a representation formula in the same spirit as Proposition~\ref{HomLPgenbis}, but for functions in some $L^p$ space.

\begin{proposition}\label{prop:constant}
Suppose $p\in [1,\infty]$ and $f\in L^p$. Then for all integers $m,n\geq 1$ and $J\in \mathcal{I}^n$
\begin{equation}\label{LPD-constant-der}
X_J f = \frac{1}{(m-1)!}  \int_0^{\infty} t^m X_J \Ls^{m} T_t f \, \frac{\dd t}{t}  \qquad \mbox{in } \Ss'. 
\end{equation}
\end{proposition}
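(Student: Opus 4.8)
The plan is to mimic the structure of the proof of~\eqref{HomLnf} in Proposition~\ref{HomLPgenbis}, using the decomposition of Proposition~\ref{thm:decomp-SP} applied to $f\in L^p\subset\Ss'$, and then show that the remainder terms vanish in $\Ss'$ as $\tau\to\infty$. Concretely, fix an integer $m\geq 1$ and apply~\eqref{LPDlocal} to $f$ with parameter $\tau$; after applying $X_J\Ls^n$ termwise (legitimate in $\Ss'$ since $X_J$ and $\Ls$ are continuous on $\Ss'$ by Lemma~\ref{lemma:Sderivate} and Remark~\ref{remark:continuity}) — wait, note the statement has $X_J\Ls^m$ inside the integral and $X_Jf$ on the left, with no $\Ls^n$; so in fact we apply~\eqref{LPDlocal} directly and then hit everything with $X_J$. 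This gives, in $\Ss'$,
\[
X_J f = \frac{1}{(m-1)!}\int_0^\tau t^m X_J\Ls^m T_t f\,\frac{\dd t}{t} + \sum_{k=0}^{m-1}\frac{1}{k!}\tau^k X_J\Ls^k T_\tau f,
\]
where the integral from $0$ to $\tau$ converges in $\Ss'$ by Proposition~\ref{thm:decomp-SP}. It then remains to prove two things: that the integral from $0$ to $\tau$ converges as $\tau\to\infty$ (to the integral in~\eqref{LPD-constant-der}), and that each remainder term $\tau^k X_J\Ls^k T_\tau f$ tends to $0$ in $\Ss'$.

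For the remainder terms, the key is Lemma~\ref{pointwiseestheat2}: for $J\in\mathcal I^n$ and any $k$, one has $|X_J\Ls^k T_\tau f| = |X_J T_{\tau/2}\,\Ls^k T_{\tau/2} f|\lesssim \tau^{-n/2} T_{a\tau}|\Ls^k T_{\tau/2}f|$, and then by Lemma~\ref{pointwiseestheat1}(2), $|\Ls^k T_{\tau/2}f|\lesssim \tau^{-k} T_{a'\tau}|f|$, so altogether $\tau^k|X_J\Ls^k T_\tau f|\lesssim \tau^{-n/2} T_{c\tau}|f|$. Testing against $\phi\in\Ss$, we pair $T_{c\tau}|f|$ with $\phi$ and move the semigroup onto $\phi$ by self-adjointness (or simply estimate $\|T_{c\tau}|f|\,\phi\|_1$ directly): since $f\in L^p$ and $\phi\in\Ss$ decays rapidly, one gets a bound like $\tau^{-n/2}$ times something bounded in $\tau$ — using~\eqref{eqcontract} or the $L^p$-$L^p$ boundedness of $T_t$ together with the rapid decay of $\phi$ to control the dual pairing uniformly in $\tau$. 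Hence $\tau^k\langle X_J\Ls^k T_\tau f,\phi\rangle\to 0$ for every $\phi\in\Ss$, i.e.\ the remainder terms vanish in $\Ss'$.

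For convergence of the integral, it suffices to show $\int_1^\infty t^{m-1} X_J\Ls^m T_t f\,\dd t$ converges in $\Ss'$ (the piece on $(0,1)$ is handled by Proposition~\ref{thm:decomp-SP}). Again by Lemmas~\ref{pointwiseestheat2} and~\ref{pointwiseestheat1}(2), $t^m|X_J\Ls^m T_t f|\lesssim t^{m-n/2} T_{ct}(t^{-m}|f|) = t^{-n/2}T_{ct}|f|$, which is integrable in $t$ at infinity once $n\geq 1$ (more precisely, after pairing with $\phi\in\Ss$ one obtains an integrand $\lesssim t^{-n/2-1}\,C(\phi)$), so the tail integral is absolutely convergent in the sense of pairing against any $\phi\in\Ss$; combined with a standard argument (Cauchy criterion in the weak topology of $\Ss'$ using the seminorm bounds from Lemma~\ref{lemmaseminorm}) this gives convergence in $\Ss'$. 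Taking $\tau\to\infty$ in the displayed identity then yields~\eqref{LPD-constant-der}.

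The main obstacle I anticipate is making the convergence arguments genuinely \emph{in $\Ss'$} rather than merely in a pointwise or $L^p$ sense: one must check that the relevant bounds, after pairing with $\phi\in\Ss$, are uniform over bounded sets of seminorms, so that the limits and the integral exist in the weak-$*$ topology of $\Ss'$. This is where Lemma~\ref{lemmaseminorm} (controlling $p_n(\Ls^h T_t\phi)$) and the rapid decay encoded in the seminorms $p_n$ (together with the doubling/noncollapsing bounds from Remark~\ref{noncollaps} ensuring $(1+d(\cdot))^{-N}\in L^{p'}$ for $N$ large) do the real work; the semigroup estimates themselves are routine given Lemmas~\ref{pointwiseestheat1} and~\ref{pointwiseestheat2}.
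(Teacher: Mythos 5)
Your proposal is correct and follows essentially the same route as the paper: apply the local decomposition~\eqref{LPDlocal}, hit it with $X_J$, and kill the remainder terms using the $\tau^{-n/2}$ gain from Lemma~\ref{pointwiseestheat2} combined with Lemma~\ref{pointwiseestheat1}(2), which gives $\|\tau^k X_J\Ls^k T_\tau f\|_p\lesssim \tau^{-n/2}\|f\|_p\to 0$ and hence convergence to $0$ in $\Ss'$. Your separate verification that the tail integral converges is harmless but redundant, since once the remainders vanish the integral automatically converges in $\Ss'$ to $X_Jf$.
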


\begin{proof}
By~\eqref{LPDlocal}, one gets 
\[
X_J f = \frac{1}{(m-1)!}\int_0^\tau t^m  X_J\Ls^m T_t f \, \frac{\dd t}{t} + \sum_{k=0}^{m-1} \frac{1}{k!} \tau^k X_J \Ls^k T_\tau f, \qquad \tau>0.
\]
Since  $n>0$ and
\[
\|  \tau^k X_J \Ls^k T_\tau f\|_{p} \lesssim \tau^{-n/2} \|f\|_p
\]
for all $\tau>0$ and $k=0,\dots, m-1$ by Lemma~\ref{pointwiseestheat1}, one gets $\tau^k X_J \Ls^k T_\tau f \to 0$ in $\Ss'$ for $\tau \to \infty$, and~\eqref{LPD-constant-der} follows.
\end{proof}

\begin{remark} If $p \in (1,\infty)$ and $n=0$, then~\eqref{LPD-constant-der} holds in $L^p$; this follows by~\cite[Proposition 2.11]{BCF}, see also~\cite[Theorem 2.3]{BDY}. If $f\in L^\infty$ and $n=0$, one can obtain~\eqref{LPD-constant-der} in a weaker form. Namely, there exists $\rho \in \Ps$ and a sequence of $(\tau_k)_k >0$, with $\lim_{k\to\infty}\tau_k =\infty$, such that
\begin{equation}\label{LPD-constant}
f - \rho = \frac{1}{(m-1)!}  \lim_{k\to \infty}\int_0^{\tau_k} (t\Ls)^mT_t f \, \frac{\dd t}{t}  \qquad \mbox{in } \Ss'. 
\end{equation}
Indeed, observe that by~\eqref{LPDlocal}, for all $\tau>0$
\[
f -\sum_{k=0}^{m-1} \frac{1}{k!} \tau^k \Ls^k T_\tau f = \frac{1}{(m-1)!}\int_0^\tau (t\Ls)^mT_t f \, \frac{\dd t}{t}.
\]
Since   $\|  (\tau \Ls)^k T_\tau f\|_{\infty} \lesssim \|f\|_\infty$ for all $\tau>0$ and $k=0,\dots, m-1$ by Lemma~\ref{pointwiseestheat1}, the sequence
\[
\int_0^\tau (t\Ls)^mT_t f \, \frac{\dd t}{t}, \qquad \tau\in \N
\]
is uniformly bounded in $L^\infty$. By the Banach--Alaoglu Theorem, it admits a weak-* convergent subsequence, hence convergent in $\Ss'$, whose limit is in $L^\infty$. Hence, there is a sequence $(\tau_k)$ in $\N$ such that
\[
\lim_{k\to \infty} \int_0^{\tau_k} (t\Ls)^mT_t f \, \frac{\dd t}{t}
\] 
exists in $\Ss'$, and belongs to $L^\infty$. This implies that
\[
f_0 = \lim_{k\to \infty} \sum_{j=0}^{m-1} \frac{1}{j!} (\tau_k \Ls)^j T_{\tau_k}f 
\] 
exists in $\Ss'$, and by~\eqref{HomLnf} one gets $f_0\in \Ps$. Then~\eqref{LPD-constant} follows. Since $f$ and the right-hand side are in $L^\infty$, we also get $f_0 \in L^\infty$. If one further assumes that the only polynomials which are bounded are the constants, then $f_0$ reduces to a constant. We refer the reader to the next Section~\ref{sec:SS} for a more thorough discussion on homogeneous Calder\'on-type representation formulae.
\end{remark}

To prove the algebra properties, we shall also need the following result. For $f\in \Ss'$, $t>0$ and $m\in \N$ we write
\[
T^{(m),*}_{t} f =  \sup_{s\in [t,2t]}\max_{ |J|= m} |X_J T_s f| . 
\]
\begin{lemma}\label{teo-equiv3}
Suppose $\alpha>0$, $q\in [1,\infty]$ and $f\in L^r$ for some $r\in (1,\infty]$. Let $m >\alpha$ be an integer.
\begin{itemize}
\item[\emph{(i)}] If $p\in [1,\infty]$, then $ \|2^{j(m-\alpha)/2} T^{(m),*}_{{2^j}} f\|_{\dell^q(L^p)} \lesssim \|[f]\|_{\dB_\alpha^{p,q}} $.
\item[\emph{(ii)}] If  $p\in (1,\infty)$, then  $  \|2^{j(m-\alpha)/2} T^{(m),*}_{{2^j}} f \|_{L^p(\dell^q)} \lesssim \|[f]\|_{\dF_\alpha^{p,q}}$.
\end{itemize}
\end{lemma}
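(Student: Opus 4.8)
The plan is to reduce everything to the already-established discrete characterization of the $\dX^{p,q}_\alpha$-norm in Theorem~\ref{teo-equiv2}, together with the $L^r$-compatibility provided by Lemma~\ref{lemmaintersec}. Since $f\in L^r$ with $r>1$, Lemma~\ref{lemmaintersec} lets us drop the polynomial correction, so that $\|[f]\|_{\dF^{p,q}_\alpha}\approx\|2^{j(m_0-\alpha/2)}\Ls^{m_0}T_{2^j}f\|_{L^p(\dell^q)}$ for any integer $m_0>\alpha/2$ (and analogously for $\dB$). So the task is to bound $2^{j(m-\alpha)/2}T^{(m),*}_{2^j}f$ by such a quantity. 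First I would fix $s\in[t,2t]$, $|J|=m$, and split $X_JT_sf$ using the semigroup property: writing $s=\frac{s}{2}+\frac{s}{2}$ and inserting a large power of $\Ls$ that we can later ``invert'' via a representation formula seems the right move. Concretely, apply the Calderón-type formula of Proposition~\ref{prop:constant}, namely $X_Jf=\frac{1}{(N-1)!}\int_0^\infty u^N X_J\Ls^N T_uf\,\frac{\dd u}{u}$ for a suitably large integer $N$; then $X_JT_sf=\frac{1}{(N-1)!}\int_0^\infty u^N X_J\Ls^N T_{u+s}f\,\frac{\dd u}{u}$.

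Next I would estimate the integrand pointwise. By Lemma~\ref{pointwiseestheat2}, $|X_JT_{u+s}g|\lesssim (u+s)^{-m/2}T_{a(u+s)}|g|$ for $|J|=m$, so peeling off $m/2$ derivatives costs $(u+s)^{-m/2}$ and replaces $X_J$ by a heat operator. The remaining $\Ls^N T_{u+s}$ should be split as $\Ls^{N'}T_{(u+s)/2}\cdot\Ls^{N-N'}T_{(u+s)/2}$ with $N'=[\alpha/2]+1$, using Lemma~\ref{pointwiseestheat1}(2) on the second factor to gain $(u+s)^{-(N-N')}$, so that altogether the integrand is controlled by $u^N(u+s)^{-m/2-(N-N')}T_{c(u+s)}|\Ls^{N'}T_{(u+s)/2}f|$. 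The power of $(u+s)$ here must be negative enough to make the $u$-integral against $u^N\,\frac{\dd u}{u}$ converge at $\infty$ and leave a Schur-type kernel; choosing $N$ large (and $m>\alpha$ as assumed) this works. Then I would pass to the supremum over $s\in[t,2t]$ and over $|J|=m$ — finitely many $J$'s, and the supremum over $s$ is harmless because Lemma~\ref{pointwiseestheat1}(1) allows one to absorb it into a slightly larger heat time — so $T^{(m),*}_t f\lesssim t^{-m/2}\int_0^\infty u^N(u+t)^{-(N-N')}T_{c(u+t)}|\Ls^{N'}T_{(u+t)/2}f|\,\frac{\dd u}{u}$ up to harmless constants. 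Multiplying by $t^{(m-\alpha)/2}$ turns the prefactor into $t^{-\alpha/2}$, and after the change of variable $u\mapsto u t$ the $u$-integral is a Schur kernel in $(u,t)$ of the type handled by~\eqref{Schurcont}, acting on the function $t\mapsto t^{N'-\alpha/2}T_{ct}|\Ls^{N'}T_{t/2}f|$.

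To finish, I would discretize (replace the $u$- and $t$-integrals by sums over dyadic scales, using the mean value theorem exactly as in the proof of Proposition~\ref{CRTNintegrale}) and apply Corollary~\ref{bilinearCRTNint} (for the $\dF$-case, where $p\in(1,\infty)$) to remove the outer heat operators $T_{ct}$, then~\eqref{Schurdiscr} to sum the Schur kernel. The result is
\[
\|2^{j(m-\alpha)/2}T^{(m),*}_{2^j}f\|_{L^p(\dell^q)}\lesssim\|2^{j(N'-\alpha/2)}\Ls^{N'}T_{2^{j-1}}f\|_{L^p(\dell^q)}\approx\|[f]\|_{\dF^{p,q}_\alpha},
\]
the last step by Lemma~\ref{lemmaintersec} and Theorem~\ref{teo-equiv2} with $m_0=N'=[\alpha/2]+1$. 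The $\dB$-case is the same computation with $\dell^q(L^p)$ in place of $L^p(\dell^q)$ and the trivial $L^p$-boundedness of $T_t$ (Lemma~\ref{pointwiseestheat1}(3)) in place of Corollary~\ref{bilinearCRTNint}, so it works for all $p\in[1,\infty]$; the restriction $p\in(1,\infty)$ in (ii) enters only through that use of Corollary~\ref{bilinearCRTNint}. The main obstacle I anticipate is bookkeeping: choosing $N$, $N'$, and the various heat-time constants so that (a) the $u$-integral converges, (b) one genuinely lands on the kernel shape required by~\eqref{Schurcont}/\eqref{Schurdiscr}, and (c) the supremum over $s\in[t,2t]$ is absorbed cleanly — none of these is deep, but they all have to be arranged simultaneously, and the hypothesis $m>\alpha$ is exactly what guarantees the relevant exponent $\tfrac{m-\alpha}{2}$ is positive so that the Schur estimate applies.
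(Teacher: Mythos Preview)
Your overall strategy is the same as the paper's: start from Proposition~\ref{prop:constant}, push the pointwise heat estimates of Lemmas~\ref{pointwiseestheat1} and~\ref{pointwiseestheat2} through a Schur-type kernel, and finish with the vector-valued boundedness of Proposition~\ref{contCRTN} (or Corollary~\ref{bilinearCRTNint}) together with Lemma~\ref{lemmaintersec} and Theorem~\ref{teo-equiv2}. The introduction of the auxiliary parameters $N$ and $N'$ is unnecessary but harmless; the paper simply takes $N=N'=m$.

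There is, however, a genuine gap in your execution. When you bound $(u+s)^{-m/2}\lesssim t^{-m/2}$ and pull this factor outside the $u$-integral, you throw away the decay in $u$ that the Schur argument needs. After this extraction you are left with
\[
t^{-\alpha/2}\int_0^\infty u^N(u+t)^{-(N-N')}\,h(u+t)\,\frac{\dd u}{u},\qquad h(v)=T_{cv}|\Ls^{N'}T_{v/2}f|,
\]
and the associated kernel has divergent marginals: for the part $u\gtrsim t$ it reduces to $t^{-\alpha/2}\!\int_t^\infty u^{N'}h(u)\,\tfrac{\dd u}{u}$, whose Hardy-type kernel $K(u,t)=(u/t)^{\alpha/2}\mathbf{1}_{\{u\ge t\}}$ fails Schur for every $\alpha>0$. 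Your proposed change of variable $u\mapsto ut$ does not rescue this, since the resulting integrand $T_{ct(u+1)}|\Ls^{N'}T_{t(u+1)/2}f|$ still mixes $u$ and $t$ and is not of the form required by~\eqref{Schurcont}.

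The fix is simply not to extract: keep the full power $(u+t)^{-m/2-(N-N')}$, so that the Schur kernel in~\eqref{Schurcont} (or~\eqref{Schurdiscr}) has exponents $\gamma=\tfrac{m-\alpha}{2}$ and $\eta=\tfrac{m}{2}+(N-N')$, and the condition $0<\gamma<\eta$ is exactly $m>\alpha$. This is precisely what the paper does: it dyadically decomposes the Calder\'on integral as $f_n=\int_{2^n}^{2^{n+1}}(s\Ls)^m T_sf\,\tfrac{\dd s}{s}$, writes $T_tf_n$ so that $X_J$ hits a heat kernel at time $\approx 2^n+2^j$ (for $t\in[2^j,2^{j+1}]$), and obtains
\[
|X_JT_tf_n|\lesssim 2^{-\frac{m}{2}\max\{j,n\}}\,T_{c2^j}\big(2^{nm}T_{c'2^n}|\Ls^mT_{2^{n-2}}f|\big),
\]
after which Proposition~\ref{contCRTN} and~\eqref{Schurdiscr} apply directly. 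If you redo your estimate without the premature extraction of $t^{-m/2}$, your argument goes through and coincides with the paper's.
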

\begin{proof}
Let $m>\alpha$ be an integer, and pick $|J|=m$ and $t>0$. By Proposition~\ref{prop:constant}, we may write
\begin{align*}
X_J T_t f& = \frac{1}{(m-1)!}\sum_{n\in\Z} X_J T_t f_n , 
\qquad \quad f_n = \int_{2^{n}}^{2^{n+1}} s^m  \Ls^m T_s  f\, \frac{\dd s}{s}.
\end{align*}
Observe now that
\begin{align*}
T_t f_n  
&= T_{2^{n-1} + t} \int_{2^{n}}^{3\, 2^{n-1}} (s\Ls)^m T_{s-2^{n-1}} f \, \frac{\dd s}{s} + T_{2^{n} +t} \int_{3\,2^{n-1}}^{2^{n+1}} (s\Ls)^m T_{s-2^{n}} f \, \frac{\dd s}{s} \\
 &= T_{2^{n-1}+t} \int_{2^{n-1}}^{2^{n}} (s+2^{n-1})^{m-1}\Ls^m T_s f\, \dd s + T_{2^{n}+t} \int_{2^{n-1}}^{2^{n}} (s+2^{n})^{m-1} \Ls^m T_{s} f\,\dd s. 
\end{align*}
If  $t\in [2^{j}, 2^{j+1}]$, by Lemma~\ref{pointwiseestheat1} we then have, for some $c,c'>0$,
\begin{align*}
 |X_J T_t f_n |
& \lesssim (2^{n}+2^{j})^{-\frac{m}{2}} T_{c(2^{j}+2^{n})} \int_{2^{n-1}}^{2^{n}} s^m |\Ls^m T_s f | \, \frac{\dd s}{s}\\
& \lesssim 2^{-\frac{m}{2}\max \{j,n\}} T_{c 2^{j}} \Big( T_{c2^{n}} \int_{2^{n-1}}^{2^{n}} s^m |\Ls^m T_s f | \, \frac{\dd s}{s}\Big)\\ &\lesssim 2^{-\frac{m}{2}\max \{j,n\}} T_{c 2^{j}}  (2^{nm} T_{c' 2^{n}}|\Ls^m T_{2^{n-2}} f|).
 \end{align*}
Therefore, by Proposition~\ref{contCRTN} and~\eqref{Schurdiscr},
\begin{align*}
\|2^{j(m-\alpha)/2} & T^{(m),*}_{2^j} f \|_{L^p(\dell^q)} \\
  &\lesssim \Big\| 2^{j(m-\alpha)/2} T_{c 2^{j}}  \sum_{n\in\Z}  2^{ -\frac{m}{2} \max\{ j, n\}} (2^{nm} T_{c' 2^{n}}|\Ls^m T_{2^{n-2}} f|)\Big\|_{L^p(\dell^q)}\\
  & \lesssim \|  2^{n(m-\frac{\alpha}{2})} T_{c' 2^{n}}|\Ls^m T_{2^{n-2}} f|\|_{L^p(\dell^q)}.
\end{align*}
Proposition~\ref{contCRTN}, Lemma~\ref{lemmaintersec} and Theorem~\ref{teo-equiv2} complete the proof of (ii).
\end{proof}

\subsection{Paraproducts} Our proof of the algebra properties goes via a paraproduct decomposition, which we proceed to show. Suppose $\alpha> 0$ and let $p,q,p_1p_2,p_3,p_4\in [1,\infty]$ be such that $\frac{1}{p_1} + \frac{1}{p_2} = \frac{1}{p_3} + \frac{1}{p_4} = \frac{1}{p}$ and $\frac{1}{p_2}+ \frac{1}{p_3} \leq  1$. 

Assume that either  $f\in L^{p_3} \cap \dB^{p_1,q}_\alpha$ and $g \in  L^{p_2} \cap\dB^{p_4,q}_\alpha $, or $f\in L^{p_3} \cap \dF^{p_1,q}_\alpha $ and $g \in  L^{p_2}\cap \dF^{p_4,q}_\alpha $ with $p,p_1,p_4\in (1,\infty)$ and $p_2,p_3\in (1,\infty]$. If $\frac{1}{r} = \frac{1}{p_2}+\frac{1}{p_3}$, then in particular $fg \in L^r$. For all integers $m>\alpha/2$ and $n\geq 0$, and all $\tau>0$, the integrals
\[
\Pi_f^{(m,n,\tau)}(g) = \sum_{h,k=0}^{m-1} \frac{1}{(m-1)! h! k!} \int_0^\tau t^{h+m+k} \Ls^{h+n} T_{t} [\Ls^m T_t  f \cdot \Ls^k T_t g] \, \frac{\dd t}{t},
\]
its symmetric $\Pi_g^{(m,n,\tau)}(f) $, and
\[
\Pi^{(m,n,\tau)}(f,g) = \sum_{h,k=0}^{m-1} \frac{1}{(m-1)! h! k!} \int_0^\tau t^{m+h+k} \Ls^{m+n} T_t  [\Ls^h T_t  f \cdot \Ls^k T_t g] \, \frac{\dd t}{t},
\]
are well-defined elements of $\Ss'$. This follows by the conditions on $f$ and $g$. For example, if $h,k \in \{ 0,\dots, m-1\}$, then for all $\phi \in\Ss$
\[
\int_0^\tau t^{h+m+k}   |\langle \Ls^{h+n} T_{t} [\Ls^m T_t  f \cdot \Ls^k T_t g], \phi\rangle |\, \frac{\dd t}{t}  \lesssim \|[f]\|_{\dF^{p_1, q}_\alpha}\, \|g\|_{p_2}\|\Ls^n \phi\|_{p'},
\]
and the other terms are similar. Observe that if $n\geq 1$, all integrals above admit a limit in $\Ss'$ for $\tau \to \infty$. As for $\Pi_f^{(m,n,\infty)}(g)$, e.g., given integers $h,k$ and $\phi \in\Ss$,
\[
\int_\tau^\infty t^{h+m+k}   |\langle \Ls^{h+n} T_{t} [\Ls^m T_t  f \cdot \Ls^k T_t g], \phi\rangle | \, \frac{\dd t}{t}  \lesssim \| f\|_{p_3} \, \|g\|_{p_2} \, \|\phi\|_{r'} \int_\tau^\infty  t^{-n} \, \frac{\dd t}{t}.
\]
The other terms are analogous. In this case, we denote the limits by $\Pi_f^{(m,n,\infty)}(g)$, $\Pi_g^{(m,n,\infty)}(f)$ and $\Pi^{(m,n,\infty)}(f,g) $ respectively. We also define
\[
\mathcal{E}^{(m,n,\tau)}(f,g) = \sum_{h,k,\ell=0}^{m-1}  \frac{1}{h!k!\ell!}\tau^{h+k+\ell} \Ls^{h+n}T_\tau [( \Ls^k T_\tau f) \cdot (\Ls^\ell T_\tau g)].
\]
The following proposition concerns the aforementioned paraproduct decomposition.
\begin{proposition}\label{paraproduct}
Suppose $\alpha> 0$ and let $p,q,p_1p_2,p_3,p_4\in [1,\infty]$ be such that $\frac{1}{p_1} + \frac{1}{p_2} = \frac{1}{p_3} + \frac{1}{p_4} = \frac{1}{p}$ and $\frac{1}{p_2}+ \frac{1}{p_3} \leq  1$. If either
\begin{itemize}
\item[\emph{(i)}] $f\in L^{p_3} \cap \dB^{p_1,q}_\alpha$ and $g \in  L^{p_2} \cap\dB^{p_4,q}_\alpha $, or
\item[\emph{(ii)}] $p,p_1,p_4\in (1,\infty)$, $p_2,p_3\in (1,\infty]$, $f\in L^{p_3} \cap \dF^{p_1,q}_\alpha $ and $g \in  L^{p_2}\cap \dF^{p_4,q}_\alpha $,
\end{itemize}
then, for all integers $m>\alpha/2$,
\begin{equation}\label{para}
fg = \Pi_f^{(m,0,\tau)}(g) + \Pi_g^{(m,0,\tau)}(f) + \Pi^{(m,0,\tau)}(f,g) +\mathcal{E}^{(m,0,\tau)}(f,g) \qquad \mbox{in } \Ss',
\end{equation}
and, for all integers $m>\alpha/2$ and $n\geq 1$,
\begin{equation}\label{parainfty}
\Ls^n(fg) = \Pi_f^{(m,n,\infty)}(g) + \Pi_g^{(m,n,\infty)}(f) + \Pi^{(m,n,\infty)}(f,g)  \qquad \mbox{in } \Ss'.
\end{equation}
\end{proposition}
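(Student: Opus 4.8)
The idea is to run the Calderón-type identity \eqref{LPDlocal} of Proposition~\ref{thm:decomp-SP} twice, once for $f$ and once for $g$, multiply the two decompositions, and then recognise in the resulting double integral exactly the three paraproduct terms together with the error term. Concretely, apply \eqref{LPDlocal} (with the same integer $m>\alpha/2$ and the same $\tau>0$) to write, in $\Ss'$,
\[
f = \frac{1}{(m-1)!}\int_0^\tau (t\Ls)^m T_t f\,\frac{\dd t}{t} + \sum_{h=0}^{m-1}\frac{1}{h!}(\tau\Ls)^h T_\tau f,
\qquad
g = \frac{1}{(m-1)!}\int_0^\tau (t\Ls)^m T_t g\,\frac{\dd t}{t} + \sum_{k=0}^{m-1}\frac{1}{k!}(\tau\Ls)^k T_\tau g.
\]
Both $f$ and $g$ are (slowly growing, continuous) functions in the respective $L^{p_i}$-spaces, hence so are all of $\Ls^m T_t f$, $\Ls^m T_t g$, $\Ls^h T_\tau f$, $\Ls^k T_\tau g$, and the products below make pointwise sense. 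Multiplying the two identities produces four groups of terms: (integral)$\times$(integral), (integral)$\times$(sum), (sum)$\times$(integral), and (sum)$\times$(sum). The last group is by definition $\mathcal E^{(m,0,\tau)}(f,g)$ once one inserts $n=0$ and relabels the summation index $\ell$ in place of one of $h,k$; note this uses $T_\tau^h T_\tau^\ell$-type products in a single $T_\tau$, which is fine since $T_\tau$ acts on functions. The real work is the (integral)$\times$(integral) term.

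For the diagonal-in-time double integral, the point is that one does \emph{not} simply get $\int\!\!\int$: one has to symmetrise. Writing $F(t)=\Ls^m T_t f$ and $G(t)=\Ls^m T_t g$, the product of the two integrals is $\frac{1}{((m-1)!)^2}\int_0^\tau\!\int_0^\tau t^m s^m F(t)G(s)\,\frac{\dd t}{t}\frac{\dd s}{s}$. Split the square $[0,\tau]^2$ into $\{s\le t\}$ and $\{t\le s\}$. On $\{s\le t\}$ reuse \eqref{LPDlocal}, this time applied to the function $G(s)=\Ls^m T_s g$ on the interval $(0,t)$ with endpoint $t$: one obtains
\[
\Ls^m T_s g \;=\; \frac{1}{(m-1)!}\int_0^{?}\!\cdots,
\]
no—more precisely, one instead keeps $t$ fixed, integrates $\frac{\dd s}{s}$ over $(0,t)$, and recognises $\frac{1}{(m-1)!}\int_0^t s^m \Ls^m T_{s} (\text{stuff})\frac{\dd s}{s}$ as, via \eqref{LPDlocal} with parameter $t$ and remainder terms at $s=t$, $(\text{stuff}) - \sum_{k=0}^{m-1}\frac1{k!}(t\Ls)^k T_t(\text{stuff})$. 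Applying this with $\text{stuff}=\Ls^m T_t f$ (which is legitimate since $\Ls^m T_t f\in\Ss'$, indeed a function) turns the $\{s\le t\}$ part of the double integral into
\[
\frac{1}{(m-1)!}\int_0^\tau t^m\,\Ls^m T_t f\cdot\Ls^m T_t g\,\frac{\dd t}{t}
\;-\;\sum_{k=0}^{m-1}\frac{1}{(m-1)!\,k!}\int_0^\tau t^{m+k}\,(\Ls^k T_t[\Ls^m T_t g])\cdot(\Ls^m T_t f)\,\frac{\dd t}{t},
\]
wait—one must be careful: the extra $T_t$ lands on the product $\Ls^m T_t f\cdot\Ls^m T_t g$, not on $\Ls^m T_t g$ alone, because \eqref{LPDlocal} is being applied to the whole integrand viewed as a distribution in the \emph{single} variable $s$ with $t$ a parameter. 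The correct bookkeeping: the $\{s\le t\}$ region gives $\Pi^{(m,0,\tau)}(f,g)$ (the $\Ls^m T_t$ outside, acting on $\Ls^m T_t f\cdot \Ls^k T_t g$ summed over $k$, minus the purely-diagonal $\int t^m \Ls^{2m}T_t[\cdots]$ piece which is the $k=0$ correction), and symmetrically $\{t\le s\}$ gives $\Pi^{(m,0,\tau)}(g,f)=\Pi^{(m,0,\tau)}(f,g)$ again — so one has to be attentive that the diagonal term $s=t$ is not double-counted and that the combinatorial constants $\frac{1}{(m-1)!h!k!}$ come out exactly. The cleanest route is: on $\{s<t\}$ apply \eqref{LPDlocal} to $\Ls^m T_s g$ (variable $s$, endpoint $t$) to replace it by $\Ls^m T_t g$-type remainders, yielding the $\Pi_f$-paraproduct; on $\{t<s\}$ symmetrically apply it to $\Ls^m T_t f$ to get $\Pi_g$; and the ``remainder of the remainder'' (the double sum with both $T_t$'s evaluated at the common endpoint, collapsed onto the diagonal $s=t$, or rather re-integrated) reassembles as $\Pi^{(m,0,\tau)}(f,g)$. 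I would carry this out by a careful Fubini on $[0,\tau]^2$, substituting \eqref{LPDlocal} on the inner variable in each half and collecting terms.

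For \eqref{parainfty}, apply $\Ls^n$ to both sides of \eqref{para}, and send $\tau\to\infty$. The justification that $\Ls^n \mathcal E^{(m,0,\tau)}(f,g)\to 0$ in $\Ss'$ as $\tau\to\infty$ follows from the estimates already displayed before the Proposition statement: pairing against $\phi\in\Ss$, each summand is controlled by $\|f\|_{p_3}\|g\|_{p_2}\|\phi\|_{r'}\,\tau^{-n}$-type bounds (using $T_t 1=1$, Hölder with $\tfrac1{p_2}+\tfrac1{p_3}+\tfrac1{r'}=1$, Lemma~\ref{pointwiseestheat1}(3) to absorb the powers of $\Ls$, and $n\ge 1$ to make the power of $\tau$ negative), hence $\to 0$; and the three paraproducts converge in $\Ss'$ by the estimates already established (the tail $\int_\tau^\infty t^{h+m+k}|\langle\cdots,\phi\rangle|\frac{\dd t}{t}\lesssim \tau^{-n}$ argument given in the paragraph preceding the Proposition). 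Since $\Ls^n$ commutes with all the operators involved and is continuous on $\Ss'$, passing $\Ls^n$ inside each paraproduct turns $\Pi_f^{(m,0,\infty)}(g)$ into $\Pi_f^{(m,n,\infty)}(g)$ and similarly for the other two, giving \eqref{parainfty}.

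\textbf{Main obstacle.} The delicate point is the symmetrisation of the diagonal double integral over $[0,\tau]^2$: getting the \emph{exact} combinatorial constants $\frac{1}{(m-1)!h!k!}$ and making sure the ``diagonal'' contribution is assigned once (to $\Pi^{(m,0,\tau)}(f,g)$) and not spuriously duplicated between the two triangular regions. Everything else — well-definedness of the integrals, that $fg\in L^r$, the $\tau\to\infty$ limits, the commutation with $\Ls^n$ — is routine given Lemma~\ref{pointwiseestheat1}, Proposition~\ref{thm:decomp-SP}, and Hölder's inequality with the exponent relations $\tfrac1{p_1}+\tfrac1{p_2}=\tfrac1{p_3}+\tfrac1{p_4}=\tfrac1p$ and $\tfrac1{p_2}+\tfrac1{p_3}\le 1$.
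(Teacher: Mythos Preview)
Your reduction of \eqref{parainfty} to \eqref{para} is correct and is precisely what the paper does: apply $\Ls^n$ to \eqref{para}, observe that each $\Pi^{(m,n,\tau)}$ converges in $\Ss'$ as $\tau\to\infty$ by the tail estimates recorded just before the Proposition, and check that $\|\mathcal E^{(m,n,\tau)}(f,g)\|_r\lesssim \tau^{-n}\|f\|_{p_3}\|g\|_{p_2}\to 0$.

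However, your approach to \eqref{para} has a genuine gap. You apply \eqref{LPDlocal} only to $f$ and to $g$ and then multiply the two decompositions. But look at the structure of the paraproducts: every one of $\Pi_f^{(m,0,\tau)}(g)$, $\Pi_g^{(m,0,\tau)}(f)$, $\Pi^{(m,0,\tau)}(f,g)$, and $\mathcal E^{(m,0,\tau)}(f,g)$ has an \emph{outer} operator $\Ls^h T_t$ (or $\Ls^h T_\tau$) acting on a pointwise product. In particular $\mathcal E^{(m,0,\tau)}(f,g)$ is a \emph{triple} sum $\sum_{h,k,\ell}\frac{1}{h!k!\ell!}\tau^{h+k+\ell}\Ls^h T_\tau[(\Ls^k T_\tau f)(\Ls^\ell T_\tau g)]$, not the double sum $\sum_{k,\ell}\frac{1}{k!\ell!}\tau^{k+\ell}(\Ls^k T_\tau f)(\Ls^\ell T_\tau g)$ that your ``sum $\times$ sum'' term produces. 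That outer $\Ls^h T_t$ cannot come from multiplying the two decompositions of $f$ and $g$: there is nothing in your scheme that applies $T_t$ to a product. Your attempted fix --- ``\eqref{LPDlocal} is being applied to the whole integrand viewed as a distribution in the single variable $s$ with $t$ a parameter'' --- does not generate it either: applying \eqref{LPDlocal} in the $s$-variable with endpoint $t$ yields remainder operators $(t\Ls)^k T_t$ acting on $g$, not on $(\Ls^m T_t f)\cdot g$.

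The paper's proof supplies the missing idea: one must apply \eqref{LPDlocal} \emph{three} times, first to $fg$ (this is what produces the outer $\Psi_t(\Ls)$ and $\Phi_\tau(\Ls)$ acting on products), and then to $f$ and to $g$ inside. Writing $\Psi_t(\Ls)=\frac{1}{(m-1)!}(t\Ls)^m T_t$ and $\Phi_t(\Ls)=-\sum_{k=0}^{m-1}\frac{1}{k!}(t\Ls)^k T_t$, one has both $h=\int_0^\tau\Psi_t(\Ls)h\,\frac{\dd t}{t}-\Phi_\tau(\Ls)h$ and the telescoping identity $\int_t^\tau\Psi_u(\Ls)h\,\frac{\dd u}{u}=\Phi_\tau(\Ls)h-\Phi_t(\Ls)h$. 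After substituting the decompositions of $f$ and $g$ (with endpoint $\tau$) inside $\Psi_t(\Ls)(fg)$ and $\Phi_\tau(\Ls)(fg)$, the inner integrals over $u,v\in(0,\tau)$ are split at $u=t$, $v=t$; the telescoping identity then converts each $\int_t^\tau$ into $\Phi_\tau-\Phi_t$, aligning the inner time with the outer $t$. The resulting eight or so cross terms $I_1,\dots,I_8$ are then collected, and the combinatorics assemble exactly into $\Pi_f+\Pi_g+\Pi+\mathcal E$. Your ``main obstacle'' --- getting the constants right and not double-counting the diagonal --- is real, but it sits on top of the more basic issue that without the third application of \eqref{LPDlocal} to $fg$ your decomposition simply does not have the right shape.
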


\begin{proof}
If $r\in [1,\infty]$ is such that $\frac{1}{r}= \frac{1}{p_2}+\frac{1}{p_3}$, then $fg \in L^r$ is a distribution. The other conditions on $f$ and $g$, together with Lemma~\ref{lemmaintersec}, not only make  all terms in~\eqref{para}  well-defined elements in $\Ss'$, but they allow to justify some switching in the order of integrations below. We leave the details to the interested reader. 

We first assume that~\eqref{para} holds, and we deduce~\eqref{parainfty}. By~\eqref{para}, for $n\geq 1$
\[
\Ls^n(fg) = \Pi_f^{(m,n,\tau)}(g) + \Pi_g^{(m,n,\tau)}(f) + \Pi^{(m,n,\tau)}(f,g) +\mathcal{E}^{(m,n,\tau)}(f,g) \qquad \mbox{in } \Ss'.
\]
Since all $\Pi$'s above admit a limit for $\tau \to \infty$ as explained above, it is enough to show that  $\mathcal{E}^{(m,n,\tau)}(f,g) \to 0$ in $\Ss'$ for $\tau \to \infty$. But this follows because, for integers $h,k,\ell$,
\begin{align*}
\| \tau^{h+k+\ell} \Ls^{h+n}T_\tau [( \Ls^k T_\tau f) \cdot (\Ls^\ell T_\tau g)]\|_r 
&\leq \tau^{-n+k+\ell} \|( \Ls^k T_\tau f) \cdot (\Ls^\ell T_\tau g)\|_r \\
& \leq  \tau^{-n}  \| f\|_{p_2}\|g\|_{p_3}.
\end{align*}
It remains to show~\eqref{para}. For notational convenience, we introduce the operators
\[
\Phi_t(\Ls) = -\sum_{k=0}^{m-1} \frac{1}{k!} (t\Ls)^k T_t,\qquad  \Psi_t(\Ls)  = \frac{1}{(m-1)!} (t\Ls)^mT_t.
\]
Observe that by Proposition~\ref{thm:decomp-SP},
\begin{equation}\label{eqsinf}
f = \int_0^{\tau} \Psi_a(\Ls)f \frac{\dd a}{a}  - \Phi_\tau(\Ls)f, \qquad \int_t^{\tau} \Psi_a(\Ls)f \frac{\dd a}{a}  = \Phi_\tau(\Ls)f - \Phi_t(\Ls)f,
\end{equation}
in $\Ss'$, and analogously for $g$ and $fg$.  Therefore
\begin{align*}
fg & = \int_{0}^\tau  \Psi_t(\Ls) \bigg\{ \bigg[ \int_{0}^{\tau} \Psi_u(\Ls) f \, \frac{\dd u}{u} - \Phi_\tau(\Ls)f \bigg] \cdot \bigg[  \int_{0}^{\tau} \Psi_v(\Ls) g \, \frac{\dd v}{v} - \Phi_\tau(\Ls)g \bigg] \bigg\}  \frac{\dd t}{t}\\
& \qquad  -  \Phi_\tau(\Ls)\bigg\{ \bigg[ \int_{0}^{\tau} \Psi_u(\Ls) f \, \frac{\dd u}{u} - \Phi_\tau(\Ls)f \bigg] \cdot \bigg[  \int_{0}^{\tau} \Psi_v(\Ls) g \, \frac{\dd v}{v} - \Phi_\tau(\Ls)g \bigg] \bigg\} \\
& = I_1(f,g) - I_2(f,g) - I_2(g,f) + I_3(f,g) - I_4(f,g) + I_5(f,g) + I_5(g,f) \\
& \qquad +\mathcal{E}^{(m,0,\tau)}(f,g),
\end{align*}
where
\begin{align*}
 I_1(f,g) & = \int_{0}^\tau  \Psi_t(\Ls) \bigg\{ \bigg[ \int_{0}^{\tau} \Psi_u(\Ls) f \, \frac{\dd u}{u}\bigg] \cdot \bigg[  \int_{0}^{\tau} \Psi_v(\Ls) g \, \frac{\dd v}{v} \bigg] \bigg\}  \frac{\dd t}{t} \\
  I_2(f,g) &=\int_0^\tau  \int_0^{\tau} \Psi_t(\Ls) ( \Psi_u(\Ls) f \cdot \Phi_\tau(\Ls)g)   \frac{\dd u\, \dd t }{u\, t}\\
    I_3(f,g) &=\int_0^\tau \Psi_\tau (\Ls) ( \Phi_\tau(\Ls) f \cdot \Phi_\tau(\Ls)g)   \frac{\dd t }{t}\\
     I_4(f,g) &=\int_0^\tau  \int_0^{\tau} \Phi_\tau(\Ls) ( \Psi_u(\Ls) f \cdot \Psi_v(\Ls)g)   \frac{\dd u\, \dd v }{u\, v}\\
      I_5(f,g) &=\int_0^\tau \Phi_\tau (\Ls) ( \Psi_t(\Ls) f \cdot \Phi_\tau(\Ls)g)   \frac{\dd t }{t}.
\end{align*}
By splitting the inner integrals in $I_1$ as the sum of the integrals on $[0,t]$ and on $[t,\tau]$, and exchanging the order of integration whenever allowed, one can see that
\begin{equation}\label{eqsimp}
\begin{split}
I_1(f,g) =\int_{0}^\tau  \Psi_t(\Ls) \bigg[  \int_t^{\tau} \Psi_u(\Ls) f\, \frac{\dd u}{u} \cdot \int_t^\tau \Psi_v(\Ls) g\, \frac{\dd v }{v}\bigg] &\frac{\dd t}{t}  \\&  + I_6(f,g) + I_6(g,f)
\end{split}
\end{equation}
where
\[
I_6(f,g) =  \int_{0}^\tau  \int_u^{\tau} \Psi_t(\Ls) \bigg[ \Psi_u(\Ls) f \cdot   \int_u^{\tau}  \Psi_v(\Ls) g \, \frac{\dd v}{v}\bigg] \,  \frac{\dd t}{t}  \frac{\dd u}{u}.
\]
The first term in the right hand side of~\eqref{eqsimp} equals, by means of the second equality in~\eqref{eqsinf} applied to both inner integrals,
\begin{align*}
\Pi^{(m,0,\tau)}(f,g) - I_7(f,g) - I_7(g,f) + I_8(f,g),
\end{align*}
where
\[
 I_7(f,g)= \int_{0}^\tau \Psi_t(\Ls) [ \Phi_t(\Ls)f \cdot \Phi_\tau(\Ls)g]\frac{\dd t}{t} , \quad  I_8(f,g)= \int_0^\tau \Psi_t(\Ls) [\Phi_\tau(\Ls) \cdot \Phi_\tau(g)]\, \frac{\dd t}{t}.
\]
By tedious but elementary similar computations one can show that 
\begin{multline*}
- I_7(f,g) - I_7(g,f) + I_8(f,g)- I_2(f,g) - I_2(g,f) + I_3(f,g) - I_4(f,g) \\
+ I_5(f,g) + I_5(g,f)  + I_6(f,g) + I_6(g,f) = \Pi_f^{(m,0,\tau)}(g) + \Pi_g^{(m,0,\tau)}(f) ,
\end{multline*}
which concludes the proof.
\end{proof}

We are now in a position to establish the algebra properties of $\dB$- and $\dF$- spaces.
\begin{theorem} \label{teo_algebra}
Suppose  $\alpha> 0$, $q\in [1,\infty]$, and let $p,p_1,p_2,p_3,p_4\in [1,\infty]$ be such that $\frac{1}{p_1} + \frac{1}{p_2} = \frac{1}{p_3} + \frac{1}{p_4} = \frac{1}{p}$ and $\frac{1}{p_2}+ \frac{1}{p_3} \leq  1$.
\begin{itemize}
\item[\emph{(i)}] If $f\in L^{p_3} \cap \dB^{p_1,q}_\alpha$ and $g \in  L^{p_2} \cap\dB^{p_4,q}_\alpha $, then
\begin{equation*} 
\|[fg]\|_{\dB^{p,q}_\alpha} \lesssim \|[f]\|_{\dB^{p_1,q}_\alpha}\|g\|_{{p_2}} + \|f\|_{{p_3}} \|[g]\|_{\dB^{p_4,q}_\alpha}.
\end{equation*}
\item[\emph{(ii)}] If $p,p_1,p_4\in (1,\infty)$,  $p_2,p_3\in (1,\infty]$, $f\in L^{p_3} \cap \dF^{p_1,q}_\alpha $ and $g \in  L^{p_2}\cap \dF^{p_4,q}_\alpha $, then
\begin{equation*}
\|[fg]\|_{\dF^{p,q}_\alpha} \lesssim \|[f]\|_{\dF^{p_1,q}_\alpha}\|g\|_{{p_2}} + \|f\|_{ {p_3}} \|[g]\|_{\dF^{p_4,q}_\alpha}.
\end{equation*}
\end{itemize}
In particular, for $\alpha> 0$, $q\in [1,\infty]$ and $p\in [1,\infty]$ or $p\in (1,\infty)$ respectively,  $L^\infty \cap \dB^{p,q}_\alpha$ and $L^\infty \cap \dF^{p,q}_\alpha$ are algebras under pointwise multiplication.
\end{theorem}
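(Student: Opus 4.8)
The plan is to derive the algebra property from the paraproduct decomposition of Proposition~\ref{paraproduct}, applied with $n\geq 1$, combined with the norm characterization via $T_t\Ls^m$ from Theorems~\ref{teo-equiv1}--\ref{teo-equiv2} and the crucial maximal estimate of Lemma~\ref{teo-equiv3}. First I would fix $m>\alpha/2$ large (actually $m>\alpha$, to be able to invoke Lemma~\ref{teo-equiv3}) and an integer $N>\alpha/2$, and estimate $\|[fg]\|_{\dX^{p,q}_\alpha}$ by computing $\|t^{N-\alpha/2}\Ls^N T_t(fg)\|_{X^{p,q}_+}$; since $fg\in L^r$ with $1/r=1/p_2+1/p_3\leq 1$, Lemma~\ref{lemmaintersec} tells us that the infimum over polynomials is already attained at $\wp=0$, so it suffices to bound this single quantity. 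Using~\eqref{parainfty} with $n=N$, we split $\Ls^N(fg)$ into the three paraproducts $\Pi_f^{(m,N,\infty)}(g)$, $\Pi_g^{(m,N,\infty)}(f)$, $\Pi^{(m,N,\infty)}(f,g)$, and the problem reduces to estimating $\|t^{-\alpha/2}T_t[\,\cdot\,]\|_{X^{p,q}_+}$ of each, where the bracket is the relevant sum of integrals in $s$ of terms like $s^{h+m+k}\Ls^{h+N}T_s[\Ls^m T_s f\cdot \Ls^k T_s g]$.

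Next I would treat the ``low--high'' paraproduct $\Pi_f^{(m,N,\infty)}(g)$ as the model case. Applying $t^{N-\alpha/2}\Ls^N T_t$ and using $\Ls^{h+N}T_t T_s = $ a multiple of $\Ls^{h+N}T_{t+s}$ together with Lemma~\ref{pointwiseestheat1}, one bounds the bracket pointwise by a Schur-type kernel in $(s,t)$ acting on $s^{m-\alpha/2}|\Ls^m T_s f|\cdot |\Ls^{\le m-1}T_s g|$; the point is that the factor coming from $g$, namely $s^{k}\Ls^k T_s g$ for $k\le m-1$, is controlled in $L^{p_2}$ by $\|g\|_{p_2}$ via Lemma~\ref{pointwiseestheat1}(3), uniformly in $s$, while the factor from $f$ carries the full $\dX^{p_1,q}_\alpha$ seminorm. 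After applying Hölder with $1/p=1/p_1+1/p_2$ inside and the vector-valued estimates of Proposition~\ref{contCRTN}/Proposition~\ref{CRTNintegrale}, together with the Schur estimates~\eqref{Schurcont}--\eqref{Schurdiscr} to sum in $s$ and reconstruct the $X^{p,q}_+$ norm in $t$, we obtain $\lesssim \|[f]\|_{\dX^{p_1,q}_\alpha}\|g\|_{p_2}$. The symmetric term $\Pi_g^{(m,N,\infty)}(f)$ gives $\|f\|_{p_3}\|[g]\|_{\dX^{p_4,q}_\alpha}$ by the same argument with the roles of $f,g$ and of the exponent pairs swapped (here the hypothesis $1/p_2+1/p_3\le 1$ ensures the relevant Hölder products land in an admissible Lebesgue space). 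For the ``high--high'' term $\Pi^{(m,N,\infty)}(f,g)$, both $\Ls^h T_s f$ and $\Ls^k T_s g$ have $h,k\le m-1$, so neither carries a $\Ls^m$; here one instead uses Lemma~\ref{teo-equiv3} to absorb the missing derivatives: writing $2m>\alpha$, the product $|\Ls^h T_s f\cdot\Ls^k T_s g|$ is estimated by $T^{(2m),*}$-type maximal functions — more precisely one rewrites $\Ls^h T_s f$ as $s^{-h}(s\Ls)^h T_s f$ and uses $\|s^{(2m-\alpha)/2}\Ls^{2m}T_s\,(\cdot)\|\lesssim\|[\cdot]\|_{\dX_\alpha}$ type bounds — so that the $s$-integral converges (thanks to $2m>\alpha$ providing a positive power of $s$) and one again lands on $\|[f]\|_{\dX^{p_1,q}_\alpha}\|g\|_{p_2}+\|f\|_{p_3}\|[g]\|_{\dX^{p_4,q}_\alpha}$.

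The last, purely formal, step: take $p_1=p_4=p$, $p_2=p_3=\infty$, so that $L^\infty\cap\dX^{p,q}_\alpha$ is stable under multiplication, with $\|[fg]\|_{\dX^{p,q}_\alpha}\lesssim \|f\|_\infty\|[g]\|_{\dX^{p,q}_\alpha}+\|[f]\|_{\dX^{p,q}_\alpha}\|g\|_\infty$, and combined with the trivial $\|fg\|_\infty\le\|f\|_\infty\|g\|_\infty$ this shows the norm $\|f\|_\infty+\|[f]\|_{\dX^{p,q}_\alpha}$ is submultiplicative up to a constant; for $\dB$ one has $p\in[1,\infty]$ and for $\dF$ one needs $p\in(1,\infty)$ as in Proposition~\ref{paraproduct}(ii), which matches the stated range.

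\textbf{Main obstacle.} I expect the real work to be the high--high term $\Pi^{(m,N,\infty)}(f,g)$: there is no $\Ls^m$ on either factor, so one cannot directly use the $\dX^{p,q}_\alpha$-characterization on $f$ or $g$ separately, and one must trade the outer $\Ls^{m+N}$ (with its good factor $s^{m+h+k}$) against the roughness of both factors, keeping the $s$-integral absolutely convergent and the Hölder bookkeeping consistent — this is exactly where the hypotheses $\alpha>0$, $1/p_2+1/p_3\le 1$, and the sharp constants in Lemma~\ref{teo-equiv3} and the Schur lemmas are all used simultaneously. A secondary technical point is the justification of the various interchanges of integration (in $s$, $t$, and against test functions / dual $L^{p'}$ functions) needed to pass from the $\Ss'$-identity~\eqref{parainfty} to the pointwise/normwise estimates; these are routine given Lemma~\ref{lemmaintersec} and the convergence estimates already recorded in the setup of the paraproducts, and I would simply refer to that discussion.
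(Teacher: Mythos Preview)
Your overall architecture matches the paper: take $\wp=0$ via Lemma~\ref{lemmaintersec}, apply~\eqref{parainfty}, and bound the three paraproducts separately. Your treatment of $\Pi_f(g)$ and $\Pi_g(f)$ is essentially the paper's argument (Lemma~\ref{pointwiseestheat1} followed by Schur~\eqref{Schurcont}, Corollary~\ref{bilinearCRTNint}, H\"older, and the $L^{p_2}$-bound on the heat maximal operator), and the specialization $p_1=p_4=p$, $p_2=p_3=\infty$ at the end is fine.

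The gap is in $\Pi^{(m,N,\infty)}(f,g)$. Your proposed mechanism (``estimate the product by $T^{(2m),*}$-type maximal functions \ldots\ rewrite $\Ls^h T_s f$ as $s^{-h}(s\Ls)^h T_s f$ and use $\|s^{(2m-\alpha)/2}\Ls^{2m}T_s(\cdot)\|\lesssim\|[\cdot]\|_{\dX_\alpha}$'') does not work: the factors $\Ls^h T_t f$, $\Ls^k T_t g$ carry at most $2(m-1)$ derivatives each, so Lemma~\ref{teo-equiv3} cannot be applied to them as written, and the crude route $|\Ls^{2m}T_{t+u}[\cdot]|\lesssim(t+u)^{-2m}T_{c(t+u)}|\cdot|$ together with $|\Ls^h T_t f|\lesssim t^{-h}T_{ct}|f|$ yields, after Schur, a quantity comparable to $\|t^{-\alpha/2}\,T_{ct}|f|\cdot T_{ct}|g|\|_{X^{p,q}_+}$, which diverges as $t\to\infty$.

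What the paper actually does is split $\Ls^{2m}T_{t+u}=\Ls^m T_{t+u}\,\Ls^m$ and expand the \emph{inner} $\Ls^m$ on the product $\Ls^h T_t f\cdot\Ls^k T_t g$ by the Leibniz rule, producing terms $X_I T_t f\cdot X_J T_t g$ with $|I|+|J|=2(m+h+k)$. The extremal cases $|I|\in\{2h,2m+2h\}$ reduce to the low--high pattern already handled. For the intermediate cases both $|I|,|J|\geq 1$; one passes to the dyadic scale, bounds each factor by $T^{(|I|),*}_{2^j}f$ and $T^{(|J|),*}_{2^j}g$, and applies H\"older with the splitting parameter $\theta=|I|/(|I|+|J|)$, setting $\alpha_1=\theta\alpha$, $1/q_1=\theta/q$, $1/r_1=\theta/p_1+(1-\theta)/p_3$ and symmetrically for the second factor. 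Since $2(m+h+k)\geq 2m>\alpha$ one has $|I|>\alpha_1$ and $|J|>\alpha_2$, so Lemma~\ref{teo-equiv3} now applies to each factor, giving $\|[f]\|_{\dF^{r_1,q_1}_{\alpha_1}}\|[g]\|_{\dF^{r_2,q_2}_{\alpha_2}}$; a further H\"older yields the convexity bound $\|[f]\|_{\dF^{r_1,q_1}_{\alpha_1}}\lesssim\|f\|_{p_3}^{1-\theta}\|[f]\|_{\dF^{p_1,q}_\alpha}^{\theta}$ (and analogously for $g$), and Young's inequality finishes. Both the Leibniz redistribution and this $\theta$-interpolation are absent from your sketch, and without them the high--high estimate does not close.
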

\begin{proof}
We prove only (ii), for the proof of (i) follows the same steps and is easier. Observe that $\|[f]\|_{\dF^{p,q}_\alpha}= \| t^{m-\frac{\alpha}{2}} \Ls^m T_t f\|_{L^p(L^q_+)}$ by Lemma~\ref{lemmaintersec}, and that the same holds for $g$ and $fg$.

Pick $m = [\alpha/2]+1$. By~\eqref{parainfty}, 
\begin{align*}
\|[fg]\|_{\dF^{p,q}_\alpha} 
& = \| t^{m-\alpha/2} T_t \Ls^m (fg)\|_{L^{p}(L^q_+)}\\
&  \lesssim \| t^{m-\alpha/2} T_t\Pi_f^{(m,m,\infty)}(g) \|_{L^{p}(L^q_+)} + \| t^{m-\alpha/2} T_t \Pi_g^{(m,m,\infty)}(f)\|_{L^{p}(L^q_+)} \\
& \hspace{5cm}  + \| t^{m-\alpha/2} T_t\Pi^{(m,m,\infty)}(f,g)\|_{L^{p}(L^q_+)} .
\end{align*}
We claim that (from now on we omit the superscripts)
\begin{equation}\label{eqalg1TL}
\| t^{m-\alpha/2} T_t\Pi_f(g) \|_{L^{p}(L^q_+)} \lesssim \|[f]\|_{\dF^{p_1,q}_\alpha}\|g\|_{{p_2}},
\end{equation}
which by symmetry implies also $\| t^{m-\alpha/2} T_t \Pi_g(f)\|_{L^{p}(L^q_+)} \lesssim   \|f\|_{{p_3}}   \|[g]\|_{\dF^{p_4,q}_\alpha}$, and that
\begin{equation}\label{eqalg2TL}
\| t^{m-\alpha/2} T_t\Pi(f,g)\|_{L^{p}(L^q_+)} \lesssim \|[f]\|_{\dF^{p_1,q}_\alpha}\|g\|_{{p_2}} + \|f\|_{{p_3}} \|[g]\|_{\dF^{p_4,q}_\alpha}.
\end{equation}
The theorem  follows by the claims.

\smallskip

We first prove~\eqref{eqalg1TL}. By definition,
\begin{multline*}
\| u^{m-\alpha/2} T_u\Pi_f(g) \|_{L^{p}(L^q_+)} \\
\lesssim \sum_{h,k=0}^{m-1} \bigg\| u^{m-\frac{\alpha}{2}} \int_0^\infty t^{h+m+k } |\Ls^{m+h} T_{u+t}  [\Ls^mT_t  f  \cdot \Ls^k T_t g]| \, \frac{\dd t}{t} \bigg\|_{L^p(L^q_+)}.
\end{multline*}
Suppose now $h,k \in \{0,\dots, m-1\}$ and $u>0$. By Lemma~\ref{pointwiseestheat1}, there exist $a_{h},a_{m}>0$ such that
\begin{align*}
t^{h} |\Ls^{m+h} T_{u+t}  [\Ls^mT_t  f  \cdot \Ls^k T_t g] |  
& \lesssim  T_{a_h t}  |\Ls^m T_{t/2 + u}  [\Ls^mT_t  f  \cdot \Ls^k T_t g]|\\
& \lesssim \left(\tfrac{t}{2}+u\right)^{-m} T_{a_h t}T_{a_{m} (t/2+u)} |\Ls^mT_t  f  \cdot \Ls^k T_t g|\\
& \lesssim (u+t)^{-m} T_{c(t+u)} |\Ls^mT_t  f  \cdot \Ls^k T_t g|,
\end{align*}
for some $c>0$. Therefore,
\begin{multline*}
 \int_0^\infty t^{h+m+k }|\Ls^{m+h} T_{u+t}  [\Ls^mT_t  f  \cdot \Ls^k T_t g]| \, \frac{\dd t}{t}  \\
\lesssim   T_{cu}\int_0^\infty t^{m+k } (u+t)^{-m} T_{ct} |\Ls^mT_t  f  \cdot \Ls^k T_t g| \, \frac{\dd t}{t}.
\end{multline*}
By Proposition~\ref{contCRTN} with $F(u,\cdot ) =u^{m-\frac{\alpha}{2}} \int_0^\infty t^{m+k } (u+t)^{-m} T_{ct} |\Ls^mT_t  f  \cdot \Ls^k T_t g| \, \frac{\dd t}{t} $, by~\eqref{Schurcont} and then by Corollary~\ref{bilinearCRTNint}
\begin{align*}
 \Big\| u^{m-\frac{\alpha}{2}} \int_0^\infty t^{h+m+k }& |\Ls^{m+h} T_{u+t}  [\Ls^mT_t  f  \cdot \Ls^k T_t g]| \, \frac{\dd t}{t} \Big\|_{L^p(L^q_+)}\\
&  \lesssim \Big\| u^{m-\frac{\alpha}{2}} \int_0^\infty t^{m+k } (u+t)^{-m} T_{ct} |\Ls^mT_t  f  \cdot \Ls^k T_t g| \, \frac{\dd t}{t}\Big\|_{L^p(L^q_+)}\\
&  \lesssim  \| t^{m+k- \frac{\alpha}{2}} T_{ct} |\Ls^m T_t f \cdot  \Ls^k T_t g|  \|_{L^p(L^q_+)}\\
&   \lesssim  \| t^{m+k- \frac{\alpha}{2}} T_{c't} |\Ls^m T_t f | \cdot T_{c't} |\Ls^k T_t g|  \|_{L^p(L^q_+)} \\
& \lesssim  \| t^{m- \frac{\alpha}{2}} T_{c't} |\Ls^m T_t f | \cdot T_{c'' t}|g|  \|_{L^p(L^q_+)}.
\end{align*}

By  H\"older's inequality,  the $L^{p_2}$-boundedness of the heat maximal operator (observe that $p_2>1$) and Corollary~\ref{bilinearCRTNint} we conclude
\begin{align*}
\|u^{m-\alpha/2} T_u\Pi_f(g)\|_{L^p(L^q_+)} &  \lesssim  \|\sup_{t>0} T_{c''t} |g| \|_{p_2} \| t^{m- \frac{\alpha}{2}} |\Ls^m T_t f |\|_{L^{p_1}(L^q_+)} \\
& \lesssim \|g\|_{p_2}\| [f]\|_{\dF^{p_1, q}_\alpha}.
\end{align*}
The proof of~\eqref{eqalg1TL} is thus complete.

We prove~\eqref{eqalg2TL}.  By definition,
\begin{multline*}
\| u^{m-\alpha/2} T_u\Pi(f,g)\|_{L^{p}(L^q_+)} \\
  \lesssim \sum_{h,k=0}^{m-1}  \bigg\| u^{m-\alpha/2}  \int_0^\infty t^{m+h+k} |\Ls^{2m} T_{t+u}  [\Ls^h T_t  f \cdot \Ls^k T_t g] | \, \frac{\dd t}{t} \bigg\|_{L^{p}(L^q_+)} .
\end{multline*}
 By Lemma~\ref{pointwiseestheat1} and the Leibniz rule, there is $a=a_m>0$ such that
\begin{align*}
|\Ls^{2m} T_{t+u} & [\Ls^h T_t  f \cdot \Ls^k T_t g] |
=  |\Ls^m T_{u+t} \Ls^m  [\Ls^h T_t  f \cdot \Ls^k T_t g]| \\
& \lesssim (t+u)^{-m} T_{a(t+u)} |\Ls^m[\Ls^h T_t  f \cdot \Ls^k T_t g]| \\
& \lesssim (t+u)^{-m} T_{a(t+u)}  \sum_{i=0}^{2m} \max_{|I|= i+2h} | Y_{I}T_t f |\max_{|J|= 2m+2k-i} |Z_{J} T_tg |,
\end{align*}
where $(Y_I, Z_J) = (\Ls^{h}, \Ls^{m+k})$ if $|I|=2h$ and $|J|=2k+2m$, $(Y_I, Z_J) = (\Ls^{m+h}, \Ls^{k})$ if $|I|=2h+2m$ and $|J|=2k$, and $(Y_I, Z_J) = (X_I, X_J)$ otherwise. Thus, if we define
\[
%F^{h,k}(f,g)= \sum\nolimits_{i=0}^{2m} F_i^{h,k}(f,g), \qquad 
F_t^{i,h,k}(f,g)=\max_{|I|= i+2h}  | Y_{I}T_t f |\max_{|J|= 2m+2k-i}|Z_{J} T_tg |,
\]
by Proposition~\ref{CRTNintegrale} and~\eqref{Schurcont} we obtain as before
\begin{align*}
\| u^{m-\alpha/2} &\Ls^m  T_u\Pi(f,g)\|_{L^{p}(L^q_+)}  \\
&\lesssim \!\sum_{h,k=0}^{m-1}\sum_{i=0}^{2m} \left\|u^{m-\alpha/2} T_{au} \int_0^\infty t^{m+h+k}(t+u)^{-m} T_{at}   F_t^{i,h,k}(f,g)\, \frac{\dd t}{t} \right\|_{L^{p}(L^q_+)}  \\
& \lesssim\sum_{h,k=0}^{m-1}  \sum_{i=0}^{2m} \| t^{m+k+h-\frac{\alpha}{2}}  T_{at} F_t^{i,h,k}(f,g)\|_{L^p(L^q_+)}.
\end{align*}
We separate two cases, depending on the values of $i$. The cases $i=0$ or $i=2m$ are symmetric, so we only give details for $i=0$. By Corollary~\ref{bilinearCRTNint} and H\"older's inequality we obtain as above
\begin{align*}
\| t^{m+k+h-\frac{\alpha}{2}}  T_{at} F_{t}^{0,h,k}(f,g)\|_{L^p(L^q_+)} &= \| t^{m+k+h-\frac{\alpha}{2}}  T_{at} (| \Ls^h T_t f | \cdot  |\Ls^{m+k} T_t g|)\|_{L^p(L^q_+)}\\
& \lesssim \| t^{m+k+h-\frac{\alpha}{2}} T_{ct} |\Ls^h T_t f | \cdot  T_{ct}|\Ls^{m+k} T_t g|\|_{L^p(L^q_+)}\\
&   \leq  \|\sup_{t>0}   T_t |f| \|_{p_3}   \| t^{m+k-\alpha/2} T_{ct}|\Ls^{m+k} T_t g |\|_{L^{p_4}(L^q_+)}\\
& \lesssim \| f\|_{{p_3}} \| [g]\|_{\dF^{p_4,q}_\alpha},
\end{align*}
the last inequality by the $L^{p_3}$-boundedness of the heat maximal operator.

Assume now that $i\in \{1,\dots, 2m-1 \}$.  Since for $t\in [2^j,2^{j+1}]$
\[
 F_t^{i,h,k} (f,g)\leq  T_{2^j}^{(i+2h), *}f  \cdot T_{2^j}^{(2m+2k-i), *}g,
\]
one has, by Proposition~\ref{contCRTN},
\begin{align*}
  \|  t^{m+h+k-\frac{\alpha}{2}}  T_{at} & F_t^{i,h,k}(f,g)\|_{L^p(L^q_+)}  \\
  &\lesssim  \|  2^{j(m+h+k-\frac{\alpha}{2})}  T_{c{2^j}} ( T_{2^j}^{(i+2h), *}f \cdot T_{2^j}^{(2m+2k-i), *}g) \|_{L^p(\dell^q)}\\
  &\lesssim  \|  2^{j(m+h+k-\frac{\alpha}{2})}  T_{2^j}^{(i+2h), *}f \cdot T_{2^j}^{(2m+2k-i), *}g\|_{L^p(\dell^q)}.
\end{align*}
We now pick $\theta = \frac{i+2h}{2(m+h+k)}$ and apply H\"older's inequality with $\alpha_1 = \theta\alpha$, $ \alpha_2 = (1-\theta)\alpha$, $\frac{1}{q_1} = \frac{\theta}{q}$, $\frac{1}{q_2} = \frac{1-\theta}{q}$ to obtain that the last term of the previous inequality is controlled by
\begin{align*}                            
       &  \Big\|  \| 2^{j (i+2h-\alpha_1)/2} T^{(i+2h), *}_{2^{j}}f \|_{\dell^{q_1}} \|2^{j (2m+2k-i -\alpha_2)/2} T^{(2m+2k-i), *}_{2^{j}}g  \|_{\dell^{q_2}}  \Big\|_p\,,  
 \end{align*}      
 which in turn, by H\"older's inequality with  $\frac{1}{r_1} = \frac{\theta}{p_1} + \frac{1-\theta}{p_3}$, $ \frac{1}{r_2} = \frac{\theta}{p_2} + \frac{1-\theta}{p_4}$, is controlled by
       \begin{multline*}
   \| 2^{j (i+2h-\alpha_1)/2} T^{(i+2h), *}_{2^{j}}f \|_{L^{r_1}(\dell^{q_1})}  \| 2^{j (2m+2k-i -\alpha_2)/2} T^{(2m+2k-i), *}_{2^{j}}g  \|_{L^{r_2}(\dell^{q_2})}\\ \lesssim \|[f]\|_{\dF^{r_1,q_1}_{\alpha_1}} \|[g]\|_{\dF^{r_2,q_2}_{\alpha_2}},
\end{multline*}
by Lemma~\ref{teo-equiv3}. Observe now that by H\"older's inequality and the $L^r$-boundedness of  the heat maximal operator for $r\in (1,\infty]$, we have
\[
\|[f]\|_{\dF^{r_1,q_1}_{\alpha_1}}  \lesssim \|f\|_{{p_3}}^{1-\theta } \|[f]\|_{\dF^{p_1,q}_\alpha}^{\theta} , \qquad \|[g]\|_{\dF^{r_2,q_2}_{\alpha_2}}  \lesssim  \|[g]\|_{\dF^{p_4,q}_\alpha}^{1-\theta}  \|g\|_{{p_2}}^{\theta} ,
\]
whence
\begin{align*}
   \|[f]\|_{\dF^{r_1,q_1}_{\alpha_1}} \|[g]\|_{\dF^{r_2,q_2}_{\alpha_2}} &
   \lesssim \|f\|_{{p_3}}^{1-\theta } \|[f]\|_{\dF^{p_1,q}_\alpha}^{\theta} \|[g]\|_{\dF^{p_4,q}_\alpha}^{1-\theta}  \|g\|_{{p_2}}^{\theta} \\
   & \lesssim \|f\|_{{p_3}} \|[g]\|_{\dF^{p_4,q}_\alpha} + \|[f]\|_{\dF^{p_1,q}_\alpha}  \|g\|_{{p_2}}
\end{align*}
which completes the proof of~\eqref{eqalg2TL} and of the theorem.
\end{proof}

\section{Property (S), representations and interpolation}\label{sec:SS}
In this section we prove homogeneous representation formulae, in terms of heat semigroups as well as of Littlewood--Paley decompositions. The former will also lead to interpolation properties of $\dB$- and $\dF$- spaces. The setting will be that of a smooth manifold, as in Section~\ref{sec:AP}.

First of all, we notice that a representation such as~\eqref{HomLnf} or~\eqref{HomLnfBF} does not hold in general, if $n=0$. Indeed, for $f\in \Ss'$, the integral
\[
\int_1^\tau (t\Ls)^m T_t f \, \frac{\dd t}{t}
\]
in general does not converge in $\Ss'$ when $\tau \to \infty$; see also~\cite[p.\ 53]{Peetre}.

This can be overcome if $\Ss$ satisfies the following property, inspired by~\cite[Proposition 2.7]{Bownik} on $\R^\dm$, which for future reference we shall denote by $(S)$: \emph{For all $n\in \N$ there exists $\nu \in \N$ such that, for all sequences $(f_k)_{k\in \N}$ in $\Ss'$ such that $(X_J f_k)_{k\in \N}$ converges in $\Ss'$ for all $J\in \mathcal{I}^n$, there exists a sequence of polynomials $(\rho_k)_{k\in \N}$ in $\Ps_\nu$ such that $(f_k + \rho_k)_{k\in \N}$ converges in $\Ss'$}.

We shall show that under condition $(S)$, Calder\'on-type representation formulae for distributions whose equivalence class belongs to some $\dF$- or $\dB$- space  can be obtained. For more general distributions, we shall also need the following property, similar to that of Lemma~\ref{pointwiseestheat1}~(2),
\begin{equation}\label{byparts}
|T_t X_J \phi| \lesssim t^{-h/2} T_{ct}|\phi|, \qquad  t>0, \, J\in \mathcal{I}^h,\, h\in\N
\end{equation}
for some $c>0$ and all $\phi\in\Ss$. The following lemma can be proved in the exact same way as Lemma~\ref{lemmaseminorm}, and its proof is omitted.

\begin{lemma}\label{lemmaseminorm1}
Let $n\geq 0$ be an integer. If~\eqref{byparts} holds and $\ell \in \N$, $h\in\N$ and $J\in \mathcal{I}^h$, then  $p_n(\Ls^\ell T_t X_J \phi ) \lesssim t^{n/2-h/2-\ell} p_{n}(\phi)$  for all $\phi \in\Ss$ and $t\geq 1$.
\end{lemma}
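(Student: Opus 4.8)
The plan is to follow verbatim the scheme of the proof of Lemma~\ref{lemmaseminorm}, the only extra ingredient being the hypothesis~\eqref{byparts}, which allows us to move the differential operator $X_J$ across the heat semigroup at the cost of a factor $t^{-h/2}$. Writing $\psi = \Ls^\ell T_t X_J\phi$, the starting point is the identity
\[
p_n(\psi) = \sup_{x\in M}\,\max_{0\le k\le n}\,(1+d(x))^n\,|\Ls^{k+\ell} T_t X_J\phi|(x),
\]
so it suffices to bound $(1+d(x))^n|\Ls^{k+\ell} T_t X_J\phi|(x)$ for each fixed $k\in\{0,\dots,n\}$, uniformly in $x\in M$, for $t\ge 1$.

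For the pointwise step, I would split $T_t = T_{t/2}\circ T_{t/2}$, apply Lemma~\ref{pointwiseestheat1}~(2) to dominate $|\Ls^{k+\ell} T_{t/2}(\,\cdot\,)|$ by a multiple of $t^{-(k+\ell)} T_{a t/2}|\,\cdot\,|$, and then apply~\eqref{byparts} to the remaining $T_{t/2} X_J\phi$; using the positivity of the semigroup and the semigroup law, this yields, for some $c'>0$ independent of $k\le n$,
\[
|\Ls^{k+\ell} T_t X_J\phi|(x)\lesssim t^{-(k+\ell)-h/2}\,T_{c' t}|\phi|(x),\qquad t>0,\ x\in M.
\]
Then, exactly as in Lemma~\ref{lemmaseminorm}, I would use $(1+d(x))^n\le (1+d(x,y))^n(1+d(y))^n$ together with $(1+d(y))^n|\phi(y)|\le p_n(\phi)$ to obtain
\[
(1+d(x))^n|\Ls^{k+\ell}T_t X_J\phi|(x)\lesssim t^{-(k+\ell)-h/2}\,p_n(\phi)\,\|(1+d(x,\cdot))^n H_{c't}(x,\cdot)\|_1,
\]
and invoke~\eqref{norm1} to bound the last factor by $(1+\sqrt t)^n$.

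It then remains only to collect the powers of $t$: for $t\ge 1$ one has $(1+\sqrt t)^n\lesssim t^{n/2}$ and $t^{-(k+\ell)}\le t^{-\ell}$ since $k\ge 0$, so the displayed quantity is $\lesssim t^{n/2-h/2-\ell}p_n(\phi)$ uniformly in $x$ and in $k\in\{0,\dots,n\}$; taking the supremum over $x$ and the maximum over $k$ gives the claim. I do not expect any genuine obstacle here: the restriction $t\ge 1$ is precisely what is needed to absorb $(1+\sqrt t)^n$ and the harmless negative powers $t^{-k}$ into $t^{n/2}$, and for $h=0$ the statement (with $X_J$ the identity) is just Lemma~\ref{lemmaseminorm} read for $t\ge 1$. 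The only point requiring a little care is that~\eqref{byparts} is assumed rather than derived — it plays here exactly the role that Lemma~\ref{pointwiseestheat1}~(2) plays in the proof of Lemma~\ref{lemmaseminorm}.
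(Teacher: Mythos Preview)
Your proposal is correct and is precisely the argument the paper has in mind: the paper omits the proof, stating only that it ``can be proved in the exact same way as Lemma~\ref{lemmaseminorm}'', and your write-up does exactly that, inserting~\eqref{byparts} at the one place where the extra factor $t^{-h/2}$ is needed. The splitting $T_t=T_{t/2}T_{t/2}$, the pointwise domination via Lemma~\ref{pointwiseestheat1}~(2) and~\eqref{byparts}, and the final appeal to~\eqref{norm1} together with $t\ge 1$ are all as expected.
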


\begin{proposition}\label{HomLPgen}
Let $m\geq 1$ be an integer and suppose $f\in \Ss'$. If $(S)$ and~\eqref{byparts} hold, then there exist $\nu= \nu(f)\in \N$, $\rho_{f}\in \Ps$, and a sequence $(\rho_k) \subset \Ps_{\nu}$ such that
\begin{equation}\label{LPD2gen}
f - \rho_f= \frac{1}{(m-1)!} \lim_{k \to \infty} \bigg( \int_0^{k} (t\Ls)^mT_t f \, \frac{\dd t}{t} + \rho_k\bigg) \qquad \mbox{ in }\; \Ss'.
\end{equation}
\end{proposition}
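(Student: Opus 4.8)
The plan is to truncate the local reproducing formula of Proposition~\ref{thm:decomp-SP} at level $\tau=k$, show that the remainder becomes negligible in $\Ss'$ once sufficiently many vector fields are applied to it, and then invoke property $(S)$ to correct the truncated integrals by polynomials of bounded degree. For $k\in\N$ set
\[
F_k=\int_0^k (t\Ls)^m T_t f\,\frac{\dd t}{t},\qquad R_k=\sum_{j=0}^{m-1}\frac{k^j}{j!}\,\Ls^j T_k f ,
\]
so that Proposition~\ref{thm:decomp-SP} with $\tau=k$ reads $F_k=(m-1)!\,(f-R_k)$ in $\Ss'$. Fix $h\in\N$ with $|\langle f,\phi\rangle|\le C\,p_h(\phi)$ for all $\phi\in\Ss$, and fix an integer $n>h$.

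The first and main step is to prove that $X_J R_k\to 0$ in $\Ss'$ as $k\to\infty$ for every $J\in\mathcal{I}^n$. Since $\Ls=-\sum_i X_i^2$ is self-adjoint and nonnegative, each $X_i$ is skew-adjoint, so the formal transpose of $X_J\Ls^j T_k$ equals $(-1)^{|J|}\Ls^j T_k X_{J^{\mathrm{rev}}}$ with $|J^{\mathrm{rev}}|=n$; hence by Lemma~\ref{lemmaseminorm1} (this is the place where~\eqref{byparts} is used) one has, for $k\ge 1$ and $\phi\in\Ss$,
\[
|\langle X_J\Ls^j T_k f,\phi\rangle|=|\langle f,(-1)^{|J|}\Ls^j T_k X_{J^{\mathrm{rev}}}\phi\rangle|\le C\,p_h(\Ls^j T_k X_{J^{\mathrm{rev}}}\phi)\lesssim k^{h/2-n/2-j}\,p_h(\phi).
\]
Summing the coefficients $k^j/j!$ against these bounds yields $|\langle X_J R_k,\phi\rangle|\lesssim k^{(h-n)/2}p_h(\phi)\to 0$, because $n>h$. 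Consequently $X_J F_k=(m-1)!\,(X_J f-X_J R_k)\to (m-1)!\,X_J f$ in $\Ss'$ for every $J\in\mathcal{I}^n$, and so property $(S)$, applied with this $n$, provides $\nu=\nu(n)\in\N$ and a sequence $(\rho_k)\subset\Ps_\nu$ for which $F_k+\rho_k$ converges in $\Ss'$, say to $H$.

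It then remains to identify $H$. Dividing by $(m-1)!$ and using $F_k=(m-1)!(f-R_k)$, one obtains $R_k-\frac{1}{(m-1)!}\rho_k\to f-\frac{1}{(m-1)!}H$ in $\Ss'$. On the other hand, exactly as in the proof of Proposition~\ref{HomLPgenbis}, via Lemma~\ref{lemmaseminorm} and the fact that $N_0:=\max(\nu,n)>h/2$, each term $(k\Ls)^j\Ls^{N_0}T_k f$ tends to $0$ in $\Ss'$, so $\Ls^{N_0}R_k\to 0$; and $\Ls^{N_0}\rho_k=0$ since $\rho_k\in\Ps_\nu$ and $N_0\ge\nu$. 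Applying $\Ls^{N_0}$ to the displayed convergence then forces $\Ls^{N_0}\bigl(f-\frac{1}{(m-1)!}H\bigr)=0$, i.e.\ $\rho_f:=f-\frac{1}{(m-1)!}H\in\Ps_{N_0}\subset\Ps$. Therefore
\[
f-\rho_f=\frac{1}{(m-1)!}H=\frac{1}{(m-1)!}\lim_{k\to\infty}\Bigl(\int_0^k(t\Ls)^m T_t f\,\frac{\dd t}{t}+\rho_k\Bigr),
\]
which is~\eqref{LPD2gen}, with $(\rho_k)\subset\Ps_\nu$ and $\rho_f\in\Ps$ as required.

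The main obstacle is the first step. The whole scheme works only because, after applying $n>h$ vector fields to the remainder $R_k$, all of them can be moved onto the test function and the smoothing of the heat semigroup can be harvested; this is precisely the content of~\eqref{byparts} and Lemma~\ref{lemmaseminorm1}, and it is what is missing in the cruder argument of Proposition~\ref{HomLPgenbis}, where only $\Ls$-powers of the remainder are shown to vanish. The subsequent identification of $\rho_f$ as a genuine polynomial is routine, resting only on the uniform degree bound $\nu$ furnished by $(S)$.
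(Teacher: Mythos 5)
Your proof is correct and follows essentially the same route as the paper: convergence of $X_J F_k$ for $|J|=n>h$ via Lemma~\ref{lemmaseminorm1}, then property $(S)$ to produce $(\rho_k)\subset\Ps_\nu$, then identification of the defect as a polynomial by applying $\Ls^{N_0}$ with $N_0>h/2$ and $N_0\ge\nu$ and using Lemma~\ref{lemmaseminorm}. The only cosmetic differences are that you control the remainder $R_k$ rather than the tail $\int_1^k$ of the integral directly (the same $k^{(h-n)/2}$ decay drives both), and that the appeal to skew-adjointness of the $X_i$ is unnecessary: the paper extends operators to $\Ss'$ by the convention $\langle Tg,\phi\rangle=\langle g,T\phi\rangle$, so $\langle X_J\Ls^jT_kf,\phi\rangle=\langle f,\Ls^jT_kX_J\phi\rangle$ holds directly, with no transposition, sign, or reversal of $J$ needed.
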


\begin{proof}
We claim that there exists $\nu=\nu(f)$ and a sequence $(\rho_k)$ in $\Ps_\nu$ such that
\[
f_0 = \frac{1}{(m-1)!}\lim_{k \to \infty} \bigg( \int_0^{k} (t\Ls)^mT_t f \, \frac{\dd t}{t} + \rho_k\bigg)
\]
exists in $\Ss'$. Assuming the claim for a moment, we complete the proof.

By~\eqref{LPDlocal}, for all $k\in\N$
\[
 f - \frac{1}{(m-1)!} \bigg( \int_0^{k} (t\Ls)^mT_t f \, \frac{\dd t}{t} + \rho_k \bigg) =  \sum_{j=0}^{m-1} \frac{1}{j!} (k\Ls)^j T_k f - \frac{1}{(m-1)!}\rho_k.
\]
Let $h\in\N$ be such that 
\begin{equation}\label{fh}
|\langle f,\phi\rangle|\leq C p_h(\phi) \qquad \forall\, \phi\in \Ss,
\end{equation}
and define $\eta = \max\{ \nu, [h/2]+1\}$. Since for all $k\geq 1$ and $j=0,\dots, m-1$
\[
|\langle (k\Ls)^j \Ls^\eta T_k f, \phi\rangle|  \lesssim p_h( (k\Ls)^j \Ls^\eta T_k \phi) \lesssim k^{h/2-\eta} p_{n}(\phi)
\]
by Lemmas~\ref{pointwiseestheat1} and~\ref{lemmaseminorm}, and since $\Ls^\eta \rho_k=0$ for all $k$,
\[
\Ls^\eta (f - f_0) =  \lim_{k \to \infty}  \sum_{j=0}^{m-1} \frac{1}{j!} (k\Ls)^j \Ls^\eta T_k f=0
\]
in $\Ss'$. Then $f-f_0 \in \Ps_\eta$, and the statement follows. Therefore, it remains to prove the claim.

We first observe that for all $f\in \Ss'$, by~\eqref{LPDlocal},
\begin{equation}\label{01noprobl}
\int_0^1 (t\Ls)^{m} T_t f \, \frac{\dd t}{t} \in \Ss'.
\end{equation}
By $(S)$, the claim will then follow if we show that there exists $n \in \N$ such that if $|J|=n$ the sequence
\[
X_J \int_1^k  (t\Ls)^{m} T_t f  \, \frac{\dd t}{t} = \int_1^k  X_J  (t\Ls)^{m}T_{t} f \, \frac{\dd t}{t} , \qquad k\in \N,
\]
converges in $\Ss'$ for $k\to \infty$. By~\eqref{fh} it is enough to show that  there is $n \in \N$ such that if $|J|=n$ the sequence
\[
 \int_1^k p_h(  (t\Ls)^{m} T_t X_J \phi)\, \frac{\dd t}{t}, \qquad k\in \N
\]
converges for all $\phi\in\Ss$. By Lemma~\ref{lemmaseminorm1}, for $t\geq 1$
\[
p_h(  (t\Ls)^{m}  T_t X_J\phi) \lesssim  t^{h/2-n/2} p_{h}(\phi),
\]
from which the claim follows by taking $n>h$.
\end{proof}

The representation formula~\eqref{LPD2gen} has a stronger form if $f\in \Ss'$ is such that $[f]$ belongs to some $\dB$- or $\dF$- space. In this case, moreover,~\eqref{byparts} is not needed.

\begin{proposition}\label{prop_realdec}
If $(S)$ holds, then for all $\alpha \geq 0$ there exists $\nu = \nu(\alpha)$ such that, for all $f\in \Ss'$ satisfying $[f]\in \dB^{p,q}_\alpha$ for some $p,q\in [1,\infty]$, or $[f]\in \dF^{p,q}_\alpha$ for some $p\in (1,\infty)$ and $q\in [1,\infty]$, and for all integers $m >\alpha/2$,  there are $\rho_{f}\in \Ps$  and  $(\rho_k) \subset \Ps_{\nu}$ such that
\begin{equation}\label{LPD2}
f - \rho_f= \frac{1}{(m-1)!} \lim_{k \to \infty} \bigg( \int_0^{k} (t\Ls)^mT_t (f+\wp_{f}) \, \frac{\dd t}{t} + \rho_k\bigg) \qquad \mbox{ in }\; \Ss',
\end{equation}
where $\wp_{f} = \wp(f,\alpha, [\alpha/2]+1,p,q)$ is that of Proposition~\ref{prop:minattained}.
\end{proposition}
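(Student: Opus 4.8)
The plan is to mimic the proof of Proposition~\ref{HomLPgen}, replacing the use of~\eqref{byparts} — not available here — by the information carried by the hypothesis $[f]\in\dB^{p,q}_\alpha$ or $[f]\in\dF^{p,q}_\alpha$. Write $F=f+\wp_f$, $\nu_0=[\alpha/2]+1$ and fix an integer $m>\alpha/2$; since $\nu_0$ is the least integer exceeding $\alpha/2$, we have $m\geq\nu_0$. Set $h_k=\int_0^k(t\Ls)^mT_tF\,\frac{\dd t}{t}$ for $k\in\N$. Two a priori estimates will do the work. On the one hand, Theorem~\ref{teo-equiv2} (applied with exponent $\nu_0$) identifies $\|2^{j(\nu_0-\alpha/2)}\Ls^{\nu_0}T_{2^j}F\|_{\dell^q(L^p)}$, respectively $\|2^{j(\nu_0-\alpha/2)}\Ls^{\nu_0}T_{2^j}F\|_{L^p(\dell^q)}$, with $\|[f]\|_{\dB^{p,q}_\alpha}$ resp.\ $\|[f]\|_{\dF^{p,q}_\alpha}$ up to constants; extracting a single dyadic slice and interpolating to non-dyadic $t$ via the $L^p$-contractivity of $T_t$ yields the pointwise-in-$t$ bound $\|\Ls^{\nu_0}T_tF\|_p\lesssim t^{\alpha/2-\nu_0}\|[f]\|_{\dX^{p,q}_\alpha}$ for all $t\geq 1$. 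On the other hand, factoring $\Ls^mT_tF=\Ls^{m-\nu_0}T_{t/2}(\Ls^{\nu_0}T_{t/2}F)$ and using Lemma~\ref{pointwiseestheat2} together with Lemma~\ref{pointwiseestheat1}~(2) — legitimate since $\Ls^{\nu_0}T_{t/2}F$ is a continuous slowly growing function by Remark~\ref{remark:continuity} — gives, for every multi-index $J$ and some $c>0$, the pointwise bound $t^m|X_J\Ls^mT_tF|\lesssim t^{\nu_0-|J|/2}\,T_{ct}|\Ls^{\nu_0}T_{t/2}F|$.

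The core step is to prove that, with $n:=[\alpha]+1$, the sequence $X_Jh_k$ converges in $\Ss'$ as $k\to\infty$ for every $J$ with $|J|=n$. By~\eqref{01noprobl} the truncation $\int_0^1(t\Ls)^mT_tF\,\frac{\dd t}{t}$ is a fixed element of $\Ss'$, on which $X_J$ is continuous by Lemma~\ref{lemma:Sderivate}; so it suffices to treat $\int_1^k t^mX_J\Ls^mT_tF\,\frac{\dd t}{t}$. For $\phi\in\Ss$ and $1\leq\sigma\leq k$, using first the second displayed bound, then moving $T_{ct}$ onto $\phi$ by symmetry of the heat kernel, then Hölder's inequality in the $L^p$--$L^{p'}$ duality together with the $L^{p'}$-contractivity of $T_{ct}$ (and $\phi\in\Ss\subset L^{p'}$, with $\|\phi\|_{p'}\lesssim p_N(\phi)$ for $N$ large), and finally the first displayed bound, one obtains
\[
\Big|\int_\sigma^k t^m\langle X_J\Ls^mT_tF,\phi\rangle\,\frac{\dd t}{t}\Big| \lesssim \|\phi\|_{p'}\int_\sigma^k t^{(\alpha-n)/2}\,\frac{\dd t}{t},
\]
which tends to $0$ as $\sigma\to\infty$ because $n>\alpha$. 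Hence $\int_1^k t^mX_J\Ls^mT_tF\,\frac{\dd t}{t}$ is Cauchy, so convergent, in $\Ss'$, and so is $X_Jh_k$, for every $|J|=n$. Since $n$ depends only on $\alpha$, property~$(S)$ supplies $\nu=\nu(\alpha)$ and a sequence $(\rho_k)\subset\Ps_\nu$ for which $f_0:=\frac{1}{(m-1)!}\lim_k(h_k+\rho_k)$ exists in $\Ss'$.

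It remains to identify $f_0$. Applying~\eqref{LPDlocal} to $F$ gives $\frac{1}{(m-1)!}h_k=F-\sum_{j=0}^{m-1}\frac{1}{j!}(k\Ls)^jT_kF$, hence $\sum_{j=0}^{m-1}\frac{1}{j!}(k\Ls)^jT_kF-\frac{1}{(m-1)!}\rho_k\to F-f_0$ in $\Ss'$. Put $\eta=\max\{\nu,\nu_0\}$. Since $\rho_k\in\Ps_\nu\subseteq\Ps_\eta$ we have $\Ls^\eta\rho_k=0$, while the same reasoning as above (now writing $\Ls^{j+\eta}T_kF=\Ls^{j+\eta-\nu_0}T_{k/2}\Ls^{\nu_0}T_{k/2}F$) gives $|\langle k^j\Ls^{j+\eta}T_kF,\phi\rangle|\lesssim k^{\alpha/2-\eta}\|\phi\|_{p'}\to 0$ as $k\to\infty$, since $\eta>\alpha/2$. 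Therefore $\Ls^\eta(F-f_0)=0$, i.e.\ $F-f_0\in\Ps_\eta$, and so $\rho_f:=f-f_0=(F-f_0)-\wp_f\in\Ps$ satisfies $f-\rho_f=f_0$, which is exactly~\eqref{LPD2}.

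I expect the only genuine difficulty to be precisely the point where Proposition~\ref{HomLPgen} invoked~\eqref{byparts}: there one integrates the vector fields by parts onto a test function, which is not possible here, so instead one keeps the $X_J$'s on $T_tF$ and, via Lemma~\ref{pointwiseestheat2}, trades each of them for half a power of $t$ and a heat operator acting on $\Ls^{\nu_0}T_{t/2}F$, whose $L^p$-norm is then controlled by the $\dX^{p,q}_\alpha$-norm of $[f]$. Choosing $|J|=[\alpha]+1$ is exactly what forces the tail integral to converge while keeping the polynomial degree $\nu$ dependent on $\alpha$ alone, as required.
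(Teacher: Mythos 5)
Your proof is correct and follows essentially the same route as the paper's: both reduce, via property $(S)$ and the local representation~\eqref{LPDlocal}, to proving convergence of $X_J\int_1^k(t\Ls)^mT_t(f+\wp_f)\,\frac{\dd t}{t}$ for $|J|$ depending only on $\alpha$, with the tail controlled through $\Ls^{\nu_0}T_t(f+\wp_f)$, $\nu_0=[\alpha/2]+1$. The only (harmless) difference is technical: the paper takes $|J|=2\nu_0$ and obtains convergence in $L^p$ by H\"older in $t$ and Corollary~\ref{bilinearCRTNint} as in~\eqref{conv1tau}, whereas you take $|J|=[\alpha]+1$ and use the cruder slice-wise bound $\|\Ls^{\nu_0}T_t(f+\wp_f)\|_p\lesssim t^{\alpha/2-\nu_0}$ to get convergence only in $\Ss'$, which is all that $(S)$ requires.
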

\begin{proof}
Write $\nu_0= [\alpha/2] +1$, and let $\nu$ be the integer associated with $n=2\nu_0$ in $(S)$. Let $f\in \Ss'$ be such that $[f]\in \dF^{p,q}_\alpha$ for $p\in (1,\infty)$ and $q\in [1,\infty]$, and suppose $m > \alpha/2$ is an integer. Pick $\wp_f =\wp(f,\alpha, \nu_0, p,q)\in \Ps$. We shall show that there exists a sequence $(\rho_k) \subset \Ps_{\nu}$ such that
\[
f_0 = \frac{1}{(m-1)!}\lim_{k \to \infty} \bigg( \int_0^{k} (t\Ls)^mT_t (f + \wp_f) \, \frac{\dd t}{t} + \rho_k\bigg)
\]
exists in $\Ss'$. The conclusion will then follow in the same way as in Proposition~\ref{LPD2gen}.

By~\eqref{01noprobl} with $f+\wp_f$ in place of $f$, and by $(S)$, it will be enough to show the convergence in $\Ss'$ of the sequence
\[
X_J \int_1^k (t\Ls)^mT_t (f+ \wp_{f}) \, \frac{\dd t}{t} , \qquad k\in \N,
\]
for $|J|=2\nu_0$. Arguing exactly as in~\eqref{conv1tau} with $X_J$ in place of $\Ls^n$, one actually sees that it converges in $L^p$. By $(S)$ and the choice of $\nu$, there is then a sequence $(\rho_k) \subset \Ps_{\nu}$ such that
\[
\int_1^k (t\Ls)^mT_t (f+ \wp_{f}) \, \frac{\dd t}{t}  + \rho_k, \qquad k\in \N,
\]
converges in $\Ss'$, namely $f_0 \in \Ss'$.
\end{proof}

\subsection{Comparisons and remarks}\label{sub:comparison}
Let us now discuss the equivalence of our $\dB$- and $\dF$- norms with those defined by means of a Littlewood--Paley decomposition. 

To begin with, observe that if $\varphi \in \Ss(\R)$ is even and compactly supported away from $0$, or if it vanishes in $0$ with infinite order, then $\Ls^{-k}\varphi(\sqrt{\Ls}) $ is well defined for all $k\in \N$, and $\varphi(\sqrt{\Ls}) = \Ls^k \Ls^{-k}\varphi(\sqrt{\Ls})$. This implies that $\varphi(\sqrt{\Ls}) \rho =0$ for all $\rho \in \Ps$. As a consequence, $\varphi(\sqrt{\Ls}) (f+\rho) = \varphi(\sqrt{\Ls}) f \in \Ss'$ for all $f\in \Ss'$ and $\rho \in \Ps$. In particular, $\varphi(\sqrt{\Ls}) $ can be defined on $\Ss'/\Ps$ with values in $\Ss'$; actually its image consists of continuous function~\cite[Proposition 3.6]{GKKP1}. If $\varphi$ contains $0$ in its support, instead, this is in general not possible, as $\varphi(\sqrt{\Ls})(f+\rho)$ might depend on $\rho$ and $\varphi(\sqrt{\Ls}) $ can be defined on $\Ss'/\Ps$ only with values in $\Ss'/\Ps$. In particular, this happens for the multipliers $\Ls^k T_t$, $t>0$. Therefore, the claimed characterization in~\cite[Theorem 6.2]{GKKP1} needs to be suitably interpreted. This issue has also been highlighted in~\cite[p.\ 32]{BBD}.

Consider a function $\varphi\in C^\infty((0,\infty))$ such that $\supp \varphi \subset   [1/2, 2]$, $|\varphi(\lambda)| \geq c>0$ for $\lambda\in [2^{-3/4}, 2^{3/4}]$ and
\[
\sum_{j\in \Z} \varphi(2^{-j}\lambda) =1 \qquad \forall \, \lambda> 0.
\] 
For $j\in \Z$, we write $\varphi_j(\lambda)= \varphi(2^{-j}\lambda)$. 

We begin by showing the Littlewood--Paley counterparts of Propositions~\ref{HomLPgen} and~\ref{prop_realdec}, with a preliminary lemma.

\begin{lemma}\label{lemmaseminorm2}
Let $\psi \in C^\infty_c(0,\infty)$ be positive, and $h\in\N$. If $\ell,m\in \N$ and $J\in \mathcal{I}^m$, and if~\eqref{byparts} holds when $m>0$, then for all $t\geq 1$ and $\phi\in\Ss$
\[
p_h( \Ls^\ell \psi(t\sqrt{\Ls})X_J \phi)  \lesssim p_{h+\dm+1}(\phi) t^{h -m-2\ell -\dm^*}.
\]
\end{lemma}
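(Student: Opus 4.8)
The plan is to reduce the multiplier $\psi(t\sqrt{\Ls})$ to a large-scale operator of heat type and then run the weighted $L^1$ bookkeeping from the proof of Lemma~\ref{lemmaseminorm}, the only genuinely new input being the rapid off-diagonal decay of $C^\infty_c(0,\infty)$-multipliers of $\Ls$. Since $p_h(F)=\sup_{x\in M}\max_{0\le k\le h}(1+d(x))^h|\Ls^kF(x)|$, it suffices to bound $(1+d(x))^h|\Ls^{k+\ell}\psi(t\sqrt{\Ls})X_J\phi(x)|$ for $0\le k\le h$. As $\psi\in C^\infty_c(0,\infty)$ we have $\Ls^{k+\ell}\psi(t\sqrt{\Ls})=t^{-2(k+\ell)}\Psi_{k,\ell}(t^2\Ls)$ with $\Psi_{k,\ell}(z):=z^{k+\ell}\psi(\sqrt z)\in C^\infty_c(0,\infty)$; writing $\Psi_{k,\ell}(z)=\e^{-z}\widetilde\Psi_{k,\ell}(z)$ with $\widetilde\Psi_{k,\ell}(z):=\e^{z}\Psi_{k,\ell}(z)\in C^\infty_c(0,\infty)$ and using that all these are functions of $\Ls$, this gives
\[
\Ls^{k+\ell}\psi(t\sqrt{\Ls})X_J\phi=t^{-2(k+\ell)}\,\widetilde\Psi_{k,\ell}(t^2\Ls)\bigl(T_{t^2}X_J\phi\bigr)\qquad\text{in }\Ss'.
\]
By~\eqref{byparts} (trivially if $m=0$) one has $|T_{t^2}X_J\phi|\lesssim t^{-m}T_{ct^2}|\phi|$ pointwise, while the Gaussian bounds~\eqref{Htestimate} yield, in the usual way, that for every $N$ the kernel $K$ of $\widetilde\Psi_{k,\ell}(t^2\Ls)$ obeys $|K(x,y)|\lesssim_N\mu(B(x,t))^{-1}(1+d(x,y)/t)^{-N}$. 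Hence
\[
\bigl|\Ls^{k+\ell}\psi(t\sqrt{\Ls})X_J\phi(x)\bigr|\lesssim t^{-2(k+\ell)-m}\int_M\!\int_M|K(x,y)|\,H_{ct^2}(y,z)\,|\phi(z)|\,\dd\mu(z)\,\dd\mu(y).
\]

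Next I would insert the weight $(1+d(x))^h$. Using $\phi\in\Ss$ in the form $|\phi(z)|\le(1+d(z))^{-h-\dm-1}p_{h+\dm+1}(\phi)$ together with
\[
(1+d(x))^h(1+d(z))^{-h-\dm-1}\le(1+d(x,z))^h(1+d(z))^{-\dm-1}\le 2^h\bigl[(1+d(x,y))^h+(1+d(y,z))^h\bigr](1+d(z))^{-\dm-1},
\]
I would split the double integral into two pieces: in the first the factor $(1+d(x,y))^h$ is absorbed into the rapid decay of $K$, in the second the factor $(1+d(y,z))^h$ into the Gaussian factor of $H_{ct^2}$, each absorption costing only one power $t^h$ because $t\ge1$. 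Carrying out the $y$-integration — a routine convolution of two kernels of rapid decay at scale $t$, handled via the doubling property of $\mu$ — and then using $\mu(B(\cdot,t))^{-1}\lesssim t^{-\dm^*}$ for $t\ge1$ from Remark~\ref{noncollaps} and the finiteness of $\int_M(1+d(z))^{-\dm-1}\,\dd\mu(z)$, both pieces are $\lesssim t^{h-\dm^*}$. This gives
\[
(1+d(x))^h\bigl|\Ls^{k+\ell}\psi(t\sqrt{\Ls})X_J\phi(x)\bigr|\lesssim t^{h-m-2(k+\ell)-\dm^*}\,p_{h+\dm+1}(\phi),
\]
and since $t\ge1$ the maximum over $0\le k\le h$ is attained at $k=0$; taking the supremum over $x$ gives the claim.

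The only step here that is not routine is the off-diagonal estimate for the $C^\infty_c(0,\infty)$-multiplier $\widetilde\Psi_{k,\ell}(t^2\Ls)$: this is a classical consequence of the Gaussian heat kernel bounds, available in the framework of~\cite{KP,GKKP1}, and can also be obtained directly from~\eqref{Htestimate} and~\eqref{dtHtestimate} by writing $\widetilde\Psi_{k,\ell}(t^2\Ls)$ as a superposition of the operators $T_{st^2}$ against a rapidly decaying weight, just as on $\R^\dm$. The two features responsible for the extra gain $t^{-\dm^*}$ (compared with Lemma~\ref{lemmaseminorm1}) are that $\widetilde\Psi_{k,\ell}(t^2\Ls)$ is a genuine large-scale operator — because $\psi$ is supported away from the origin — so its kernel has size $\mu(B(\cdot,t))^{-1}\lesssim t^{-\dm^*}$, and that $\phi\in\Ss$ is integrable with norm controlled by $p_{h+\dm+1}(\phi)$, at the price of the extra $\dm+1$ derivatives in the seminorm on the right-hand side.
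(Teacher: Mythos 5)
Your proposal is correct and follows essentially the same route as the paper: the same factorization $\Ls^{k+\ell}\psi(t\sqrt{\Ls})=t^{-2(k+\ell)}\widetilde\Psi_{k,\ell}(t^2\Ls)\,T_{t^2}$, the use of~\eqref{byparts} on the heat factor, the off-diagonal bound $|K(x,y)|\lesssim_N \mu(B(x,t))^{-1}(1+d(x,y)/t)^{-N}$ for the compactly supported multiplier (which the paper takes from~\cite[Theorem 3.1]{KP}), the composition of the two rapidly decaying kernels, and the noncollapsing bound $\mu(B(x,t))^{-1}\lesssim t^{-\dm^*}$ for $t\geq 1$ to produce the extra gain. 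The only (immaterial) difference is bookkeeping: the paper first composes the two kernels into a single $D_{t,\sigma}(x,y)$ and then distributes the weight $(1+d(x))^h$, whereas you split the weight between the two kernels before integrating.
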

\begin{proof}
Observe that for all $t>0$ and $k\in\N$, one has 
\[
\Ls^k \Ls^\ell \psi(t\sqrt{\Ls})X_J =t^{-2k-2\ell} (t\sqrt{\Ls})^{2k+2\ell}\psi(t\sqrt{\Ls}) \, \e^{t^2\Ls} T_{t^2}  X_J.
\]
Thus, if $\widetilde K_t$ denotes the integral kernel of $(t\sqrt{\Ls})^{2k+2\ell } \psi(t\sqrt{\Ls}) \, \e^{(t\sqrt{\Ls})^2}$, by~\eqref{byparts}
\begin{align*}
|\Ls^k \Ls^\ell \psi(t \Ls) X_J\phi(x)| 
& \leq t^{-2k-2\ell } \int_M|\widetilde K_t(x,z)| \, |T_{ t^2} X_J \phi (z)| \, \dd \mu(z)\\
& \lesssim t^{-2k-2\ell- m}\int_M \bigg[ \int_M |\widetilde K_t(x,z)| H_{ct^2}(z,y) \, \dd \mu(z) \bigg] |\phi(y)|\, \dd \mu(y).
\end{align*}
For notational convenience, given $\delta,\sigma>0$ let us introduce as in~\cite{CKP, KP, GKKP1}
\[
D_{\delta,\sigma}(x,y) = \frac{1}{(\mu(B(x,\delta))\mu(B(y,\delta)))^{1/2}} \left( 1+ \frac{d(x,y)}{\delta}\right)^{-\sigma}.
\]
By the heat kernel estimates~\eqref{Htestimate} and the doubling property of $\mu$, for $\sigma>0$
\[
H_{ct^2}(z,y) \lesssim  D_{t, \sigma}(z,y),
\]
and moreover, by \cite[Theorem 3.1]{KP}, see also~\cite[Theorem 2.5]{GKKP1}, for $\sigma \geq \dm +1$,
\[
 |\widetilde K_t(x,z) |\lesssim  D_{t,\sigma}(x,z), 
\]
the constants involved depending on $\sigma$. For $\sigma\geq 2\dm +1$, we obtain (cf.~\cite[Lemma 2.3 and (2.7)]{CKP})
\begin{align*}
\int_M |\widetilde K_t(x,z)| H_{ct^2}(z,y) \, \dd \mu(z)  &\lesssim \int_M D_{t,\sigma} (x,z) D_{t,\sigma}(z, y) \,\dd \mu(z) \\
& \lesssim  D_{t,\sigma}(x, y)  \\
& \lesssim \mu(B(x,t))^{-1} \left( 1+ \frac{d(x,y)}{t}\right)^{-\sigma+ \dm/2}.
\end{align*}
Observe now that if $\sigma\geq h+ \dm /2$
\begin{align*}
(1+d(x) )^h \left( 1+ \frac{d(x,y)}{t}\right)^{-\sigma+ \dm/2} 
& \leq  (1+d(y))^h  (1+d(x,y) )^h \left( 1+ \frac{d(x,y)}{t}\right)^{-\sigma+\dm/2}\\
& \leq  (1+d(y))^h (1+t)^h.
\end{align*}
By applying the above estimates with $\sigma = h +2\dm+1$, and by Remark~\ref{noncollaps}, we conclude that for all $t\geq 1$, $x\in M$ and $k=0,\dots, h$
\begin{align*}
(1+d(x))^h |\Ls^k \Ls^\ell \psi(t \sqrt{\Ls}) X_J\phi(x)| &
\lesssim t^{-2k-2\ell-m+h-\dm^*} \int_M   (1+d(y))^h |\phi(y)|\, \dd \mu(y)\\
& \lesssim  t^{-2k-2\ell -m+h-\dm^*} p_{n+\dm+1}(\phi),
\end{align*}
which concludes the proof.
\end{proof}

\begin{proposition}\label{convHLP}
If $(S)$ and~\eqref{byparts} hold, then for all $f\in \Ss'$ there exist $n= n(f)$, $\rho_f\in \Ps$, and a sequence $(\rho_k)_k \subset \Ps_{n}$ such that 
\begin{equation}\label{trueLP}
f- \rho_f =  \lim_{k\to \infty} \bigg(\sum_{j\geq -k}  \varphi_j(\sqrt{\Ls}) f + \rho_k\bigg) \qquad \mbox{in } \Ss'.
\end{equation}
\end{proposition}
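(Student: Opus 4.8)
The proof follows the scheme of Proposition~\ref{HomLPgen}: the truncated Calder\'on integral is replaced by the truncated Littlewood--Paley sum $\sum_{j\ge -k}\varphi_j(\sqrt\Ls) f$, and the role of the low-frequency remainder is played by $\psi(\sqrt{t_k\Ls}) f$ for a rescaled cut-off defined as follows. We may and do take $\varphi\ge 0$. Put $t_k=4^k$ and let $\psi$ be the even function given on $(0,\infty)$ by $\psi(\mu)=\sum_{i\ge 1}\varphi(2^i\mu)$; it is smooth and compactly supported, hence Schwartz, so $\psi(\sqrt\Ls)$ is continuous on $\Ss$ and $\Ss'$, and a change of index gives $\sum_{j\le -k-1}\varphi(2^{-j}\lambda)=\psi(\sqrt{t_k}\,\lambda)$ for all $\lambda>0$ and $k\in\N$. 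For $\phi\in\Ss$ the series $\sum_{j\ge -k}\varphi_j(\sqrt\Ls)\phi$ converges in $\Ss$ (only its high-frequency part is infinite, and that converges rapidly, as in~\cite{KP,GKKP1}), and since $\sum_{j\ge -k}\varphi_j(\lambda)+\psi(\sqrt{t_k}\,\lambda)=1$ we obtain $\sum_{j\ge -k}\varphi_j(\sqrt\Ls)\phi+\psi(\sqrt{t_k\Ls})\phi=\phi$ in $\Ss$; pairing with $f$ and using the definition of the actions on $\Ss'$,
\[
\sum_{j\ge -k}\varphi_j(\sqrt\Ls) f= f-\psi(\sqrt{t_k\Ls}) f\qquad\text{in }\Ss'.
\]
Hence it is enough to produce $\nu=\nu(f)\in\N$, a polynomial $\rho_f$ and a sequence $(\rho_k)\subset\Ps_\nu$ with $-\psi(\sqrt{t_k\Ls}) f+\rho_k\to-\rho_f$ in $\Ss'$: then $\sum_{j\ge -k}\varphi_j(\sqrt\Ls)f+\rho_k\to f-\rho_f$, and $n(f):=\nu$ works.

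Fix $h\in\N$ with $|\langle f,\phi\rangle|\le Cp_h(\phi)$ for all $\phi\in\Ss$ and an integer $n>h$. We verify the hypothesis of property $(S)$ for the sequence $f_k:=-\psi(\sqrt{t_k\Ls})f$. For $|J|=n$, $j\le -1$ and $\phi\in\Ss$, by the definition of the actions of $X_J$ and $\varphi_j(\sqrt\Ls)$ on $\Ss'$, using $2^{-j}\ge 1$, and applying Lemma~\ref{lemmaseminorm2} with $\ell=0$ and $m=n$ (here~\eqref{byparts} enters, as $n>0$),
\[
|\langle X_J\varphi_j(\sqrt\Ls) f,\phi\rangle|=|\langle f,\varphi(2^{-j}\sqrt\Ls) X_J\phi\rangle|\lesssim p_h\big(\varphi(2^{-j}\sqrt\Ls) X_J\phi\big)\lesssim p_{h+\dm+1}(\phi)\,2^{-j(h-n-\dm^*)}.
\]
Since $n>h$ and $\dm^*>0$, the exponent $h-n-\dm^*$ is negative, so $\sum_{j\le -1}X_J\varphi_j(\sqrt\Ls) f$ converges in $\Ss'$, and therefore its tails $X_Jf_k=-\sum_{j\le -k-1}X_J\varphi_j(\sqrt\Ls) f$ tend to $0$ in $\Ss'$ as $k\to\infty$, for every $J$ with $|J|=n$. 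By $(S)$ there exist $\nu=\nu(n)$ and $(\rho_k)\subset\Ps_\nu$ such that $f_k+\rho_k\to g_0$ in $\Ss'$ for some $g_0\in\Ss'$.

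It remains to show $g_0\in\Ps$, which finishes the proof with $\rho_f:=-g_0$. Set $\eta:=\max\{\nu,[h/2]+1\}$, so that $\Ls^\eta\rho_k=0$ for all $k$ and $\eta>h/2$; by continuity of $\Ls^\eta$ on $\Ss'$,
\[
\Ls^\eta g_0=-\lim_{k\to\infty}\Ls^\eta\psi(\sqrt{t_k\Ls})f=-\lim_{k\to\infty}\sum_{j\le -k-1}\Ls^\eta\varphi_j(\sqrt\Ls) f\qquad\text{in }\Ss'.
\]
For $\phi\in\Ss$, applying Lemma~\ref{lemmaseminorm2} now with $\ell=\eta$ and $m=0$,
\[
\Big|\Big\langle\sum_{j\le -k-1}\Ls^\eta\varphi_j(\sqrt\Ls) f,\phi\Big\rangle\Big|\le\sum_{j\le -k-1}p_h\big(\Ls^\eta\varphi(2^{-j}\sqrt\Ls)\phi\big)\lesssim p_{h+\dm+1}(\phi)\sum_{j\le -k-1}2^{-j(h-2\eta-\dm^*)},
\]
and since $h-2\eta-\dm^*<0$ the last sum is $\lesssim 2^{-(k+1)(2\eta+\dm^*-h)}\to 0$ as $k\to\infty$. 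Hence $\Ls^\eta g_0=0$, i.e.\ $g_0\in\Ps_\eta\subset\Ps$. The substantial content is concentrated in the two applications of Lemma~\ref{lemmaseminorm2}: taking $n>h$ derivatives makes the low-frequency part of $f$ regular enough that $(S)$ supplies the correcting polynomials $\rho_k$, while applying $\Ls^\eta$ with $\eta>h/2$ removes enough low-frequency mass to force the limit $g_0$ to be a genuine polynomial — this being exactly where the hypotheses $(S)$ and~\eqref{byparts} are used. The only point requiring care beyond this is the verification that the first displayed identity holds in $\Ss'$, which amounts to the $\Ss$-convergence of the truncated Littlewood--Paley sum and is routine.
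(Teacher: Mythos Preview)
Your proof is correct and follows essentially the same route as the paper's: both arguments hinge on Lemma~\ref{lemmaseminorm2} applied term-by-term to the $\varphi_j(\sqrt\Ls)$ with $t=2^{-j}$, first with $m=n>h$ derivatives to feed property~$(S)$, then with $\ell=\eta>h/2$ powers of $\Ls$ to kill the limit and force it into $\Ps$. Your explicit introduction of the cumulative low-frequency cutoff $\psi$ (so that $f_k=-\psi(\sqrt{t_k\Ls})f$) is a cleaner packaging than the paper's somewhat ambiguous notation $\varphi_{-k-1}$ for the low-frequency remainder in~\eqref{LPDk}.

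One small point of care: the identity $X_Jf_k=-\sum_{j\le -k-1}X_J\varphi_j(\sqrt\Ls)f$ is not immediate, since the partial sums $\sum_{-N\le j\le -k-1}\varphi_j$ do not converge to $\psi(\sqrt{t_k}\,\cdot)$ in $\Ss(\R)$ (they vanish at~$0$ while $\psi$ does not). The cleanest fix is to argue via Cauchy differences: for $k'>k$ one has the \emph{finite} identity $f_{k}-f_{k'}=-\sum_{-k'\le j\le -k-1}\varphi_j(\sqrt\Ls)f$, whence $X_J(f_k-f_{k'})$ is controlled by your estimate and $(X_Jf_k)$ is Cauchy in $\Ss'$. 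This is implicit in the paper as well, which works directly with the finite increments $\sum_{-k\le j\le -1}X_J\varphi_j(\sqrt\Ls)f$.
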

\begin{proof}
Let $f\in \Ss'$ be given, and let $C>0$ and $h\in \N$ be such that
\[
|\langle f, \phi \rangle| \leq C p_{h}(\phi) \qquad \forall \, \phi\in \Ss.
\]
It is well known, cf.~\cite[Proposition 5.5]{KP}, that for all fixed $k\in\Z$
\begin{equation}\label{LPDk}
f = \varphi_{-k-1}(\sqrt{\Ls}) f + \sum_{j\geq -k} \varphi_j(\sqrt{\Ls}) f  \qquad \mbox{in }\, \Ss'.
\end{equation}
In particular, $\sum_{j\geq 0} \varphi_j(\sqrt{\Ls}) f \in \Ss'$. If we show that there exist polynomials $(\rho_k)$ in $\Ps_n$ such that
\[
f_0 = \lim_{k\to \infty}\bigg(\sum_{j\geq -k}  \varphi_j(\sqrt{\Ls}) f + \rho_k\bigg)
\]
exists in $\Ss'$, then for $\ell \geq \max(n, (h-\dm^*)/2)$ one gets by~\eqref{LPDk} and  Lemma~\ref{lemmaseminorm2}
\[
\Ls^\ell (f-f_0) = \lim_{k\to \infty}\Ls^\ell \varphi_{-k-1}(\sqrt{\Ls}) f =0
\]
in $\Ss'$, and the proof is complete.

By $(S)$ it is then enough to show that there is $m\in \N$ such that
\begin{equation}\label{serieincrim}
\sum_{-k\leq j\leq -1} X_J \varphi_j(\sqrt{\Ls}) f, \qquad k\in\N,
\end{equation}
converges in $\Ss'$ for all $J\in \mathcal{I}^m$. This follows if there exists $m$ such that for  $J\in \mathcal{I}^m$
\[
\sum_{-k \leq j \leq -1} p_h( \varphi_j(\sqrt{\Ls})X_J \phi)
\]
is a convergent sequence for all $\phi \in \Ss$. But this is a consequence of Lemma~\ref{lemmaseminorm2}.
\end{proof}

Let us now introduce the quantities, for $\alpha \geq 0$, $p,q\in [1,\infty]$ and $[f]\in \Ss'/\Ps$, 
\[
\| [f]\|_{\dB^{p,q}_{\alpha},\mathrm{LP}} = \bigg( \sum_{j\in \Z} ( 2^{j\alpha} \| \varphi_j(\sqrt{\Ls}) f\|_p)^q \bigg)^{1/q}
\]
and
\[
\| [f]\|_{\dF^{p,q}_{\alpha},\mathrm{LP}} = \bigg\| \Big( \sum_{j\in \Z} ( 2^{j\alpha} | \varphi_j(\sqrt{\Ls}) f| )^q \Big)^{1/q}\bigg\|_p .
\]
Then, we have the following.
\begin{proposition}\label{convBTLLP}
If $(S)$ holds, then for all $\alpha \geq 0$ there exists $\eta = \eta(\alpha)$ such that, for all $f\in \Ss'$ such that either $\| [f]\|_{\dB^{p,q}_{\alpha},\mathrm{LP}} $ or $\| [f]\|_{\dF^{p,q}_{\alpha},\mathrm{LP}} $ is finite for $q\in [1,\infty]$ and $p\in [1,\infty]$ or $p\in (1,\infty)$ respectively, there exist $\rho_f\in \Ps$ and a sequence $(\rho_k)_k \subset \Ps_{\eta}$ such that~\eqref{trueLP} holds.
\end{proposition}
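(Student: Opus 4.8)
The plan is to follow the scheme of Proposition~\ref{convHLP}, substituting for~\eqref{byparts} the information carried by the Littlewood--Paley norm. The key elementary observation is that finiteness of $\| [f]\|_{\dB^{p,q}_\alpha,\mathrm{LP}}$ (respectively $\| [f]\|_{\dF^{p,q}_\alpha,\mathrm{LP}}$) forces each dyadic block $\varphi_j(\sqrt\Ls)f$ to be a genuine $L^p$ function with $\|\varphi_j(\sqrt\Ls)f\|_p\lesssim 2^{-j\alpha}$ for every $j\in\Z$, because $2^{j\alpha}|\varphi_j(\sqrt\Ls)f|$ is pointwise dominated by the $\ell^q$-square function defining the norm (the implicit constant being the Littlewood--Paley norm of $[f]$). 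I would fix an integer $N>\alpha$, depending only on $\alpha$, and let $\eta=\eta(\alpha)$ be the integer associated with $n=N$ by $(S)$. Exactly as in Proposition~\ref{convHLP}, by~\eqref{LPDk} and the $\Ss'$-continuity of $X_J$ (Lemma~\ref{lemma:Sderivate}) it is enough to prove that for every $J\in\mathcal I^N$ the partial sums $\sum_{-k\le j\le -1}X_J\varphi_j(\sqrt\Ls)f$ converge in $\Ss'$ as $k\to\infty$; property $(S)$ then supplies $(\rho_k)\subset\Ps_\eta$ and $f_0\in\Ss'$ with $f_0=\lim_k\big(\sum_{j\ge -k}\varphi_j(\sqrt\Ls)f+\rho_k\big)$, and for $\ell$ large enough the very computation of Proposition~\ref{convHLP}---using~\eqref{LPDk} and Lemma~\ref{lemmaseminorm2} with no derivative $X_J$, which does not require~\eqref{byparts}---gives $\Ls^\ell(f-f_0)=0$, so that $\rho_f:=f-f_0\in\Ps$ and~\eqref{trueLP} holds.

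To obtain the convergence of those partial sums---in fact in $L^p$---I would prove the Bernstein-type estimate $\|X_J\varphi_j(\sqrt\Ls)f\|_p\lesssim 2^{j(N-\alpha)}$ for $j\le -1$ and $J\in\mathcal I^N$. Choose an even $\tilde\varphi\in C_c^\infty(0,\infty)$ with $0\notin\supp\tilde\varphi$ and $\tilde\varphi\equiv 1$ on $\supp\varphi$, set $\tilde\varphi_j(\lambda)=\tilde\varphi(2^{-j}\lambda)$ (so that $\varphi_j(\sqrt\Ls)\tilde\varphi_j(\sqrt\Ls)=\varphi_j(\sqrt\Ls)$) and $G(\lambda)=\mathrm e^{\lambda^2}\tilde\varphi(\lambda)$, a fixed even Schwartz function; since $\tilde\varphi_j(\sqrt\Ls)=T_{2^{-2j}}G(2^{-j}\sqrt\Ls)$, one has
\[
X_J\varphi_j(\sqrt\Ls)f=\big(X_J T_{2^{-2j}}\big)\,G(2^{-j}\sqrt\Ls)\big[\varphi_j(\sqrt\Ls)f\big],\qquad \varphi_j(\sqrt\Ls)f\in L^p.
\]
Lemma~\ref{pointwiseestheat2} and the contractivity of $T_t$ give $\|X_JT_{2^{-2j}}\|_{L^p\to L^p}\lesssim (2^{-2j})^{-N/2}=2^{jN}$; and for $j\le -1$ (so that $2^{-j}\ge 2$) the kernel of $G(2^{-j}\sqrt\Ls)$ enjoys the off-diagonal bounds of~\cite[Theorem~3.1]{KP} recalled in the proof of Lemma~\ref{lemmaseminorm2}, hence $G(2^{-j}\sqrt\Ls)$ is bounded on $L^p$ uniformly in $j$ by the doubling property of $\mu$. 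Combining these with $\|\varphi_j(\sqrt\Ls)f\|_p\lesssim 2^{-j\alpha}$ yields the claimed bound; since $N>\alpha$, the series $\sum_{j\le -1}2^{j(N-\alpha)}$ converges, so $\big(\sum_{-k\le j\le -1}X_J\varphi_j(\sqrt\Ls)f\big)_k$ is Cauchy in $L^p$, hence convergent in $L^p$ and therefore in $\Ss'$ because $\Ss\hookrightarrow L^{p'}$.

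I expect the main obstacle to be precisely the uniform $L^p\to L^p$ bound $\|X_J\tilde\varphi_j(\sqrt\Ls)\|\lesssim 2^{jN}$ for $j\le -1$: everything else is either a rerun of Proposition~\ref{convHLP} or follows from the pointwise domination of single blocks by the square function. The factorization $\tilde\varphi_j(\sqrt\Ls)=T_{2^{-2j}}G(2^{-j}\sqrt\Ls)$ is designed so that this bound splits into Lemma~\ref{pointwiseestheat2} (which produces the gain $2^{jN}$) and the scale-uniform $L^p$-boundedness of the fixed rescaled multipliers $G(2^{-j}\sqrt\Ls)$, for which the kernel estimates already available in the paper suffice; the one subtlety to respect is that $G(2^{-j}\sqrt\Ls)$ must be applied to the $L^p$ function $\varphi_j(\sqrt\Ls)f$ and never to $f$ itself, so that no unbounded inverse heat operator enters.
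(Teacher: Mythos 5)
Your argument is correct, and it follows the same skeleton as the paper's proof: reduce, via $(S)$ and~\eqref{LPDk}, to the $L^p$-convergence of $\sum_{-k\le j\le -1}X_J\varphi_j(\sqrt\Ls)f$ for $|J|=N>\alpha$, factor out a heat operator at scale $2^{-2j}$ so that Lemma~\ref{pointwiseestheat2} produces the gain $2^{jN}$, and close the estimate using $N>\alpha$; the final passage from the limit $f_0$ to $\rho_f=f-f_0\in\Ps$ via Lemma~\ref{lemmaseminorm2} with no derivatives is exactly the paper's. Where you genuinely diverge is in how the partial sums are estimated. The paper keeps the $\ell^q$-structure: it bounds $|X_J\varphi_j(\sqrt\Ls)f|\lesssim 2^{jm}T_{c2^{-2j}}|\widetilde\varphi_j(\sqrt\Ls)f|$ with $\widetilde\varphi=\e^{\lambda^2}\varphi$, applies H\"older in $j$, and then needs Proposition~\ref{contCRTN} (the vector-valued heat maximal inequality, hence $p\in(1,\infty)$ in the $\dF$ case) together with the equivalence of Littlewood--Paley norms for different bump functions from~\cite{GKKP1}. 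You instead insert a reproducing cutoff $\tilde\varphi_j$ so that the rescaled multiplier $G(2^{-j}\sqrt\Ls)$ acts on the single block $\varphi_j(\sqrt\Ls)f$, whose $L^p$ norm is directly dominated by the LP-norm, and you sum term-by-term norms. This buys you two things: you replace the bump-equivalence of LP square functions by a scale-uniform Schur test on the kernel of $G(2^{-j}\sqrt\Ls)$ (which the estimates already quoted in Lemma~\ref{lemmaseminorm2} do supply), and you dispense with the vector-valued maximal inequality altogether, so the Besov and Triebel--Lizorkin cases, and all $p\in[1,\infty]$ in the Besov case, are handled uniformly. The price is that you discard the $\ell^q$ cancellation, but since only convergence (not a norm equivalence) is needed here, nothing is lost.
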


\begin{proof}
We assume that $f\in \Ss'$ is such that $\| [f]\|_{\dF^{p,q}_{\alpha},\mathrm{LP}} $ is finite, and we show that there exists $m$, depending only on $\alpha$, such that~\eqref{serieincrim} converges in $L^p$ for all $J\in \mathcal{I}^m$. The conclusion follows as in the proof of Proposition~\ref{convHLP}.

We write again $\varphi_j(\sqrt{\Ls}) = T_{2^{-2j}} \widetilde \varphi(2^{-j}\sqrt{\Ls})$, where $\widetilde \varphi (\lambda) = \e^{\lambda^2} \!\varphi(\lambda)$. Then, for $|J|= m= [\alpha]+1$, $h \leq  k \leq 0$ and $q\in(1,\infty)$
\begin{align*}
\bigg| \sum_{h\leq j \leq k} X_J\varphi_j(\sqrt{\Ls}) f \bigg|
& \lesssim   \sum_{h\leq j \leq k}  2^{jm} T_{c2^{-2j}}| \widetilde\varphi_j(\sqrt{\Ls}) f | \\
% & = \sum_{h\leq j \leq k} 2^{j(m-\alpha)} 2^{j\alpha} T_{c2^{-2j}}| \widetilde\varphi_j(\sqrt{\Ls}) f |   \\
 %&  \lesssim  \sum_{h\leq j \leq k} 2^{jm} | \widetilde\varphi_j(\sqrt{\Ls}) f |  \\
 &  \lesssim \bigg( \sum_{h\leq j \leq k} (2^{j\alpha} T_{c2^{-2j}}| \widetilde\varphi_j(\sqrt{\Ls}) f |)^q \bigg)^{1/q} \bigg( \sum_{h\leq j \leq k} 2^{jq'(m - \alpha)}\bigg)^{1/q'} 
\end{align*}
from which we deduce, by Proposition~\ref{contCRTN} and~\cite[Remark (1), Sec.\ 5]{GKKP1},
\begin{align*}
\bigg\| \sum_{h\leq j \leq k} X_J\varphi_j(\sqrt{\Ls}) f  \bigg\|_p 
& \lesssim \bigg( \sum_{h\leq j \leq k} 2^{jq'(m - \alpha)}\bigg)^{1/q'}  \bigg\| \Big( \sum_{j\in \Z} ( 2^{j\alpha} | \widetilde \varphi_j(\sqrt{\Ls}) f| )^q \Big)^{1/q}\bigg\|_p\\
& \lesssim  \bigg( \sum_{h\leq j \leq k} 2^{jq'(m - \alpha)}\bigg)^{1/q'}  \| [f]\|_{\dF^{p,q}_\alpha, \mathrm{LP}},
\end{align*}
which completes the proof. The cases $q=1$ or $q=\infty$ are analogous.
\end{proof}

\begin{proposition}\label{PropLP}
Suppose that $(S)$ holds, and that $\alpha \geq 0$ and $q\in [1,\infty]$.
\begin{itemize}
\item[\emph{(i)}]  If $p\in [1,\infty]$, then $\|[f]\|_{\dB_\alpha^{p,q} }  \approx \| [f]\|_{\dB^{p,q}_{\alpha},\mathrm{LP}}$.
\item[\emph{(ii)}]  If $p\in (1,\infty)$ and $q\in (1,\infty]$, then $ \|[f]\|_{\dF_\alpha^{p,q} } \approx \| [f]\|_{\dF^{p,q}_{\alpha},\mathrm{LP}}$.
\end{itemize}
\end{proposition}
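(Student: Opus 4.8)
The plan is to establish the two inequalities $\|[f]\|_{\dX_\alpha^{p,q},\mathrm{LP}}\lesssim\|[f]\|_{\dX_\alpha^{p,q}}$ and $\|[f]\|_{\dX_\alpha^{p,q}}\lesssim\|[f]\|_{\dX_\alpha^{p,q},\mathrm{LP}}$ separately, using the discrete heat characterizations of Theorems~\ref{teo-equiv1} and~\ref{teo-equiv2} as the bridge between the two descriptions. Throughout I fix $m=[\alpha/2]+1$, so that $m-\alpha/2>0$, and a fattened bump $\widetilde\varphi\in C_c^\infty(0,\infty)$ with $\widetilde\varphi\varphi=\varphi$. I describe the proof for $\dF$-spaces; the $\dB$-case is identical and lighter, since in every step the Fefferman--Stein vector-valued maximal inequality (the source of the restriction $p,q\in(1,\infty]$ in~(ii)) is replaced by the triangle inequality in $\dell^q$, which needs no restriction on $p,q$.

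\emph{First inequality (LP $\lesssim$ heat).} Since $\varphi$ is supported away from $0$, the operator $\varphi_j(\sqrt\Ls)$ kills polynomials, so $\varphi_j(\sqrt\Ls)f=\varphi_j(\sqrt\Ls)(f+\wp_f)$ for $\wp_f$ as in Proposition~\ref{prop:minattained}. A short spectral-calculus computation gives the exact factorization $\varphi_j(\sqrt\Ls)=2^{-2jm}\theta_j(\sqrt\Ls)\,\Ls^m T_{2^{-2j}}$, where $\theta(\lambda)=\lambda^{-2m}\e^{\lambda^2}\varphi(\lambda)\in C_c^\infty(0,\infty)$ and $\theta_j(\lambda)=\theta(2^{-j}\lambda)$. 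By the spectral multiplier estimate~\cite[Theorem 3.1]{KP} (see also~\cite[Theorem 2.5]{GKKP1}), used exactly as in Lemma~\ref{lemmaseminorm2}, the kernel of $\theta_j(\sqrt\Ls)$ is dominated, uniformly in $j$, by $D_{2^{-j},\sigma}$ for every $\sigma$; choosing $\sigma$ large one gets the pointwise bound $|\theta_j(\sqrt\Ls)g|\lesssim\mathcal{M}g$ with $\mathcal{M}$ the Hardy--Littlewood maximal operator. Hence
\[
\|2^{j\alpha}\varphi_j(\sqrt\Ls)f\|_{L^p(\dell^q)}\lesssim\big\|2^{-2j(m-\alpha/2)}\mathcal{M}\big(\Ls^m T_{2^{-2j}}(f+\wp_f)\big)\big\|_{L^p(\dell^q)}\lesssim\big\|2^{-2j(m-\alpha/2)}\Ls^m T_{2^{-2j}}(f+\wp_f)\big\|_{L^p(\dell^q)},
\]
the last step by Fefferman--Stein, and since $\{2^{-2j}:j\in\Z\}\subset\{2^\ell:\ell\in\Z\}$ the right-hand side is at most $\|2^{\ell(m-\alpha/2)}\Ls^m T_{2^\ell}(f+\wp_f)\|_{L^p(\dell^q)}\approx\|[f]\|_{\dF_\alpha^{p,q}}$ by Theorem~\ref{teo-equiv2}.

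\emph{Second inequality (heat $\lesssim$ LP).} Assume $\|[f]\|_{\dF_\alpha^{p,q},\mathrm{LP}}<\infty$, so that $\varphi_j(\sqrt\Ls)f\in L^p$ for each $j$. By Proposition~\ref{convBTLLP} and property $(S)$ there are $\rho_f\in\Ps$ and a sequence $(\rho_k)\subset\Ps_\eta$, with $\eta=\eta(\alpha)$ that we may take $\geq m$, such that the representative $\tilde f:=f-\rho_f$ equals $\lim_k\big(\sum_{j\ge -k}\varphi_j(\sqrt\Ls)f+\rho_k\big)$ in $\Ss'$. The key point is that $\Ls^\eta T_t\rho_k=T_t\Ls^\eta\rho_k=0$ because $\rho_k\in\Ps_\eta$, so applying the continuous operator $\Ls^\eta T_t$ to the above limit removes the polynomial corrections and yields $\Ls^\eta T_t\tilde f=\sum_{j\in\Z}\Ls^\eta T_t\varphi_j(\sqrt\Ls)f$ in $\Ss'$. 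To control this series I use an almost-orthogonality estimate: writing $\Ls^\eta T_t\varphi_j(\sqrt\Ls)=[\Ls^\eta T_t\widetilde\varphi_j(\sqrt\Ls)]\varphi_j(\sqrt\Ls)$, rescaling $\lambda=2^j\mu$, and invoking again~\cite[Theorem 3.1]{KP} for the kernel of the rescaled multiplier $\mu^{2\eta}\e^{-t2^{2j}\mu^2}\widetilde\varphi(\mu)$, one obtains
\[
\big|\Ls^\eta T_t\varphi_j(\sqrt\Ls)f(x)\big|\lesssim 2^{2j\eta}\,\omega\big(t\,2^{2j}\big)\,\mathcal{M}\big(\varphi_j(\sqrt\Ls)f\big)(x),\qquad \omega(s)\lesssim\min(1,s^{-N})\ \ \forall N>0 .
\]
This makes the series converge absolutely in $L^p$, using $(b_j):=(\|2^{j\alpha}\varphi_j(\sqrt\Ls)f\|_p)_j\in\dell^q$, so $\Ls^\eta T_t\tilde f\in L^p$. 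Passing to $t=2^\ell$ and using the continuous-to-discrete equivalence established inside the proof of Theorem~\ref{teo-equiv2}, the quantity $2^{\ell(\eta-\alpha/2)}|\Ls^\eta T_{2^\ell}\tilde f(x)|$ is bounded by a discrete convolution $\sum_j\kappa(\ell+2j)\,2^{j\alpha}\mathcal{M}(\varphi_j(\sqrt\Ls)f)(x)$ with $\kappa(s)=2^{s(\eta-\alpha/2)}\omega(s)\in\dell^1(\Z)$ (here $\eta-\alpha/2>0$ is used). Young's inequality on $\Z$ together with Fefferman--Stein then give $\|2^{\ell(\eta-\alpha/2)}\Ls^\eta T_{2^\ell}\tilde f\|_{L^p(\dell^q)}\lesssim\|(b_j)\|_{\dell^q}=\|[f]\|_{\dF_\alpha^{p,q},\mathrm{LP}}$, and since $\eta>\alpha/2$ the left-hand side dominates $\approx\|[f]\|_{\dF_\alpha^{p,q}}$ by Theorem~\ref{teo-equiv1}(ii) applied with the index $\eta$ and the representative $\tilde f$.

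\emph{Main obstacle.} The genuinely delicate part is the second inequality: the Littlewood--Paley series $\sum_j\varphi_j(\sqrt\Ls)f$ does not converge in $\Ss'$ (this is the very reason for passing to $\Ss'/\Ps$ and for property $(S)$), so one cannot simply write $f=\sum_j\varphi_j(\sqrt\Ls)f$ and apply $\Ls^\eta T_t$. The device that makes the argument work is to use the representative $\tilde f$ supplied by Proposition~\ref{convBTLLP}: its polynomial corrections lie in $\Ps_\eta$ and are therefore annihilated by $\Ls^\eta$, so working with the index $\eta$ rather than $m$ in the heat characterization turns the ill-defined sum into the genuinely convergent $L^p$-series $\sum_j\Ls^\eta T_t\varphi_j(\sqrt\Ls)f$. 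The remaining, more routine, difficulty is the almost-orthogonality decay: one needs the $L^p$-operator norm of $\Ls^\eta T_t\widetilde\varphi_j(\sqrt\Ls)$ to decay faster than any power of $t2^{2j}$ when this product is large and to stay bounded when it is small, which is exactly what the Gaussian factor in the heat semigroup combined with the spectral-multiplier kernel bounds of~\cite{KP,GKKP1} provide.
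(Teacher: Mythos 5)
Your proof is correct and follows essentially the same route as the paper, which itself defers the details to the arguments of \cite[Theorems 6.7 and 7.5]{KP}: the key devices coincide (Proposition~\ref{convBTLLP} together with $(S)$ to produce a representative whose Littlewood--Paley corrections lie in $\Ps_\eta$ and are killed by $\Ls^\eta T_t$, the kernel bounds of \cite[Theorem 3.1]{KP} for compactly supported multipliers, the Fefferman--Stein inequality, and the discrete heat characterizations of Theorems~\ref{teo-equiv1}--\ref{teo-equiv2}). The only cosmetic difference is that you run the almost-orthogonality estimate directly with a fattened bump $\widetilde\varphi$, whereas the paper follows \cite{KP} in using $\psi_j=\sqrt{\varphi_j}$ and then the equivalence of LP norms for different bumps from \cite[Remark (1), Sec.\ 5]{GKKP1}; the two are interchangeable.
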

\begin{proof}
The proof can be obtained with the same steps as those of \cite[Theorems 6.7 and 7.5]{KP}, so we will not provide all the details.

Let us briefly comment the proof of (ii). Suppose $[f]\in \Ss'/\Ps$ is such that $\| [f]\|_{\dF^{p,q}_{\alpha},\mathrm{LP}}$ is finite. Let $\eta(\alpha)$ be as in Proposition~\ref{convBTLLP}, and suppose $m\geq \max([\alpha/2]+1, \eta(\alpha))$. Then, by Proposition~\ref{convBTLLP} (whose notation we maintain)
\[
T_t \Ls^m(f- \rho_f) = \lim_{k\to \infty} \bigg(\sum_{j\geq -k}  T_t \Ls^m \varphi_j(\sqrt{\Ls}) f + T_t \Ls^m \rho_k\bigg) = \sum_{j\in \Z}T_t \Ls^m \varphi_j(\sqrt{\Ls}) f.
\]
Consider $\psi_j = \sqrt{\varphi_j}$. As in the proof of~\cite[Theorem 7.5]{KP}, then, one shows that
\[
\| t^{m-\alpha/2} T_t \Ls^m(f- \rho_f)\|_{L^p(L^q_+)} \lesssim \Big\| \Big( \sum\nolimits_{j\in \Z} ( 2^{j\alpha} | \psi_j(\sqrt{\Ls}) f| )^q \Big)^{1/q}\Big\|_p.
\]
This gives $ \|[f]\|_{\dF_\alpha^{p,q} } \lesssim \| [f]\|_{\dF^{p,q}_{\alpha},\mathrm{LP}}$, by the equivalences given in~\cite[Remark (1), Sec.\ 5]{GKKP1} and in Theorem~\ref{teo-equiv1}.

To obtain the converse inequality, notice that $\varphi_j(\sqrt{\Ls})f = \varphi_j(\sqrt{\Ls})(f+\rho)$ for all $\rho\in \Ps$; then one gets, as in the proof of~\cite[Theorem 7.5]{KP}, for $m=[\alpha/2]+1$
\[
\Big\| \Big( \sum\nolimits_{j\in \Z} ( 2^{j\alpha} | \varphi_j(\sqrt{\Ls}) f| )^q \Big)^{1/q}\Big\|_p \lesssim \| t^{m-\alpha/2} T_t \Ls^m(f+\rho)\|_{L^p(L^q_+)}, \qquad \rho\in \Ps,
\]
and this gives $ \| [f]\|_{\dF^{p,q}_{\alpha},\mathrm{LP}}\lesssim \|[f]\|_{\dF_\alpha^{p,q} } $.
\end{proof}

\subsection{Interpolation}

In this section, we show the complex interpolation properties of Triebel--Lizorkin and Besov spaces. For $1\leq p,q\leq \infty$ and $\alpha \geq 0$, we introduce the spaces
\[
L^p(\dell_\alpha^q) = \{ u=(u_j)_{j\in \Z} \colon   \|u\|_{L^p(\dell_\alpha^q)}\coloneqq \| 2^{-j\frac{\alpha}{2}} u_j \|_{ L^p(\dell^q)}<\infty\},
\]
and
\[
\dell_\alpha^q(L^p) = \{ u=(u_j)_{j\in \Z} \colon   \|u\|_{\dell_\alpha^q(L^p)}\coloneqq \| 2^{-j\frac{\alpha}{2}} u_j \|_{\dell^q(L^p)}<\infty\},
\]
where it is implicitly assumed that $u_j$ is a measurable function in $\Ss'$. Notice that the presence of $-j$ in the exponents above, rather than $j$ as e.g.\ in Proposition~\ref{PropLP}, is only due to our choice of splitting $(0,\infty)$ into dyadic intervals $[2^j, 2^{j+1})$ rather than $[2^{-j}, 2^{-j+1})$, $j\in \Z$. The following lemma will be instrumental in the following.

\begin{lemma}\label{lemmau0}
Assume that $(S)$ holds. Then for all $\alpha \geq 0$ there exists $\beta= \beta(\alpha) \in\N$ such that, if $u\in L^p(\dell_\alpha^q)$ with $p\in (1,\infty)$ and $q\in [1, \infty]$, or if $u\in \dell_\alpha^q(L^p)$ with $p,q\in [1,\infty]$, and if $m\geq 1$ is an integer, then there exists a sequence of polynomials $(\rho_k)_k \subset \Ps_\beta$ such that
\begin{equation}\label{limitu0}
u_0 = \lim_{k\to\infty} \bigg( \sum_{j\leq \log_2(k) } 2^{-jm} \int_{2^{j}}^{2^{j+1}} t^{2m}\Ls^m T_{t-2^{j-1}} u_j \, \frac{\dd t}{t}  +\rho_k\bigg)
\end{equation}
exists in $\Ss'$. For any other sequence $(\tilde \rho_k)$ for which the corresponding limit above $\tilde u_0$ exists, one has $[u_0] = [\tilde u_0 ]$.
\end{lemma}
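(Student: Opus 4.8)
The plan is to abbreviate $v_j := 2^{-jm}\int_{2^{j}}^{2^{j+1}} t^{2m}\,\Ls^m T_{t-2^{j-1}} u_j\,\frac{\dd t}{t}$ for $j\in\Z$, so that the object in \eqref{limitu0} is $\lim_{k\to\infty}\bigl(\sum_{j\le\log_2 k} v_j+\rho_k\bigr)$, and to handle the tails $j\ge 0$ and $j\le-1$ by completely different arguments. On the range $t\in[2^j,2^{j+1}]$ one has $t-2^{j-1}\asymp 2^j\asymp t$, so Lemma~\ref{pointwiseestheat1}~(3) gives $v_j\in L^p$ with $\|v_j\|_p\lesssim\|u_j\|_p$, hence $v_j\in\Ss'$. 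The only structural fact about $u$ I would use is that, isolating a single term of the $\dell^q$-sum, $\|u_j\|_p\le 2^{j\alpha/2}\|u\|$ in both cases, where $\|u\|$ is the norm of $u$ in $L^p(\dell_\alpha^q)$ or in $\dell_\alpha^q(L^p)$.

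For the small-scale tail $j\le-1$ I would prove that $\sigma_-:=\sum_{j\le-1} v_j$ converges in $\Ss'$ unconditionally. Testing against $\phi\in\Ss$ and writing $\|\Ls^m T_s\phi\|_{p'}=\|T_s(\Ls^m\phi)\|_{p'}\le\|\Ls^m\phi\|_{p'}\lesssim p_N(\phi)$ for $N$ large (valid because the volume growth makes $(1+d(\cdot))^{-N}\in L^{p'}$), one gets $|\langle v_j,\phi\rangle|\le 2^{-jm}\int_{2^j}^{2^{j+1}}t^{2m}\|u_j\|_p\|\Ls^m T_{t-2^{j-1}}\phi\|_{p'}\frac{\dd t}{t}\lesssim 2^{jm}\|u_j\|_p\, p_N(\phi)$. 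Since $\|u_j\|_p\le\|u\|$ for $j\le-1$ and $m\ge 1$, the series $\sum_{j\le-1}|\langle v_j,\phi\rangle|$ converges, so $\sigma_-\in\Ss'$.

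For the large-scale tail $j\ge 0$ the direct estimate is useless (it grows in $j$), and this is where $(S)$ enters. Fix an integer $n>\alpha$, say $n=[\alpha]+1$, and let $J\in\mathcal I^n$. Splitting $T_{t-2^{j-1}}=T_{(t-2^{j-1})/2}T_{(t-2^{j-1})/2}$, applying Lemma~\ref{pointwiseestheat2} to the factor $X_J T_{(t-2^{j-1})/2}$ and Lemma~\ref{pointwiseestheat1}~(3) to $\Ls^m T_{(t-2^{j-1})/2}u_j$, and using $t-2^{j-1}\asymp 2^j\asymp t$, I obtain $\|X_J v_j\|_p\lesssim 2^{-jn/2}\|u_j\|_p\lesssim 2^{j(\alpha-n)/2}\|u\|$, which is summable over $j\ge 0$. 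Hence $\sum_{j\ge 0}X_J v_j$ converges in $L^p\hookrightarrow\Ss'$, so the finite sums $f_k:=\sum_{0\le j\le\log_2 k} v_j$ satisfy that $(X_J f_k)_k$ converges in $\Ss'$ for every $J\in\mathcal I^n$. Now $(S)$ (applied with this $n$) provides an integer $\nu=\nu(n)$, which we declare to be $\beta=\beta(\alpha)$ — it depends on $\alpha$ only, in particular not on $m$ — and polynomials $(\rho_k)\subset\Ps_\beta$ such that $(f_k+\rho_k)_k$ converges in $\Ss'$. Since $\sum_{j\le\log_2 k} v_j+\rho_k=\sigma_-+(f_k+\rho_k)$, the limit $u_0$ of \eqref{limitu0} exists.

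For the last assertion: if $(\tilde\rho_k)$ is another sequence of polynomials, of degrees bounded by some fixed integer (the relevant case, e.g.\ $(\tilde\rho_k)\subset\Ps_\beta$), for which the corresponding limit $\tilde u_0$ exists, then subtracting the two relations yields $\tilde\rho_k-\rho_k\to\tilde u_0-u_0$ in $\Ss'$; since $\Ps_N=\ker\Ls^N$ is closed in $\Ss'$, the $\Ss'$-limit of a bounded-degree sequence of polynomials is a polynomial, so $[u_0]=[\tilde u_0]$. I expect the technical heart to be the bound $\|X_J v_j\|_p\lesssim 2^{j(\alpha-n)/2}\|u\|$, and the conceptual point to be that inserting a derivative $X_J$ between the heat operators gains decay as $j\to+\infty$ but loses it as $j\to-\infty$: this forces the split at $j=0$ and explains why only the large-scale tail needs $(S)$, the small-scale tail being summable on its own.
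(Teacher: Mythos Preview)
Your proof is correct and follows the same overall strategy as the paper: split the sum at $j\approx 0$, handle the large-scale tail $j\ge 0$ by inserting $X_J$ with $|J|=[\alpha]+1$ to gain a factor $2^{-j|J|/2}$ and then invoke $(S)$, and show the small-scale tail converges on its own. The only substantive difference is in the small-scale part: you use a direct duality estimate $|\langle v_j,\phi\rangle|\lesssim 2^{jm}\|u_j\|_p\,p_N(\phi)$ to get convergence in $\Ss'$, whereas the paper uses the pointwise bound $|v_j|\lesssim T_{c2^j}|u_j|$ together with H\"older in $\dell^q$ and Proposition~\ref{contCRTN} to obtain the stronger conclusion $\sum_{j\le 0}v_j\in L^p$. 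Your route is shorter and, incidentally, does not need the case distinction on $q$ or any special care at $\alpha=0$ (where the paper's H\"older factor $(\sum_{j\le 0}2^{jq'\alpha/2})^{1/q'}$ diverges). For the uniqueness assertion both arguments are the same: once $(\tilde\rho_k)$ has bounded degree, $\Ls^N(u_0-\tilde u_0)=\lim_k \Ls^N(\rho_k-\tilde\rho_k)=0$ for suitable $N$, so $u_0-\tilde u_0\in\Ps$.
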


\begin{proof}
The second statement is easily proved. If $(\rho_k)$ and $(\tilde \rho_k)$ are two sequences in $\Ps_\beta$ as in the statement, then by~\eqref{limitu0} one has $\Ls^\beta (u_0-\tilde u_0) =0$,  whence $[u_0] = [\tilde u_0]$.

To show the existence of a sequence of polynomials with the required properties, we assume $q\in (1,\infty)$ (the cases $q=1$ or $q=\infty$ being analogous) and we first prove that the sum for negative $j's$ is an element of $L^p$, hence of $\Ss'$. Indeed, since $2^{j-1} \leq t-2^{j-1} \leq 2^{j+1}$ for $t\in [2^j, 2^{j+1}]$, by Lemma~\ref{pointwiseestheat1}
\begin{multline*}
\bigg| \sum_{j\leq 0} 2^{-jm} \!\!\int_{2^{j}}^{2^{j+1}} \!\! \!\!  t^{2m}\Ls^m T_{t-2^{j-1}} u_j \, \frac{\dd t}{t} \bigg|
  \lesssim \sum_{j\leq 0}  T_{c2^j} |u_j|\\
  \leq  \bigg( \sum_{j\leq 0}  (2^{-j\frac{\alpha}{2} }T_{c2^j} |u_j|)^q\bigg)^{1/q} \bigg( \sum_{j\leq 0}  2^{jq'\frac{\alpha}{2} }\bigg)^{1/q'},
\end{multline*}
whence by Proposition~\ref{contCRTN}
\[
\bigg\| \sum_{j\leq 0} 2^{-jm} \!\!\int_{2^{j}}^{2^{j+1}} \! \!\! \!\! t^{2m}\Ls^m T_{t-2^{j-1}} u_j \, \frac{\dd t}{t} \bigg\|_p \lesssim\big\| 2^{-j\frac{\alpha}{2} }T_{c2^j} |u_j|\big\|_{L^p(\dell^q)} \lesssim  \|u\|_{L^p(\dell_\alpha^q)}.
\]
Suppose now $J\in \mathcal{I}^a$ with $a= [\alpha]+1$. For $h,\ell \geq 0$, again by Lemma~\ref{pointwiseestheat1}, by H\"older's inequality and Proposition~\ref{contCRTN} as before,
\begin{multline*}
\bigg\| X_J \sum_{h\leq j \leq \ell } 2^{-jm} \int_{2^{j}}^{2^{j+1}} \!\! \!\!  t^{2m}\Ls^m T_{t-2^{j-1}} u_j \, \frac{\dd t}{t} \bigg\|_p
  \lesssim \bigg\| \sum_{h\leq j \leq \ell }  2^{-aj/2 } T_{c2^j} |u_j|\bigg\|_p \\
%  \leq  \bigg( \sum_{h\leq j \leq \ell }   (2^{-j\frac{\alpha}{2} }T_{c2^j} |u_j|)^q\bigg)^{1/q} \bigg( \sum_{h\leq j \leq \ell }  2^{jq'(\alpha-a)/2 }\bigg)^{1/q'} \\
  \lesssim \bigg( \sum_{h\leq j \leq \ell }  2^{jq'(\alpha-a)/2 }\bigg)^{1/q'}  \|u\|_{L^p(\dell_\alpha^q)}.
\end{multline*}
The conclusion now follows by~$(S)$.
\end{proof}

\begin{theorem}\label{interpolation}
Assume that $(S)$ holds, and suppose $\alpha_0,\alpha_1\geq 0$,  $\theta \in (0,1)$, $\alpha_\theta = (1-\theta)\alpha_0 + \theta \alpha_1$ and $q_0,q_1\in [1,\infty]$ with $\min(q_0,q_1)<\infty$. Given $ p_0,p_1 \in [1,\infty]$, define $\frac{1}{p_\theta} = \frac{1-\theta}{p_0} + \frac{\theta}{p_1}$ and $\frac{1}{q_\theta} = \frac{1-\theta}{q_0} + \frac{\theta}{q_1}$.
\begin{itemize}
\item[\emph{(i)}] If $p_0,p_1 \in [1,\infty]$, then  $(\dB^{p_0,q_0}_{\alpha_0}, \dB^{p_1,q_1}_{\alpha_1})_{[\theta]} = \dB^{p_\theta,q_\theta}_{\alpha_\theta}$. 
\item[\emph{(ii)}] If $p_0,p_1 \in (1,\infty)$, then  $(\dF^{p_0,q_0}_{\alpha_0}, \dF^{p_1,q_1}_{\alpha_1})_{[\theta]} = \dF^{p_\theta,q_\theta}_{\alpha_\theta}$.
\end{itemize}
\end{theorem}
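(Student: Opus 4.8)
The standard route for complex interpolation of Besov and Triebel--Lizorkin spaces is the retraction/coretraction method: one realizes $\dX^{p,q}_\alpha$ as a complemented subspace of a vector-valued sequence space $X^{p,q}_\alpha \in \{ L^p(\dell^q_\alpha), \dell^q_\alpha(L^p)\}$ via a bounded linear map (coretraction) $S$ and a bounded linear left inverse (retraction) $R$ with $R \circ S = \mathrm{Id}$. Complex interpolation then commutes with such retractions, so the result reduces to the (known) interpolation identity for the sequence spaces themselves. For us the natural coretraction is the analysis map built from the Littlewood--Paley pieces or, in keeping with the heat-semigroup language of the paper, the map sending $[f]$ to the sequence $u_j = 2^{j(m-\alpha/2)}\Ls^m T_{2^j}(f+\wp_f)$, $j\in\Z$, with $m=[\alpha/2]+1$; by Theorem~\ref{teo-equiv2} this is bounded $\dX^{p,q}_\alpha \to X^{p,q}_\alpha$. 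The reconstruction (retraction) map is the one furnished by Lemma~\ref{lemmau0}: given a sequence $u=(u_j)$ one forms the limit $u_0$ of~\eqref{limitu0} (with the appropriate power normalization), obtaining a well-defined class $[u_0]\in\Ss'/\Ps$; condition $(S)$ is exactly what makes this limit exist. One checks $R\circ S = \mathrm{Id}$ on $\dX^{p,q}_\alpha$ using the Calder\'on-type reproducing formula~\eqref{HomLnfBF} of Proposition~\ref{HomLPgenbis} — discretized as in the proof of Theorem~\ref{teo-equiv2} — and one checks that $S\circ R$ is a bounded projection on $X^{p,q}_\alpha$ by the almost-orthogonality estimates~\eqref{Schurcont}, \eqref{Schurdiscr} together with Proposition~\ref{contCRTN}.

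Concretely I would proceed as follows. First, fix $\alpha_0,\alpha_1\ge 0$ and choose a single integer $m$ with $m > \max(\alpha_0,\alpha_1)/2$ and also $m \ge \max(\eta(\alpha_0),\eta(\alpha_1),\beta(\alpha_0),\beta(\alpha_1))$ from Propositions~\ref{convBTLLP} and Lemma~\ref{lemmau0}, so that all characterizations and reconstruction formulae are available simultaneously for both endpoints. Second, define $S[f] = \big(2^{j(m-\alpha/2)}\Ls^m T_{2^j}(f+\wp_f)\big)_{j\in\Z}$ and verify, via Theorem~\ref{teo-equiv2} and Proposition~\ref{prop:minattained}, that $S\colon \dX^{p,q}_\alpha \to X^{p,q}_\alpha$ is an isometric-up-to-constants embedding for every admissible $(p,q,\alpha)$ lying on the interpolation segment. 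Third, define $R$ through Lemma~\ref{lemmau0}: $R u = [u_0]$ where $u_0$ is the limit~\eqref{limitu0} with the normalization $2^{-jm}$ replaced so as to invert the factor in $S$; boundedness $R\colon X^{p,q}_\alpha \to \dX^{p,q}_\alpha$ follows from Corollary~\ref{bilinearCRTNint}, Proposition~\ref{CRTNintegrale} and the Schur estimates~\eqref{Schurcont}--\eqref{Schurdiscr}, exactly as in the proof of~\eqref{mm1}. Fourth, check $R S = \mathrm{Id}_{\dX^{p,q}_\alpha}$: applying $R$ to $S[f]$ reconstitutes, by~\eqref{HomLnfBF} and Proposition~\ref{thm:decomp-SP}, the class $[f]$ up to a polynomial, and the ambiguity is killed in $\Ss'/\Ps$. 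Fifth, invoke the complex interpolation theorem for retractions (Triebel, \emph{Interpolation Theory}, §1.2.4) to get $(\dX^{p_0,q_0}_{\alpha_0},\dX^{p_1,q_1}_{\alpha_1})_{[\theta]} = R\big( (X^{p_0,q_0}_{\alpha_0}, X^{p_1,q_1}_{\alpha_1})_{[\theta]}\big)$. Sixth, compute the right-hand side using the classical Calder\'on interpolation formula for weighted vector-valued $L^p$ of $\dell^q$: $(L^{p_0}(\dell^{q_0}_{\alpha_0}), L^{p_1}(\dell^{q_1}_{\alpha_1}))_{[\theta]} = L^{p_\theta}(\dell^{q_\theta}_{\alpha_\theta})$ when $\min(q_0,q_1)<\infty$, and the analogous identity for $\dell^q_\alpha(L^p)$; these are standard (Bergh--L\"ofstr\"om, Theorems 5.1.1--5.1.2, combined with the scalar weighted-$\dell^q$ computation). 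Seventh, conclude $RS = \mathrm{Id}$ also identifies the interpolated retract with $\dX^{p_\theta,q_\theta}_{\alpha_\theta}$, finishing both (i) and (ii).

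The main technical obstacle is not the sequence-space interpolation — that is classical — but the careful handling of the \emph{polynomial ambiguity} throughout: $S$ is only well defined on $\Ss'/\Ps$ after selecting the minimizer $\wp_f$, $R$ is only well defined modulo $\Ps_\beta$, and one must ensure that on the interpolation segment a \emph{single} value of $\beta$ (and of the near-minimizing polynomial degree) works for all $\theta$, so that $S$ and $R$ are genuine morphisms of the \emph{same} ambient space $\Ss'/\Ps$ rather than of $\theta$-dependent quotients. This is precisely where hypothesis $(S)$ and the uniformity statements in Propositions~\ref{prop:minattained}, \ref{convBTLLP} and Lemma~\ref{lemmau0} (all polynomial degrees depending only on $\alpha$, hence bounded on the compact segment $[\alpha_0\wedge\alpha_1,\alpha_0\vee\alpha_1]$) are essential. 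A secondary point requiring care is that $X^{p,q}_+$ with $p=\infty$ or $q=\infty$ are not reflexive, but since $\min(q_0,q_1)<\infty$ is assumed and $p_0,p_1<\infty$ in case (ii), the Calder\'on formula still applies; in case (i) one uses the Besov sequence space $\dell^q_\alpha(L^p)$, for which the relevant interpolation identity holds for all $p\in[1,\infty]$ under $\min(q_0,q_1)<\infty$ as well.
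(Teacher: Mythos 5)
Your proposal is correct and follows essentially the same route as the paper: the author also proves that $\dF^{p,q}_\alpha$ (resp.\ $\dB^{p,q}_\alpha$) is a retract of $L^p(\dell^q_\alpha)$ (resp.\ $\dell^q_\alpha(L^p)$), with the coretraction $[f]\mapsto (2^{jm}\Ls^m T_{2^{j-1}}(f+\wp_f))_j$ bounded by Theorem~\ref{teo-equiv2}, the retraction furnished by Lemma~\ref{lemmau0} and the reproducing formula~\eqref{LPD2}, and then invokes the classical complex interpolation of the weighted vector-valued sequence spaces. Your explicit attention to choosing a single $m$ and a single polynomial degree valid at both endpoints is a point the paper treats only implicitly, but it does not change the argument.
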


\begin{proof}

To prove (ii), it is enough to prove that the spaces $\dF^{p,q}_\alpha$ are retracts of $L^p(\dell_\alpha^q)$. The result then follows by~\cite[Theorem 6.4.2]{BerghLofstrom} and the complex interpolation properties of the spaces $L^p(\dell_\alpha^q)$ (see~\cite[Theorem p.\ 128]{TriebelInterp} and~\cite[p.\ 121]{BerghLofstrom}). 

Fix $p\in (1,\infty)$, $q\in [1,\infty]$, $\alpha \geq 0$ and $m= \max([\alpha/2]+1, \beta(\alpha))$ where $\beta=\beta(\alpha)$ is that of Lemma~\ref{lemmau0}, and pick $\wp =\wp(f,\alpha,[\alpha/2]+1, p,q)$. Then define the functional $\mathfrak{I} \colon \dF^{p,q}_\alpha \to L^p(\dell_\alpha^q)$,
\[
(\mathfrak{I}\, [f])_j = 2^{jm} \Ls^m T_{2^{j-1}}  (f + \wp_f), \qquad j\in \Z.
\] 
The functional $\mathfrak{I}$ is well defined by Proposition~\ref{prop:minattained}. 

By Lemma~\ref{lemmau0}, for all $u\in L^p(\dell_\alpha^q)$ there is a sequence $(\rho_k)$ in $\Ps_\beta$ such that~\eqref{limitu0} exists in $\Ss'$, and the functional $\mathfrak{P} \colon L^p(\dell_\alpha^q) \to \dF^{p,q}_\alpha$ given by
\[
\mathfrak{P} u  =\bigg[ \frac{1}{(2m-1)!}\lim_{k\to \infty} \bigg(  \sum_{j\leq \log_2(k) } 2^{-jm} \int_{2^{j}}^{2^{j+1}} t^{2m}\Ls^m T_{t-2^{j-1}} u_j \, \frac{\dd t}{t}  +\rho_k\bigg) \bigg]
\]
is well defined. By Lemma~\ref{lemmau0} and~\eqref{LPD2},  moreover, $\mathfrak{P} \circ \mathfrak{I}\, [f]  = [f]$, that is $\mathfrak{P} \circ \mathfrak{I}  = \mathrm{Id}_{\dF^{p,q}_\alpha}$. Moreover, $\mathfrak{I} $ is bounded from $\dF^{p,q}_\alpha$ to $L^p(\dell_\alpha^q)$ by Theorem~\ref{teo-equiv2}. Thus, it remains to prove that $\mathfrak{P}$ is bounded from $L^p(\dell_\alpha^q)$ to $\dF^{p,q}_\alpha$. 

Observe first of all that 
\begin{multline*}
\Ls^m \lim_{k\to \infty} \bigg(  \sum_{j\leq \log_2(k) } 2^{-jm} \int_{2^{j}}^{2^{j+1}} t^{2m}\Ls^m T_{t-2^{j-1}} u_j \, \frac{\dd t}{t}  +\rho_k\bigg)  \\ = \sum_{j\in\Z } 2^{-jm} \int_{2^{j}}^{2^{j+1}} t^{2m}\Ls^{2m} T_{t-2^{j-1}} u_j \, \frac{\dd t}{t} .
\end{multline*}
By Theorem~\ref{teo-equiv2} then (by  estimating the infimum with the term when $\rho=0$)
\begin{align*}
\| \mathfrak{P} u\|_{\dF^{p,q}_\alpha} 
%&\approx \inf_{\rho \in \Ps} \Bigg\| \Bigg( \sum_{k\in \Z} 2^{qk(m-\frac{\alpha}{2})} \Bigg|\Ls^m T_{2^k} \Big(  \sum_{j\in\Z } 2^{-jm} \int_{2^{j}}^{2^{j+1}} t^{2m}\Ls^m T_{t-2^{j-1}} u_j \, \frac{\dd t}{t}  + \rho\Big) \Bigg|^q \Bigg)^{1/q} \Bigg\|_p\\
%& \lesssim \Bigg\| \Bigg( \sum_{k\in \Z} 2^{qk(m-\frac{\alpha}{2})} \Bigg|\Ls^m T_{2^k} \Big(  \sum_{j\in\Z } 2^{-jm} \int_{2^{j}}^{2^{j+1}} t^{2m}\Ls^m T_{t-2^{j-1}} u_j \, \frac{\dd t}{t} \Big) \Bigg|^q \Bigg)^{1/q} \Bigg\|_p.
& \lesssim \bigg\|2^{k(m-\frac{\alpha}{2})}  \sum_{j\in\Z } 2^{-jm} \int_{2^{j}}^{2^{j+1}} t^{2m}\Ls^{2m} T_{t-2^{j-1}+2^k} u_j \, \frac{\dd t}{t}  \bigg\|_{L^p(\dell^q)}.
\end{align*}
By  Lemma~\ref{pointwiseestheat1}, we get
\begin{align*}
\bigg|2^{-jm} \int_{2^{j}}^{2^{j+1}} t^{2m}\Ls^{2m} T_{t-2^{j-1}+2^k} u_j \, \frac{\dd t}{t}\bigg|
% & \leq 2^{-jm} \int_{2^{j}}^{2^{j+1}} t^{2m}|\Ls^{2m} T_{t-2^{j-1}+2^{k}} u_j |\, \frac{\dd t}{t}\\
&\lesssim 2^{-jm} \int_{2^{j}}^{2^{j+1}}  \frac{t^{2m}}{(2^{j}+2^{k})^{2m}} T_{c(2^{j}+2^{k})}| u_j |\, \frac{\dd t}{t} \\
& \lesssim  T_{c2^k} \left( \frac{2^{jm}}{(2^{j}+2^{k})^{2m}} T_{c2^{j}}| u_j| \right).
\end{align*}
We finally apply~Proposition~\ref{contCRTN},~\eqref{Schurdiscr} and again Proposition~\ref{contCRTN} to get
\begin{align*}
\| \mathfrak{P} u\|_{\dF^{p,q}_\alpha} 
& \lesssim \Big\| 2^{k(m-\frac{\alpha}{2})}  \sum_{j\in \Z}\frac{2^{jm}}{(2^{j}+2^{k})^{2m}} T_{c2^{j}}| u_j|\Big\|_{L^p(\dell^q)} \\
& \lesssim  \| 2^{-j\frac{\alpha}{2}} T_{c2^{j}}|u_j|  \|_{L^p(\dell^q)}\  \lesssim \| u\|_{L^p(\dell_\alpha^q )},
\end{align*}
which concludes the proof.
\end{proof}

\section{A glimpse at inhomogeneous spaces}\label{sec:inho}
In this section we briefly discuss an analogous theory to that developed so far, but for inhomogeneous spaces. These are indeed spaces of functions -- at least for strictly positive regularities, and the analogues of most of the results above can be obtained without substantial modifications in the arguments. We will not provide proofs, as these would go out of the scope of the present paper. The reader might look at~\cite{BPV1} for further insights. Let us first introduce the notation
\[
\|F\|_{L^p(L^q_o)} =\bigg\| \left( \int_0^1\!  |F(t, \cdot )|^q \, \frac{\dd t}{t} \right)^{1/q}\bigg\|_p, \qquad \|F\|_{L^q_o(L^p)} = \left( \int_0^1\!\left\| F(t, \cdot )\right\|_p^q  \, \frac{\dd t}{t} \right)^{\! 1/q},
\]
which is the inhomogeneous counterpart of that introduced in Subsection~\ref{Sub:not}. We shall also use the notation $X^{p,q}_o$ in the same spirit as above. Then, we have the following definition.
\begin{definition}
Suppose $\alpha \geq 0$, $m=[\alpha/2]+1$,  and $p,q\in [1,\infty]$. The Besov space $B_\alpha^{p,q} $ is the subspace of $\Ss' $ of distributions $f$ such that
\begin{equation*}
 \|f\|_{B_\alpha^{p,q} } \coloneqq \|T_{1} f\|_{p} + \| t^{m-\alpha/2} \Ls^m T_t f\|_{L^q_o(L^p)}<\infty,
\end{equation*}
while the Triebel--Lizorkin space $F_\alpha^{p,q} $ is the subspace of $\Ss' $ of distributions $f$ such that
\begin{equation*}
\|f\|_{F_\alpha^{p,q} } \coloneqq  \|T_{1} f\|_{p} + \| t^{m-\alpha/2} \Ls^m T_t f\|_{L^p(L^q_o)}<\infty.
\end{equation*}
\end{definition}
By means of~\eqref{LPDlocal}, and by analogous results to those of Subsection~\ref{sec:consheat} for the spaces $L^p(L^q_o)$, one can show equivalent characterizations as those in Subsection~\ref{sub:normchar}, provided $X^{p,q}_+$ is replaced by $X^{p,q}_o$, $\alpha>0$ and the $L^p$ norm of  $f$ is added. In particular, one can see that if $\alpha>0$, $\|T_{1} f\|_{p} $ can be replaced by $\|f\|_p$ for both $B$- and $F$-spaces. The equivalence of the above norms with those defined by a Littlewood--Paley decomposition was proved in~\cite[Theorems 6.7 and 7.5]{KP}. Also embeddings and interpolation, see~\cite{KP, HH}, and algebra properties have inhomogeneous counterparts when $\alpha>0$. Most of these results might be obtained assuming that the heat kernel estimates~\eqref{dtHtestimate} and~\eqref{XHtestimate} hold only for small times. It is also worth mentioning that some additional result can be proved in the inhomogeneous case, like embeddings in $L^\infty$.

Let us denote by $L^p_\alpha$ the subspace of $L^p$ such that
\[
\|f\|_{L^p_\alpha}\coloneqq \|(I +\Ls)^{\alpha/2}f\|_{p} <\infty, \qquad 1<p<\infty, \; \alpha \geq 0. 
\]
Observe that by the boundedness of the operator $(I+ \Ls)^{-\beta}$ on $L^p$ for every $1<p<\infty$ and $\beta\geq 0$ (see~\cite[Section 5]{Komatsu}), one has the embedding $L^p_{\alpha_1}\subseteq L^p_{\alpha_2}$ for $\alpha_1>\alpha_2\geq 0$. Observe moreover that by~\cite[Theorem II.2.7]{VSCC} (applied to $\e^{-t}T_{t}$, which is $L^1$-$L^\infty$ ultracontractive with norm $\lesssim t^{-\dm/2}$, see the discussion at the beginning of Section~\ref{sec:embed}) one has the Sobolev embeddings $L^p_\alpha \subseteq L^q$ for $1/p-1/q= \alpha / \dm$ for $1< p,q <\infty$ and $\alpha>0$. By means of~\eqref{normHt} with $n=0$, one gets $\|(I+\Ls)^{-\alpha /2} f\|_{\infty} \lesssim \|f\|_p$, hence $L^p_\alpha \subseteq L^\infty$, if $p\in (1,\infty)$ and $\alpha p >\dm$. 

Similar embeddings hold for $B$- and $F$- spaces. First, if $0<\beta<\alpha$ and $p,q,r\in [1,\infty]$, then  $B_\alpha^{p,q} \subseteq B_{\beta}^{p,r}$ and $F_\alpha^{p,q} \subseteq F_{\beta}^{p,r}$, as a consequence of H\"older's inequality. This and Littlewood--Paley--Stein theory~\cite{Meda, Stein} lead to 
\[
F^{p,q}_\alpha \subseteq F^{p,2}_\beta = L^p_\beta \subseteq L^\infty\qquad (\alpha>\beta>\dm/p)
\]
when $p\in (1,\infty)$, $q\in [1,\infty]$ and $\alpha p>\dm$. The analogue of  Theorem~\ref{teo_embeddings}~(3) for inhomogeneous $B$-spaces, instead, gives 
\[
B^{p,q}_\alpha  \subseteq B^{p,1}_{\dm/p} \subseteq B_0^{\infty,1}\subseteq L^\infty
\]
when $p,q\in [1,\infty]$ and $\alpha p>\dm$, the last inclusion by~\eqref{LPDlocal}.

By combining the analogue of Theorem~\ref{teo_algebra} for inhomogeneous spaces with the above embeddings, one obtains the algebra properties under pointwise multiplication of inhomogeneous spaces: if $q,p\in [1,\infty]$ and $\alpha>0$, then $B^{p,q}_\alpha\cap L^\infty$ is an algebra, and in particular, $B^{p,1}_{\dm/p}$ and $B^{p,q}_\alpha$ are algebras for $\alpha>\dm/p$; if $q\in [1,\infty]$, $p\in (1,\infty)$ and $\alpha>0$, then $F^{p,q}_\alpha\cap L^\infty$ is an algebra, and in particular $F^{p,q}_\alpha$ is an algebra for $\alpha>\dm/p$.

\section{Examples}\label{sec:example}
In this final section we discuss the two main examples where our results apply: nilpotent Lie groups and the Grushin setting. While heat kernel estimates on nilpotent Lie groups are well known, to the best of our knowledge those in the Grushin setting are not, and we prove them below.

\subsection{Nilpotent Lie groups} Let $G$ be a connected and simply connected nilpotent Lie group, and consider a system of linearly independent left-invariant vector fields $\mathfrak{X} = \{ X_1,\dots,X_\kappa\}$ satisfying H\"ormander's condition. Let $\Ls=\Delta= -\sum_{j=1}^\kappa X_j^2 $ be the associated sum-of-squares sub-Laplacian. It is well known, see e.g.~\cite{VSCC}, that $G$ endowed with the control distance associated with $\mathfrak{X}$ and the (left- and right-invariant) Haar measure is a doubling metric measure space satisfying all the assumptions of the paper. Its heat kernel is smooth and satisfies all the required bounds~\cite[Theorems IV.4.2, IV.4.3]{VSCC}. Its H\"older continuity can also be seen via~\cite[Theorem 5.11 and Corollary 7.6]{GT}. Condition~\eqref{strongnoncollaps} is proved in~\cite[Proposition IV.5.6]{VSCC}.

We complete the system $\mathfrak{X}$ to a basis $\{X_1, \dots, X_\eta\}$ of its Lie algebra, which we identify with the algebra of all left-invariant vector fields on $G$, and identify $G$ with $\R^\eta$ via the exponential map. Under this identification, the space $\Ss$ defined in~\eqref{defS} (and equivalent to $\tilde \Ss$) is the classical Schwartz space on $G$, that is $\Ss(\Ls) = \Ss(G) = \Ss(\R^\eta)$. Moreover,
\begin{equation}\label{Xtopartial1}
X_j = \sum_{k=1}^\eta p_{k,j }\partial_{k}, \qquad \partial_j = \sum_{k=1}^\eta \tilde p_{k,j }X_{k}, \qquad j=1,\dots ,\eta,
\end{equation}
for certain polynomials, in the Euclidean sense, $p_{k,j }$ and $\tilde p_{k,j }$. For notational convenience, let us write $\mathcal{N}= \{ 1,\dots, \eta\}$, and recall also $\mathcal{I} = \{ 1,\dots, \kappa\}$.  Since the family $\mathfrak{X}$ is a H\"ormander basis, there are $a\in \N$ and polynomials $\tilde p_{J,j }$ such that
\begin{equation}\label{partialtoX1}
\partial_j = \sum_{k=1}^a \sum_{J \in \mathcal{I}^k}  \tilde p_{J,j }X_{J}, \qquad j=1,\dots ,\eta.
\end{equation}

We plan now to show that condition $(S)$ holds, and to do that we shall need the following lemma. As for the $X_I$'s, by $\partial_J$ with $J\in \mathcal{N}^\ell$ we shall mean $\partial_{J_1}\cdots \partial_{J_\ell}$.

\begin{lemma}\label{lemmader}
For all $n\in\N$ there exists $\ell= \ell(n) \in \N$ such that for all $I \in \mathcal{N}^\ell$
\begin{equation}\label{partialtoX}
\partial_I = \sum_{ k\geq n } \sum_{J \in \mathcal{I}^k}  \tilde p_{J,I }X_{J},
\end{equation}
and
\begin{equation}\label{Xtopartial}
X_I = \sum_{ k\geq n } \sum_{J \in \mathcal{N}^k} p_{J,I }\partial _{J}, 
\end{equation}
for certain polynomials, in the Euclidean sense, $\tilde p_{J,I }$ and $p_{J,I }$.
\end{lemma}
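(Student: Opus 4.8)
The plan is to iterate the two elementary identities \eqref{Xtopartial1} and \eqref{partialtoX1} exactly $\ell$ times and to control the outcome by a \emph{weighted degree}. First I would complete $\mathfrak X$ to a strong Malcev basis $X_1,\dots,X_\eta$ of $\mathfrak g$ adapted to the descending central series, so that each $X_j$ carries a weight $w_j\in\{1,\dots,s\}$, where $s$ is the nilpotency class and $w_1=\dots=w_\kappa=1$, and I would work in coordinates in which $G$ admits the dilations $\delta_r(x)=(r^{w_1}x_1,\dots,r^{w_\eta}x_\eta)$ and each $X_j$ --- hence also each $\partial_j$ --- is $\delta_r$-homogeneous of degree $w_j$. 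This refines \eqref{Xtopartial1}--\eqref{partialtoX1}: one may take the polynomial $p_{k,j}$ in \eqref{Xtopartial1} homogeneous of weighted degree $w_k-w_j$ (so $p_{k,j}=0$ unless $w_k\ge w_j$), and the polynomial $\tilde p_{J,j}$ in \eqref{partialtoX1} homogeneous of weighted degree $|J|-w_j\ge 0$ (so only $X_J$ with $|J|\ge w_j$ occur).

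Next I would substitute and normal-order. Inserting \eqref{partialtoX1} into $\partial_I=\partial_{I_1}\cdots\partial_{I_\ell}$ and commuting every polynomial coefficient to the left --- an operation preserving $\delta_r$-homogeneity, since $[X_i,p]=X_ip$ is again a homogeneous polynomial of the right weighted degree --- yields a finite expansion $\partial_I=\sum_J\tilde p_{J,I}\,X_J$ over multi-indices $J$ in the H\"ormander alphabet $\mathcal I$, with every summand $\delta_r$-homogeneous of degree $\omega:=w_{I_1}+\dots+w_{I_\ell}$. Matching degrees gives $|J|-\operatorname{wdeg}\tilde p_{J,I}=\omega$, hence $|J|\ge\omega\ge\ell$ for each $J$ that appears; collecting equal-length terms and taking $\ell=\ell(n)\ge n$ proves \eqref{partialtoX}. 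Symmetrically, feeding \eqref{Xtopartial1} into $X_I=X_{I_1}\cdots X_{I_\ell}$ and normal-ordering produces $X_I=\sum_J p_{J,I}\,\partial_J$, each summand $\delta_r$-homogeneous of degree $\omega$; since $\partial_J$ has degree $\sum_m w_{J_m}$, we get $\sum_m w_{J_m}=\omega+\operatorname{wdeg}p_{J,I}\ge\omega\ge\ell$, and $w_{J_m}\le s$ forces $|J|\ge\ell/s$. Thus $\ell=\ell(n)=ns$ makes every $\partial_J$ occurring satisfy $|J|\ge n$, giving \eqref{Xtopartial}; the same $\ell(n)=ns$ also works for \eqref{partialtoX}.

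The combinatorics --- expanding the $\ell$-fold products and pushing polynomial coefficients through the vector fields --- is routine bookkeeping, and the estimates $\ell\le\omega\le s\ell$ fall out at once. I expect the genuine difficulty to lie in the first step: verifying that in suitable coordinates the $X_j$ (and hence the $\partial_j$) are really $\delta_r$-homogeneous of degree $w_j$, so that \eqref{Xtopartial1}--\eqref{partialtoX1} decompose into homogeneous components. For stratified (homogeneous) groups, which include the motivating examples, this is classical; for a general nilpotent $G$ one passes to coordinates privileged for the flag generated by $\mathfrak X$, in which the $X_j$ do become homogeneous, and then notes that \eqref{partialtoX}--\eqref{Xtopartial} are polynomial identities that transfer back to the original exponential coordinates. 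Carrying out this coordinate change cleanly --- rather than the degree count itself --- is, I think, where most of the work goes.
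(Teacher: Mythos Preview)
Your weighted-degree argument is clean and correct for \emph{graded} (homogeneous) nilpotent groups, but the paper's Section~8.1 treats an arbitrary connected, simply connected nilpotent $G$ with a H\"ormander system, and there your first step fails: not every such $G$ admits a family of dilating automorphisms, so there need not exist coordinates in which the left-invariant $X_j$ are exactly $\delta_r$-homogeneous. Your proposed fix via ``privileged coordinates for the flag'' does not resolve this --- in those coordinates the $X_j$ become a homogeneous principal part \emph{plus lower-order terms}, not genuinely homogeneous operators, and the lower-order terms spoil the exact degree-matching $|J|-\operatorname{wdeg}\tilde p_{J,I}=\omega$ on which your counting rests. (The first non-gradable examples already occur in dimension~$7$.) So as written the argument covers the stratified case but leaves a genuine gap in the general nilpotent setting.

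The paper avoids homogeneity entirely by an induction on $n$. The key mechanism is: given the representation for $n-1$ with some $\ell'$, one chooses $m$ large enough that applying $m$ further derivatives annihilates every polynomial coefficient appearing at stage $n-1$, and then $\ell(n)=m+\ell'$ works. For \eqref{partialtoX} this is immediate, since Euclidean $\partial_L$ with $|L|$ exceeding the maximal degree kills each $\tilde p_{J,I'}$; for \eqref{Xtopartial} the analogous annihilation of polynomial coefficients by iterated left-invariant $X_L$ is supplied by \cite[Corollary~1.4]{AntonelliLeDonne}. This route uses only \eqref{Xtopartial1}--\eqref{partialtoX1} and the fact that left-invariant vector fields eventually kill polynomials, so it applies to all nilpotent $G$ without any gradability hypothesis. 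If you want to salvage your approach in full generality, the natural move is to lift to the free nilpotent group (which \emph{is} stratified) rather than to change coordinates on $G$ itself --- but at that point the paper's direct induction is shorter.
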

\begin{proof}
We prove both~\eqref{partialtoX} and~\eqref{Xtopartial} by induction, and observe that both hold for $n=1$ (and $\ell=1$) by~\eqref{partialtoX1} and~\eqref{Xtopartial1} respectively. 

Assume that~\eqref{partialtoX} holds for $n-1$, i.e.\ there exists $\ell'$ such that for all $I' \in \mathcal{N}^{\ell'}$
\[
\partial_{I'} = \sum_{ k\geq n-1 } \sum_{J \in \mathcal{I}^k}  \tilde p_{J,I' }X_{J}.
\]
Let $m-1$ be the maximum degree of the polynomials $\tilde p_{J,I' }$ above. Then $\partial_{L} \tilde p_{J,I' } =0$ for all $L \in \mathcal{N}^{m}$ and all $J$ and $I'$. Therefore, if $I \in \mathcal{N}^{m+ \ell'}$ is such that $I=(L,I')$ with $L \in \mathcal{N}^{m}$ and $I' \in \mathcal{N}^{\ell'}$,
\[
\partial_I = \partial_{L}\partial_{I'} = \sum_{ k\geq n-1 } \sum_{J \in \mathcal{I}^k}  \partial_L(\tilde p_{J,I' }X_{J}).
\] 
It remains to observe that since $\partial_L \tilde p_{J,I' } =0$ and by~\eqref{partialtoX1}, one has
\[
 \partial_L(\tilde p_{J,I' }X_{J}) = \sum_{u\geq 1} \sum_{U \in \mathcal{I}^u} \tilde p_{J,I', U}X_U X_J
\]
for certain polynomials $\tilde p_{J,I', U}$. This implies~\eqref{partialtoX}.

Assume now that~\eqref{Xtopartial} holds for $n-1$, i.e.\ there exists $\ell'$ such that for all $I' \in \mathcal{N}^{\ell'}$
\[
X_{I'} = \sum_{ k\geq n-1 } \sum_{J \in \mathcal{N}^k}  p_{J,I' }\partial_{J}.
\]
By~\cite[Corollary 1.4]{AntonelliLeDonne}, there is $m\in \N$ such that $X_L p_{J,I' }=0$ for all $L\in \mathcal{N}^m$, $J$ and $I'$ (e.\ g., $m$ can be chosen as the highest degree \emph{à la Leibman} of the polynomials $p_{J,I' }$). Therefore, if $I \in \mathcal{N}^{m+ \ell'}$ is such that $I=(L,I')$ with $L \in \mathcal{N}^{m}$ and $I' \in \mathcal{N}^{\ell'}$,
\[
X_I = X_L X_{I'} = \sum_{ k\geq n -1} \sum_{J \in \mathcal{N}^k} X_L(p_{J,I' }\partial_{J}).
\] 
Since $X_{L} p_{J,I' } =0$ and $X_L$ does not have zero order terms, cf.~\eqref{Xtopartial1}, 
\[
X_L(p_{J,I' }\partial_{J}) =  \sum_{u\geq 1} \sum_{U \in \mathcal{N}^u} p_{J,I', U} \partial_U \partial_J,
\]
for certain polynomials $p_{J,I', U}$, and~\eqref{Xtopartial} follows.
\end{proof}
We are now in a position to show that property $(S)$ holds.

\begin{lemma}\label{Sgroup}
For all $n\in \N$ there exists $\nu \in \N$ such that, for all sequences $(f_k)_{k\in \N}$ in $\Ss'$ such that $(X_J f_k)_{k\in \N}$ converges in $\Ss'$ for all $J\in \mathcal{I}^n$, there exists a sequence of polynomials $(\rho_k)_{k\in \N}$ in $\Ps_\nu$ such that $(f_k + \rho_k)_{k\in \N}$ converges in $\Ss'$.
\end{lemma}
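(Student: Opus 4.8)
The plan is to transfer property $(S)$ from its classical Euclidean form on $\R^\eta$ to the group, using the identification $\Ss(\Ls)=\Ss(\R^\eta)$ together with the two-way dictionary between the vector fields $X_j$ and the Euclidean partials $\partial_j$ furnished by Lemma~\ref{lemmader}. Fix $n\in\N$ and let $\ell=\ell(n)$ be as in that lemma, so that each Euclidean partial derivative of order $\ell$ admits a \emph{finite} expansion $\partial_I=\sum_{k\ge n}\sum_{J\in\mathcal{I}^k}\tilde p_{J,I}X_J$ with Euclidean-polynomial coefficients $\tilde p_{J,I}$. Given $(f_k)_k\subset\Ss'$ such that $(X_Jf_k)_k$ converges in $\Ss'$ for every $J\in\mathcal{I}^n$, the first step is to upgrade this: if $J\in\mathcal{I}^{k'}$ with $k'\ge n$, write $J=(J'',J')$ with $J'\in\mathcal{I}^n$ and $J''\in\mathcal{I}^{k'-n}$, so that $X_Jf_k=X_{J''}(X_{J'}f_k)$ converges in $\Ss'$ because $X_{J''}$ is continuous on $\Ss'$ by Lemma~\ref{lemma:Sderivate}. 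Since multiplication by a Euclidean polynomial is continuous on $\Ss'(\R^\eta)=\Ss'$, the displayed expansion then shows that $(\partial_If_k)_k$ converges in $\Ss'$ for every $I\in\mathcal{N}^\ell$; as $I$ ranges over $\mathcal{N}^\ell$ this means that every Euclidean partial derivative of order $\ell$ of $f_k$ converges in $\Ss'(\R^\eta)$.

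Next I would invoke the Euclidean statement underlying property $(S)$ on $\R^\eta$, cf.~\cite[Proposition 2.7]{Bownik}: if all Euclidean derivatives of order $\ell$ of a sequence in $\Ss'(\R^\eta)$ converge, there exist Euclidean polynomials $P_k$ of degree at most $\ell-1$, the bound being uniform in $k$, such that $(f_k+P_k)_k$ converges in $\Ss'$. It remains to replace each $P_k$ by a member of a single class $\Ps_\nu$. For this, by~\cite[Corollary 1.4]{AntonelliLeDonne} there is $m=m(\ell)$ such that $X_Lp=0$ for every $L\in\mathcal{N}^m$ and every Euclidean polynomial $p$ of degree $\le\ell-1$; since the H\"ormander generators are among $X_1,\dots,X_\eta$, this gives $X_Jp=0$ for all $J\in\mathcal{I}^{2\nu}$ as soon as $2\nu\ge m$, and, $\Ls^\nu$ being a finite linear combination of such $X_J$, we obtain $\Ls^\nu p=0$, i.e.\ $p\in\Ps_\nu$. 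Taking $\nu=\nu(\ell):=\lceil m(\ell)/2\rceil$ and $\rho_k:=P_k\in\Ps_\nu$, the sequence $(f_k+\rho_k)_k$ converges in $\Ss'$, which is the assertion.

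I expect the only genuinely substantive ingredients to be the classical $\R^\eta$ statement (standard) and the uniform annihilation of bounded-degree Euclidean polynomials by a fixed power of $\Ls$ through~\cite[Corollary 1.4]{AntonelliLeDonne}; everything else is routine bookkeeping with the continuity of the $X_J$'s and of polynomial multiplication on $\Ss'$, and with the dictionary of Lemma~\ref{lemmader}. The point that needs care is precisely this last reduction: one must keep all the index bounds ($\ell$, $m$, $\nu$) depending only on $n$ and not on $k$, so that a single $\Ps_\nu$ serves the whole sequence — which is exactly why the uniformity of the degree bound in Bownik's lemma, together with the fact that $\Ls$ is built only from the H\"ormander generators, is the crux.
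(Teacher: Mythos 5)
Your proof is correct and follows the paper's argument essentially verbatim: pass from the $X_J$'s to Euclidean partials via \eqref{partialtoX} (spelling out, as you do, that convergence of $(X_Jf_k)$ for $|J|=n$ propagates to all longer words by continuity of the $X_{J''}$ on $\Ss'$), apply Bownik's Euclidean version of $(S)$ to get polynomials of degree at most $\ell-1$, and then check that a fixed power of $\Delta$ annihilates them. The only (harmless) deviation is the last step, where you invoke the Antonelli--Le Donne corollary directly on the $P_k$'s instead of routing through \eqref{Xtopartial} of Lemma~\ref{lemmader} — whose proof rests on that same corollary — and both routes yield the required $\nu$ depending only on $n$.
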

\begin{proof}
Pick $n\in \N$ and assume that $(f_k)$ is a sequence in $\Ss'$ such that $(X_J f_k)$ converges in $\Ss'$ for all $J\in \mathcal{I}^n$. By Lemma~\ref{lemmader},~\eqref{partialtoX}, there is $\ell \in\N$ such that $(\partial_I f_k)$ converges in $\Ss'$ for all $I\in \mathcal{N}^{\ell}$. By~\cite[Proposition 2.7]{Bownik}, we deduce the existence of polynomials $(\rho_k)$ of degree, in the Euclidean sense, at most $\ell-1$ such that $(f_k+\rho_k)_k$ converges in $\Ss'$. In particular, $\partial_I \rho_k =0$ for all $I\in \mathcal{N}^\ell$ and $k\in\N$. To conclude, observe that for all $\nu\in\N$ there exist $(c_J^\nu) \in \C$ such that
\[
\Delta^\nu = \sum_{J \in \mathcal{I}^{2\nu}} c_J^\nu X_J.
\]
By~\eqref{Xtopartial}, there exists $\nu\in\N $ such that $\Delta^\nu \rho_k =0$ for all $k$, whence $(\rho_k)\subset \Ps_\nu$.
\end{proof}

We finally show that $\Ps$ coincides with the space of polynomials on $\mathbb{R}^\eta$. Let us first consider the case when $G$ is a stratified group and $\mathfrak{X}$ is a basis of the first layer of its Lie algebra. Since for every $k\in \N$ the operator $\Delta^k$ is  left-invariant, hypoelliptic and homogeneous with respect to the natural dilations of $G$, by a theorem of Geller~\cite[Theorem 2]{Geller} every element $\rho \in \Ps$ is a polynomial on $\R^d$ in exponential chart. It remains to prove that such result can be extended to all nilpotent $G$'s. We do this in the following proposition, which was kindly shown to us by M.\ Calzi. It might actually work for more general operators, but we limit here to the case under consideration.

\begin{proposition}
If $\rho \in \Ss'(G)$ is such that $\Delta^k \rho =0$ for some $k\in \N$, then $\rho$ is a polynomial on $G$ in exponential chart. 
\end{proposition}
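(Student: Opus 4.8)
The plan is to descend from the free nilpotent, hence stratified, case, where Geller's theorem is available. Let $r$ be the nilpotency step of $\mathfrak g=\mathrm{Lie}(G)$ and let $N$ be the free nilpotent Lie group of step $r$ on $\kappa$ generators, a stratified group whose Lie algebra $\mathfrak n$ has first layer spanned by generators $Y_1,\dots,Y_\kappa$; set $\Delta_N=-\sum_{j=1}^\kappa Y_j^2$. Since $\mathfrak g$ has step $\le r$, the assignment $Y_j\mapsto X_j$ extends uniquely to a Lie algebra homomorphism $\varphi\colon\mathfrak n\to\mathfrak g$, which is onto because $X_1,\dots,X_\kappa$ generate $\mathfrak g$ (H\"ormander's condition); as $N$ is simply connected, $\varphi$ integrates to a surjective Lie group homomorphism $\pi\colon N\to G$ with $d\pi_e=\varphi$, in particular a surjective submersion. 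Being left-invariant and corresponding under $d\pi_e$, the fields $Y_j$ and $X_j$ are $\pi$-related, so $\Delta_N$ is $\pi$-related to $\Delta$, that is $\Delta_N(u\circ\pi)=(\Delta u)\circ\pi$ for $u\in C^\infty(G)$.

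I would then pull $\rho$ back along $\pi$. In exponential coordinates $\pi$ is the linear map $\varphi$, because $\exp_G\circ\varphi=\pi\circ\exp_N$; writing $\mathfrak n=\mathfrak k\oplus\mathfrak v$ with $\mathfrak k=\ker\varphi$ and identifying $\mathfrak v\cong\mathfrak g$ via $\varphi$, fibre integration $\pi_*\phi(v)=\int_{\mathfrak k}\phi(k,v)\,dk$ sends $\Ss(N)=\Ss(\mathfrak k\times\mathfrak v)$ continuously into $\Ss(G)=\Ss(\mathfrak v)$, since a partial integral of a Schwartz function is Schwartz; here I use that, as recalled above for $G$ and applicable verbatim to $N$, the space $\Ss$ coincides with the Euclidean Schwartz space in exponential coordinates, together with Lemma~\ref{lemma:Sderivate}. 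Consequently $\langle\pi^*\rho,\phi\rangle:=\langle\rho,\pi_*\phi\rangle$ defines $\pi^*\rho\in\Ss'(N)$, and by the standard behaviour of distributional pullback under $\pi$-related differential operators $\Delta_N^k(\pi^*\rho)=\pi^*(\Delta^k\rho)=0$.

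Now $\Delta_N^k$ is a left-invariant operator on the stratified group $N$, homogeneous of degree $2k$ with respect to the dilations and hypoelliptic, being a power of the hypoelliptic operator $\Delta_N$; hence Geller's theorem~\cite[Theorem 2]{Geller} yields $\pi^*\rho=P$ for a polynomial $P$ on $\mathfrak n$ in exponential coordinates. Since $P=\pi^*\rho$ is a smooth function, restricting it along a local smooth section of $\pi$ shows that $\rho$ is smooth and $\pi^*\rho=\rho\circ\pi$, so $P$ is constant along the fibres of $\pi$, which in exponential coordinates are the affine subspaces $\mathfrak k\times\{v\}$, $v\in\mathfrak v$. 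Therefore $P(k,v)=Q(v)$ for a polynomial $Q$ on $\mathfrak v\cong\mathfrak g$, and $\rho=Q\circ\exp_G^{-1}$ is a polynomial on $G$ in exponential chart.

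The delicate point is the interaction of the pullback with the rest: one must check that $\pi_*$ genuinely maps $\Ss(N)$ continuously into $\Ss(G)$, which rests on $\pi$ being \emph{linear} in exponential coordinates, i.e.\ on $\pi$ being a homomorphism, and that the identity $\Delta_N^k\circ\pi^*=\pi^*\circ\Delta^k$ persists at the distributional level (it holds for smooth functions by the chain rule and extends by the sequential continuity of $\pi^*$ on $\mathcal D'$). The remaining ingredients — the universal property of the free nilpotent group, integration of $\varphi$ to $\pi$, $\pi$-relatedness of left-invariant fields, hypoellipticity of powers of $\Delta_N$, and Geller's theorem itself — are standard.
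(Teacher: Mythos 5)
Your proof is correct and shares the paper's skeleton — lift to the free nilpotent group via the surjective homomorphism $\pi$, apply Geller's theorem there, and descend — but the descent step is genuinely different. The paper proves that $\pi_*\colon C_c^\infty(G')\to C_c^\infty(G)$ is \emph{surjective} (via a compactness and cut-off argument) in order to get injectivity of the transpose $\pi_*^t$, then deduces $\dd\pi(Y)^m\rho=0$ for all $Y$ from $Y^m\pi_*^t\rho=0$, and finally invokes the Antonelli--Le Donne characterization of polynomials to conclude. You instead exploit that $\pi$ is \emph{linear} in exponential coordinates with fibres $\mathfrak k\times\{v\}$: the polynomial $P=\pi^*\rho$ is constant on fibres, so $\rho$ is read off directly as $P(0,\cdot)$ on $\mathfrak v\cong\mathfrak g$. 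This bypasses both the surjectivity lemma and the Antonelli--Le Donne theorem, so your route is more elementary at the final step; what the paper's route buys is that it never needs to identify $\pi^*\rho$ with an actual composition $\rho\circ\pi$, working purely by duality. One small point: ``restricting $\pi^*\rho$ along a local smooth section'' is not a priori defined for a distribution; the clean way to finish is to test $\pi^*\rho=P$ against $\chi\otimes\psi$ with $\int_{\mathfrak k}\chi=1$, which gives $\langle\rho,\psi\rangle=\int\bigl(\int P(k,v)\chi(k)\,\dd k\bigr)\psi(v)\,\dd v$ and hence exhibits $\rho$ as a polynomial in $v$ directly, with no smoothness argument needed. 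The remaining ingredients you use (continuity of the fibre integration $\Ss(N)\to\Ss(G)$, the intertwining $\pi_*\circ Y_j=X_j\circ\pi_*$ and its dual) coincide with what the paper establishes or cites.
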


\begin{proof}
Let $G'$ be a simply connected free nilpotent Lie group of the same nilpotency step as $G$, whose Lie algebra $\mathfrak{g}'$ is generated by the vector fields $Y_1,\dots, Y_\kappa$. Then $\Delta'= - \sum_{j=1}^\kappa Y_j^2$ is a Rockland operator on $G'$ with respect to the gradation which assigns degree $1$ to each $Y_j$, and so is $L' = (\Delta')^k$. Consider the continuous, surjective homomorphism $\pi \colon G' \to G$ such that $d\pi(Y_j)=X_j$, where we recall that for $Y\in \mathfrak{g}'$, $\phi\in C^\infty(G)$ and $x\in G$
\[
d\pi(Y)\phi(x) = \lim_{t\to 0} \frac{\phi (x \pi (\exp'(tY))) - \phi(x) }{t},
\]
$\exp'$ being the exponential map $\mathfrak{g}' \to G'$. Observe that $Y(\phi \circ \pi) = (d\pi(Y) \phi)\circ \pi$. Since $d\pi$ is a homomorphism of Lie algebras, it can be extended to the enveloping algebra of $G'$. Moreover, $\pi$ induces a linear mapping on $\Ss(G')$ defined as
	\[
	\langle \pi_*(g), \phi\rangle= \langle g, \phi\circ \pi\rangle, \qquad g\in \Ss(G'), \; \phi\in C_c^\infty(G),
	\]
with the property that for $Y\in \mathfrak{g}'$ and $g\in \Ss(G')$
\begin{equation}\label{pistardiff}
	\pi_*(Y g)=\dd \pi(Y) \pi_*(g),
\end{equation}	
see~\cite[Proposition 1.90]{Calzi}, and hence $\pi_*\colon C_c^\infty(G') \to C_c^\infty(G) $ is continuous. We claim that (a) $\pi_* \colon \Ss(G')\to \Ss(G)$ is continuous, and that (b) $\pi_*\colon C_c^\infty(G') \to C_c^\infty(G) $ is surjective. These imply that its transpose $\pi_*^t$ is a continuous injective linear mapping $\Ss'(G)\to \Ss'(G')$. Assuming the claims, then,
	\[
	L'\pi_*^t(\rho)=  \pi_*^t(\Delta^k\rho)=0,
	\]
	so that $\pi_*^t \rho$ is a polynomial on $G'$ by the aforementioned~\cite[Theorem 2]{Geller}. Then, there is $m\in\N$ such that $Y^m \pi_*^t(\rho)=0$ for every $Y\in \mathfrak{g}'$, so that
	\[
\pi_*^t(\dd \pi(Y)^m \rho)=0,
	\]
	and since $\pi_*^t$ is injective this implies $\dd \pi(Y)^m \rho=0$ for every $Y\in \mathfrak{g}'$. Since $\dd \pi\colon \mathfrak{g}'\to \mathfrak{g}$ is surjective,  $\rho$ is a polynomial on $G$ in exponential chart by~\cite[Theorem 1.3]{AntonelliLeDonne}.
	
It remains to prove the claims. To prove (a), it is enough to notice that if $d'$ is the control distance on $G'$ associated to the vector fields $Y_1, \dots, Y_\kappa$, and $e'$ is the identity of $G'$, then $d(\pi(y), e) \leq  d'(y,e')$ for all $y\in G'$. The continuity  $\pi_* \colon \Ss(G')\to \Ss(G)$ now easily follows from this property and~\eqref{pistardiff}.

To prove (b), we first observe that any compact $K$ in $G$ is the image of a compact $K'$ in $G'$. Indeed, for $x\in K$ choose $y\in \pi^{-1}(x)$ and let $V'_{y}$ be a compact neighbourhood of $y$ in $G'$. Since $\pi$ is open, $\pi(V'_{y})$ is a compact neighbourhood of $x$, thus $K$ can be covered with neighbourhoods $\pi(V'_{y_1}),\dots, \pi(V'_{y_s})$, for certain $y_1,\dots y_s\in G'$. Then $V'\coloneqq \bigcup_j V'_{y_j}$ is a compact in $G$ whose image covers $K$, hence $K= \pi(K')$ with  $K'=V'\cap \pi^{-1}(K)$. 

Let now $ \phi \in C_c^\infty(G)$ be supported in a compact $K$, and let $K'$ be a compact in $G'$ such that $\pi(K')=K$. Take a smooth cut-off function $\chi$ which is $1$ on $K'$, so that $\pi_*(\chi)(\pi(y))>0$ for $y\in K'$ by~\cite[Proposition 1.92]{Calzi}.  Then, the function $g= \frac{\phi'\circ \pi}{\pi_*(\chi)\circ \pi} $ is smooth on an open neighbourhood of $K'$ and it vanishes  on the complement of $K'$. Hence, it can be extended to a smooth function on $G'$. Define $\phi' \coloneqq \chi g$, and observe that it is smooth, with compact support, and $\pi_*( \phi ' )=\phi$.
\end{proof}

\subsection{Grushin operators} For $\x = (x',x'') \in \R^{n+1} $ with $x'\in \R^n$ and $x''\in \R$, we define the Grushin operator
\[
\Ls  = -\Delta_{x'}  - |x'|^2 \partial_{x''}^2, \qquad \Delta_{x'} = \sum\nolimits_{j=1}^n \partial_{x'_j}^2.
\]
The operator $\Ls$ is hypoelliptic and homogeneous of degree 2 with respect to the anisotropic dilations
\[
\delta_r (\x) = (rx', r^2x''), \qquad r>0.
\]
In other words, $\Ls(f \circ \delta_r) = r^2 (\Ls f) \circ \delta_r$. If
\begin{equation}\label{Grushinvectors}
 \X_{j} = \partial_{x'_j}  \qquad \X_{n+j} = x'_j \partial_{x''}, \quad j=1,\dots, n,
\end{equation}
and $\mathfrak X = \{\X_1, \dots, \X_{2n} \}$, then
\[
\Ls = -\sum\nolimits_{j=1}^{2n} \X_j^2,
\]
and $[\X_j, \X_{n+j}] = \partial_{x''}$. It is easily seen that $\Ls$ is symmetric and nonnegative on $L^2(\R^{n+1})$; it is indeed essentially self-adjoint. We endow $\R^{n+1}$ with the control distance associated with $\Ls$, see e.g.~\cite{RobinsonSikora1}, which is homogeneous, i.e.\ $d(\delta_r \x, \delta_r \y) = r d(\x, \y)$, and such that
\[
d(\x, \y) \approx |x'-y'| + \min \left(\frac{|x''-y''|}{|x'|+|y'|} ,  |x''-y''|^{1/2} \right) .
\]
Moreover, if $|\cdot|$ denotes the Lebesgue measure,
\[
|B(\x,r)| \approx r^{n+1} (r+|x'|),
\]
so that
\[
\left(\frac{R}{r}\right)^{n+1} \lesssim \frac{|B(\x,R)|}{|B(\x,r)|} \lesssim \left(\frac{R}{r}\right)^{n+2} \qquad R\geq r>0,
\]
and the measure space $(\R^{n+1}, d, \mathrm{Leb})$ is doubling and satisfies all the assumptions of the paper. We write $\mathcal{N}= \{ 1,\dots, n+1\}$, and observe that  $\mathcal{I}= \{ 1,\dots, 2n\}$. Then, we have the following result.

\begin{lemma}\label{SGrushin}
$\Ss(\Ls) = \Ss(\R^{n+1})$ with equivalence of norms.
\end{lemma}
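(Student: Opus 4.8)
The plan is to prove that $\Ss(\Ls) = \Ss(\mathbb R^{n+1})$ as Fréchet spaces by establishing the continuous inclusions in both directions, exploiting the close relationship between the Grushin vector fields $\X_1,\dots,\X_{2n}$ of~\eqref{Grushinvectors} and the Euclidean partial derivatives $\partial_{x'_1},\dots,\partial_{x'_n},\partial_{x''}$. By Lemma~\ref{lemma:Sderivate} (which applies here, since the Grushin heat kernel satisfies~\eqref{XHtestimate} — as established later in the paper), we may work with the equivalent description $\tilde\Ss$ of $\Ss(\Ls)$ in terms of the seminorms $\tilde p_n(\phi) = \|(1+d(\cdot))^n \max_{0\le |J|\le n}|\X_J\phi|\|_\infty$, where now $d$ is the Grushin control distance. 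The first reduction is to compare $d$ with the Euclidean distance: from the formula $d(\x,\y)\asymp |x'-y'| + \min(|x''-y''|/(|x'|+|y'|), |x''-y''|^{1/2})$ one reads off, taking $\y=0$, that $d(\x) = d(\x,0) \asymp |x'| + \min(|x''|/|x'|,|x''|^{1/2})$, so in particular $d(\x)\lesssim |x'| + |x''|^{1/2} \lesssim 1 + |\x|$ and, conversely, $|\x| \lesssim 1 + d(\x)^2$ (the worst case being $x''$ large with $x'$ bounded, where $d(\x)\asymp |x''|^{1/2}$). Hence the polynomial weights $(1+d(\cdot))^n$ and $(1+|\cdot|)^n$ are mutually dominated after adjusting $n$ by a factor of $2$, and the weight comparison contributes only a harmless change of index.

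Next I would handle the derivatives. For the inclusion $\Ss(\mathbb R^{n+1}) \hookrightarrow \tilde\Ss$: each $\X_j$ with $j\le n$ is literally $\partial_{x'_j}$, while $\X_{n+j} = x'_j \partial_{x''}$ is a polynomial coefficient times a Euclidean derivative; iterating, any $\X_J$ with $|J|=\ell$ is a finite sum $\sum_{|K|\le \ell} q_K \partial_{x}^K$ with $q_K$ polynomials (of degree $\le \ell$) in $x'$. Therefore $|\X_J\phi(\x)| \lesssim (1+|\x|)^\ell \max_{|K|\le \ell}|\partial_x^K\phi(\x)|$, and combining with the weight comparison one gets $\tilde p_n(\phi) \lesssim \|(1+|\cdot|)^{N}\max_{|K|\le n}|\partial_x^K\phi|\|_\infty$ for a suitable $N=N(n)$, which is a Schwartz seminorm. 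For the reverse inclusion $\tilde\Ss \hookrightarrow \Ss(\mathbb R^{n+1})$ one needs to express the $\partial_{x''}$-derivative in terms of the $\X_j$'s. Here the key algebraic identity is that the bracket $[\X_j,\X_{n+j}] = \partial_{x''}$ for each $j=1,\dots,n$, so $\partial_{x''} = \X_j\X_{n+j} - \X_{n+j}\X_j$ is a second-order polynomial in the Grushin fields — no coefficients needed. Since the $\partial_{x'_j}=\X_j$ are already among the fields, iterating gives that each Euclidean $\partial_x^K$ with $|K|=\ell$ is a sum of $\X_J$'s with $|J|\le 2\ell$ (say), with constant coefficients. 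Thus $\max_{|K|\le n}|\partial_x^K\phi(\x)| \lesssim \max_{|J|\le 2n}|\X_J\phi(\x)|$, and together with $(1+|\x|)^n \lesssim (1+d(\x))^{2n}$ we obtain $\|(1+|\cdot|)^n\max_{|K|\le n}|\partial_x^K\phi|\|_\infty \lesssim \tilde p_{2n}(\phi) + \tilde p_{4n}(\phi) \lesssim \tilde p_{4n}(\phi)$ (or similar), a seminorm of $\tilde\Ss$. One also checks the qualitative membership conditions ($\X_J\phi\in L^2$ versus $\partial_x^K\phi\in L^2$) pass back and forth, using $(1+|\cdot|)^{-(n+2)}\in L^1(\mathbb R^{n+1})$ exactly as in the proof of Lemma~\ref{lemma:Sderivate}.

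Finally, both inclusions being continuous in the Fréchet topologies, the open mapping theorem (or directly the two-sided seminorm estimates) gives that they are topological isomorphisms, yielding $\Ss(\Ls)=\Ss(\mathbb R^{n+1})$ with equivalence of families of seminorms. I expect the only genuinely delicate point to be the distance comparison $|\x|\lesssim 1 + d(\x)^2$, which must be verified uniformly over all regimes of $(x',x'')$ — in particular checking that when $|x'|$ is small and $|x''|$ is moderate the intermediate term $\min(|x''|/|x'|, |x''|^{1/2})$ behaves correctly — but this is a short elementary computation from the explicit formula for $d$; everything else is bookkeeping with the bracket relation $[\X_j,\X_{n+j}]=\partial_{x''}$ and polynomial coefficients, entirely analogous to the nilpotent-group case treated via Lemma~\ref{lemmader}.
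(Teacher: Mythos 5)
Your proposal is correct and follows essentially the same route as the paper: compare the Grushin control distance at the origin with the Euclidean weight via $1+d(\x,0)\lesssim 1+|x'|+|x''|\lesssim (1+d(\x,0))^2$, expand each $\X_J$ as polynomial coefficients times Euclidean derivatives for one inclusion, and use $\partial_{x''}=[\X_j,\X_{n+j}]$ (together with $\X_j=\partial_{x'_j}$) for the reverse. The paper's proof is just a terser version of the same argument, likewise working through the $\tilde\Ss$ description of $\Ss(\Ls)$.
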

\begin{proof}
Observe first that
\begin{align}\label{distGrusEucl}
(1+d(\x,0))^k  \lesssim (1+|x'| + |x''|^{1/2})^k \lesssim (1+|x'|+|x''|)^{k} \lesssim (1+d(\x,0))^{2k}.
\end{align}
By the  first inequality in~\eqref{distGrusEucl} and
\[
|\bX_J \phi(\x)| \lesssim (1+|x'|+|x''|)^{|J|} \max_{0\leq |I|\leq |J|, \; 0\leq h\leq |J|}|\partial_{x'}^I \partial_{x''}^h \phi(\x)|,
\]
one gets that $\Ss(\R^{n+1}) \subseteq \Ss(\Ls)$. By the second inequality in~\eqref{distGrusEucl} and the fact that $\partial_{x''}^h = [\X_j, \X_{n+j}]^h$, one gets $\Ss(\Ls) \subseteq \Ss(\R^{n+1}) $ and hence the equality.
\end{proof}
Lemma~\ref{SGrushin}, together with~\cite[Theorem 1]{Xuebo}, implies that $\Ps$ is the space of polynomials on $\R^{n+1}$.  It also allows us to show the validity of $(S)$.

\begin{lemma}
Suppose $\nu\in\N$. For all sequences $(f_k)_{k\in\N}$ in $\Ss'$ such that $(\X_J f_k)_{k\in \N}$ converges in $\Ss'$ for all $J\in \mathcal{I}^\nu$, there exists a sequence of polynomials $(\rho_k)_{k\in \N}$ in $\Ps_\nu$ such that $(f_k + \rho_k)_{k\in \N}$ converges in $\Ss'$.
\end{lemma}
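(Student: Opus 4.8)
The plan is to transfer the statement to the Euclidean setting via Lemma~\ref{SGrushin} and \cite[Proposition 2.7]{Bownik}, in the spirit of Lemma~\ref{Sgroup}, and then to check that the polynomials furnished by Bownik's result automatically lie in $\Ps_\nu$ by an anisotropic homogeneity argument. By Lemma~\ref{SGrushin} one has $\Ss(\Ls)=\Ss(\R^{n+1})$, so convergence in $\Ss'$ is convergence of tempered distributions on $\R^{n+1}$ and, as recalled after that lemma, $\Ps$ is the usual space of polynomials. Given $\nu$ and a sequence $(f_k)$ in $\Ss'$ with $(\X_J f_k)_k$ convergent in $\Ss'$ for every $J\in\mathcal{I}^\nu$, I would first observe that, since each $\X_{J''}$ is continuous on $\Ss'$ by Lemma~\ref{lemma:Sderivate}, applying $\X_{J''}$ shows that $(\X_{J'}f_k)_k$ converges in $\Ss'$ for \emph{every} word $J'$ with $|J'|\geq\nu$.

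The second step is to recover ordinary derivatives. Since $\partial_{x'_j}=\X_j$ for $j=1,\dots,n$ and $\partial_{x''}=[\X_1,\X_{n+1}]=\X_1\X_{n+1}-\X_{n+1}\X_1$, every Euclidean differential monomial $\partial_{x'}^\alpha\partial_{x''}^\beta$ is a finite linear combination of words $\X_J$ with $|J|=|\alpha|+2\beta$, so $(\partial_{x'}^\alpha\partial_{x''}^\beta f_k)_k$ converges in $\Ss'$ whenever $|\alpha|+2\beta\geq\nu$. In particular every Euclidean multi-index $I=(\alpha,\beta)$ of order $|\alpha|+\beta=\nu$ satisfies $|\alpha|+2\beta\geq\nu$, hence $(\partial_I f_k)_k$ converges in $\Ss'$ for all such $I$; by \cite[Proposition 2.7]{Bownik} there is then a sequence $(\rho_k)$ of polynomials of Euclidean degree at most $\nu-1$ such that $(f_k+\rho_k)_k$ converges in $\Ss'$.

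It remains to verify $\rho_k\in\Ps_\nu$, i.e.\ $\Ls^\nu\rho_k=0$. Decomposing $\rho_k$ into $\delta_r$-homogeneous components and using $\Ls(P\circ\delta_r)=r^2(\Ls P)\circ\delta_r$, one checks that $\Ls$ maps a $\delta_r$-homogeneous polynomial of degree $m$ to one of degree $m-2$, so that $\Ls^\nu$ lowers the anisotropic degree by $2\nu$; since a polynomial of Euclidean degree $\leq\nu-1$ has anisotropic degree $\leq 2(\nu-1)<2\nu$, and there are no nonzero $\delta_r$-homogeneous polynomials of negative degree, $\Ls^\nu\rho_k=0$ follows, so $(\rho_k)\subset\Ps_\nu$ and we are done. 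The only delicate point is this last step: one must keep the three gradings in play — $\X$-word length, Euclidean degree, and $\delta_r$-homogeneity — carefully aligned so that Bownik's polynomials land exactly in $\Ps_\nu$ rather than merely in some $\Ps_{\nu'}$ with uncontrolled $\nu'$; the remaining steps are routine.
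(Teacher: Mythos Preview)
Your proof is correct and follows essentially the same route as the paper: reduce to Euclidean derivatives via $\partial_{x'_j}=\X_j$ and $\partial_{x''}=[\X_j,\X_{n+j}]$, apply \cite[Proposition 2.7]{Bownik} to obtain polynomials of Euclidean degree $\leq\nu-1$, and then check that these lie in $\Ps_\nu$. The only minor difference is in the last step: the paper observes directly that $\Ls^\nu=\sum_{j=\nu}^{2\nu}\sum_{J\in\mathcal{N}^j}p_J^\nu\partial_J$ involves only Euclidean derivatives of order $\geq\nu$ and hence annihilates polynomials of degree $\leq\nu-1$, whereas you reach the same conclusion via the anisotropic $\delta_r$-homogeneity of $\Ls$; both arguments are valid, and your explicit first step (extending convergence to $\X$-words of length $\geq\nu$) is a point the paper leaves implicit.
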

\begin{proof}
Pick $\nu\in \N$ and assume that $(f_k)$ is a sequence in $\Ss'$ such that $(\X_J f_k)$ converges in $\Ss'$ for all $J\in \mathcal{I}^\nu$. By~\eqref{Grushinvectors} and the fact that $\partial_{x''} = [\X_j, \X_{n+j}]$, we see that $(\partial_J f_k)$ converges in $\Ss'$ for all $J\in \mathcal{N}^\nu$.  By~\cite[Proposition 2.7]{Bownik}, there exist polynomials $(\rho_k)$ of degree (in the Euclidean sense) at most $\nu-1$ such that $(f_k+\rho_k)$ converges in $\Ss'$. In particular, $\partial_I \rho_k =0$ for all $I\in \mathcal{N}^\nu$ and $k\in\N$. Since
\[
\Ls^\nu=\sum_{j=\nu}^{2\nu} \sum_{J \in \mathcal{N}^j} p_J^\nu \partial_J
\]
for certain polynomials $p_J^\nu$, one gets $\Ls^{\nu} \rho_k =0$ for all $k\in\N$, whence $(\rho_k)\in \Ps_\nu$.
\end{proof}
We now exploit the relation of the Grushin setting with the Heisenberg group $\mathbb{H}^n \simeq \R^{n} \times \R^n\times \R$, whose Lie algebra we denote with $\mathfrak{h}^n$ and whose homogeneous dimension with $Q=2n+2$. We refer the reader to~\cite{DziubanskiJotsaroop} for more details. Its group law is
\[
(x_1, y_1, t_1) (x_2, y_2, t_2) = (x_1+x_2, y_1+y_2, t_1 + t_2 + \tfrac{1}{2}(y_1x_2 -x_1y_2)).
\]
We also define the left-invariant vector fields on $\mathbb{H}^n$
\[
X_j = \partial_{x_j} + \frac{1}{2}y_j \partial_t, \qquad X_{n+j} = \partial_{y_j} - \frac{1}{2}x_j \partial_t, \qquad j=1,\dots, n,
\]
and the right-invariant ones
\[
Y_j = \partial_{x_j} - \frac{1}{2}y_j \partial_t, \qquad Y_{n+j} = \partial_{y_j} + \frac{1}{2}x_j \partial_t, \qquad j=1,\dots, n.
\]
We denote by $\Delta = -\sum_{j=1}^n (X_j^2 + X_{n+j}^2)$ the ordinary left-invariant sub-Laplacian on $\mathbb{H}^n$.  Let $\pi$ be the unitary representation of  $\mathbb{H}^n$ on $L^2(\R^{n+1})$ given by
\[
\pi(x,y,t) f(\x) = f(x'+y, x'' + t + \tfrac{1}{2} x \cdot y + x\cdot x').
\]
For $Z \in \mathfrak{h}^n$, let $d\pi (Z) f=  \frac{d}{dt}\Big|_{t=0} (\pi(\exp(tZ))f )$, extend $d\pi$ to the enveloping algebra of $\mathbb{H}^n$, and observe that for $j=1,\dots, n$
\begin{equation}\label{dpi}
d\pi (X_j) = \X_{n+j}, \quad d\pi (X_{n+j}) = \X_{j},\qquad d\pi(\Delta)=\Ls.
\end{equation}
For a given $F\in L^1(\mathbb{H}^n)$, let 
\[
\pi(F)(\x,\y) = \int_{\R^n} F(z, y'-x', y''-x'' - \tfrac{1}{2}z \cdot (y'+x'))\, dz.
\]
Then, for every $j=1,\dots, 2n$,
\begin{equation}\label{dpipi}
d\pi (X_j)_{\x} \pi(F)(\x,\y) = - \pi(Y_j F)(\x,\y), \qquad d\pi (X_j)_{\y} \pi(F)(\x,\y) = \pi(X_j F)(\x,\y).
\end{equation}
A key observation moreover is that, if $h_t$ is the convolution kernel of $\e^{-t\Delta}$, and $H_t$ is the integral kernel of $\e^{-t\Ls}=T_t$, then 
\[
\pi(h_t)(\x,\y) = H_t(\x,\y).
\]
The H\"older continuity of $H_t$ is proved in~\cite[Corollary 2.5]{DziubanskiJotsaroop}.

\begin{proposition}
The following properties hold:
\begin{itemize}
\item[\emph{(1)}] $(T_t)_{t>0}$ is a diffusion semigroup on $(\R^{n+1}, \mathrm{Leb})$ and $T_t1 =1$;
\item[\emph{(2)}] there exist two constants $b_0, b_1>0$ such that 
\[
 |B(\x,\sqrt{t})|^{-1}\, \e^{-b_0 d(\x,\y)^2/t} \lesssim H_t(\x,\y) \lesssim  |B(\x,\sqrt{t})|^{-1}\, \e^{-b_1 d(\x,\y)^2/t} 
\]
 for every $t>0$ and $\x,\y \in \R^{n+1}$;
\item[\emph{(3)}]  for every $h,k\in \N$ there exists a positive constant $a=a_{h,k}$ such that 
\[|(\X_{I})_{\x} (\X_{J})_{\y} H_t(\x,\y)| \lesssim t^{-\frac{h+k}{2}} |B(\x,\sqrt{t})|^{-1}\, \e^{-a  d(\x,\y)^2/t} 
\]
for all $t>0$, $\x,\y\in \R^{n+1}$, and  $I\in \mathcal{I}^h$, $J\in \mathcal{I}^k$.
\end{itemize}
\end{proposition}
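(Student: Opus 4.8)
The plan is to derive all three assertions from the single identity $\pi(h_t)=H_t$ together with the intertwining relations \eqref{dpi}--\eqref{dpipi}, reducing everything to standard facts about the heat kernel $h_t$ of the sub-Laplacian $\Delta$ on $\mathbb{H}^n$. For (1): since $\Ls$ is nonnegative, real and essentially self-adjoint, $T_t=\e^{-t\Ls}$ is already a symmetric, strongly continuous $L^2$-contraction semigroup of real operators, and its kernel $H_t=\pi(h_t)$ is nonnegative because $h_t\ge 0$ and $\pi$ acts by integrating $h_t$ along the curves $z\mapsto(z,\,y'-x',\,y''-x''-\tfrac12 z\cdot(y'+x'))$. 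An obvious change of variables in that integral gives $\int_{\R^{n+1}}H_t(\x,\y)\,d\y=\int_{\mathbb{H}^n}h_t=1$ — the last equality being the conservativeness of the Heisenberg heat semigroup — and by the symmetry of $H_t$ also $\int H_t(\x,\y)\,d\x=1$; hence $T_t$ is an $L^1$- and $L^\infty$-contraction with $T_t1=1$, i.e.\ a diffusion semigroup.

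For (2) and (3) I would use the classical Gaussian estimates on $\mathbb{H}^n$ (see \cite{VSCC}): for any product $\widetilde R$ of $h$ right-invariant and $k$ left-invariant vector fields, $|\widetilde R\,h_t(g)|\lesssim t^{-(h+k)/2}\,|B_{\mathbb{H}^n}(e,\sqrt t)|^{-1}\,\e^{-c\rho(g)^2/t}$, where $\rho$ is the Carnot--Carath\'eodory distance of $\mathbb{H}^n$ and $|B_{\mathbb{H}^n}(e,r)|\asymp r^{2n+2}$ (the right-invariant and mixed cases being reduced to the left-invariant one by the inversion symmetry of $h_t$ and the commutation of left- and right-invariant fields). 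By \eqref{dpi} each Grushin field $\X_j$ is $d\pi$ of a generator of $\mathfrak{h}^n$, so iterating \eqref{dpipi} — legitimate since the right-invariant fields produced by the $\x$-derivatives commute with the left-invariant ones produced by the $\y$-derivatives — yields, for $I\in\mathcal{I}^h$ and $J\in\mathcal{I}^k$, an identity $(\X_I)_{\x}(\X_J)_{\y}H_t(\x,\y)=\pm\,\pi(\widetilde R\,h_t)(\x,\y)$ with $\widetilde R$ as above. Writing $g(\x,\y,z)=(z,\,y'-x',\,y''-x''-\tfrac12 z\cdot(y'+x'))$, the whole problem is thus reduced to the estimate
\[
\int_{\R^n}\e^{-c\,\rho(g(\x,\y,z))^2/t}\,dz\;\lesssim\;\frac{|B_{\mathbb{H}^n}(e,\sqrt t)|}{|B(\x,\sqrt t)|}\,\e^{-c'\,d(\x,\y)^2/t},
\]
from which (3) follows at once, while the upper bound in (2) is the case $h=k=0$; the lower bound in (2) is obtained by restricting the $z$-integral to a suitable ball around a near-optimal value of $z$ and invoking the lower Gaussian bound for $h_t$, using the reverse inequality $\inf_z\rho(g(\x,\y,z))\lesssim d(\x,\y)$.

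The hard part — and the only genuinely new ingredient — is the displayed geometric estimate, a comparison between the Grushin distance $d$ and the pull-back of $\rho$ through the fibration $z\mapsto g(\x,\y,z)$. I would prove it from the standard quasi-norm expressions $\rho((a,b,c))\asymp|a|+|b|+\min(|c|/(|a|+|b|),|c|^{1/2})$ on $\mathbb{H}^n$ and the formula for $d(\x,\y)$ recalled above, splitting the $z$-integration at $|z|\asymp|x'|+|y'|+\sqrt t$. On the complementary range $\rho(g(\x,\y,z))\gtrsim|z|$ gives Gaussian decay in $z$; on the bounded range one controls $\rho(g(\x,\y,z))$ from below by $d(\x,\y)$ outside a thin set of $z$'s whose contribution is absorbed, and the term $\tfrac12 z\cdot(y'+x')$ in the last coordinate forces the $z$-integral to concentrate, so that — using $|B_{\mathbb{H}^n}(e,\sqrt t)|\asymp t^{n+1}$ and $|B(\x,\sqrt t)|\asymp t^{(n+1)/2}(\sqrt t+|x'|)$ — its contribution is $\asymp t^{n/2}\,\sqrt t/(\sqrt t+|x'|)=|B_{\mathbb{H}^n}(e,\sqrt t)|/|B(\x,\sqrt t)|$ rather than the crude $t^{n/2}$. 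I expect the bookkeeping between the three relevant scales ($|z|$, $|x'-y'|$ and the central displacement $y''-x''-\tfrac12 z\cdot(y'+x')$), together with the verification that $\rho(g(\x,\y,z))\gtrsim d(\x,\y)$ after integrating away the bad $z$'s, to be the most delicate point, but no idea beyond this splitting should be needed.
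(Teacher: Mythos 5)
Your strategy for part (3) is the same as the paper's: identify $(\X_I)_{\x}(\X_J)_{\y}H_t$ with $\pm\,\pi(\widetilde R\,h_t)$ for a mixed left/right-invariant derivative $\widetilde R$ of $h_t$ on $\bbH^n$, invoke the Gaussian bounds of~\cite[Theorem IV.4.2]{VSCC}, and then push the resulting estimate through $\pi$. Where you diverge is in the two steps you yourself flag as hard. For (2), the paper does not rederive the two-sided bounds at all: it simply quotes \cite[Theorem 1.2]{RobinsonSikora2}, so your lower-bound argument (restricting the $z$-integral to a set of good $z$'s of measure $\gtrsim t^{(n+1)/2}/(\sqrt t+|x'|)$) is extra work the paper avoids. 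For the key transference inequality, the paper replaces your direct estimate of $\int_{\R^n}\e^{-c\rho(g(\x,\y,z))^2/t}\,dz$ by a cleaner mechanism: it dominates the Gaussian by $\sum_{j\ge 0}\e^{-\alpha j^2}\mathbf 1_{B_{\bbH^n}(e,\sqrt t\,j)}$ and then applies $\pi$ termwise, using \cite[Lemmas 2.1 and 2.2]{DziubanskiJotsaroop} for the bound $\pi(\mathbf 1_{B_{\bbH^n}(e,r)})(\x,\y)\lesssim r^{Q}\,|B(\x,r)|^{-1}\,\mathbf 1_{B(\x,C_1r)}(\y)$, i.e.\ a bound on the Lebesgue measure of the slice $\{z:\rho(g(\x,\y,z))<r\}$ together with the implication $\rho(g(\x,\y,z))<r\Rightarrow d(\x,\y)<C_1 r$; summing in $j$ with the reverse doubling $|B(\x,\sqrt t\,j)|\gtrsim j^{n+1}|B(\x,\sqrt t)|$ then yields exactly your displayed inequality. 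So your displayed estimate is correct and equivalent to what the paper extracts from the cited lemmas, but the only genuinely delicate point of the whole proposition — the comparison of $\rho\circ g$ with $d$ and the computation of the slice measure, with the crucial gain $\sqrt t/(\sqrt t+|x'|)$ from the central coordinate — is precisely the part you leave as ``bookkeeping''; if you want a self-contained proof you would in effect have to reprove those two lemmas, and the layer-cake/indicator decomposition is the painless way to organize it. Part (1) is fine either way (the paper cites \cite[Theorem 6.1]{RobinsonSikora1} for $T_t1=1$ where you compute the integral directly).
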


\begin{proof}
To prove~(1), observe that $T_t$ is positivity preserving, contractive on $L^2$ and symmetric. Thus, by~\cite[Theorem 1.4.1]{Davies} it can be extended to a contraction semigroup on $L^p$, $1\leq p\leq \infty$, strongly continuous when $1\leq p<\infty$. The property $T_t1 =1$ is~\cite[Theorem 6.1]{RobinsonSikora1}.

Statement (2) is nothing but~\cite[Theorem~1.2]{RobinsonSikora2}.

The proof of~(3) owes much to the proof of~\cite[Proposition 2.3]{DziubanskiJotsaroop}.  For $I\in \mathcal{I}^h$,  define an associated $\tilde I \in \mathcal{I}^h$ with the property that $\tilde I_j = I_j -n $ if $I_j \in \{n+1,\dots , 2n\}$ and $\tilde I_j = I_j +n $ if $I_j \in \{1,\dots , n\}$, and similarly for $J$. Then, by~\eqref{dpi} and~\eqref{dpipi}
\begin{align*}
(\X_{I})_{\x} (\X_{J})_{\y} H_t(\x,\y) 
&= d\pi (X_{\tilde I_1})_{\x} \cdots d\pi (X_{\tilde I_h})_{\x} \, d\pi (X_{\tilde J_1})_{\y} \cdots d\pi (X_{\tilde J_k})_{\y}\, \pi(h_t)(\x,\y)\\
& = (-1)^{h} \pi ( Y_{ \tilde I} X_{\tilde J} h_t)(\x, \y).
\end{align*}
Recall now that by~\cite[Theorem~IV.4.2]{VSCC} there exists $a>0$ such that
\[
|Y_{\tilde I}  X_{\tilde J} h_t({\bf x})| \lesssim t^{-(h+k+ Q)/2} \e^{-ad_c({\bf x},e)^2/t}, \qquad {\bf x} \in \mathbb{H}^n, \; t>0.
\]
Here $d_c$ is the Carnot--Carathéodory distance on $\mathbb{H}^n$ and $e$ is its identity. If  ${\bf x} = (x_1,y_1,t_1)$ and $\delta_r({\bf x}) = (rx_1,r y_1,r^2 t_1)$ is the anisotropic dilation on $\mathbb{H}^n$, then there is $\alpha>0$ such that
\begin{align*}
|Y_{\tilde I} X_{\tilde J} h_t({\bf x})|
\lesssim t^{-(h+k+ Q)/2}  \sum_{j\geq 0}\e^{-\alpha j^2} \mathbf{1}_{B_{\mathbb{H}^n}(e, \sqrt{t}j)} ({\bf x}),
\end{align*}
so that
\[
|\pi(Y_{\tilde I} X_{\tilde J} h_t)| \leq \pi(|Y_{\tilde I} X_{\tilde J}  h_t|)   \lesssim t^{-(Q+h+k)/2} \sum_{j\in\Z}\e^{-\alpha j^2} \pi( \mathbf{1}_{B_{\mathbb{H}^n}(e, \sqrt{t}j)}).
\]
By combining~\cite[Lemma~2.1 and Lemma~2.2]{DziubanskiJotsaroop}, there exists $C_1>0$ such that
\[
|\pi( \mathbf{1}_{B_{\mathbb{H}^n}(e, \sqrt{t}j)})(\x,\y)| \lesssim (\sqrt{t}j)^{Q} |B(\x, \sqrt{t}j)|^{-1} {\bf 1}_{B(\x, C_1 \sqrt{t} j)}(\y).
\]
Hence
\begin{align*}
|(\X_{I})_{\x} (\X_{J})_{\y} H_t(\x,\y) | 
& \lesssim  t^{-(h+k)/2} \sum_{j \geq 0}\e^{-\alpha j^2} j^Q |B(\x, \sqrt{t}j)|^{-1} {\bf 1}_{B(\x, C_1 \sqrt{t} j)}(\y)\\
& \lesssim  t^{-(h+k)/2} |B(\x, \sqrt{t})|^{-1} \sum_{j\geq 0}\e^{-\alpha j^2} j^{Q-n-1}  {\bf 1}_{B(\x, C_1 \sqrt{t} j)}(\y)\\
&\lesssim  t^{-(h+k)/2} |B(\x, \sqrt{t})|^{-1} \e^{-\alpha' d(\x,\y)^2/t},
\end{align*}
for some $\alpha'>0$. The statement follows.
\end{proof}

\subsection*{Acknowledgements} It is my pleasure to thank The Anh Bui, Mattia Calzi, Guorong Hu, Marco M.\ Peloso and Maria Vallarino for inspiring conversations.

\end{document}